\newcommand{\ARXIV}[1]{arxiv:#1}
\newcommand{\MR}[1]{MR:#1}
\newcommand{\scrI}{\mathscr{I}}
\newcommand{\SBM}{\text{SBM}}
\newcommand{\ds}{\displaystyle}
\newcommand{\MF}{{\mathcal{M}_F}}
\newcommand{\1}{\mbox {\bf 1}}
\newcommand{\0}{\mbox {\bf 0}}
\newcommand{\cP}{{\mathcal P}}
\newcommand{\uP}{{\underline{P}}}
\newcommand{\uB}{{\underline{B}}}
\newcommand{\vep}{\varepsilon}
\newcommand{\beq}{\begin{eqnarray*}}
\newcommand{\feq}{\end{eqnarray*}}
\newcommand{\beqn}{\begin{eqnarray}}
\newcommand{\feqn}{\end{eqnarray}}
\newcommand{\ba}{\mathbf {a}}
\newcommand{\bM}{\mathbf {M}}
\newcommand{\balpha}{\boldsymbol{\alpha}}
\newcommand{\To}{\Rightarrow}
\newcommand{\Z}{{\mathbb Z}}
\newcommand{\N}{{\mathbb N}}
\newcommand{\R}{{\mathbb R}}
\newcommand{\cN}{{\mathcal N}}
\newcommand{\cT}{{\mathcal T}}
\newcommand{\cV}{{\mathcal V}}
\newcommand{\cF}{{\mathcal F}}
\newcommand{\cU}{{\mathcal U}}
\newcommand{\cG}{{\mathcal G}}
\newcommand{\cA}{{\mathcal A}}
\newcommand{\cE}{{\mathcal E}}
\newcommand{\tB}{\tilde B}
\newcommand{\hxi}{\widehat \xi}
\newcommand{\bxi}{\bar \xi}
\newcommand{\uxi}{\underline{\xi}}
\newcommand{\uX}{\underline{X}}
\newcommand{\uH}{\underline{H}}
\newcommand{\VM}{{\texttt{vm}}}
\newcommand{\TV}{{\texttt{tv}}}
\newcommand{\AV}{{\texttt{av}}}
\newcommand{\GV}{{\texttt{gv}}}
\newcommand{\LV}{{\texttt{lv}}}
\newcommand{\SN}{{S_N}}
\newcommand{\Lip}{\text{\texttt{Lip}}}
\newcommand{\sumA}{\sum_{\emptyset\ne A\subsetneq \cN_N}}
\numberwithin{equation}{section}
\newtheorem{theorem}{Theorem}[section]
\newtheorem{corollary}[theorem]{Corollary}
\newtheorem{lemma}[theorem]{Lemma}
\newtheorem{proposition}[theorem]{Proposition}
\newtheorem{remark}[theorem]{Remark}
\newtheorem{conjecture}[theorem]{Conjecture}
\theoremstyle{definition}
\newtheorem{definition}[theorem]{Definition}
\theoremstyle{definition}
\newtheorem{example}[theorem]{Example}
\title{A complete convergence theorem for the $q$-voter model and other voter model perturbations in two dimensions}
\author{Ted
  Cox\footnote{\href{mailto:jtcox@syr.edu}{jtcox@syr.edu},  Mathematics Department, Syracuse
    University, }\ \ and  Ed
  Perkins\footnote{\href{mailto:perkins@math.ubc.ca}{perkins@math.ubc.ca},
  Mathematics Department, The University of British Columbia}}
\date{}
\begin{document}

\maketitle

% \keywords{Complete convergence theorem; q-voter model;
%   interacting particle system; voter model perturbation;
%   cancellative processes; super-Brownian motion;
%   two-dimensional random walk} 

% \subjclass{60K35; 82C22}

\begin{abstract}The $q$-voter model is a spin-flip system in which the
rate of flipping to type $i$ is given by the $q$th power of the
proportion of nearest neighbours in type $i$ for $i=0,1$.  If $q=1$ it
reduces to the classical voter model.  We show that in the critical
$2$-dimensional case, for $q<1$ and close enough to $1$, for any
initial state as $t\to\infty$ the system converges weakly to a mixture
of all $0$'s, all $1$'s, and a unique invariant law which contains
infinitely many sites of both types.  This follows as a special case
of a general theorem which proves a similar ``complete convergence
theorem'' for cancellative, monotone, finite range voter model
perturbations on $\Z^2$ providing a certain parameter, $\Theta_3$, is
strictly positive. Similar results follow for the affine and geometric
voter models and Lotka-Volterra models, all for parameter values close
to that giving the voter model.  This kind of asymptotic behavior is
quite different from that of the $2$-dimensional voter model itself,
which undergoes clustering, and converges to a mixture of all $0$'s
and all $1$'s.

The above parameter $\Theta_3$ has an explicit expression in terms of
asymptotic coalescing probabilities of $2$-dimensional random walk,
and we give a rather simple sufficient condition for it to be strictly
positive.  An important step in the proof is to establish weak
convergence of the rescaled spin-flip systems to super-Brownian motion
with drift $\Theta_3$.  In fact, a convergence result is proved under
weaker hypotheses which includes all known such results for
$2$-dimensional voter model perturbations and a number of new ones,
including a rescaled limit theorem for the $q$-voter model where
$q\uparrow 1$ with the rescaling.
\end{abstract}

\tableofcontents

\section{Introduction and main results}
\subsection{The \textit{q}-voter model}\label{s:qvintro}
To define the model, let $q\ge 0$ and let $\cN\subset \Z^d$ be a
non-empty finite symmetric (about the origin) set not containing $0$
such that the uniform distribution on $\cN$ is an irreducible kernel
(the group generated by $\cN$ is $\Z^d$), and for some $\sigma^2>0$,
\begin{equation}
\sum_{z\in \cN}z_iz_j/|\cN|=\delta_{ij}\sigma^2\text{ for all }i,j\le d.\label{nbhddef}
\end{equation}
Here $|\cN|$ is the cardinality of $\cN$.  We call such an $\cN$ a
neighbourhood, and the elements of $\cN$, neighbours of $0$.  Note
that the symmetry and irreducibility of $\cN$ imply
\begin{equation}\label{Nprop}
 |\cN|\ge 2d \text{ and is even for any neighbourhood $\cN$.}
\end{equation}
For $x\in\Z^d$, $x+\cN$ is the set of neighbours of $x$. \ In the
$q$-voter model the state at time $t$ is $\xi_t:\Z^d\to \{0,1\}$,
where $\xi_t(x)$ is the opinion of a voter at $x$ at time $t$. The
rate at which the voter at $x$ changes opinion is the $q$-th power of
the fraction of its neighbours with the opposite opinion.  More
formally, if $\xi\in\{0,1\}^{\Z^d}$ and
\[
f_j(x,\xi) = \frac{1}{|\cN|}\sum_{y\in \cN}1\{\xi(x+y)=j\}, \quad j=0,1,
\]
and $c^{(q)}$ is defined by 
\[
c^{(q)}(x,\xi) = \hxi(x) f^q_1(x,\xi) + \xi(x)f^q_0(x),
\]
then the $q$-voter model $\xi_t$ is the spin-flip process with rate
function $c^{(q)}$ (see Theorem B.3 in \cite{Lig99}).  Throughout we
will use the notation
\[\hxi=1-\xi,\]
and if $q=0$, $0^q:=0$ in the above.  The well-studied voter model is
obtained by taking $q=1$, in which case we will write $c^{\VM}(x,\xi)$
for $c^{(1)}(x,\xi)$.

The $q$-voter model was introduced by Nettle in \cite{N99} and also
used by Abrams and Strogatz in \cite{AS} as a model of language
death. The model, along with many variations of it, has been studied
in the physics literature (e.g., see \cite{ASW}, \cite{ASLPSW},
\cite{JSW}, \cite{MLCPS}, \cite{NSSW}, \cite{VA}) Rigorous results for
the model defined on large torii in $\Z^d$, $d\ge 3$ for $q$ close to
$1$ have been obtained by Agarwal, Simper and Durrett in
\cite{ASD}. Our goal is to study the model on $\Z^2$ in the
mathematically critical, and biologically important, two-dimensional
case, with $q$ close to $1$ and $q<1$. The methods we use to study
this model will lead to some general results for a family of
two-dimensional spin-flip processes.

We let $|\xi|=\sum_x\xi(x)$, and say that a probability measure $\nu$
on $\{0,1\}^{\Z^d}$ has the {\it coexistence property} if
\[
\nu(|\hxi|=|\xi|=\infty) =1.
\]
A translation invariant probability $\nu$ on
$\{0,1\}^{\Z^d}$ has density $p\in[0,1]$ iff
$\nu(\xi(0)=1)=p$.  In the case $d\ge 3$, it is well known
that the voter model has a one-parameter family of
translation invariant stationary distributions indexed by
density, $\{\mu_\theta,0\le \theta\le 1\}$, such that for a
wide class of initial laws with a given density, $\theta$,
the voter model converges in law as $t\to\infty$ to the
corresponding $\mu_\theta$ (see Chapter~V of
\cite{Lig85}). In particular, density is preserved over
time. For $q<1$, the flip rates to $1$'s say, are increased
relative to the voter model by a factor of $f_1^{q-1}$, and
so one is reinforcing the flip rates to locally rare
types. That this effect strongly influences the ergodic
behavior of the model is shown in Theorem~1.3 of \cite{ASD},
which considers a sequence of $q_n$-voter models
$\xi^{(n)}_t$ on torii in $\Z^3$ of side length $n$,
$q_n\uparrow 1$. That result shows that for an initial
sequence of laws with any fixed density in (0,1), with an
appropriate time rescaling, the density of $\xi^{(n)}_t$
approaches 1/2 and stays close to 1/2 for at least
polynomially (in $n$) long times.  We will refine this for
$|\cN|\le 8$ (and in particular for the nearest neighbour
setting in $d=3,4$) for $q<1$, sufficiently close to one, by
showing the existence of a translation invariant stationary
distribution $\nu_{1/2}$ with the coexistence property and
density $1/2$ to which the $q$-voter model converges weakly
starting from any initial law with the coexistence
property. See Theorem~\ref{t:CCT} where a slightly stronger
``complete convergence" result is proved).  In particular
$\nu_{1/2}$ is the unique stationary law with the
coexistence property (see Corollary~\ref{c:inv}).

Perhaps more interesting, such a convergence theorem also holds in two
dimensions, again for $|\cN|\le 8$, and so, in particular, for the
nearest neighbour case (see again Theorem~\ref{t:CCT}).  Recall that
for $d=2$, the voter model exhibits clustering, that is, it converges
weakly to a mixture of all zeros and all ones as $t$ tends to
infinity. Dynamically, a typical site becomes part of a growing
cluster of the same type as $t$ becomes large. The actual type will
change back and forth as time evolves and the clusters grow.  A
quantitative description of this dynamical clustering may be found in
\cite{CG}.  This clustering behaviour in two dimensions is typical of
branching population models such as super-Brownian motion \cite{D77}
or discrete time branching systems \cite{K77}.  In these cases there
is local extinction for large time and the mass becomes concentrated
on larger and larger clumps separated by greater and greater
distances. As the two-dimensional case is of particular importance in
population modelling this failure to converge to any local equilibrium
is referred to as the ``pain in the torus".  See \cite{F75} where a
closely related model exhibiting the above clumping behaviour is
dismissed as being ``biologically irrelevant".  The fact that, for any
$q<1$ and close to $1$, the $q$-voter model converges to an
essentially unique equilibrium is, we believe, of some general
interest.  Other interesting examples of such convergence results
which include lower dimensions ($d\le 2$), are due to Handjani
\cite{H99} for the threshold voter model, corresponding to $q=0$
(excluding only the one-dimensional nearest neighbour case where
coexistence fails), the present authors \cite{CP14} for the symmetric
two-dimensional Lotka-Volterra model, and Sturm and Swart \cite{SS}
for the one-dimensional ``rebellious voter model" for sufficiently
small competition parameter. Results and methods from the first two
works will play a role here.

Our convergence results, and the coexistence results in \cite{ASD},
both require $q$ close enough to $1$. This is counter-intuitive
because taking $q$ smaller should only increase the advantage of
locally rare types, described above.  This restriction on $q$ is due
to the perturbative nature of both arguments which use the theory of
voter model perturbations. This refers to the fact that the models
approach the voter model as a parameter approaches a particular
value--in this case as $q\uparrow 1$.  \cite{ASD} uses general results
for voter model perturbations from \cite{CDP13} which require
$d\ge 3$, as these results rely on rapid local convergence to the
appropriate invariant law of the voter model.  The two-dimensional
case is more involved but, as was noted above, a particular class of
voter model perturbations, Lotka-Volterra models, were analyzed (again
using perturbative methods) for $d=2$ in \cite{CP08} and \cite{CMP}.
The methodologies in this latter paper will be extended here to show
that rescaled $q$-voter models in which $q\uparrow 1$ as well,
converge to a two-dimensional super-Brownian motion with a positive
drift (see Section~\ref{sec:scallim}).  The positivity of the drift
will be critical to show that the process exhibits long-term
coexistence. The description of the drift in terms of the asymptotics
of long-time non-coalescing probabilities for two-dimensional random
walks (see \eqref{Theta} in Section~\ref{sec:coalrw}) is therefore
important.

When $q>1$ the above intuitive argument now goes in the
other direction; the flip rates to $1$'s are multiplied by a
factor of $f_1^{q-1}$ and so, relatively speaking, one is
reinforcing flip rates to locally dominant types.  As a
result we now expect a type of founder control, where one
type or another will take over, with probabilities depending
on the initial configuration.  The tools for proving such a
result for $q>1$ and close enough to $1$ in two dimensions,
or even for $d\ge 3$, do not seem to be currently available.

In addition to being a voter model perturbation (see
Section~\ref{sec:vmp}) the other key properties of the
$q$-voter model used in the proof are \textit{monotonicity}
(see the definition in Section~\ref{ssec:CCT}) and the
\textit{cancellative} property (see \eqref{canc} in
Section~\ref{sec:canc}), which implies it has an
\textit{annihilating} dual. The fact that the $q$-voter model
has no \textit{coalescing} dual (see Section 4 of Chapter III
in \cite{Lig85}) was established in \cite{ASD}, where it was
shown that that the $q$-voter model ($q\ne 1$) is not
additive.

Our ``complete convergence" theorem for $q$-voter models is obtained
as a corollary of a general result for two-dimensional monotone,
cancellative finite range voter model perturbations when (the natural
extension of) the above drift parameter is positive (see
Theorem~\ref{t:vgen2dCCT} below). This result will also imply such a
``complete convergence" theorem with coexistence for (two-dimensional)
affine voter models, geometric voter models and the aforementioned
Lotka-Volterra models, for appropriate choices of parameter (see the
examples in Section~\ref{sec:vmp} and then Theorems~\ref{t:CCT-LV},
\ref{t:CCT-AV} and \ref{t:CCT-GV}). We note that no restrictions on
$|\cN|$ are required for these three models or for
Theorem~\ref{t:vgen2dCCT} to hold.  The Lotka-Volterra result was
first proved in \cite{CP14}, while the other applications are new.

 \subsection{A complete convergence theorem for the \textit{q}-voter
model}\label{ssec:CCT} Assume first $c(x,\xi)$ is a rate function
satisfying condition (B4) of \cite{Lig99} (see
\eqref{pertboundunscaleda} below), and so by Theorem~B3 of that
reference is the rate function of a unique spin flip system $\xi_t$
starting in state $\xi_0$ under $P_{\xi_0}$.  Define the hitting times
$\tau_{\0} = \inf\{t\ge 0: \xi_t=\0\}$ and $\tau_{\1} = \inf\{t\ge 0:
\xi_t=\1\}$.  We identify a random vector with its probability law, as
usual, and introduce the probabilities
\[ \beta_{0}(\xi_0) = P_{\xi_0}(\tau_{\0}<\infty), \quad
\beta_{1}(\xi_0) = P_{\xi_0}(\tau_{\1}<\infty),\quad
\beta_\infty(\xi_0) = P_{\xi_0}(\tau_{\0}=\tau_{\1}=\infty).
\] By standard arguments (see (1.8) of \cite{CP14}),
\begin{equation}\label{betavalues}\beta_{0}(\xi_0)=0\text{ if
}|\xi_0|=\infty, \ \beta_{1}(\xi_0)=0\text{ if }|\hxi_0|=\infty,\text{
and hence }\beta_\infty(\xi_0)=1\text{ if }|\xi_0|=|\hxi_0|=\infty.
\end{equation} Recall that a spin-flip process with rate function
$c(x,\xi)$ is monotone iff for every $\uxi\le\xi$ (this means
$\uxi(x)\le \xi(x)$ for all $x$),
  \begin{equation*}
  \begin{aligned} c(x,\xi)&\ge c(x,\uxi)\text{ if }\xi(x)=\uxi(x)=0,
\\ c(x,\xi)&\le c(x,\uxi)\text{ if } \xi(x)=\uxi(x)=1,
\end{aligned}
\end{equation*}
We assume throughout that $0\le q \le 1$ and note that clearly, for
all such $q$,
\begin{equation}\label{qvattractive} \text{the $q$-voter model is  monotone.}
\end{equation}

We let $\To$ denote weak convergence of probability laws. A
probability $\nu$ on $\{0,1\}^{\Z^d}$ is symmetric iff
$\nu(\hxi\in\cdot)=\nu(\xi\in\cdot)$.  Both $\0$ and $\1$ (the
configurations of all $0$'s and all $1$'s, respectively) are obviously
traps for the $q$-voter model.

\begin{theorem}\label{t:CCT} Assume $|\cN|\le 8$ and $d=2,3$ or $4$.
There exists $0<q_c<1$ such that for $q_c<q<1$ there is a
translation invariant symmetric stationary distribution $\nu_{1/2}$
with density $1/2$ satisfying the coexistence property, and such that
for all initial $\xi_0\in\{0,1\}^{\Z^d}$,
\begin{equation}\label{cctheorem} \xi_t \To
\beta_{0}(\xi_0)\delta_{\0} +\beta_{\infty}(\xi_0) \nu_{1/2}
+\beta_{1}(\xi_0)\delta_{\1} \quad \text{ as }t\to\infty,
\end{equation} where $\beta_\infty(\xi_0)>0$ unless $\xi_0$ is one of
the fixed points, ${\bf 0}$ or ${\bf 1}$.
\end{theorem}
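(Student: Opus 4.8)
The plan is to derive Theorem~\ref{t:CCT} as a corollary of the general result Theorem~\ref{t:vgen2dCCT} for monotone cancellative finite-range voter model perturbations on $\Z^2$, together with the corresponding $d\ge 3$ machinery from \cite{CDP13}, \cite{ASD}. First I would verify that for $q<1$ the $q$-voter model falls within the class covered by those theorems: it is a voter model perturbation (expand $f_1^q = f_1 + (q-1)f_1\log f_1 + o(q-1)$, so the perturbation kernel is identified and the perturbation is finite range since $\cN$ is finite); it is monotone by \eqref{qvattractive}; and it is cancellative, with an annihilating dual, by \eqref{canc}. Thus the only hypothesis that remains to be checked, in the $d=2$ case, is the strict positivity of the drift parameter $\Theta_3$ attached to the $q$-voter perturbation. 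Here the restriction $|\cN|\le 8$ enters: I would invoke the simple sufficient condition for $\Theta_3>0$ advertised in the introduction (the condition phrased in terms of coalescing probabilities of two-dimensional random walk, \eqref{Theta}) and check that it holds for the $q$-voter perturbation kernel precisely when $|\cN|\le 8$. With $\Theta_3>0$ established, Theorem~\ref{t:vgen2dCCT} yields, for $q$ in some interval $(q_c,1)$, a translation invariant symmetric stationary law $\nu_{1/2}$ with the coexistence property and density $1/2$, and the complete convergence statement \eqref{cctheorem} for all initial configurations; the $d=3,4$ cases with $|\cN|\le 8$ follow the same way, using the $d\ge 3$ voter-perturbation results in place of Theorem~\ref{t:vgen2dCCT}.

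The symmetry and density statements for $\nu_{1/2}$ require a short separate argument. Since the $q$-voter rate function satisfies $c^{(q)}(x,\hxi) = c^{(q)}(x,\xi)$ with the roles of $0$ and $1$ interchanged (the flip mechanism is symmetric under $\xi\mapsto\hxi$), the map $\xi\mapsto\hxi$ intertwines the semigroup with itself; hence if $\nu_{1/2}$ is the stationary law produced by the general theorem then $\nu_{1/2}(\hxi\in\cdot)$ is also stationary with the coexistence property, and uniqueness of such a law (Corollary~\ref{c:inv}, itself a consequence of the complete convergence theorem) forces $\nu_{1/2}$ to be symmetric. Symmetry immediately gives $\nu_{1/2}(\xi(0)=1) = \nu_{1/2}(\xi(0)=0) = 1/2$, i.e.\ density $1/2$, once translation invariance is known; translation invariance is inherited from translation invariance of the dynamics and of the construction in Theorem~\ref{t:vgen2dCCT}.

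Finally, the claim that $\beta_\infty(\xi_0)>0$ unless $\xi_0\in\{\0,\1\}$. By \eqref{betavalues}, if $|\xi_0|=|\hxi_0|=\infty$ then $\beta_\infty(\xi_0)=1$, so the only remaining cases are $\xi_0$ with exactly finitely many $1$'s (but at least one), or symmetrically finitely many $0$'s. In that situation I would argue that with positive probability the process fixates at neither trap: monotonicity lets us compare from below with a process started from a translate of a fixed finite configuration, and a finite-energy / irreducibility argument on the spin-flip dynamics shows that from any nontrivial finite configuration there is positive probability of reaching, by some fixed time, a configuration from which $\beta_\infty>0$ (e.g.\ one can reach a configuration dominated below and above by configurations in the domain of attraction of $\nu_{1/2}$, using that flip rates are bounded below on the relevant finite region). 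Equivalently, one uses that $\delta_{\0}$ and $\delta_{\1}$ cannot together carry all the mass of the limit law started from a nontrivial finite configuration, since the survival/coexistence estimates underlying Theorem~\ref{t:vgen2dCCT} (ultimately the super-Brownian limit with positive drift $\Theta_3$) give a uniform lower bound on the probability of producing a large cluster of the minority type, which then persists with positive probability.

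The main obstacle is the verification of $\Theta_3>0$ for the $q$-voter perturbation when $|\cN|\le 8$: everything else is either a direct appeal to Theorem~\ref{t:vgen2dCCT} or a soft symmetry/positivity argument, whereas the sign of $\Theta_3$ is a genuinely quantitative fact about two-dimensional random walk coalescence that depends delicately on the neighbourhood $\cN$, and it is exactly where the hypothesis $|\cN|\le 8$ is used.
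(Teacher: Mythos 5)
Your overall architecture matches the paper's: reduce the $d=2$ case to Theorem~\ref{t:vgen2dCCT} and the $d=3,4$ case to the $d\ge 3$ voter-perturbation machinery of \cite{CP14} and \cite{ASD}. However, there is a genuine gap in where you locate the difficulty, and it causes you to skip the one model-specific verification that actually requires work.

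You dispose of the cancellative property in half a sentence (``it is cancellative, with an annihilating dual, by \eqref{canc}''), but \eqref{canc} is only the \emph{definition} of a cancellative rate function; verifying that the $q$-voter rates admit such a representation with \emph{nonnegative} coefficients $\beta_0(A)$ is not automatic -- indeed the $q$-voter model ($q\neq 1$) is not additive, and the cancellative property for general $\cN$ is an open problem (Conjecture~\ref{conj:canc}). The paper's verification (Lemma~\ref{l:qcanc}, via Proposition~\ref{p:nlvm}) requires inverting an explicit $|\cN|\times|\cN|$ matrix and checking sign conditions on the resulting $\alpha_\ell(q)$ for $q$ near $1$; for $|\cN|=8$ this is a \texttt{maple}-assisted computation. \emph{This} is where the hypothesis $|\cN|\le 8$ enters, and one also needs the slightly stronger conclusion that $\beta_0(A)>0$ for some $|A|=3$ (the ``good cancellative'' property), which is used in the proof of Theorem~\ref{t:CCT2dgen}. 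By contrast, you attribute the restriction $|\cN|\le 8$ to the positivity of $\Theta_3$ and propose to check that the positivity criterion ``holds \ldots precisely when $|\cN|\le 8$.'' That is backwards: $\Theta_3>0$ holds for \emph{every} neighbourhood $\cN$, because it follows from the strict subadditivity of $r^s(A)=r_{|A|}=(|A|/|\cN|)\log(|\cN|/|A|)$ (an elementary calculus fact) combined with a soft random-partition argument built from the coalescing-walk constants $K_3$ (Lemma~\ref{l:genpostheta} and Corollary~\ref{postheta}); no restriction on $|\cN|$ is needed there, and no delicate quantitative estimate on two-dimensional coalescence probabilities is required. Attempting to prove $\Theta_3>0$ ``precisely when $|\cN|\le 8$'' would both fail (it is true more generally) and leave the actual bottleneck -- the cancellative property -- unaddressed.

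Two smaller points. Your symmetry argument for $\nu_{1/2}$ invokes Corollary~\ref{c:inv}, which is itself deduced from Theorem~\ref{t:CCT}; this is circular as written, though harmless since the symmetry of $\nu_{1/2}$ is already part of the conclusion of the general theorem (it is extracted in the proof of Theorem~\ref{t:CCT2dgen} from the $0$--$1$ symmetry forced by $\0$ being a trap). Your sketch of $\beta_\infty(\xi_0)>0$ for nontrivial finite configurations is in the right spirit; the paper's version is cleaner: reduce to $\xi_0=\delta_0$ by monotonicity and translation invariance, note $P_{\delta_0}(|\xi_t|>0)$ is non-increasing so that \eqref{condB} gives $P_{\delta_0}(\tau_{\0}=\infty)>0$, and use $\beta_1(\delta_0)=0$ from \eqref{betavalues}.
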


We will state Theorem~\ref{t:CCT} more compactly by saying that for
$q_c<q<1$ {\it the complete convergence theorem with coexistence (CCT)
holds for the $q$-voter model.}  As an immediate corollary we have a
complete description of all invariant laws.

\begin{corollary}\label{c:inv} For $d$, $\cN$, and $q_c$ as in
Theorem~\ref{t:CCT}, and $q_c<q<1$, $\nu_{1/2}$ is the only invariant
distribution with the coexistence property, and
$\{\nu_{1/2},\delta_{\0}, \delta_{\1}\}$ are the only extremal
invariant distributions.
\end{corollary}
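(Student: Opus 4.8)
The plan is to deduce everything from the complete convergence theorem, Theorem~\ref{t:CCT}, by running the dynamics from a stationary start. Fix $d,\cN,q$ as in the statement and let $\nu$ be an arbitrary invariant law. Running $\xi_t$ with $\xi_0$ distributed as $\nu$ gives $\xi_t\sim\nu$ for every $t\ge0$. For bounded continuous $f$ on $\{0,1\}^{\Z^d}$ the maps $\xi_0\mapsto E_{\xi_0}[f(\xi_t)]$ are bounded and, by \eqref{cctheorem}, converge pointwise as $t\to\infty$ to $\beta_0(\xi_0)f(\0)+\beta_\infty(\xi_0)\int f\,d\nu_{1/2}+\beta_1(\xi_0)f(\1)$; since $\int E_{\xi_0}[f(\xi_t)]\,\nu(d\xi_0)=\int f\,d\nu$ for every $t$ by stationarity, bounded convergence yields
\[
\int f\,d\nu=\lim_{t\to\infty}\int E_{\xi_0}[f(\xi_t)]\,\nu(d\xi_0)=a_0\,f(\0)+a_\infty\int f\,d\nu_{1/2}+a_1\,f(\1),
\]
where $a_i=\int\beta_i\,d\nu\ge0$. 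Because $\0$ and $\1$ are traps, the events $\{\tau_\0<\infty\}$, $\{\tau_\1<\infty\}$, $\{\tau_\0=\tau_\1=\infty\}$ are disjoint with union the whole sample space, so $\beta_0+\beta_\infty+\beta_1\equiv1$ and hence $a_0+a_\infty+a_1=1$. As the displayed identity holds for all bounded continuous $f$, we obtain
\[
\nu=a_0\,\delta_{\0}+a_\infty\,\nu_{1/2}+a_1\,\delta_{\1}.
\]

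Thus every invariant law lies in the convex set $\Delta$ of all such combinations, and conversely $\Delta$ consists of invariant laws, since $\delta_{\0}$ and $\delta_{\1}$ are invariant ($\0$ and $\1$ being traps) and $\nu_{1/2}$ is stationary by Theorem~\ref{t:CCT}; so the set of invariant laws equals $\Delta$. To identify its extreme points, consider the three disjoint events $A_0=\{\xi=\0\}$, $A_\infty=\{|\xi|=|\hxi|=\infty\}$, $A_1=\{\xi=\1\}$, which satisfy $\delta_{\0}(A_0)=\nu_{1/2}(A_\infty)=\delta_{\1}(A_1)=1$ while every other evaluation of one of $\delta_{\0},\nu_{1/2},\delta_{\1}$ on one of $A_0,A_\infty,A_1$ is $0$ (for $\nu_{1/2}$ this uses the coexistence property $\nu_{1/2}(A_\infty)=1$). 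Hence, for $\nu$ as above, the coefficients are determined by $\nu$ via $a_0=\nu(A_0)$, $a_\infty=\nu(A_\infty)$, $a_1=\nu(A_1)$, the three generators are affinely independent, and each is extremal in $\Delta$: if, say, $\delta_{\0}=\lambda\mu+(1-\lambda)\mu'$ with $\mu,\mu'\in\Delta$ and $0<\lambda<1$, evaluating on $A_0$ forces $\mu(A_0)=\mu'(A_0)=1$, i.e.\ $\mu=\mu'=\delta_{\0}$. Since the extreme points of a finite convex hull lie among its generators, $\{\delta_{\0},\nu_{1/2},\delta_{\1}\}$ is precisely the set of extremal invariant laws.

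If moreover $\nu$ has the coexistence property, then evaluating the representation $\nu=a_0\delta_{\0}+a_\infty\nu_{1/2}+a_1\delta_{\1}$ on $A_\infty$ gives $1=\nu(A_\infty)=a_\infty$, so $a_0=a_1=0$ and $\nu=\nu_{1/2}$; hence $\nu_{1/2}$ is the unique invariant law with the coexistence property. The argument is routine given Theorem~\ref{t:CCT}; the only step requiring care is the passage of the limit in \eqref{cctheorem} through the integral against $\nu$, which is immediate from bounded convergence applied to the bounded measurable functions $\xi_0\mapsto E_{\xi_0}[f(\xi_t)]$. So there is no substantive obstacle here—the corollary is a direct unpacking of the complete convergence theorem.
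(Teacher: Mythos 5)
Your proof is correct and is exactly the standard unpacking that the paper has in mind when it calls the corollary "immediate": integrate the complete convergence theorem against an invariant law, use that $\beta_0+\beta_\infty+\beta_1\equiv 1$ and that $\delta_{\0},\nu_{1/2},\delta_{\1}$ are mutually singular to get the mixture representation and identify the extreme points. No gaps.
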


\begin{remark} Note that the hypotheses of the above results are
satisfied when $\cN$ is the set of nearest neighbours in $\Z^d$ and
$2\le d\le 4$.  They also hold for $d=2$ when $\cN$ is the unit sphere
in $\Z^2$ in the $L^\infty$ norm ($|\cN|=8$). Clearly the restriction
$|\cN|\le 8$ forces $d\le 4$ by \eqref{Nprop}.
\end{remark} The $q$-voter model is a {\it nonlinear voter model} as
defined in \cite{CD91}. These are spin-flip processes where the rate
of flipping depends only on the number of sites of the opposite type
in $\cN$.  The cancellative property is defined in Section III.4 of
\cite{Lig85} and discussed in Section~\ref{sec:canc} below.  It is
verified for the $q$-voter model in Lemma~\ref{l:qcanc} when $|\cN|\le
8$ and $q\in(q_c,1)$ using a criteria from \cite{CD91} developed for
nonlinear voter models (Proposition~\ref{p:nlvm}). This criteria
involves the inverse of an $|\cN|\times|\cN|$ matrix and so becomes
more complicated as $|\cN|$ increases. This is the reason we restrict
the neighbourhood size in Theorem~\ref{t:CCT}.
\begin{conjecture}\label{conj:cct} In Theorem~\ref{t:CCT} complete
convergence with coexistence continues to hold for any neighbourhood
$\cN$ in any dimension $d\ge 2$ for $q<1$ and sufficiently close to
$1$.
\end{conjecture}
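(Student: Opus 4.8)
\emph{Towards a proof of Conjecture~\ref{conj:cct}.} The plan is to remove the hypothesis $|\cN|\le 8$, which by \eqref{Nprop} also removes the restriction $d\le 4$. The neighbourhood size enters the proof of Theorem~\ref{t:CCT} at exactly one place, the verification of the cancellative property in Lemma~\ref{l:qcanc}: once that property is available for a general neighbourhood $\cN$, the machinery developed here --- Theorem~\ref{t:vgen2dCCT} in two dimensions, and the voter-perturbation theory of \cite{CDP13} together with annihilating duality in dimensions $d\ge 3$, as in \cite{ASD} --- should apply with no constraint on $|\cN|$, the critical value $q_c$ being allowed to depend on $\cN$ and $d$. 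So the heart of the matter, and what I expect to be the main obstacle, is Step~1 below.

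\emph{Step 1: the cancellative property for arbitrary $\cN$.} The $q$-voter model is a nonlinear voter model whose flip rate depends on the state only through $k$, the number of opposite-type neighbours in $\cN$, via $g(k)=(k/|\cN|)^q$. The only available tool, Proposition~\ref{p:nlvm} (from \cite{CD91}), recasts cancellativity as a sign condition on the image of the rate vector $(g(k))_{k}$ under the inverse of a fixed $|\cN|\times|\cN|$ combinatorial matrix. The difficulty is twofold: this inverse is not feasible to write out by hand once $|\cN|$ is large, and a bare continuity argument in $q$ at $q=1$ need not suffice, since for the voter model (which is additive) some of the resulting inequalities could hold only with equality. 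I would first determine whether the voter model satisfies those conditions strictly; if so, continuity in $q$ immediately yields $q_c(\cN)<1$ for each fixed $\cN$. If not, one must expand the relevant coefficients to first order in $1-q$ and check their signs, for which I would look for a sign-definite closed form for the inverse matrix applied to the function $k\mapsto(k/|\cN|)^q$, or to its first-order perturbation $k\mapsto(k/|\cN|)\log(|\cN|/k)$, exploiting the strict convexity on $[0,1]$ that distinguishes $q<1$ from the voter model. A cleaner alternative --- which would be needed if the criterion of Proposition~\ref{p:nlvm} actually fails for large $\cN$ --- is to bypass it and build an annihilating dual for the $q$-voter model directly, by decomposing its flip rates into cancellative building blocks, or to establish a new and more robust sufficient condition for the existence of such a dual.

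\emph{Step 2: assembling the conclusion.} With Step~1 in hand, in two dimensions I would verify the hypotheses of Theorem~\ref{t:vgen2dCCT} for the $q$-voter model on $\Z^2$ with arbitrary $\cN$: monotonicity is \eqref{qvattractive}, and the voter-model-perturbation and finite-range properties are immediate, so the only remaining input is strict positivity of the drift $\Theta_3$. For $q<1$ close to $1$ this should follow from the explicit representation \eqref{Theta} of $\Theta_3$ in terms of asymptotic non-coalescing probabilities of the random walk with step law uniform on $\cN$, together with the simple sufficient condition advertised for $\Theta_3>0$; one only needs to check that this condition is not secretly tied to $|\cN|\le 8$, which I would verify directly. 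In dimensions $d\ge 3$ no scaling limit is needed: one runs the \cite{CDP13}-based argument of \cite{ASD} to produce the coexistent equilibrium $\nu_{1/2}$ and rapid local relaxation, and then upgrades this to the full complete convergence theorem by a Griffeath-type argument using the annihilating dual from Step~1. Thus the genuinely new content is concentrated in Step~1; everything downstream of the cancellative property is already essentially in place.
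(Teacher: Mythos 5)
The statement you are addressing is an open conjecture in the paper, not a theorem: the authors explicitly remark, immediately after stating it, that ``this would follow from Conjecture~\ref{conj:canc} on the cancellative property holding for general $q$-voter models.'' Your proposal reproduces exactly that reduction. Your Step~2 is sound and matches what is already in the paper: for $d=2$, Corollary~\ref{postheta} together with \eqref{rsubaddintro} gives $\Theta>0$ for \emph{any} neighbourhood, with no hidden dependence on $|\cN|\le 8$, and monotonicity and the finite-range voter-perturbation property are likewise unrestricted; for $d\ge 3$, positivity of $f'(0)$ holds for any $\cN$ by Proposition~\ref{p:posthetabigd}. So you have correctly isolated where $|\cN|\le 8$ enters, namely Lemma~\ref{l:qcanc}.

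The genuine gap is that Step~1 is not a proof but a research plan, and it is precisely the open problem. You write that you ``would first determine whether'' the sign conditions of Proposition~\ref{p:nlvm} hold strictly, and ``if not, one must expand... and check their signs,'' or else ``establish a new and more robust sufficient condition.'' None of these alternatives is carried out, and the paper's own evidence suggests the first route is genuinely delicate: for $|\cN|=8$ the authors need a \texttt{maple}-assisted computation of $\bM^{-1}$ and a first-order expansion in $1-q$ showing $\alpha_\ell(1)=0$ with $\alpha_\ell'(1)<0$ for $2\le\ell\le|\cN|$ --- so the voter model does \emph{not} satisfy the conditions strictly, and the sign of $\alpha_\ell'(1)$ for general $|\cN|$ reduces to inequalities like \eqref{cangoal} whose verification required explicit rational arithmetic with no visible closed-form or sign-definite structure. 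Absent a proof of the cancellative property (or of an annihilating dual by some other route) for arbitrary $\cN$, the conjecture remains open, and your proposal does not close it; it restates the paper's reduction and defers the hard step.
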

\noindent As the above discussion suggests, and a more careful
analysis of the proofs shows, this would follow from
Conjecture~\ref{conj:canc} on the cancellative property holding for
general $q$-voter models. We hasten to add that for other models like
the geometric voter model, the threshold voter, and the affine voter
model, one can apply Proposition~\ref{p:nlvm} for arbitrary $\cN$, and
for the Lotka-Volterra model, one can check the cancellative property
directly through an educated guess of the annihilating dual (see
\cite{NP99} and Section~6 of \cite{CP14}). These models are defined in
Section~\ref{sec:vmp}.

As already noted in Section~\ref{s:qvintro}, taking $q<1$ smaller than
$q_c$ should only make large clusters less likely and so the complete
convergence should follow for all $0<q<1$ for $d\ge 2$. Moreover, as
the result fails for $q=1$ (the voter model), we expect $q$ close to
$1$, handled to some extent in Theorem~\ref{t:CCT}, to be the most
delicate case. Recall also from Section~\ref{s:qvintro} that for the
extreme case $q=0$, a CCT is proved in \cite{H99} for $d\ge 2$.
\begin{conjecture}\label{conj:cctallq} In Theorem~\ref{t:CCT} complete
convergence with coexistence holds for any neighbourhood $\cN$, any
$d\ge 2$ and any $0<q<1$.
  \end{conjecture}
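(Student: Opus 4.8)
The statement is a conjecture, so what follows is a strategy rather than a proof. It relaxes Theorem~\ref{t:CCT} in three ways — arbitrary $\cN$, arbitrary $d\ge 2$, and arbitrary $q\in(0,1)$ — and the first is purely the cancellative property: given Conjecture~\ref{conj:canc}, the $q$-voter model has an annihilating dual for every $\cN$, $d$ and $q\in[0,1]$, and the constraint $|\cN|\le 8$, which came only from the matrix criterion of \cite{CD91} used in Proposition~\ref{p:nlvm}, disappears. With that in hand the range $q_c<q<1$ is already handled — in $d=2$ by Theorem~\ref{t:vgen2dCCT}, once one checks that the drift $\Theta_3$ is strictly positive (a simple sufficient condition for this is available), and in $d\ge 3$ by the voter model perturbation machinery of \cite{CDP13} applied as in \cite{ASD}. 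So the genuinely open content is the non-perturbative range $0<q\le q_c$.

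For that range the plan is to work in from both ends of $[0,1]$ and then tackle the interior. Near $q=0$ there is a second perturbative regime: as $q\downarrow 0$ the rate function $c^{(q)}$ converges (using $0^q\to 0$ and $s^q\to 1$ for $s\in(0,1]$) to that of the threshold-$1$ voter model, for which Handjani \cite{H99} established the complete convergence theorem with coexistence in every $d\ge 2$; one therefore expects a perturbation argument off that result to cover $q\in(0,\eps)$. The hard part is $q$ bounded away from both endpoints, where one should drop perturbation altogether and rely only on monotonicity (valid for all $q$, see \eqref{qvattractive}) and the annihilating dual. The crux is to prove \emph{coexistence for every $q\in(0,1)$}: a translation invariant stationary law $\nu$ with $\nu(|\xi|=|\hxi|=\infty)=1$, which by the $\hxi\leftrightarrow\xi$ symmetry of the dynamics automatically has density $1/2$. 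Via annihilating duality this is equivalent to positive survival probability of the dual started from a finite configuration. Heuristically survival can only get easier as $q$ decreases, since small $q$ reinforces flips toward the locally rarer type; the cleanest implementation would be an honest monotonicity-in-$q$ statement, and failing that a direct block construction comparing the dual with supercritical oriented percolation, exploiting the tendency of an isolated island of the minority type to expand under the $q$-dynamics.

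Granting coexistence, the passage to the full statement \eqref{cctheorem} is the standard argument for monotone cancellative systems and should be essentially independent of $q$. Annihilating duality expresses, for a finite set $A$, the probability that $\sum_{x\in A}\xi_t(x)$ is odd as an expectation over the dual run to time $t$; letting $t\to\infty$ one splits according to whether the dual dies out or survives. On the extinction events one recovers the atoms $\delta_{\0}$ and $\delta_{\1}$ with the weights $\beta_0(\xi_0)$ and $\beta_1(\xi_0)$ of \eqref{betavalues}; on the survival event, monotonicity sandwiches the law of $\xi_t$ between the laws obtained from the initial states $\0$ and $\1$, each of which converges to the unique translation invariant law with coexistence, producing the $\nu_{1/2}$ term with weight $\beta_\infty(\xi_0)$. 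Uniqueness of $\nu_{1/2}$ among coexistence laws — hence Corollary~\ref{c:inv} in this generality — again follows from a monotone coupling of the upper and lower invariant measures restricted away from the traps $\0$ and $\1$, exactly as in the perturbative range, and $\beta_\infty(\xi_0)>0$ for $\xi_0\notin\{\0,\1\}$ follows from survival of a suitably seeded dual.

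The main obstacle is clearly the coexistence claim for $q$ bounded away from $0$ and $1$: neither limiting picture — the voter model at $q=1$ or the threshold-$1$ voter model at $q=0$ — is available there, and the super-Brownian scaling limit, hence the criterion $\Theta_3>0$, only speaks to $q$ near $1$. There is no obvious attractive coupling between $q$-voter models with different values of $q$ (the rate functions do not dominate one another in a spin-compatible way), so a monotonicity-in-$q$ shortcut seems unlikely and one is probably forced into a self-contained contour or block argument for the dual. A secondary, more algebraic obstacle is Conjecture~\ref{conj:canc} itself: the criterion of \cite{CD91} requires controlling the signs of the entries of the inverse of an $|\cN|\times|\cN|$ matrix uniformly over neighbourhoods $\cN$.
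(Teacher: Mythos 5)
The statement you are addressing is Conjecture~\ref{conj:cctallq}; the paper offers no proof of it, only the remark that the issues are ``quite different'' from those in Conjecture~\ref{conj:canc} and that a result for any $d$ and any neighbourhood would be of great interest. Your proposal correctly recognizes this and correctly isolates the open content: the cancellative property for general $\cN$ (Conjecture~\ref{conj:canc}) disposes of the neighbourhood restriction, the range $q_c<q<1$ is already covered by Theorem~\ref{t:vgen2dCCT} together with Corollary~\ref{postheta} (and by Theorem~1.2 of \cite{CP14} for $d\ge3$), and the genuinely unresolved regime is $q$ bounded away from both $0$ and $1$, where no perturbative anchor exists. Your identification of the obstacles --- no monotone coupling in $q$, and the super-Brownian criterion $\Theta_3>0$ speaking only to $q$ near $1$ --- matches the reason the authors leave this as a conjecture.

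One caution on your final paragraph: the passage from ``coexistence'' to the full complete convergence statement is not as $q$-independent as you suggest. The paper's route (Theorem~\ref{t:CCT2dgen}, resting on Proposition~4.1 of \cite{CP14}) requires not merely a stationary law with the coexistence property but the quantitative condition \eqref{condA} --- that for large $t$ any sufficiently large set contains a site in state $1$ with a nearby site in state $0$ --- together with \eqref{condB}. In the paper these are verified only through the block comparison with supercritical oriented percolation, which is where the perturbative input $\Theta_3>0$ actually enters. So even granting dual survival for all $q\in(0,1)$, one would still need a non-perturbative substitute for that block construction; your ``sandwiching between the laws started from $\0$ and $\1$'' does not by itself yield \eqref{condA}. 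This is a gap in the sketch rather than an error, but it means the second half of your argument is not a routine consequence of the first.
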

\noindent We state the CCT for $0<q<1$ and $d\ge 2$ as a separate
conjecture because we believe the issues here are quite different from
those underlying Conjecture~\ref{conj:canc}. Here a result in any $d$
for any neighbourhood would be of great interest. Similar ``obvious"
results should also hold for the CCT for Lotka-Volterra models for
(symmetric) competition parameter $\alpha\in(0,1)$ but are again only
proved for the most delicate case when $\alpha$ near $1$ (see
Theorem~1.1 of \cite{CP14}) due to the perturbative nature of the
proofs.

To state our general two-dimensional CCT we need to introduce the
drift parameter mentioned in Section~\ref{s:qvintro}, and for this, we
first need some long time asymptotics of non-coalescing probabilities
for two-dimensional random walks.

\subsection{Two-dimensional coalescing random walk}\label{sec:coalrw}
Let $p:\Z^2\to[0,1]$ be a symmetric, irreducible, random walk kernel
with covariance matrix $\sigma^2 I$ for some $\sigma>0$, such that
$p(0)=0$. For the particular case of $q$-voter models, $p(\cdot)$ will
be the uniform law on a neighbourhood $\cN$. Under a probability $\hat
P$, let $\ds \{B^x_t,x\in\Z^2\}$ be a system of rate one continuous
time coalescing random walks with jump kernel $p$, and for $A\subset
\Z^2$ define $B^A_t=\{B^x_t,x\in A\}$, let $|B^A_t|$ denote its
cardinality, and let the time it takes all walks starting in $A$ to
coalesce to a single walk be $\tau(A) = \inf\{t\ge 0: |B^A_t|=1\}$.
For $n\ge 2$ and nonempty, finite disjoint $A_1,\dots,A_n
\subset\Z^2$, define the stopping times
\begin{equation}\label{sigtau}
\begin{aligned}
\tau(A_1,\dots,A_n) &= \max_{1\le i\le n}\tau(A_i),\\
\sigma(A_1,\dots,A_n) &= \inf\{t\ge 0:
B^{A_i}_t\cap B^{A_j}_t\ne \emptyset \text{ for some
}i\ne j\}.
\end{aligned}
\end{equation}
At the risk of some confusion, for $x\in\Z^2$, we will often identify
$\{x\}$ with $x$. We write
\[a(t)\sim b(t)\text{ as $t\to\infty\ $ to mean }\
\lim_{t\to\infty}\frac{a(t)}{b(t)}=1.\]

Proposition~1.3 in \cite{CMP} states that for $n\ge 2$ and distinct
$x_1,\dots,x_n\in\Z^d$ there is a finite $K_n(x_1,\dots,x_n)>0$ such
that
  \begin{equation}\label{CRWold} q_{x_1,\dots,x_n}(t):=\hat
P\big(\sigma(x_1,\dots,x_n)>t\big) \sim
\frac{K_n(x_1,\dots,x_n)}{(\log t)^{\binom n2}} \text{ as }t\to\infty.
\end{equation} The $n=2$ case is well known, as $\sigma(x_1,x_2 )$ has
the same law as the hitting time of 0 of a walk starting at $x_1-x_2$
run at rate 2.  The following extension of \eqref{CRWold}, proved in
Section~\ref{s:crw} below, will be used to describe the key drift term
arising in our general CCT.

\begin{proposition}\label{p:CRWnew} Let $n\ge 2$ and $A_1,\dots,A_n$
be nonempty finite disjoint subsets of $\Z^2$. Then there exists a
finite $K_n(A_1,\dots,A_n)>0$ such that
\begin{equation}\label{CRWnew} \hat P(\sigma(A_1,\dots,A_n)>t,
\tau(A_1,\dots,A_n)<t) \sim \frac{K_n(A_1,\dots,A_n)}{(\log t)^{\binom
n2}} \text{ as }t\to\infty.
\end{equation} In fact, if $a_i\in A_i$, $1\le i\le n$,
 \begin{multline} K_n(A_1,\dots,A_n)\\ = \sum_{\text{distinct
}x_1,\dots,x_n\in \ \Z^2} K_n(x_1,\dots,x_n)\hat
P(\sigma(A_1,\dots,A_n)> \tau(A_1,\dots,A_n),\\
B^{a_i}_{\tau(A_1,\dots,A_n)}=x_i,\ 1\le i\le n).
\label{KnDef}
\end{multline}
\end{proposition}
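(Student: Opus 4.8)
The plan is to reduce \eqref{CRWnew} to \eqref{CRWold} by conditioning on the state of the coalescing system at the stopping time $T := \tau(A_1,\dots,A_n)$, i.e.\ at the first time when every one of the clusters $A_1,\dots,A_n$ has internally coalesced to a single walk. First I would observe that on the event $\{\sigma(A_1,\dots,A_n) > T\}$ the walks starting in distinct $A_i$ have not yet met by time $T$, so at time $T$ the system consists of exactly $n$ distinct walks, located at positions $x_i := B^{a_i}_T$ for $1\le i\le n$ (the choice of representative $a_i\in A_i$ is immaterial, since all walks from $A_i$ have coalesced by time $T$). By the strong Markov property applied at $T$, conditionally on this configuration the future evolution is that of $n$ coalescing walks started from the distinct points $x_1,\dots,x_n$, run for the remaining time. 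Hence
\begin{align*}
\hat P\big(\sigma(A_1,\dots,A_n)>t,\ T<t\big)
&= \sum_{\text{distinct }x_1,\dots,x_n} \hat P\Big(\sigma(A_1,\dots,A_n)>T,\ T<t,\ B^{a_i}_T = x_i\ \forall i,\\
&\qquad\qquad \sigma(x_1',\dots,x_n')>t-s \text{ for the shifted system}\Big),
\end{align*}
which after integrating over the law of $(T, (B^{a_i}_T))$ on $\{\sigma(A_1,\dots,A_n)>T\}$ gives an exact identity expressing the left-hand side as a mixture of the quantities $q_{x_1,\dots,x_n}(t-s)$ over $s\le t$.

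Next I would pass to the asymptotics. The key structural input is that $T$ has exponentially decaying tails: indeed $T\le \sum_{i=1}^n \tau(A_i)$ and each $\tau(A_i)$ is the coalescence time of finitely many rate-one walks, which is stochastically dominated by a sum of independent exponential-type meeting times, so $\hat P(T>u)$ decays at least like $e^{-cu}$ (or at worst polynomially fast — any rate beating $(\log t)^{-\binom n2}$ suffices, and here it is far faster). Given this, $t - T \sim t$ on the relevant event, and by \eqref{CRWold}, $q_{x_1,\dots,x_n}(t-s) \sim K_n(x_1,\dots,x_n)/(\log(t-s))^{\binom n2} \sim K_n(x_1,\dots,x_n)/(\log t)^{\binom n2}$ uniformly for $s$ in any range growing slower than $t$. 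The only subtlety is handling the (exponentially unlikely) event $\{T$ close to $t\}$ and the tail of the sum over $x_1,\dots,x_n$: these I would control by a dominated-convergence argument, using that $q_{x_1,\dots,x_n}(t-s)\le 1$ always, that $\hat P(T>t/2)\to 0$ faster than $(\log t)^{-\binom n2}$, and that $\sum_{x_1,\dots,x_n}\hat P(\sigma(A_1,\dots,A_n)>T,\ B^{a_i}_T = x_i) = \hat P(\sigma(A_1,\dots,A_n)>T)\le 1$, so the sum over configurations is automatically summable with total mass at most one.

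Carrying this out yields
\[
(\log t)^{\binom n2}\,\hat P\big(\sigma(A_1,\dots,A_n)>t,\ T<t\big)
\longrightarrow \sum_{\text{distinct }x_1,\dots,x_n}K_n(x_1,\dots,x_n)\,\hat P\big(\sigma(A_1,\dots,A_n)>T,\ B^{a_i}_T = x_i\ \forall i\big),
\]
which is exactly \eqref{KnDef} once one notes that on $\{\sigma(A_1,\dots,A_n)>T\}$ we have $\sigma(A_1,\dots,A_n) > \tau(A_1,\dots,A_n)$, matching the event in \eqref{KnDef}. Finiteness of $K_n(A_1,\dots,A_n)$ is immediate since it is bounded by $\sup_{x_1,\dots,x_n}K_n(x_1,\dots,x_n)$ times a probability; in fact one should check it is the supremum-type bound is finite, which follows from the explicit form of $K_n$ in \cite{CMP}, and strict positivity follows because at least one configuration $(x_1,\dots,x_n)$ with $K_n(x_1,\dots,x_n)>0$ is hit with positive probability on $\{\sigma(A_1,\dots,A_n)>T\}$ (e.g.\ any configuration actually reached without the clusters meeting). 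I expect the main obstacle to be the uniformity in the dominated-convergence step: one must verify that the contribution of configurations $(x_1,\dots,x_n)$ with the $x_i$ very far apart — where $K_n(x_1,\dots,x_n)$ may be large — does not spoil the limit, which requires pairing the growth of $K_n$ in the spatial variables against the decay of $\hat P(B^{a_i}_T = x_i,\ \sigma(A_1,\dots,A_n)>T)$ using the exponential tail of $T$ together with Gaussian-type bounds on where the walks can be at time $T$.
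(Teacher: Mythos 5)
Your overall skeleton --- condition at $\tau^*:=\tau(A_1,\dots,A_n)$ via the strong Markov property, apply \eqref{CRWold} to the post-$\tau^*$ configuration, and pass to the limit by dominated convergence --- is indeed the paper's route. But two of your key technical claims are false in two dimensions, and they sit exactly where the real work lies.

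First, $\tau^*$ does \emph{not} have exponentially (or even polynomially) decaying tails. The meeting time of two independent rate-one walks on $\Z^2$ is a.s.\ finite but heavy-tailed: $\hat P(\sigma(x,y)>u)\sim K_2(x,y)/\log u$. Hence $\hat P(\tau^*>u)$ decays only like $1/\log u$, which is \emph{not} $o((\log t)^{-\binom n2})$ for any $n\ge 2$, so bounding the tail of $\tau^*$ on its own cannot dispose of the event $\{\tau^*\text{ large}\}$. The paper instead exploits the joint event: on $\{\sigma(A_1,\dots,A_n)>t,\ \tau^*>t^{1/3}\}$ at least $n+1$ distinct walks survive to time $t^{1/3}$ (since some cluster has not internally coalesced while no two clusters have met), and by \eqref{CRWold} applied to $n+1$ walks this has probability $O((\log t)^{-\binom{n+1}{2}})=o((\log t)^{-\binom n2})$.

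Second, $\sup_{x_1,\dots,x_n}K_n(x_1,\dots,x_n)=\infty$ (already $K_2(x,y)\to\infty$ as $|x-y|\to\infty$), so neither your finiteness argument for \eqref{KnDef} nor your ``total mass at most one'' summability claim is valid as stated, and the bound $q_{x_1,\dots,x_n}\le 1$ is useless once multiplied by $(\log t)^{\binom n2}$. The paper obtains finiteness essentially for free from the pointwise inequality $q_{A_1,\dots,A_n}(t)\le q_{a_1,\dots,a_n}(t)$ together with Fatou's lemma, and for the dominated convergence it proves a genuinely quantitative input, Lemma~\ref{l:CMP1.3ext}: $(\log t)^{\binom n2}q_{x_1,\dots,x_n}(t)\le C\,K_n(x_1,\dots,x_n)$ uniformly for $t\ge 2(\max_{i\ne j}|x_i-x_j|^4\vee t_0)$, which supplies an integrable dominating function precisely because $\sum K_n(x_1,\dots,x_n)\hat P(\cdots)=K_n(A_1,\dots,A_n)<\infty$. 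Making that bound applicable forces one further decomposition, according to whether the walks have spread beyond distance $(t/4)^{1/4}$ by time $t^{1/3}$; that exceptional event is controlled by Doob's inequality. These are the ingredients your sketch is missing, and the ``exponential tail plus Gaussian bounds'' mechanism you propose in their place does not exist here.
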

\begin{remark}\label{r:coalbnds} Let $S$ be a finite subset of $\Z^2$.
By summing over partitions of $S$ of cardinality $n$, one sees that
for $n\ge 2$, if $u(t)\ge Ct^r$ for some $C,r>0$, then
\[ \sup_{t\ge 1} (\log t)^{\binom n2} \hat P( |B^{S}_{u(t)}|=n )
<\infty.
\]
\end{remark}
Let $\cN$ be a neighbourhood in $\Z^2$ (in practice it will contain
the support of $p$ but this is not needed for our definitions), and
set
\[\bar\cN=\cN\cup\{0\}.\]
For a set $\Gamma$, $|\Gamma|\ge k$, let $\cP_k(\Gamma)$ be the set of
partitions $\{\pi_1,\dots,\pi_k\}$ of $\Gamma$ such that each
$|\pi_i|\ge 1$.  We will write $\cP(\Gamma)$ for $\cP_2(\Gamma)$.  For
$A$ a non-empty subset of $\cN$, define
\begin{equation}\label{Theta+-}
\begin{aligned}
\Theta^{+}(A) &= \sum_{\{A_1,A_2\}\in
\cP(\bar\cN\setminus A)}
K_3(A,A_1,A_2),
\Theta^{-}(A) &= \sum_{\{A_1,A_2\}\in \cP(A)}
K_3(\bar\cN\setminus A,A_1,A_2).
\end{aligned}
\end{equation}
Note that we have suppressed the dependence of $\Theta^\pm$ on $p$ and
$\cN$.  For the $q$-voter model it will be understood that
$p= 1_{\cN}/|\cN|$ for a given neighbourhood $\cN$.

\subsection{ A general complete convergence theorem in two dimensions}\label{sec:proofCCTintro}

The main conditions we impose on our spin-flip system is that they are
cancellative and constitute a finite range voter model
perturbation. To define the latter, for $d\ge 2$, let $p:\Z^d\to[0,1]$
be a symmetric, irreducible, random walk kernel with finite support
and $p(0)=0$ (as in the last section but now with $d\ge 2$). Write
$p(A)$ for $\sum_{y\in A}p(y)$.  Assume that
\begin{equation}\label{covpintro} \text{ $p$ has covariance matrix
$\sigma^2 I$ for some $\sigma>0$.}
\end{equation} Let $f_i(x,\xi)=\sum_yp(y-x)1\{\xi(y)=i\}$ (agreeing
with our earlier notation if $p$ is uniform on $\cN$) and introduce
the associated voter model rates
\[c^{\VM}(x,\xi)=\hxi(x)f_1(x,\xi)+\xi(x)f_0(x,\xi).\] Consider also a
neighbourhood $\cN$ containing the support of $p$. We write
$\xi|_{x+\cN}$ for the function on $\cN$ which maps $y\in\cN$ to
$\xi(x+y)$.
\begin{definition}\label{def:fravmpintro} A voter model perturbation
on $\Z^d$ for $d\ge 2$ with finite range in $\cN$ is a family of
translation invariant spin-flip systems,
$\{\xi^{[\vep]}_\cdot:0<\vep\le \vep_0\}$, for some $\vep_0\in(0,1]$,
with rate functions
\begin{equation}\label{frvprates} c_\vep(x,\xi)=c^{\VM}(x,\xi)+\vep
c^*_\vep(x,\xi)\ge 0\quad\text{for all }x\in\Z^d,\
\xi\in\{0,1\}^{\Z^d},
\end{equation} where for some $g^\vep_0,g^\vep_1:\{0,1\}^\cN\to\R$,
\begin{equation}\label{cstar}
c^*_\vep(x,\xi)=\hxi(x)g_1^\vep(\xi|_{x+\cN})+\xi(x)g_0^\vep(\xi|_{x+\cN}).
\end{equation} In addition there are $g_i:\{0,1\}^\cN\to\R$ such that
\begin{equation}\label{giconv} \Vert g_i^\vep-g_i\Vert_\infty\le
c_g\vep^{r_0}\text{ for $i=0,1$ and all }\vep\text{ and some
}c_g,r_0>0,\text{ if $d\ge 3$},
\end{equation} and
\begin{equation}\label{giconv2} \lim_{\vep\to0+}\Vert
g_i^\vep-g_i\Vert_\infty=0\text{ for $i=0,1$, if $d=2$}.
\end{equation} Finally we assume
\begin{equation}\label{zerotrap} \text{for all $\vep\in(0,\vep_0]$,
${\bf 0}$ is a trap for }\xi^{[\vep]},\text{ that is,
$g^\vep_1(1_\emptyset)=0$},
\end{equation} and, in addition if $d=2$,
\begin{equation}\label{onetrap} \text{for all $\vep\in(0,\vep_0]$,
${\bf 1}$ is a trap for }\xi^{[\vep]},\text{ that is, $g^\vep_0(1_\cN
)=0$}.\qed\end{equation}
\end{definition}

This class of processes is discussed further in Section~\ref{sec:vmp}. At times we will abuse the wording and  say $\xi^{[\vep]}$ is a finite range voter model perturbation, for $0<\vep\le\vep_0$. The following ``asymptotic rate function" associated with the above finite range voter perturbation will play an important role:
\begin{equation}\label{rsdefnintro}
r^s(A):=g_1(1_A)=\lim_{\vep\to 0}\frac{c_\vep(0,1_A)-f_1(0,1_A)}{\vep}\ \ \text{ for }A\subset \cN.
\end{equation}
The above equality is elementary. 
For $d=2$ the ``drift" associated with the above voter model perturbation with finite range in $\cN$ is
\begin{equation}\label{Thetadefnsintro}
\Theta_3:=\sum_{\emptyset\neq A\subset\cN}r^s(A)(\Theta^+(A)-\Theta^-(A)).
\end{equation}

 Fix a neighbourhood $\cN$ in $\Z^d$ where $d\ge 2$. It is easy to see the family of associated $q$-voter
models for $q=1-\vep$ is a finite range voter model perturbation  (see Example~\ref{e:qv} in Section~\ref{sec:vmp}). 
If
\begin{equation}\label{relldefn}r_\ell=(\ell/|\cN|)\log(|\cN|/\ell), \text{ for }\ell=1,\dots,|\cN|,\ \text{ and }r_0=0, 
\end{equation}
then for the family of $q$ voter models and for $A\subset\cN$,
\begin{equation}\label{qvrs}r^s(A)=\lim_{\vep\to 0}\frac{(|A|/|\cN|)^{1-\vep}-(|A|/|\cN|)}{\vep}=r_{|A|}.
\end{equation}
Therefore, for the two-dimensional $q$-voter model we have
\begin{equation}\label{Theta}
\Theta_3:=\Theta = \sum_{\emptyset\neq A\subset\cN}r_{|A|}(\Theta^+(A)-\Theta^-(A)).
\end{equation}

Here is our general complete convergence theorem in two dimensions.
\begin{theorem}\label{t:vgen2dCCT} Assume for $0<\vep\le\vep_0$,
$\xi^{[\vep]}$ is a cancellative and monotone finite range voter model
perturbation in $\Z^2$, and $\Theta_3>0$.  There is an $\vep_1>0$ such
that for $\vep\in(0,\vep_1)$, the complete convergence theorem with
coexistence (CCT) holds for $\xi^{[\vep]}$.
\end{theorem}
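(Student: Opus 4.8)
The plan is to deduce the theorem from two ingredients that the paper makes available. The first is the scaling-limit theorem of Section~\ref{sec:scallim}: under the appropriate (logarithmically corrected) space--time--mass rescaling, with $\vep$ slaved to the scaling parameter, the measure-valued version of $\xi^{[\vep]}$ converges to a two-dimensional super-Brownian motion with drift $\Theta_3$; as in the analysis of Lotka--Volterra models in \cite{CP08,CMP,CP14}, the hypothesis $\Theta_3>0$ makes this limiting super-Brownian motion supercritical, so it survives for all time with positive probability and, on the survival event, its total mass does not decay. The second ingredient is the annihilating (cancellative) dual, which will be used to pin down the precise mixture in the limit. The central task is to transfer survival of the super-Brownian limit to the discrete systems in a form suitable for a renormalization argument: I would prove that for all sufficiently small $\vep$ there exist scales $L=L(\vep)$, $T=T(\vep)$ and a density $\theta_0>0$ so that, calling a box of side $L$ \emph{good} if it contains at least $\theta_0 L^2$ sites of each type, the following holds --- started from any configuration whose box about the origin is good, each neighbouring box of side $L$ is good at time $T$ with probability close to one. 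Monotonicity is used to sandwich the contribution of the sites outside the starting block, and the comparison of this finite-time event with the super-Brownian limit is where $\Theta_3>0$ enters.

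Given this propagation estimate, the usual comparison with supercritical oriented percolation shows that, started from any configuration possessing one good block, the process survives globally with positive probability and, on that event, at all large times there is a positive spatial density both of $1$'s and of $0$'s; in particular $|\xi_t|=|\hxi_t|=\infty$ on a set of positive probability. Because the voter rates $c^{\VM}$ are bounded below at sites adjacent to a site of the opposite type while $\vep c^*_\vep$ is a uniformly small perturbation for small $\vep$, from \emph{any} $\xi_0\notin\{\0,\1\}$ the process reaches, in $O(1)$ time and with positive probability, a configuration with a good block; together with the previous sentence and the fact that $\0,\1$ are traps, this gives $\beta_\infty(\xi_0)>0$ whenever $\xi_0\notin\{\0,\1\}$, while \eqref{betavalues} gives $\beta_0(\xi_0)=\beta_1(\xi_0)=0$ when $|\xi_0|=|\hxi_0|=\infty$.

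To produce $\nu_{1/2}$ and obtain the stated convergence, let $\mu_{1/2}$ be the density-$1/2$ Bernoulli product measure; it is translation invariant, symmetric, and has the coexistence property. By compactness of $\{0,1\}^{\Z^2}$ the family $\{\mu_{1/2}S_t\}_{t\ge0}$ is tight, and any weak limit point along $t_k\to\infty$ is translation invariant, symmetric, of density $1/2$, and stationary, and --- since the survival estimate above forbids escape of mass to $\delta_{\0}$ or $\delta_{\1}$ --- has the coexistence property. Uniqueness of the limit is obtained by coupling two stationary coexistence configurations through the graphical representation and using the percolating backbone of good blocks, inside which monotonicity forces the two copies to agree, to remove local discrepancies; this defines $\nu_{1/2}$ and shows $\mu_{1/2}S_t\To\nu_{1/2}$. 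Finally, for a general $\xi_0$, run a coupled pair from $(\xi_0,\nu_{1/2})$: since $P_{\xi_0}(\tau_{\0}\le t)\uparrow\beta_0(\xi_0)$ and $P_{\xi_0}(\tau_{\1}\le t)\uparrow\beta_1(\xi_0)$, it remains to show that on $\{\tau_{\0}\wedge\tau_{\1}>t\}$ the first coordinate agrees with the stationary second coordinate on any fixed finite window with probability tending to $\beta_\infty(\xi_0)$. Conditionally on not hitting a trap, $\xi^1_\cdot$ builds a percolating backbone of good blocks, inside which monotonicity again forces agreement; the annihilating dual is what makes the resulting convergence of finite-dimensional parity functionals rigorous, playing the role the coalescing dual plays for the voter model in $d\ge3$. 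Hence $\xi_t\To\beta_0(\xi_0)\delta_{\0}+\beta_\infty(\xi_0)\nu_{1/2}+\beta_1(\xi_0)\delta_{\1}$.

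\textbf{Main obstacle.} The crux is the propagation/survival step: converting the qualitative survival of the super-Brownian limit with drift $\Theta_3>0$ into a quantitative, initial-condition-uniform good-block estimate, and doing so for \emph{both} types simultaneously so that the surviving configurations genuinely coexist rather than cluster as the pure two-dimensional voter model does. This is precisely where positivity of $\Theta_3$ is indispensable, and the passage between the particle system and its super-Brownian limit needs the sharp $d=2$ coalescing-random-walk asymptotics of Proposition~\ref{p:CRWnew} and the bound in Remark~\ref{r:coalbnds} to control error terms; monotonicity and the cancellative dual are the structural tools that make the local comparison and the final identification rigorous. A secondary but nontrivial point is the uniqueness of $\nu_{1/2}$ and the concluding coupling, which must be arranged compatibly with the trapping at $\0$ and $\1$.
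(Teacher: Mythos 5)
Your overall architecture coincides with the paper's: the theorem is reduced to (i) a quantitative block-propagation estimate for both types simultaneously, obtained from the convergence of the rescaled process to super-Brownian motion with drift $\Theta_3>0$ together with a killed-process/monotonicity comparison (this is condition \eqref{Hyp2} and Theorem~\ref{t:ThetaOP}); (ii) a comparison with supercritical $2K$-dependent oriented percolation; and (iii) an abstract complete convergence theorem for cancellative finite range voter model perturbations that turns the percolation output into the stated mixture (Theorem~\ref{t:CCT2dgen}, resting on Proposition~4.1 of \cite{CP14}). One small point you should make explicit: the general limit theorem gives drift $\Theta_2+\Theta_3$, and it is the symmetry forced by the cancellative property (Remark~\ref{rem:sym}) that yields $\Theta_2=0$, so that the hypothesis $\Theta_3>0$ is the one that matters.

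The one step that would fail as written is your identification of the limit. You propose to prove uniqueness of $\nu_{1/2}$ and the final convergence by coupling two copies through the graphical representation and claiming that, inside the percolating backbone of good blocks, ``monotonicity forces the two copies to agree.'' Monotonicity only preserves the partial order of \emph{ordered} initial configurations; two coexistence configurations (or $\xi_0$ and a sample from $\nu_{1/2}$) are generically incomparable, and running them through the same Poisson events gives no coalescence. The paper never argues this way: the identification is carried out entirely through the annihilating dual, and the role of the percolation backbone is only to verify the two hypotheses of the abstract cancellative CCT --- \eqref{condB} (non-degenerate survival starting from a single $1$) and, crucially, \eqref{condA}, which says that for large $t$ any sufficiently large set contains, with probability close to one on survival, a site $y$ with $\xi_t(y)=1$ and $\xi_t(y+x_0)=0$ for a fixed displacement $x_0$. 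It is \eqref{condA}, fed into the parity computations of Proposition~4.1 of \cite{CP14}, that yields both the coexistence property of $\nu_{1/2}$ and the convergence of $P(\sum_{x\in A}\xi_t(x)\text{ odd})$ for every finite $A$, hence the full weak convergence. You do name the annihilating dual at the end, but the bridge from the percolation construction to the dual analysis is exactly condition \eqref{condA}, and that link is missing from your sketch.
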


Turning to the $q$-voter model in two dimensions with $|\cN|\le 8$, we
have already noted that in any dimension this model is monotone
(elementary), cancellative (Lemma~\ref{l:qcanc}) and a finite range
voter model perturbation (Example~\ref{e:qv}). So it remains to verify
that $\Theta_3>0$, which clearly is a crucial condition, as all the
other conditions hold equally well for the ordinary voter model where
the CCT fails, and $\Theta_3=0$. In Corollary~\ref{postheta} we will
show in complete generality that $\Theta_3>0$ will easily follow from
the strict subadditivity of $r^s$, that is from
\begin{equation}\label{rsubaddsetintro}
r^s(A\cup B)<r^s(A)+r^s(B)\text{ for all non-empty disjoint }A,B\subset\cN.
\end{equation}
For the $q$-voter model this means (recall \eqref{qvrs}) if $r_\ell$
is as in \eqref{relldefn}, then
\begin{equation}\label{rsubaddintro}r_{\ell_1+\ell_2}<r_{\ell_1}+r_{\ell_2}\text{
for all }0<\ell_i,\ \ell_1+\ell_2\le |\cN|.
\end{equation}
This follows from an elementary calculus exercise, and was noted in
Section~5 of \cite{ASD}, where it played an important role in their
analysis of the $q$-voter model for $d\ge 3$. The general condition
\eqref{rsubaddsetintro} owes much to the calculation in
\cite{ASD}. Therefore Corollary~\ref{postheta} and
\eqref{rsubaddintro} imply that
\begin{equation}\label{posthetaintro} \Theta>0.
\end{equation}
We now may apply this and Theorem~\ref{t:vgen2dCCT} to prove
Theorem~\ref{t:CCT} for $d=2$. See Corollary~\ref{c:CCTsubadd} for a
general statement of this reasoning to establish a CCT in two
dimensions.

Theorem~\ref{t:vgen2dCCT} is a two-dimensional version of Theorem~1.2
of \cite{CP14} where a similar result is stated for $d\ge 3$.  More
specifically for $d\ge 3$ this result establishes a complete
convergence theorem with coexistence for cancellative spin-flip
systems which are voter model perturbations (as defined in Section~1
of \cite{CP14}), providing a certain drift is positive. The drift is
$f'(0)$ where
\[ \frac{\partial u}{\partial t}=\sigma^2\frac{\Delta u}{2}+f(u),\] is
the limiting reaction diffusion equation under law of large numbers
scaling (see \cite{ASD} or \cite{CDP13}).  In fact the drift, $f'(0)$,
also equals the positive drift in a limiting super Brownian motion
arising in a low density scaling theorem (see Corollary~1.8 of
\cite{CP05}). That these two drifts coincide is easy and shown on
pages 33-34 in Section~1.8 of \cite{CDP13}, and the fact that the
hypotheses of Corollary~1.8 of \cite{CP05} hold for voter model
perturbations is also verified in the same place. In either
representation, the positive drift is used to show regions of low
density will repopulate to avoid local extinction and the resulting
clumping. The reaction function $f$ is defined in terms of the
invariant measures for the voter model and so is not well-defined for
$d=2$. Therefore, in extending Theorem~1.2 of \cite{CP14} to two
dimensions in Theorem~\ref{t:vgen2dCCT} we replace $f'(0)$ with the
drift $\Theta_3$ in a super Brownian motion low density limit theorem
(Theorem~\ref{t:SBMgenintro} and Remark~\ref{rem:symmSBMintro} in the
next section).  See Section~\ref{sec:scallim} for more about this
convergence to super-Brownian motion.

\begin{remark}\label{betainfdge3} In Theorem~1.2 of \cite{CP14} the
last condition for a (CCT), namely $\beta_\infty(\xi^{[\vep]})>0$ if
$\xi^{[\vep]}$ is not ${\bf 0}$ or ${\bf 1}$, was not part of the
conclusion, but in fact it is easy to argue just as in the
$2$-dimensional result above to derive this condition. See
Remark~\ref{CCTdge3gen} below.
\end{remark} Consider briefly the simpler $d\ge 3$ case of
Theorem~\ref{t:CCT}. We have noted that the hypotheses of Theorem~1.2
of \cite{CP14} have been verified, at least for $|\cN|\le 8$, aside
from the positivity of $f'(0)$.  This last property follows from
Theorem~1.2 of \cite{ASD} (the proof given there for $d=3$ holds in
any dimension).  As a result, we are then able to establish the $d\ge
3$ case of Theorem~\ref{t:CCT} as a direct consequence of Theorem~1.2
of \cite{CP14}. This simple argument is carried out in
Section~\ref{sec:cct3d} below. The same lemma
(Lemma~\ref{l:genpostheta}) which led to the positivity of $\Theta_3$
under the strict subadditivity of $r^s$ in Corollary~\ref{postheta},
also leads to a simple self-contained direct proof of $f'(0)>0$ for
$d\ge 3$ (see Proposition~\ref{p:posthetabigd}).

To prove the $2$-dimensional result, Theorem~\ref{t:vgen2dCCT}, we
will use the more fundamental Proposition~4.1 of \cite{CP14}, instead
of Theorem~1.2 in \cite{CP14}.  The former result holds for arbitrary
$d$ at the cost of bringing in some additional technical hypotheses.
(In fact this result was used to establish Theorem~2.1 in \cite{CP14}
for $d\ge 3$.) Proposition~4.1 of \cite{CP14} is combined with several
other results in \cite{CP14} to prove Theorem~\ref{t:CCT2dgen} below.
This result establishes the CCT for cancellative finite range voter
model perturbations if two additional conditions (\eqref{condA} and
\eqref{condB}) are in force. These additional conditions demonstrate
the ability of $1$'s and $0$'s to both coexist and propagate in space
and time, respectively.  The next step is to follow the derivation of
the CCT for the two-dimensional Lotka-Volterra model in Section~6 of
\cite{CP14} to show that the above conditions will follow from a
set-up which allows a comparison to super-critical oriented
percolation (see \eqref{Hyp2} below) to show simultaneous propagation
of both $0$'s and $1$'s in close proximity as time gets large.  This
is done in Theorem~\ref{t:genCCT}. The last step is to justify the
above super-critical oriented percolation set-up by using a low
density limit theorem in which the scaling limit is a super-Brownian
motion with {\it positive drift} $\Theta_3$
(Theorem~\ref{t:SBMgenintro} and Remark~\ref{rem:symmSBMintro} in the
next section.)

\subsection{A scaling limit theorem in 2 dimensions}\label{sec:scallim}

First consider a scaling limit theorem for the $q$-voter model.  We
will speed up time and scale down space for a two-dimensional
$q$-voter model in the usual Brownian manner and at the same time let
$q\uparrow 1$ at an appropriate rate. In the regime where $1$'s are
relatively rare we show the normalized empirical measure of $1$'s
converges to super-Brownian motion with drift. Such limit theorems are
technically more difficult in two dimensions than higher dimensions,
even in the simple voter model setting \cite{CDP00}. The increased
clustering in two dimensions (e.g from the stronger recurrence of the
dual for the voter model) leads to a greater branching rate, or
equivalently, a greater mass per particle.  The resulting extra $\log
N$ factor complicates even the simplest moment bounds. Convergence to
super-Brownian motion in two dimensions was established for a class of
Lotka-Volterra spin systems (also voter model perturbations) in
Theorem~1.5 of \cite{CMP}. We will refine some of the results and
methods used in that paper to obtain the required scaling limit
theorem. In \cite{CMP} a particular branching coalescing dual process
was used in some key calculations. Such duals seem more complex in our
present setting and so instead we use a systematic comparison of our
model with the voter model over small intervals (see, for example,
Section~\ref{ss:compproc}).  We believe this gives a more robust
approach to general voter model perturbations in the critical and
physically important two-dimensional case.  The higher dimensional
($d\ge 3$) analogues of this limit result follow from \cite{CP05}
which proves a limit theorem for a general class of voter model
perturbations including $q$-voter models as $q\uparrow 1$ (see
Remark~\ref{r:SBM$d>2$} below).

Let $\MF=\MF(\R^2)$ denote the space of finite measures on $\R^2$ with
the topology of weak convergence.  A $2$-dimensional super-Brownian
motion with initial condition $X_0\in\MF(\R^2)$, branching rate $b>0$,
diffusion coefficient $\sigma^2>0$, and drift $\theta\in\R$, denoted
$SBM(X_0,b,\sigma^2,\theta)$, is an $\MF(\R^2)$-valued diffusion $X$
whose law is the unique solution of the martingale problem:
\begin{eqnarray*}
(\mbox{MP}) \ \begin{cases}   \!\!\!&\!\!\!  \forall \ \phi \in C_b^3(\mathbb{R}^2), \quad M_t(\phi) = X_t(\phi)-X_0(\phi) -
\int_0^t X_s\left( \frac{\sigma^2}{2} \Delta \phi + \theta \phi \right) ds
\\ & \ \mbox{ is a continuous } \mathcal{F}_t^X \mbox{-martingale  such that } \\ & \ \ 
 \langle M(\phi) \rangle_t = \int_0^t X_s\left( b \phi^2 \right) ds. \end{cases}
\end{eqnarray*}
Here $C^3_b$ is the set of bounded $C^3$ functions with bounded
continuous partials of order $3$ or less and $\mathcal{F}^X_t$ is the
canonical right-continuous filtration generated by $X$.

For $N\ge e^3$ ($N$ denotes a real number), let 
\begin{equation}\label{vepNdef}N'=N/\log N,\text{ and }\vep_N=(\log N)^3/N. 
\end{equation}
We let $\xi^{(q_N)}_t$ denote a $q_N$-voter model on $\Z^2$ with
$q_N=1-\vep_N$.  Consider the rescaled $q_N$-voter model,
\[\xi^N_t(x)=\xi^{(q_N)}_{Nt}(x\sqrt N),\ \ x\in\SN:=\Z^2/\sqrt N,\]
and define the associated $\MF$-valued empirical process
\begin{equation}\label{XNdefn}X^N_t = \dfrac{1}{N'}\sum_{x\in \SN}\xi^N_t(x)\delta_x.
\end{equation}

\begin{theorem}\label{t:SBM} Assume $\cN$ is a neighbourhood in
$\Z^2$, $\sigma^2=\sigma^2(\cN)$ is as in \eqref{nbhddef} and
$\{\xi^N_0\}$ satisfies $X^N_0\to X_0$ in $\MF$. If $\Theta$ is as in
\eqref{Theta}, then $\Theta>0$ and
\[ X^N \To \SBM(X_0,4\pi\sigma^2,\sigma^2,\Theta) \text{ in the
Skorokhod space $D(\R_+,\MF)$ as }N\to\infty.
\]
\end{theorem}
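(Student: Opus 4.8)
The plan is to prove Theorem~\ref{t:SBM} as the special case of a more general scaling limit theorem for two-dimensional finite range voter model perturbations (the ``$\SBM(X_0,4\pi\sigma^2,\sigma^2,\Theta_3)$'' limit referenced as Theorem~\ref{t:SBMgenintro} in the introduction), verifying along the way that the $q_N$-voter model with $q_N=1-\vep_N$ fits the hypotheses of that general statement. The positivity $\Theta>0$ is not really part of the dynamical argument: it follows immediately from \eqref{posthetaintro}, i.e.\ from Corollary~\ref{postheta} applied with the strict subadditivity \eqref{rsubaddintro} of the sequence $r_\ell$, which is an elementary calculus fact. So the substance is the weak convergence $X^N\To\SBM(X_0,4\pi\sigma^2,\sigma^2,\Theta)$ in $D(\R_+,\MF)$.

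The approach is the standard martingale-problem program, but executed with the extra care the two-dimensional regime demands (the $\log N$ factors built into $N'=N/\log N$ and $\vep_N=(\log N)^3/N$). First I would write down the exact semimartingale decomposition of $X^N_t(\phi)$ for $\phi\in C_b^3$: the discrete generator applied to $\sum_x\xi^N_t(x)\phi(x)$ splits into (i) a voter-model part, which upon rescaling produces the Laplacian term $\tfrac{\sigma^2}{2}\Delta\phi$ plus the voter martingale whose bracket is the source of the branching rate, and (ii) a perturbation part of size $\vep_N$ times a rate function built from $c^*_{\vep_N}$; because $1$'s are rare under this low-density scaling (each site carries mass $1/N'=\log N/N$), the leading contribution of the perturbation is the drift term $\Theta\,X_s(\phi)$, where the constant $\Theta$ emerges precisely because the local environment of an occupied site relaxes to a voter-model equilibrium and the relevant equilibrium expectations of $r^s(A)$-weighted indicators are exactly the coalescing random walk quantities $\Theta^+(A)-\Theta^-(A)$ of Section~\ref{sec:coalrw}. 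The hard analytic input is identifying the limit of the quadratic variation: one must show $\langle M^N(\phi)\rangle_t\to\int_0^t X_s(4\pi\sigma^2\phi^2)\,ds$. The factor $4\pi\sigma^2$ is the signature of the two-dimensional voter model — it comes from the asymptotics $\hat P(\text{two rescaled walks meet before time }t)$ and the associated Green's-function constant $(\log N)/(2\pi\sigma^2)$, interacting with the choice $N'=N/\log N$. This is where I would lean on the machinery of \cite{CMP}, in particular the moment estimates and the coalescing-random-walk asymptotics summarized in Proposition~\ref{p:CRWnew} and Remark~\ref{r:coalbnds}.

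Concretely the steps, in order, are: (1) verify the $q_N$-voter family is a finite range voter model perturbation and compute $r^s(A)=r_{|A|}$ via \eqref{qvrs}, so that $\Theta_3=\Theta$ as in \eqref{Theta}; (2) derive the semimartingale decomposition $X^N_t(\phi)=X^N_0(\phi)+D^N_t(\phi)+M^N_t(\phi)$ with $D^N_t(\phi)=\int_0^t X^N_s(\tfrac{\sigma^2_N}{2}\Delta_N\phi)\,ds+\int_0^t(\text{perturbation drift})\,ds$; (3) prove tightness of $\{X^N\}$ in $D(\R_+,\MF)$ — via a compact containment estimate using first-moment bounds $\E[X^N_t(1)]$ controlled uniformly (here the perturbation could a priori increase mass, so one needs the monotonicity/comparison or a Gronwall argument on the first moment using $\|g^\vep_1\|_\infty<\infty$), plus an Aldous–Rebolledo criterion for the real-valued processes $X^N_\cdot(\phi)$; (4) identify the limit of the drift term $D^N_t(\phi)$, the delicate part being to replace the empirical local environment statistics by their voter-equilibrium values, which is a ``local equilibrium'' or ``coupling with the dual over short time blocks'' argument — this is exactly where I would use the systematic comparison with the voter model over small time intervals that the introduction flags (Section~\ref{ss:compproc}), rather than a branching-coalescing dual; (5) identify the limit of $\langle M^N(\phi)\rangle_t$ as $\int_0^t X_s(4\pi\sigma^2\phi^2)\,ds$, the key being second-moment estimates on $X^N_t(\phi)$ and on the collision local time of the rescaled dual walks, again following \cite{CMP} but now for a general perturbation; (6) conclude that any weak limit point solves (MP) for $\SBM(X_0,4\pi\sigma^2,\sigma^2,\Theta)$, and invoke uniqueness of that martingale problem to upgrade to full weak convergence.

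The main obstacle I expect is step (4)–(5): controlling the $\log N$-sized discrepancies in two dimensions when passing from the discrete perturbation drift and the voter quadratic variation to their continuum limits. Unlike $d\ge 3$, where local relaxation to voter equilibrium is fast enough that one can quote the general results of \cite{CP05}/\cite{CDP13} more or less off the shelf, here the recurrence of the two-dimensional dual makes the local environment relax only logarithmically, so the error terms are genuinely marginal and must be estimated with care — this is why the theorem's constant is the nontrivial $4\pi\sigma^2$ and why the rescaling uses $\vep_N=(\log N)^3/N$ rather than $1/N$. The cleanest route is to prove a general two-dimensional scaling limit (Theorem~\ref{t:SBMgenintro}) under the weak hypothesis \eqref{giconv2}, using the short-time-block comparison with the voter model to reduce every local expectation to a voter-model computation, and then to cite the existing two-dimensional voter-model moment and coalescing-walk estimates from \cite{CMP} and Proposition~\ref{p:CRWnew}. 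Theorem~\ref{t:SBM} then follows by specialization, with $\Theta>0$ coming for free from \eqref{posthetaintro}.
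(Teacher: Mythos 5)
Your proposal follows essentially the same route as the paper: Theorem~\ref{t:SBM} is obtained as a specialization of the general two-dimensional limit theorem (Theorem~\ref{t:SBMgenintro}) using the symmetry of the $q$-voter rates (so $\Theta_2=0$ and the drift is $\Theta_3=\Theta$), with $\Theta>0$ coming from the strict subadditivity of $r_\ell$ via Corollary~\ref{postheta}; the general theorem is proved exactly by the semimartingale decomposition, short-time-block comparison with the voter model, coalescing-walk drift asymptotics, moment bounds, tightness, and martingale-problem identification that you outline. The only cosmetic difference is that the paper establishes $C$-relative compactness via Kolmogorov-type increment bounds and Jakubowski's theorem rather than Aldous--Rebolledo.
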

\noindent The fact that $\Theta>0$ was already noted above in
\eqref{posthetaintro}.

\begin{remark}\label{r:negq} There is a symmetric result for $q>1$
where we take $q_N=1+\vep_N$. As noted in \cite{ASD}, the $r_\ell$ in
this case is the negative of the $r_\ell$ in \eqref{relldefn} and
therefore in \eqref{Theta}, $\Theta_3<0$ will have the opposite sign.
The same proof noted below in Remark~\ref{rem:symmSBMintro} (using the
more general Theorem \ref{t:SBMgenintro}) then gives the conclusion of
Theorem \ref{t:SBM} with drift $\Theta_3<0$, the negative of that in
Theorem~\ref{t:SBM}.
    \end{remark}
\begin{remark} \label{r:SBM$d>2$}The analogue of Theorem~\ref{t:SBM}
for $d\ge 3$ follows from a limit theorem for a class of voter model
perturbations established as Corollary~1.8 of \cite{CP05}.  In this
setting we take $N'=N$ and $\vep_N=1/N$ in our definition of $X^N$. It
is then straightforward to verify the hypotheses of the above result
and so conclude that for $\cN$, $\sigma^2$, and $\{X_0^N\}$ as in
Theorem~\ref{t:SBM} with $d\ge 3$,
\[X^N\To\SBM(X_0,2\gamma_e,\sigma^2,\Theta)\text{ in
$D(\R_+,\MF(\R^d))$ as }N\to\infty.\] Here $\gamma_e\in(0,1)$ is the
escape probability from $0$ of the random walk in $\Z^d$ whose step
kernel is uniform in $\cN$, and
\[\Theta=\sum_{\emptyset\neq A\subset \cN}\beta(A)\hat
P(\tau(A)<\infty, \tau(A\cup\{0\})=\infty)-\delta(A)\hat
P(\tau(A\cup\{0\})<\infty)>0,\] where for $r_\ell$ as in
\eqref{relldefn},
\[\beta(A)=\sum_{\emptyset\neq C\subset A} 1(C\neq
\cN)(-1)^{|A|-|C|}r_{|C|},\text{ and }\delta(A)=\sum_{\emptyset\neq
C\subset A} 1(C\neq \cN)(-1)^{|A|-|C|}r_{|\cN\setminus C|}.\] The
positivity of $\Theta$ follows from Theorem~1.2 of \cite{ASD} (or
Proposition~\ref{p:posthetabigd} below) and the fact that the drift
$\Theta$ agrees with $f'(0)$ where $f$ is the reaction function in the
limiting reaction diffusion equation (see Section~1.8 of
\cite{CDP13}), denoted by $\phi$ in \cite{ASD}.
\end{remark}

We now consider a general two-dimensional limit theorem for a large
class of finite range voter model perturbations. The random walk
kernel, $p$, is as described at the start of
Section~\ref{sec:proofCCTintro} with $d=2$.  The key condition is the
following ``asymptotic $0-1$ symmetry" which was used implicitly for
the special case of two-dimensional Lotka-Volterrra models in
\cite{CMP}.
\begin{definition} \label{def:asysymvmp}Consider a finite range voter
model perturbation in $\Z^2$, $\xi^{[\vep]}$, as in
Definition~\ref{def:fravmpintro} with rates $c_\vep(x,\xi)$ for
$0<\vep\le \vep_0$. We say $\xi^{[\vep]}$ is {\it asymptotically
symmetric} if, in addition, for some $g^a:\{0,1\}^\cN\to\R$,
\begin{equation}\label{asymsym}
\lim_{\vep\to0+}(\log1/\vep)^2(g^\vep_1(\xi)-g^\vep_0(\hxi))=g^a(\xi)\text{
for all }\xi\in\{0,1\}^\cN,
\end{equation} or, equivalently, for some
$c^a:\Z^2\times\{0,1\}^{\Z^2}\to\R$ (necessarily anti-symmetric),
\begin{equation}\label{casymsym}
\lim_{\vep\to0+}(\log1/\vep)^2\frac{c_\vep(x,\xi)-c_\vep(x,\hxi)}{\vep}=c^a(x,\xi)\text{
for all }x\in\Z^2\text{ and }\xi\in\{0,1\}^{\Z^2}.
 \end{equation}
\end{definition}
The relationship between $c^a$ and $g^a$ is (note that $c^a$ is
translation invariant)
\[c^a(0,\xi)=\widehat\xi(0)g^a(\xi\vert_\cN)-\xi(0)g^a(\widehat\xi|_\cN).\]
Further discussion may be found in Section~\ref{sec:vmp}.  The
following asymmetric function will also be important for our limit
theorem:
\begin{equation}
r^a(A):=g^a(1_A)=\lim_{\vep\to0}(\log (1/\vep))^2\frac{c_\vep(0,1_A)-c_\vep(0,1_{{\bar\cN}\setminus A})}{\vep}.
\end{equation}
The above equality is again elementary. 

We are ready to state our general two-dimensional limit theorem. It
will not require monotonicity or the cancellative property.  Recall
the notation $K_2(A_1,A_2)$ from Section~\ref{sec:coalrw}, and that
$\sigma^2$ is as in \eqref{covpintro}.  A second ``drift" parameter
associated with such voter model perturbations will be denoted by
\begin{equation}\label{Thetadefnsintro2} \Theta_2=\sum_{\emptyset\neq
A\subset\cN} r^a(A)K_2(A,\bar\cN\setminus A).
\end{equation}
\begin{theorem}\label{t:SBMgenintro} Assume $\{\xi^{[\vep]}:0<\vep\le
\vep_0\}$ is an asymptotically symmetric finite range voter model
perturbation on $\Z^2$. Let $\xi_t^N(x)=\xi^{[\vep_N]}_{Nt}(x\sqrt
N)$, $x\in\SN$. Define a measure-valued process by
\begin{equation}\label{genlXNdefintro}X^N_t=(1/N')\sum_{x\in\SN}\xi^N_t(x)\delta_x.
\end{equation}
If $X_0^N\to X_0$ in $\MF$, then 
\[
X^N \To \SBM(X_0,4\pi\sigma^2,\sigma^2,\Theta_2+\Theta_3) \text{ in
  the Skorokhod space $D(\R_+,\MF)$ as }N\to\infty. 
\]
\end{theorem}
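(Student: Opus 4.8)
The plan is to run the standard martingale-problem scheme for convergence to super-Brownian motion: prove tightness of $\{X^N\}$ in $D(\R_+,\MF)$, then identify every limit point as the unique solution of the martingale problem $(\mathrm{MP})$ with the claimed branching rate $4\pi\sigma^2$, diffusion coefficient $\sigma^2$, and drift $\Theta_2+\Theta_3$. The engine for this is the semimartingale decomposition of $X^N_t(\phi)$: for $\phi\in C_b^3(\R^2)$ one writes $X^N_t(\phi)=X^N_0(\phi)+D^N_t(\phi)+M^N_t(\phi)$, where $D^N_t(\phi)$ is the compensator obtained from the generator of the rescaled spin-flip system acting on $\xi\mapsto (1/N')\sum_x\xi(x)\phi(x)$, and $M^N_t(\phi)$ is a martingale. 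Splitting $c_{\vep_N}=c^{\VM}+\vep_N c^*_{\vep_N}$ and, crucially, symmetrizing via $c_{\vep_N}(x,\xi)=\tfrac12(c_{\vep_N}(x,\xi)+c_{\vep_N}(x,\hxi))+\tfrac12(c_{\vep_N}(x,\xi)-c_{\vep_N}(x,\hxi))$, the voter part of the drift contributes, after the usual discrete-Laplacian-to-Laplacian computation, the term $\int_0^t X^N_s(\tfrac{\sigma^2}{2}\Delta\phi)\,ds$ plus a negligible error; the symmetric perturbation part contributes (after another use of the $0$-$1$ symmetrization and the substitution $\vep_N=(\log N)^3/N$, so that $N\vep_N/(\log N)^2 = \log N = \log(1/\vep_N)+O(\log\log N)$, matched against the $1/(\log t)^{\binom n2}$ decay of non-coalescing probabilities) the drift $\int_0^t X^N_s(\Theta_3\phi)\,ds$; and the anti-symmetric perturbation part, weighted by the $(\log(1/\vep_N))^2$ in the definition of $r^a$ and asymptotic symmetry, contributes $\int_0^t X^N_s(\Theta_2\phi)\,ds$. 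The square function $\langle M^N(\phi)\rangle_t$ is computed from $\sum_x (\Delta\text{-jump of }\phi\text{-functional})^2\times\text{rate}$; the leading contribution comes from the voter rates, and the key point — exactly as in the $d=2$ voter model and the Lotka--Volterra case of \cite{CMP} — is that the extra $\log N$ in $N'=N/\log N$ converts the $O(1/N)$ naive size into an $O(1)$ quantity whose expectation converges, via the recurrence asymptotics of the coalescing dual, to $\int_0^t X_s(4\pi\sigma^2\phi^2)\,ds$.

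The concrete steps I would carry out, in order, are: (1) write down the generator of $\xi^N$ and the resulting exact formula for $D^N_t(\phi)$ and $\langle M^N(\phi)\rangle_t$; (2) prove a uniform first-moment bound $\sup_{t\le T}\E[X^N_t(1)]<\infty$ and, more importantly, moment bounds of the type in Remark~\ref{r:coalbnds} controlling $\E[(\text{number of occupied sites in a box at time }Nt)]$ and the two-point function, using a comparison of the perturbed system to the voter model over short time intervals (the ``compproc'' strategy alluded to in Section~\ref{ss:compproc}); (3) use these to show each of the drift error terms is $o(1)$ in $L^1$ uniformly on $[0,T]$ and that the martingale square functions converge; (4) establish $C$-tightness of $\{X^N\}$ via the Jakubowski/Aldous criteria, using the moment bounds plus the semimartingale decomposition to control modulus of continuity; (5) pass to a subsequential limit $X$, verify $X$ is continuous and solves $(\mathrm{MP})$ with the stated coefficients (the drift and square-function computations from step (3) identify the coefficients), and invoke uniqueness of the $SBM$ martingale problem to conclude full convergence.

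The main obstacle is step (2)–(3): obtaining the moment and two-point-function estimates with the correct $\log N$ normalization, and then showing the perturbation-driven drift terms genuinely converge to $\Theta_2+\Theta_3$ rather than merely being tight. In two dimensions the dual coalescing random walk is recurrent, so the second moment of $X^N_t(1)$ carries an extra $\log N$ and every ``error'' term one would like to discard in $d\ge3$ is only borderline summable here; controlling it requires the refined non-coalescing asymptotics of Proposition~\ref{p:CRWnew} (and Remark~\ref{r:coalbnds}) together with the precise matching of $\vep_N=(\log N)^3/N$ to the $(\log t)^{-\binom n2}$ rates. Concretely, the perturbation part of the drift is a sum over $\emptyset\ne A\subset\cN$ of $r^s(A)$ (resp.\ $r^a(A)$) times the probability that, in the dual started from the relevant finite configuration, a prescribed coalescence/non-coalescence pattern on $\bar\cN$ has occurred by the rescaled time; showing that this probability, multiplied by the appropriate power of $\log N$, converges to the combination $\Theta^+(A)-\Theta^-(A)$ (resp.\ $K_2(A,\bar\cN\setminus A)$) appearing in \eqref{Thetadefnsintro} and \eqref{Thetadefnsintro2} is the technical heart of the argument, and is where the bulk of the work in Section~\ref{sec:scallim} will lie. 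The comparison-with-the-voter-model device over small intervals is what makes this tractable without an explicit branching-coalescing dual, and replacing the dual-based computation of \cite{CMP} with this more robust comparison is the main methodological novelty of the proof.
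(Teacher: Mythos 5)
Your outline follows essentially the same route as the paper: the SDE-based semimartingale decomposition \eqref{SMG1}--\eqref{SMG2} with the drift split into voter, symmetric, and antisymmetric pieces, small-time comparison of $\xi^N$ with a (biased) voter model in place of an explicit branching-coalescing dual, the sharp non-coalescing asymptotics of Proposition~\ref{p:CRWnew} to extract $\Theta_2+\Theta_3$ and $4\pi\sigma^2$, a two-point-function (Proposition~\ref{p:key2bnd}) and total-mass moment bound, $C$-tightness via Jakubowski, and identification of the limit through uniqueness of the SBM martingale problem. You have also correctly located the technical heart of the argument (the drift asymptotics under the $\log N$ normalization), so there is nothing substantive to add.
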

\begin{remark} \label{rem:symmSBMintro} Clearly a finite range voter
model perturbation which is symmetric, that is
$c_\vep(x,\xi)=c_\vep(x,\hxi)$ for all $x$ and $\xi$, is
asymptotically symmetric with $c^a=r^a=\Theta_2=0$.  It will turn out
that a cancellative finite range voter model perturbation is
necessarily symmetric for each $\vep$ (see Remark~\ref{rem:sym}).
Therefore in the setting of our general CCT
(Theorem~\ref{t:vgen2dCCT}), the above limit theorem applies with
$\Theta_2=0$ and the drift for our limiting SBM is indeed $\Theta_3$,
as in the discussion in Section~\ref{sec:proofCCTintro}.  In
particular, this is the case for the $q$-voter model. Therefore,
recalling \eqref{Theta} and \eqref{posthetaintro}, we see that
Theorem~\ref{t:SBM} is an immediate consequence of the general
Theorem~\ref{t:SBMgenintro} above.
\end{remark}
 
Theorem~\ref{t:SBMgenintro} includes all the examples we know of
super-Brownian limits for voter model perturbations in two dimensions,
as well as a number of new ones. In addition to the above result for
$q$-voter models and the limit theorem for the ordinary
$2$-dimensional voter model in \cite{CDP00}, this includes the basic
limit theorems for Lotka-Volterra models in \cite{CP08} (see
Example~\ref{e:lv3}), the more refined Lotka-Volterra limit theorems
in \cite{CMP} (see Example~\ref{e:lv2}), and limit theorems for the
affine and geometric voter model (Examples~\ref{e:av3} and
\ref{e:gm3}, respectively).

The following ``survival" corollary is an easy consequence of
Theorem~\ref{t:SBMgenintro} and standard arguments (see
Section~\ref{sec:percsetup}).
\begin{corollary}\label{cor:surv} Assume for $0<\vep\le\vep_0$,
$\xi^{[\vep]}$ is a monotone asymptotically symmetric finite range
voter model perturbation in $\Z^2$, and $\Theta_2+\Theta_3>0$.  There
is an $\vep_1>0$ such that for $\vep\in(0,\vep_1)$,
$P_{\delta_0}(|\xi^{[\vep]}_t|>0\text{ for all }t\ge 0)>0$.
\end{corollary}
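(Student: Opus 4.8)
The plan is to derive Corollary~\ref{cor:surv} from Theorem~\ref{t:SBMgenintro} by the standard route of comparing a suitably rescaled version of $\xi^{[\vep]}$ with a supercritical oriented percolation, the super-Brownian limit being precisely what supplies the one ``block estimate'' the comparison requires. This is the machinery assembled in Section~\ref{sec:percsetup} (and patterned on Section~6 of \cite{CP14}), so at bottom the proof reduces to checking its hypotheses, the only non-routine input being the block estimate, which the positive-drift super-Brownian limit supplies.

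First couple $\vep$ with a (real) scaling parameter $N$ by setting $\vep = \vep_N = (\log N)^3/N$ as in \eqref{vepNdef}; since $N \mapsto \vep_N$ is continuous and eventually strictly decreasing, each small enough $\vep$ equals $\vep_N$ for a large $N = N(\vep)$, and it suffices to prove survival of $\xi^{[\vep_N]}$ for all large $N$. By Theorem~\ref{t:SBMgenintro}, for any fixed $X_0 \in \MF$ the rescaled empirical measures $X^N$, started from configurations with $X^N_0 \to X_0$, converge weakly in $D(\R_+,\MF)$ to $X = \SBM(X_0, 4\pi\sigma^2, \sigma^2, \Theta_2+\Theta_3)$; since $\Theta_2+\Theta_3 > 0$ this limit is a supercritical super-Brownian motion. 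For such an $X$ it is classical that, for suitable choices of a block scale $L$, a time horizon $T$ and a mass level $m$, there is a choice of $X_0$, of total mass $m$ and supported in one macroscopic block, for which, with probability at least $1-\delta$, $X_T$ carries mass at least $m$ on each of a prescribed finite set of neighbouring blocks, the event in question depending, up to an error absorbed into $\delta$, only on the mass in a bounded region. By this weak convergence, for all large $N$ the same event holds for $X^N$ with probability at least $1-2\delta$.

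Now unscale. A macroscopic block of side $L$ corresponds to a box of side of order $L\sqrt N$ in $\Z^2$, and ``$X^N$ carries mass $\ge m$ on a block'' reads ``$\xi^{[\vep_N]}$ has at least $m N'$ ones in the corresponding box''. The finite range of the perturbation lets one truncate the process outside a bounded region at a cost absorbed into $\delta$ for $N$ large, and monotonicity lets one restart the truncated process from the minimal configuration carrying $m N'$ ones on each ``good'' box and none elsewhere, chosen so that its rescaled initial measure still converges to the $X_0$ above. The block events then become increasing events measurable with respect to a bounded space-time window, so a standard comparison with independent oriented percolation shows that the good blocks dominate an oriented percolation of density at least $1-2\delta$; taking $\delta$ small this percolates, and by attractiveness $P(|\xi^{[\vep_N]}_t| > 0 \text{ for all } t \ge 0) > 0$ whenever $\xi^{[\vep_N]}_0$ dominates a single good block. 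To start instead from $1_{\{0\}}$: since $c_\vep = c^{\VM} + \vep c^*_\vep$ and $c^{\VM}(x,\xi) = f_1(x,\xi) > 0$ whenever $\xi(x) = 0$ and $x$ has a $p$-neighbour in state $1$, for $\vep$ small the rate creating a new $1$ next to an existing one is strictly positive, while over a bounded time interval there is positive probability that no $1$ is lost; hence from $1_{\{0\}}$ the process reaches, with positive probability in bounded time, a configuration dominating a good block. The Markov property and monotonicity then give $P_{\delta_0}(|\xi^{[\vep]}_t| > 0 \text{ for all } t \ge 0) > 0$ for all $\vep \in (0,\vep_1)$, where $\vep_1$ is small enough that $N(\vep)$ is large enough for the $\delta$-estimates and that the rates $c_\vep$ are positive wherever $c^{\VM}$ is.

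The step I expect to be the main obstacle is the block construction itself: one must arrange the ``good block'' events so that they are simultaneously close in probability to the corresponding super-Brownian events (so weak convergence applies), increasing in the configuration (so attractiveness can be used to restart), and measurable with respect to a bounded space-time window (so that finite range and monotonicity yield the finite dependence the oriented-percolation comparison needs). The remaining ingredients (supercriticality of super-Brownian motion with positive drift, the unscaling bookkeeping, and getting started from a single $1$) are routine.
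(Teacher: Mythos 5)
Your proposal is correct and follows essentially the same route as the paper: the super-Brownian limit of Theorem~\ref{t:SBMgenintro} with drift $\Theta_2+\Theta_3>0$ supplies the block estimate (this is exactly Theorem~\ref{t:ThetaOP}, i.e.\ condition \eqref{Hyp2}), which is then fed into the standard comparison with supercritical oriented percolation, the paper citing Proposition~5.3 of \cite{CP07} for this last step. The one place where you understate the work is the claim that truncation outside a bounded region costs only an error absorbed into $\delta$: this is not a consequence of finite range alone but is the content of Lemma~\ref{l:CMPL81}, whose proof requires the full drift and mean-mass machinery of Sections~\ref{s:qvdrifttm}--\ref{sec:Ibound}; you do, however, correctly flag the block construction as the main obstacle, so this is a matter of detail rather than a missing idea.
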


Section~\ref{s:crw} discusses coalescing random walks and proves
Proposition~\ref{p:CRWnew}. Cancellative processes are defined, and
many of their properties are presented, in
Section~\ref{sec:canc}. Here the criterion for a nonlinear voter model
to be cancellative (Proposition~\ref{p:nlvm} from \cite{CD91}) is
proved for completeness, and then applied to show the $q$-voter model
is cancellative for all $q\in[0,1]$ if $|\cN|=4$. The $|\cN|=8$ case
for $q$ near $1$ and $q<1$ is outlined here, while the actual
\texttt{maple}-assisted proof is presented in an Appendix. Additional
properties of finite range voter model perturbations and associated
notation are presented in Section~\ref{sec:vmp}.  Several examples of
cancellative finite range voter model perturbations are presented here
as well. The short proof of Theorem~\ref{t:CCT} for $d\ge 3$ is
presented in Section~\ref{sec:cct3d}. The general complete convergence
theorem, Theorem~\ref{t:vgen2dCCT}, is proved in
Section~\ref{sec:proofCCT}, assuming Theorem~\ref{t:ThetaOP}, which
states that the conditions for a block comparison to super-critical
percolation, \eqref{Hyp2}, will hold for a monotone, asymptotically
symmetric, finite range voter model perturbation, if
$\Theta_2+\Theta_3>0$.
This section also includes a coupled SDE construction of our particle
system $\xi$ along with $\hxi$, and killed versions of these
processes, on a common probability space
(Proposition~\ref{p:infinitesde} and the ensuing
\eqref{couplingforintro}).  This set-up is used in the proof of an
intermediate result (Theorem~\ref{t:genCCT}) in which a CCT is
established assuming \eqref{Hyp2} in place of $\Theta_2+\Theta_3>0$.
Theorem~\ref{t:ThetaOP} is proved in Section~\ref{sec:percsetup} as a
corollary to the weak convergence result, Theorem~\ref{t:SBMgenintro},
{\it and its proof}. The latter result is proved in
Sections~\ref{sec:couplingsemimart}-\ref{sec:convtoSBM} and
Section~\ref{sec:Ibound}.  Section~\ref{sec:couplingsemimart} sets up
the approximating martingale problems, using a convenient SDE
coupling, and also gives a number of examples of the weak convergence
theorem.  A number of preliminary bounds and sharp estimates on the
drift terms are given in Section~\ref{s:qvdrifttm}, while the proof of
convergence to SBM is in Section~\ref{sec:convtoSBM}.  A key technical
bound, Proposition~\ref{p:key2bnd}, used for exact asymptotics on the
drift terms, is proved in Section~\ref{sec:Ibound}.

\medskip
 
\noindent{\bf Acknowledgement.} It is a pleasure to thank Mathieu
 Merle for his help with the proof of Proposition~\ref{p:CRWnew}.

\section{Coalescing probability asymptotics for
  two-dimensional random
  walks}\label{s:crw}

We work in the setting of Section~\ref{sec:coalrw}, and in particular,
$p(\cdot)$ is the general random walk kernel considered there, and
$q_{x_1,\dots,x_n}(t)$ and $K_n(x_1,\dots,x_n)$ are as in
\eqref{CRWold}. In fact, as the reader can easily check, the proof
below includes a derivation of \eqref{CRWold}.
\begin{lemma}\label{l:CMP1.3ext} Assume $n\ge 2$ and $x_1,\dots,x_n\in
\Z^2$ are distinct. There are positive constants $C_{\ref{l:qxtbnd}}$
and $t_0>0$ depending only on $p$ and $n$ such that
\begin{equation}\label{l:qxtbnd} (\log t)^{\binom n
2}q_{x_1,\dots,x_n}(t) \le C_{\ref{l:qxtbnd}} K_n(x_1,\dots,x_n)
\text{ if } t\ge 2\big(\max_{i\ne j}\{|x_i-x_j|^4\}\vee t_0\big).
\end{equation}
\end{lemma}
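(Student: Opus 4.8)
The plan is to prove the uniform bound in \eqref{l:qxtbnd} by induction on $n$, running in parallel with a proof of the asymptotic \eqref{CRWold} (as the excerpt notes, the proof of the lemma will contain a derivation of \eqref{CRWold}). The base case $n=2$ is essentially classical: $\sigma(x_1,x_2)$ is distributed as the hitting time of $0$ by a single random walk with kernel $p$ started at $x_1-x_2$ and run at rate $2$, so $q_{x_1,x_2}(t)=\hat P(\sigma(x_1,x_2)>t)\sim K_2(x_1,x_2)/\log t$ with $K_2(x_1,x_2)\in(0,\infty)$ by the local central limit theorem / potential kernel asymptotics for recurrent two-dimensional walk (see e.g. the $n=2$ remark after \eqref{CRWold}). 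Moreover these asymptotics are effective: one gets $\log t\,\hat P(\sigma(x_1,x_2)>t)\le C K_2(x_1,x_2)$ once $t$ is large compared with $|x_1-x_2|^4$, which is exactly the form claimed (the fourth power comes from allowing a polynomial-in-$\log$ slack while the walk, on the diffusive time scale $t$, first spreads to scale $\sqrt t \gg |x_i-x_j|$).

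For the inductive step, fix $n\ge 3$ and assume the bound for all smaller cardinalities. The standard device is to split the event $\{\sigma(x_1,\dots,x_n)>t\}$ according to the behaviour on a short initial time window. Running the $n$ coalescing walks from time $0$ to time $s:= t^{1/2}$ (or any intermediate scale, e.g. $\sqrt t$), the walks have not yet met on $\{\sigma>t\}\supset\{\sigma>s\}$; by the Markov property at time $s$, conditionally on the positions $y_1,\dots,y_n = B^{x_1}_s,\dots,B^{x_n}_s$ (which are now typically spread out at scale $s^{1/2}=t^{1/4}$), the remaining non-coalescence probability is $q_{y_1,\dots,y_n}(t-s)$. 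One then pairs each such term with the inductive hypothesis. The key combinatorial identity, which is the heart of the matter, is that the leading-order contribution to $q_{x_1,\dots,x_n}(t)$ comes from exactly one pair of walks ``nearly coalescing'' while the other pairs stay apart, and summing over which pair that is produces, after the $s$-step averaging, the recursion
\[
q_{x_1,\dots,x_n}(t)\ \approx\ \sum_{i<j}\hat P(\text{walks }i,j\text{ coalesce before time }t)\cdot q_{\text{(the other }n-1\text{ walks)}}(t),
\]
each factor contributing one more power of $\log t$ in the denominator (total $\binom n2$), and this is where $K_n$ and the decomposition \eqref{KnDef} of Proposition~\ref{p:CRWnew} come from. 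Making ``$\approx$'' into a two-sided inequality, with the error terms absorbed because a second near-coalescence costs an extra $\log$ factor, gives both \eqref{CRWold} and the effective bound \eqref{l:qxtbnd} with the threshold $t\ge 2(\max_{i\ne j}|x_i-x_j|^4\vee t_0)$; the fourth-power threshold propagates through the induction since after the $s$-step the new separations $|y_i-y_j|$ are of order $t^{1/4}$, still small compared with the remaining time $t-s\approx t$.

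The main obstacle, and the part requiring genuine work rather than bookkeeping, is controlling the error terms in the recursion uniformly in the starting points and getting the \emph{effective} (non-asymptotic) form of the bound rather than just the limit. Two issues arise: first, one must rule out the ``two simultaneous coalescences'' contribution, which requires a Bonferroni-type inclusion-exclusion over pairs together with an a priori bound of the form $\hat P(\sigma(A_1,\dots,A_k)>t)\le C/(\log t)$ whenever $k\ge 2$ (to kill the extra pair); this itself needs the $n=2$ estimate applied to a cleverly chosen pair. Second, the short-time averaging must be done carefully so that the conditional positions $y_i$ are genuinely spread at scale $t^{1/4}$ with overwhelming probability on $\{\sigma>s\}$ — i.e.\ one needs that non-coalescence up to time $s$ forces separation growing like $\sqrt s$, which follows from a comparison with a difference walk and recurrence estimates, but must be quantified. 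I expect roughly half the work of the section to be in establishing a clean a priori upper bound $\hat P(\sigma(A_1,\dots,A_k)>t)\le C_k/(\log t)$ for $k\ge 2$ (a corollary of the $k=2$ computation and a union bound over some pair), since that is the lever that makes every error term in the induction lower order.
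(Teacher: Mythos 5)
There is a genuine gap: the inductive recursion at the heart of your proposal is not correct, and it is not the mechanism the paper uses. You write that the leading contribution to $q_{x_1,\dots,x_n}(t)$ comes from ``one pair nearly coalescing while the others stay apart'' and propose
$q_{x_1,\dots,x_n}(t)\approx\sum_{i<j}\hat P(\text{walks }i,j\text{ coalesce before }t)\cdot q_{(\text{other }n-1)}(t)$.
But $q$ is the probability that \emph{no} pair meets, so no near-coalescence enters; and dimensionally the right-hand side is of order $1\cdot(\log t)^{-\binom{n-1}{2}}$ (the pairwise meeting probability tends to $1$ by recurrence), which differs from the true order $(\log t)^{-\binom n2}$ by $n-1$ powers of $\log t$, not one. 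The correct heuristic is asymptotic independence of the $\binom n2$ pairwise non-collision events, each contributing one factor of $(\log t)^{-1}$, and making that effective is exactly the hard part. Your proposed a priori bound $\hat P(\sigma(A_1,\dots,A_k)>t)\le C/\log t$ supplies only a single power of $\log t$ and cannot serve as the lever. A secondary but fatal bookkeeping error: with your intermediate time $s=t^{1/2}$ the separations after time $s$ are of order $t^{1/4}$, so $\max|y_i-y_j|^4$ is of order $t$, which is \emph{not} small compared with $t-s$; the threshold condition in the inductive hypothesis then fails (the paper's Proposition~\ref{p:CRWnew} proof uses $s=t^{1/3}$ and caps $\Delta^*\le t/4$ precisely to avoid this).

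The paper's actual proof is not an induction on $n$ at all. It works with independent (non-coalescing) walks, sets $f_x(t)=(\log t)^{\binom n2}P_x(D_t)$, and telescopes over dyadic times: the engine is the sharp conditional doubling estimate $P_x(D_{2t}\mid D_t)=1-\binom n2\log 2/\log t+O((\log t)^{-3/2})$, valid uniformly once $t\ge\max_{i\ne j}|x_i-x_j|^4\vee e^4$ (quoted from Lemma~9.12 of \cite{CMP}). Multiplying these factors against $(1+1/k)^{\binom n2}$ shows $f_x(2^m)\prod_{k\ge m}(1+\tilde c(x,k)k^{-3/2})$ converges to $K_n$, which simultaneously yields the asymptotic \eqref{CRWold} and, because the infinite product is bounded below by a universal constant from $k\ge j_0$ on, the effective bound \eqref{l:qxtbnd}. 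If you want to salvage your outline you would need to either prove the doubling estimate yourself or find a genuinely different route to uniform asymptotic factorization of the pairwise events; the single-step Markov decomposition plus the $k=2$ bound does not get there.
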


\begin{proof} The argument here is an extension of the one given in
Section~9 of \cite{CMP}.  Let $\ds \{\tilde B^x_t,x\in\Z^2\}$ be a
system of (non-coalescing) independent rate one continuous time random
walks with jump kernel $p$. Let $x_1,\dots,x_n\in\Z^2$ be distinct,
set $x=(x_1,\dots,x_n)$ and define the non-collision event
\[ D_t = \big\{ \tilde B_s^{x_i}\neq\tilde B_s^{x_j}\text{ for all
}s\le t\text{ and }i\neq j \big\}.
\] Clearly, if $P_x$ is the law of $(\tilde B^{x_1},\dots,\tilde
B^{x_n})$, then $q_t(x)=P_x(D_t)$.  Dependence on the fixed natural
number $n$ is suppressed.

By Lemma~9.12 of \cite{CMP}, there is a positive constant
$C_{\ref{D2tbnd}}$ depending only on $p$ and $n$ such that for
$x_1,\dots,x_n$ as above,
\begin{align}\label{D2tbnd} \nonumber P_{x}(D_{2t}|D_t) = 1 -
\frac{\binom n2 \log 2}{\log t} +\frac{c(x,t)}{(\log t)^{3/2}},
&\text{ where }|c(x,t)| \le C_{\ref{D2tbnd}}\\ & \text{ whenever
}\max_{i\neq j}\{|x_i-x_j|^4\}\vee e^4\le t.
\end{align} For $t\ge 1$ define $f_x(t)= (\log t)^{\binom n2}P_x(D_t)$
and $\ds k(t) = \max\{i\ge 0: 2^{i}\le t <2^{i+1}\}$. Since $P_x(D_t)$
is decreasing in $t$, it is easy to see that
\begin{equation}\label{t2kbnds} \frac{ (k(t))^{\binom n2}}{
(k(t)+1)^{\binom n2}} f_x(2^{k(t)+1}) \le f_x(t) \le \frac{
(k(t)+1)^{\binom n2}} { (k(t))^{\binom n2}}f_x(2^{k(t)}) .
\end{equation}

For $m,m'\ge 1$, iterating conditional probabilities leads to
\begin{multline}\label{f(2)}
\qquad f_x(2^{m+m'}) = f_x(2^{m}) \prod_{i=0}^{m'-1}
\Big(1+\frac{1}{m+i}\Big)^{\binom n2}
P_x(D_{2^{m+i+1}}| D_{2^{m+i}})
\\= f_x(2^{m}) \prod_{k=m}^{m+m'-1}
\Big(1+\frac{1}{k}\Big)^{\binom n2}
P_x(D_{2^{k+1}}| D_{2^{k}}).\qquad
\end{multline}
To make use of \eqref{f(2)}, we let $\bar c(x,k)=c(x,2^k)/(\log
2)^{3/2}$, and note that it follows from \eqref{D2tbnd} that
\begin{align}\label{e:b0}
\nonumber P_x(D_{2^{k+1}}|D_{2^{k}} )= 1- \frac{\binom n2}{k} 
+ \frac{\bar c(x,k)}{k^{3/2}}, &\text{ where }|\bar c(x,k)| \le C_{\ref{e:b0}}\\
& \text{ whenever }\ds 2^k\ge 
\max_{i\ne j}\{|x_i-x_j|^4\}\vee e^4.
\end{align}
By the binomial theorem there is a constant $C_{\ref{e:b1}}>0$
(depending only on $n$) so that
\begin{equation}\label{e:b1} (1+\frac{1}{k}\Big)^{\binom n2} = 1 +
\frac{\binom n2}{ k} + \frac{\underline c(k)}{k^2}\text{ where
}\sup_{k\in\N}|\underline c(k)| \le C_{\ref{e:b1}}.
\end{equation}
Taken together, the last two facts imply that for some
$C_{\ref{e:b2}}>0$, depending only on $p(\cdot)$ and $n$, there are
constants $\tilde c(x,k)$ satisfying
\begin{equation}\label{e:b2}
\Big(1+\frac{1}{k}\Big)^{\binom
  n2} P_x(D_{2^{k+1}}| D_{2^{k}}) = 1 + \frac{\tilde
  c(x,k)}{k^{3/2}}\ \text{ where }\ |\tilde c(x,k)|\le C_{\ref{e:b2}}\text{ for }
2^k\ge
\max_{i\ne j}\{|x_i-x_j|^4\}\vee e^4.
\end{equation}
Use this in \eqref{f(2)} to see that, for all $m$ and $x$ satisfying
$\ds 2^m\ge \max\{|x_i-x_j|^4\}\vee e^4$,
\begin{equation}\label{f(2)2} f_x(2^{m+m'}) = f_x(2^{m})
\prod_{k=m}^{m+m'-1} \Big(1+\frac{\tilde c(x,k)}{k^{3/2}}\Big), \text{
where }\sup_{k\ge m}|\tilde c(x,k)|\le C_{\ref{e:b2}}.
\end{equation} If $m$ also satisfies $m\ge j_0\equiv
\lceil(2C_{\ref{e:b2}})^{2/3}\rceil$, then the above bound implies
\begin{equation}\label{e:1/2} 1 + \frac{\tilde c(x,k)}{k^{3/2}} \ge
1/2 \text{ if }k\ge m,
\end{equation} and in particular, each factor in the product in
\eqref{f(2)2} is strictly positive.

Define $t_0=e^4\vee 2^{j_0}$ and $\ds m_0(x)=\min\{m: 2^m\ge
\max_{i\ne j}\{|x_i-x_j|^4\vee t_0 \}\,\ge j_0$.  Then for any $m\ge
m_0(x)$, \eqref{f(2)2} and \eqref{e:1/2} imply
\begin{equation}\label{e:b3} \lim_{m'\to\infty}f_x(2^{m+m'}) =
f_x(2^{m}) \prod_{k=m}^{\infty} \Big(1+\frac{\tilde
c(x,k)}{k^{3/2}}\Big)
\end{equation} exists, and is strictly positive.  The fact that
$\lim_{k\to\infty}f_x(2^k)$ exists combined with \eqref{t2kbnds}
actually proves \eqref{CRWold}, with
\begin{equation}\label{e:b4} K_n(x_1,\dots,x_n) = f_x(2^{m})
\prod_{k=m}^{\infty} \Big(1+\frac{\tilde c(x,k)}{k^{3/2}}\Big) \text{
for all } m\ge m_0(x).
\end{equation}
It follows from this and \eqref{f(2)2} that for all $m\ge m_0(x)(\ge
j_0)$,
\begin{equation}\label{mKnbnd} \frac{f_x(2^{m})}{K_n(x_1,\dots,x_n)} =
\Big[\prod_{k=m}^{\infty} \Big(1+\frac{\tilde
c(x,k)}{k^{3/2}}\Big)\Big]^{-1} \le \Big[\prod_{k=j_0}^{\infty}
\Big(1-\frac{C_{\ref{e:b2}}}{k^{3/2}}\Big)\Big]^{-1}<\infty,
\end{equation} where we have also used \eqref{e:1/2} and the
definition of $j_0$.

Finally, if $\ds t\ge 2(\max_{i\ne j}|x_i-x_j|^4\vee t_0)$, then
$2^{k(t)}\ge t/2$ implies $k(t)\ge m_0$, and by \eqref{mKnbnd},
\[ f_x(2^{k(t)})\le K_n(x_1,\dots,x_n) \Big[\prod_{k=j_0}^{\infty}
\Big(1-\frac{C_{\ref{e:b2}}}{k^{3/2}}\Big)\Big]^{-1}.
\] It now follows from \eqref{t2kbnds} and \eqref{mKnbnd} that for
such $t$ (note also $k(t)\ge 1$),
\begin{equation} f_x(t) \le \frac{ (k(t)+1)^{\binom n2}} {
(k(t))^{\binom n2}} f_x(2^{k(t)}) \le 2^{\binom n2}
\Big[\prod_{k=j_0}^{\infty}\Big(1-
\frac{C_{\ref{e:b1}}}{k^{3/2}}\Big)\Big]^{-1} K_n(x_1,\dots,x_n) .
\end{equation} This completes the proof of Lemma~\ref{l:qxtbnd}.
\end{proof}

For disjoint finite nonempty sets of $\Z^2$, $A_1,\dots A_n$,
introduce
\[ \Gamma_t = \big\{ \sigma(A_1,\dots,A_n)> t, \tau(A_1,\dots,A_n)<
t\big\},
\] and define $q_{A_1,\dots,A_n}(t)=\widehat P(\Gamma_t)$.  Note that
if $A_i=\{a_i\}$ are all singletons, then \break
$\tau(A_1,\dots,A_n)=0$, and so
$q_{A_1,\dots,A_n}(t)=q_{a_1,\dots,a_n}(t)$ agrees with our earlier
notation.
\begin{proof}[Proof of Proposition~\ref{p:CRWnew}] Assume $A_i,\, i\le
n$ and $a_i\in A_i$ are as in Proposition~\ref{p:CRWnew}, and define
$K_n(A_1,\dots,A_n)(\le\infty)$ as in \eqref{KnDef}.  %The first step
is to prove that $K_n(A_1,\dots,A_n)$, given by \eqref{KnDef} is
finite.  On the event $\ds \Gamma_t$, $ B^{a_i}_t\ne B^{a_j}_t$ for
all $i\ne j$, which implies $q_{A_1,\dots,A_n}(t)\le
q_{a_1,\dots,a_n}(t)$. It follows from \eqref{CRWold} that
\begin{equation}\label{limsupqt} \limsup_{t\to\infty}(\log t)^{\binom
n 2}q_{A_1,\dots,A_n}(t)\le K_n(a_1,\dots,a_n) <\infty.
\end{equation} Letting $\tau^*=\tau(A_1,\dots,A_n)$ and $\sigma^*=
\sigma(A_1,\dots,A_n)$, and applying the Markov property at time
$\tau^*$, we have
\begin{multline}\label{qtrep} (\log t)^{\binom n2}q_{A_1,\dots,A_n}(t)
\\ = \sum_{\text{distinct }x_1,\dots, x_n\in\Z^2} \int_0^{t}\widehat
P(\sigma^*>\tau^*\in du, B^{a_i}_{\tau^*}=x_i\text{ for }1\le i\le n)
(\log t)^{\binom n2}q_{x_1,\dots,x_n}(t-u) .
\end{multline}
By \eqref{CRWold}, for fixed $u\in[0,t)$, and distinct
$x_1,\dots,x_n\in\Z^2$,
\begin{equation}\label{t-ulim} \lim_{t\to\infty}(\log t)^{\binom
n2}q_{x_1,\dots,x_n}(t-u) = K_n(x_1,\dots,x_n).
\end{equation} In view of the definition of $K_n(A_1,\dots,A_n)$,
\eqref{limsupqt}, \eqref{qtrep}, and \eqref{t-ulim}, Fatou's Lemma
implies that
\begin{align}\label{Kliminf}
\nonumber K_n(A_1,\dots,A_n) \le \liminf_{t\to\infty}
(\log t)^{\binom n
  2}q_{A_1,\dots,A_n}(t)&\le \limsup_{t\to\infty}
(\log t)^{\binom n
  2}q_{A_1,\dots,A_n}(t)\\
  &\le K_n(a_1,\dots,a_n) <\infty .
\end{align}

Now introduce
\[
\Delta^* = \max_{i\ne j}
\{|B^{a_i}_{\tau^*}- B^{a_j}_{\tau^*}|^4\},
\]
and the disjoint decomposition $\Gamma_t=\Gamma^{(1)}_t
\cup \Gamma^{(2)}_t \cup \Gamma^{(3)}_t$,
where
\begin{align*}
\Gamma^{(1)}_t & = \{ \sigma^*>t,
\tau^*\in(t^{1/3},t]\}\\
\Gamma^{(2)}_t &= \{ \sigma^*>t,
\tau^*\le t^{1/3},\Delta^*>t/4\} \\
\Gamma^{(3)}_t &= \{ \sigma^*>t,
\tau^*\le t^{1/3},\Delta^*\le t/4\} .
\end{align*}
We will show that as $t\to\infty$,
\begin{align} \label{gamt12}
(\log t)^{\binom n
  2}\widehat P(\Gamma_t^{(i)})&\to 0 \text{ for }i=1,2, \text{ and }\\
\label{q3lim}
(\log t)^{\binom n
  2}\widehat P(\Gamma_t^{(3)})&\to K_n(A_1,\dots,A_n),
\end{align}
proving \eqref{CRWnew}. It is in the proof of
\eqref{q3lim} that we use Lemma~\ref{l:CMP1.3ext}.

On the event $\Gamma^{(1)}_t$, $\ds\sum_{i=1}^n |B_{t^{1/3}}^{A_i}|\ge
n+1$ (recall that $|A_1|+\cdots+|A_n|\ge n+1$).  It follows that if
$A=\cup_{i=1}^n A_i$, then
\begin{equation}\label{gam1} (\log t)^{\binom n 2}\widehat
P(\Gamma^{(1)}_t) \le \sum_{\text{distinct }y_0,y_1,\dots,y_{n}\in A}
(\log t)^{\binom n 2} q_{y_0,y_1,\dots,y_n}(t^{1/3}) \to 0\text{ as
}t\to\infty,
\end{equation} since each probability in the sum is $O\big((\log
t)^{-\binom {n+1}2}\big)$ by \eqref{CRWold}.

On the event $\Gamma^{(2)}_t$, there must exist $a\ne b\in A$ such
that $|B^a_u- B^b_u|> (t/4)^{1/4}$ for some $u\le t^{1/3}$. Assuming
$\ds (t/4)^{1/4}> 2\max_{a\ne b\in A}|a-b|$, we have the crude bound
\begin{align*}
\widehat P(\Gamma^{(2)}_t) &\le \sum_{a\ne b\in A}
\widehat P\Big(\sup_{0\le u\le   t^{1/3}}|B^a_u-B^b_u| \ge (\tfrac
t4)^{1/4}\Big)\\ 
&= \sum_{a\ne b\in A}
\widehat P\Big(\sup_{0\le u\le   t^{1/3}}|B^0_{2u}| \ge \tfrac12
(\tfrac t4)^{1/4}\Big)\\ 
&\le \binom{|A|}2 \widehat P\Big(\sup_{0\le u\le   2t^{1/3}}|B^0_u|^2 \ge
\tfrac14 (\tfrac t4)^{1/2}\Big).
\end{align*}
Doob's inequality implies
\[ (\log t)^{\binom n2}\widehat P(\Gamma^{(2)}_t) \le \binom{|A|}2
(\log t)^{\binom n2} \frac{\widehat
E(|B^0_{2t^{1/3}}|^2)}{\frac14(t/4)^{1/2}} = 32 \sigma^2 \binom{|A|}2
(\log t)^{\binom n2} t^{-1/6}\to 0\text{ as }t\to\infty,
\]
completing the proof of \eqref{gamt12}

To handle $\Gamma^{(3)}_t$, we recall \eqref{l:qxtbnd} and suppose
that $t>4t_0\vee 8$ (so that $t-t^{1/3}>t/2)$.  Applying the Markov
property at time $\tau^*$ gives us
\begin{multline}
(\log t)^{\binom n 2} \widehat P(\Gamma^{(3)}_t)\\
= 
\sum_{\substack{\text{distinct }x_1,\dots,
    x_n\in\Z^2\\
   \max_{i\ne j}\{|x_i-x_j|^4\}\le t/4}}
\int_0^{t^{1/3}}\widehat P(\sigma^*>\tau^*\in 
du, B^{a_i}_{\tau^*}=x_i\text{ for }1\le i\le n)
(\log t)^{\binom n 2} q_{x_1,\dots,x_n}(t-u). \label{q3rep}
\end{multline}
For $u\le t^{1/3}$ and $t/4\ge \max_{i\ne j}\{|x_i-x_j|^4\}$ we have
(use also $t>4t_0\vee 8$)
\[(t-u)/2>(t-t^{1/3})/2>t/4\ge t_0\vee\max_{i\ne j}\{|x_i-x_j|^4\},\]
and so Lemma~\ref{l:CMP1.3ext} applies to show that
\[
(\log t)^{\binom n 2} q_{x_1,\dots,x_n}(t-u) \le
\frac{ (\log t)^{\binom n2}}
{(\log(t-u))^{\binom n2}}
C_{\ref{l:qxtbnd}} K_n(x_1,\dots,x_n)
\le  2^{\binom n 2}C_{\ref{l:qxtbnd}} K_n(x_1,\dots,x_n).
\]
For the last inequality use $t-u>t/2$ and $t>8$.  The right-hand side
is integrable with respect to $\widehat P(\sigma^*>\tau^*\in
du)1(x_1,\dots,x_n\text{ distinct})\,d\lambda$ for counting measure
$\lambda$ on $(\Z^2)^{n}$, by \eqref{Kliminf} and the definition of
$K_n(A_1,\dots,A_n)$. So we can use \eqref{t-ulim} and dominated
convergence to take the limit as $t\to\infty$ inside the integral in
\eqref{q3rep} and hence, recalling again the definition of
$K_n(A_1,\dots,A_n)$, prove \eqref{q3lim} and so complete the proof.
\end{proof} 

\section{Cancellative processes, voter model perturbations and examples}

\subsection{Cancellative processes}\label{sec:canc}
Let $\cN$ be a non-empty finite subset of $\Z^d\setminus\{0\}$, and
call such a subset a {\sl{general}} neighbourhood.  Our cancellative
processes are translation invariant spin systems with rate functions
satisfying
\begin{equation}\label{canc} c(x,\xi)= \frac{k_0}{2} \left[ 1 -
(2\xi(x)-1) \sum_{A\subset \bar \cN}\beta_0(A)H(\xi,A+x) \right],
\end{equation}
where $H(\xi,A)=\prod_{y\in A}(2\xi(y)-1)$, $k_0$ is a positive
constant, $\beta_0\ge 0$, $\beta_0(\varnothing)=0$, and
$\sum_{A\subset\bar\cN}\beta_0(A)=1$. This last implies $\1$ is a trap
for $\xi_t$, that is, $c(x,\1)\equiv0$.  The restriction to subsets
$A$ of $\bar\cN$ means our definition is a bit more restrictive than
that in (1.16) of \cite{CP14} or (4.4) in Section III.4 of
\cite{Lig85}.  A recent summary of properties of cancellative
processes is given in Sections 1 and 2 of \cite{CP14}. We say $\xi$ is
a {\it good} cancellative process if, in addition, $\beta_0(A)>0$ for
some $A$ with $|A|>1$.
\begin{remark}\label{rem:votercanc} For any probability $q_0$ on
$\cN$, the voter model with kernel $q_0$ is cancellative with $k_0=1$,
and $\beta_0(A)=q_0(y)$ if $A=\{y\}$, and zero if $|A|\neq 1$, as one
can easily check (or see Example III.4.16 in \cite{Lig85}). So the
voter model is cancellative, but not a good cancellative process. The
converse also holds and equally easy to check: If $\xi$ is
cancellative but not good cancellative, then $\xi$ has rates equal to
$k_0$ times those of a voter model with kernel $q_0(y)=\beta_0(\{y\})$
($y\in\cN$).
\end{remark}
Another useful condition, which will hold for our main results, is
\begin{equation}\label{evencond}
\text{$\beta_0(A)=0$ if
$|A|$ is even},
\end{equation} 
which by Lemma~2.1 in \cite{CP14} is equivalent to 
\begin{equation}\label{0trap}
\text{$\0$ is a trap for $\xi_t$,} 
\end{equation}
and also to
\begin{equation}\label{c01symm}
\xi \text{ is $0-1$ symmetric, that is, }c(x,\xi)=c(x,\hxi).
\end{equation}

A cancellative process has a translation invariant stationary
distribution, which is the weak limit of the process started in
Bernoulli product measure with density $1/2$ (see Corolllary~III.1.8
of \cite{G79}). Under the above symmetry it will have density $\1/2$
and so we denote it by $\nu_{1/2}$. A simple proof of the existence of
$\nu_{1/2}$ under \eqref{c01symm} is given after Lemma~2.1 in
\cite{CP14}. It is possible that $\nu_{1/2}=(1/2)(\delta_{\0} +
\delta_{\1})$, and so it need not have the coexistence property. For
example, this is the case for the voter model when $d\le 2$
(Corollary~V.1.13 of \cite{Lig85}).

As defined in \cite{CD91}, a general neighbourhood $\cN$ and
nonnegative sequence $a=(a_\ell)$, $1\le \ell\le |\cN|$,
$(a_l)\not\equiv 0$, defines a nonlinear voter model $\xi_t:\Z^d\to
\{0,1\}$ if the rate at which the opinion at a site flips to the
opposite opinion is $a_\ell$ if $\ell$ of its neighbours hold this
opposite opinion. That is, $\xi_t$ is the spin-flip system defined by
the rate function
\[
c(x,\xi) = \hxi(x)\sum_{\ell=1}^{|\cN|} a_\ell1\{n_1(x,\xi)=\ell\} +
\xi(x)\sum_{\ell=1}^{|\cN|} a_\ell1\{n_0(x,\xi)=\ell\},
\]
where $n_i(x,\xi)=\sum_{y\in x+\cN}1\{\xi(y)=i\}$, $i=1,2$. Clearly
every nonlinear voter model rate function $c(x,\xi)$ satisfies the
symmetry condition \eqref{c01symm}.  In \cite{CD91} $\cN$ satisfied
additional symmetry and irreducibility conditions, but they are not
needed for the results in this section.  Henceforth we exclude the
trivial case of $a\equiv 0$ from consideration.  The $q$-voter model
on $\Z^d$ (with neighbourhood $\cN$) is the nonlinear voter model with
$a_\ell = (\ell/|\cN|)^q$.

The following result, from page 129 of \cite{CD91} provides a means of
checking that a nonlinear voter model is cancellative without checking
\eqref{canc} directly.  We give the short proof for completeness and
to highlight the choice of $\beta_0$ and $k_0$ which will enter later.

\begin{proposition}\label{p:nlvm} Let $\cN$ be a general neighbourhood
and $a_\ell\ge 0$ for $\ell=1,\dots,|\cN|$ with $(a_\ell)\not\equiv
0$.  Define the $|\cN|\times |\cN|$ matrix $\bM$ by
\begin{equation}\label{Mdef} M(k,j) = \sum_{\text{odd }i \le j\wedge
k} \binom ji \binom{|\cN|-j}{k-i} , \qquad 1\le k,j \le |\cN|.
\end{equation} If there is a nonnegative sequence $\alpha=(\alpha_k)$,
$1\le k\le |\cN|$, such that
\begin{equation}\label{aalpha} a_\ell = \sum_{k=1}^{|\cN|} \,
\alpha_{k} \, M(k,\ell), \quad 1\le \ell\le |\cN|,
\end{equation} then the nonlinear voter model determined by $(a_\ell)$
is a \textbf{cancellative} nonlinear voter model. Moreover in
\eqref{canc} $k_0=\sum_{\ell=1}^{|\cN|}\alpha_\ell$ and for
$A\subset\bar\cN$,
\begin{equation}\label{beta0def} \beta_0(A) =\frac{1}{k_0}\times
\begin{cases} 0&\text{if } A=\{0\}\text{ or } |A|\text{ is even}\\
\alpha_{m}&\text{if }0\notin A, |A|=m\text{ is odd}\\
\alpha_{m-1}&\text{if }0\in A, |A|=m \text{ is odd},
\end{cases}
\end{equation} and so \eqref{evencond} also holds.
\end{proposition}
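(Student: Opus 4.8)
The plan is to check, by a short computation in the spin variables $2\xi(y)-1$, that the nonlinear voter rate function determined by $(a_\ell)$ is \emph{equal} to the cancellative rate function \eqref{canc} with $k_0$ and $\beta_0$ as in \eqref{beta0def}; once this identity of rates is in hand, every remaining assertion of the proposition follows at once. By translation invariance we may work at $x=0$. Both rate functions are $0$-$1$ symmetric: for the nonlinear voter model this is clear, and for the function built from \eqref{canc}--\eqref{beta0def} it is the implication \eqref{evencond}$\Rightarrow$\eqref{c01symm} recorded above, since the $\beta_0$ in \eqref{beta0def} is supported on sets of odd cardinality. Hence it suffices to prove equality on configurations $\xi$ with $\xi(0)=0$. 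Fix such a $\xi$, write $B=\{y\in\cN:\xi(y)=1\}$ and $j=|B|$, so that the quantity to be reproduced is $a_j=\sum_k\alpha_k\,M(k,j)$.

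Next expand the right-hand side of \eqref{canc}. With $2\xi(0)-1=-1$, split the sum over $A\subset\bar\cN$ according to whether $0\in A$, using $\bigl(2\xi(0)-1\bigr)^2=1$; since by \eqref{beta0def} the weight of a set depends only on its size and on whether it contains $0$, the two halves combine and the cancellative rate at $\xi$ collapses to
\[
\frac{k_0}{2}\;-\;\frac12\sum_{k=1}^{|\cN|}(-1)^k\,\alpha_k\,E_k(j),
\qquad
E_k(j):=\sum_{\substack{A'\subset\cN\\ |A'|=k}}\ \prod_{y\in A'}\bigl(2\xi(y)-1\bigr).
\]
Since $2\xi(y)-1$ equals $+1$ on the $j$ sites of $B$ and $-1$ on the other $|\cN|-j$ sites of $\cN$, the binomial theorem gives $E_k(j)=[z^k](1+z)^j(1-z)^{|\cN|-j}=\sum_i(-1)^{k-i}\binom{j}{i}\binom{|\cN|-j}{k-i}$. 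The crux of the argument is the elementary identity
\[
M(k,j)\;=\;\bigl|\{A'\subset\cN:\ |A'|=k,\ |A'\cap B|\text{ odd}\}\bigr|\;=\;\tfrac12\Bigl(\tbinom{|\cN|}{k}-(-1)^kE_k(j)\Bigr),
\]
the first equality being the combinatorial reading of \eqref{Mdef} and the second following from Vandermonde's identity $\sum_i\binom{j}{i}\binom{|\cN|-j}{k-i}=\binom{|\cN|}{k}$ by separating even from odd $i$. Substituting this into $a_j=\sum_k\alpha_k M(k,j)$ rewrites the target as $\tfrac12\sum_k\binom{|\cN|}{k}\alpha_k-\tfrac12\sum_k(-1)^k\alpha_k E_k(j)$, which is exactly the displayed cancellative rate once $k_0$ is fixed by the normalisation $\sum_{A\subset\bar\cN}\beta_0(A)=1$; the same binomial bookkeeping, applied to totals, checks that normalisation, so \eqref{canc}--\eqref{beta0def} is a genuine cancellative rate function and reading off its coefficients yields the $k_0$ and $\beta_0$ of the statement.

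With the rate identity established, the rest is immediate: $\beta_0\ge0$ and $\beta_0(\varnothing)=0$ are visible in \eqref{beta0def}, \eqref{evencond} is built into it, and the process is a legitimate spin system because its rate coincides with the manifestly nonnegative nonlinear voter rate --- which also retroactively validates the reduction to $\xi(0)=0$ via the $0$-$1$ symmetry of both rates. Conceptually, \eqref{aalpha} is precisely the condition that the (uniquely determined) coefficients of the nonlinear voter rate in the orthogonal basis $\{H(\xi,A):A\subset\bar\cN\}$ carry the sign that makes the $\beta_0$ read off from them nonnegative; the single step that is not pure bookkeeping is the combinatorial identity above relating $\bM$ to the alternating subset-sums $E_k(j)$, and that is where I would expect to spend the most care.
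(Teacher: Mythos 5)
Your argument is correct and is, at bottom, the same as the paper's: both proofs verify that the cancellative rate built from \eqref{canc} and \eqref{beta0def} reproduces $a_j=\sum_k\alpha_k M(k,j)$ at a configuration with $\xi(0)=0$ and $j$ neighbours in state $1$, and both dispose of the case $\xi(0)=1$ via the $0$--$1$ symmetry that \eqref{evencond} forces. The only difference is packaging: the paper enumerates directly the odd-cardinality sets $A$ with $\xi(A)$ odd, split according to whether $0\in A$, and recognises the counts as the entries of $\bM$; you reach the same counts through the alternating sums $E_k(j)=[z^k](1+z)^j(1-z)^{|\cN|-j}$ together with $M(k,j)=\tfrac12\bigl(\binom{|\cN|}{k}-(-1)^kE_k(j)\bigr)$. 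These are the same computation in different clothing, and every step of yours checks out.

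One point deserves attention. Your expansion shows that matching constant terms forces $k_0=\sum_{k=1}^{|\cN|}\binom{|\cN|}{k}\alpha_k$; equivalently, this is exactly what the normalisation $\sum_{A\subset\bar\cN}\beta_0(A)=1$ demands, since the class $\{A:0\notin A,\ |A|=m\}$ contains $\binom{|\cN|}{m}$ sets each of weight $\alpha_m/k_0$, and similarly for the sets containing $0$. This is \emph{not} the value $k_0=\sum_\ell\alpha_\ell$ printed in the statement (the paper's own proof asserts $\sum_{A\subset\bar\cN}\beta_0(A)=\frac1{k_0}\sum_\ell\alpha_\ell$, which silently drops these binomial multiplicities). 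A sanity check with the plain voter model, where $\alpha_1=1/|\cN|$ and $\alpha_k=0$ for $k\ge2$, gives $k_0=1$ and $\beta_0(\{y\})=1/|\cN|$ under your normalisation, consistent with Remark~\ref{rem:votercanc}, whereas the printed $k_0=1/|\cN|$ would give $\sum_A\beta_0(A)=|\cN|$. So your closing claim that reading off the coefficients ``yields the $k_0$ of the statement'' is not literally true: your bookkeeping yields the correct constant, and the discrepancy lies in the statement, not in your argument. Nothing downstream is affected, since only cancellativity, \eqref{evencond}, and the strict positivity of $\beta_0$ on three-point sets are used later.
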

\begin{proof}
Define $\xi(A)=\sum_{a\in A}\xi(a)$ and $\hxi(A)=\sum_{a\in
A}\hxi(a)$.  In \eqref{canc} since $2\xi(x)-1= (-1)^{\hxi(x)}$ and
$\sum_A\beta_0(A)=1$, we can rewrite \eqref{canc} for $x=0$ as
\begin{equation}\label{e:c2} c(0,\xi)= \frac{k_0}{2}
\sum_{A\subset\bar\cN}\beta_0(A) \Big[1 - (-1)^{\hxi(A)+\hxi(0)}
\Big].
\end{equation}

Let $\balpha=(\alpha_j)$ be a given nonnegative sequence such that
\eqref{aalpha} defines non-negative $a_k$, not all identically
zero. Let $k_0=\sum_{\ell=1}^{|\cN|}\alpha_\ell>0$, the latter since
otherwise $a\equiv0$.  For $A\subset\bar\cN$, define $\beta_0(A)$ by
\eqref{beta0def}. Then $\beta_0(\emptyset)=0$ and
$\sum_{A\subset\bar\cN}\beta_0(A)=\frac{1}{k_0}\sum_{\ell=1}^{|\cN|}\alpha_\ell=1$. Note 
also that \eqref{evencond} holds and hence so does \eqref{c01symm}.
With these choices, \eqref{e:c2} becomes
\begin{equation}\label{e:c3} c(0,\xi)= \frac{1}{2}\sum_{m\text{ odd}}
\bigg\{\alpha_m\sum_{A\not\ni 0,|A|=m} + \1(m\ge
3)\alpha_{m-1}\sum_{A\ni 0,|A|=m}\bigg\} \Big[1 -
(-1)^{\hxi(A)+\hxi(0)} \Big].
\end{equation} Observe that in the second sum, $m\ge 3$ because we
have set $\beta(\{0\})=0$.

Observe that if $m$ is odd and $|A|=m$, if $\xi(0)=0$, then
\[
\frac12 \Big[1 - (-1)^{\hxi(A)+\hxi(0)} \Big]
=\frac12 \Big[1 + (-1)^{\hxi(A)} \Big]
=1\{\hxi(A)\text{ is even}\} =
1\{\xi(A)\text{ is odd}\},
\]
where we have used the fact that $\xi(A)+\hxi(A)=|A|$. Therefore,
\begin{equation}\label{e:c5}
c(0,\xi)= \sum_{m\text{ odd}}
\bigg\{\alpha_m\sum_{A\not\ni 0,|A|=m} +
\1(m\ge 3)\alpha_{m-1}\sum_{A\ni 0,|A|=m}\bigg\}
1\{\xi(A)\text{ is odd}\}.
\end{equation}
For any
$\xi$, odd $m$, $1\le j\le |\cN|$, define
\begin{align*} \cA'_m(j) &=\Big\{ A\subset\bar\cN:0\notin A, |A|=m,
\xi(\cN)=j, \xi(A)\text{ is odd}\Big\}\\ \cA''_m(j) &=\Big\{
A\subset\bar\cN:0\in A, |A|=m, \xi(\cN)=j,\xi(A)\text{ is odd}\Big\}.
\end{align*}
Then 
\begin{equation}\label{e:c6}
c(0,\xi) =
\sum_{m\text{ odd}} \alpha_m|\cA'_m(j)| +
\sum_{m\ge3,\text{ odd}} \alpha_{m-1}|\cA''_m(j)|
\qquad \text{if }\xi(0)=0,\, \xi(\cN)=j.
\end{equation}

Continue to assume $\xi(\cN)(=n_1(0,\xi))=j$ and $\xi(0)=0$, consider
$A\in\cA'_m(j)$, and the disjoint union
\[ A = (A\cap\{x:\xi(x)=1\})\cup(A\cap\{x:\xi(x)=0\}).
\] By counting the number of ways to choose the first set with the
requirement $i=\xi(A)\le j\wedge m$ odd, we find that %for any $\xi$,
summing over odd $i\le j$,
\[ | \cA'_m(j)| = \sum_{\substack{i\text{ odd}\\i\le j\wedge m}}\binom
ji \binom{|\cN|-j}{m-i} .
\] Here we build $A\subset \cN$ by first choosing the $i$ sites of $A$
to put $1$'s from the available $j$ sites in state $1$, and then
choose the $m-i$ sites of $A$ to put $0$'s from the $|\cN|-j$
available sites in state $0$.  Similar reasoning (recall $\xi(0)=0$
and now $0$ must be in $A$) leads to
\[
| \cA''_m(j)|=
\sum_{\substack{i\text{ odd}\\i\le j\wedge (m-1)}}\binom ji
\binom{|\cN|-j}{m-1-i}. 
\]
Insert the above into \eqref{e:c6} to get for $\xi(0)=0,\ \xi(\cN)=j$,
\begin{align*}
c(0,\xi) &= 
\sum_{1\le k\le |\cN|} \alpha_{k}
\sum_{\substack{i\text{ odd}\\i\le j\wedge k}}
  \binom ji \binom{|\cN|-j}{k-i}\\
& =\sum_{k=1}^{|\cN|}\alpha_kM(k,j)=a_j.
\end{align*}
If $\xi(0)=1$, and $\hxi(\cN)(=n_0(0,\xi))=j$, then $\hxi(0)=0$. Use
the symmetry \eqref{c01symm} noted earlier, and the above with $\hxi$
in place of $\xi$ to get
\begin{align*}
c(0,\xi) &= c(0,\hxi)=a_j.
\end{align*}
Therefore the cancellative system corresponding to the above $\beta_0$
and $k_0$ is indeed the non-linear voter model determined by $(a_j)$.
\end{proof}

\begin{lemma} \label{l:qcanc} Assume $d\ge 1$, and $\cN$ is a general
neighbourhood with $2\le|\cN|\le 8$.\\ (a) For $2\le |\cN|\le 4$ and
all $q\in[0,1]$, the corresponding $q$-voter model on $\Z^d$ is
cancellative, and for $q\in[0,1)$, in \eqref{canc} we have
$\beta_0(A)>0$ for all $A\subset \bar\cN$ with $|A|=3$. \\ (b) For $q$
sufficiently close to $1$, the corresponding $q$-voter model on $\Z^d$
is cancellative for $q\le1$ and the last conclusion in (a) holds for
$q<1$.
\end{lemma}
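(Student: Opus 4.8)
The plan is to deduce both parts from Proposition~\ref{p:nlvm}. Since the $q$-voter model on $\Z^d$ with neighbourhood $\cN$ is the nonlinear voter model with $a_\ell=(\ell/|\cN|)^q$ for $1\le\ell\le|\cN|$, it suffices to exhibit, in the relevant range of $q$, a nonnegative sequence $\alpha=(\alpha_k)_{1\le k\le|\cN|}$ solving the linear system $a_\ell=\sum_{k=1}^{|\cN|}\alpha_k M(k,\ell)$, $1\le\ell\le|\cN|$, with $M$ as in \eqref{Mdef}; cancellativeness (with the $\beta_0,k_0$ of that proposition) then follows, and by \eqref{beta0def} the assertion ``$\beta_0(A)>0$ for every $A\subset\bar\cN$ with $|A|=3$'' amounts to $\alpha_2>0$ (governing the sets $A\ni 0$, always present) together with $\alpha_3>0$ (governing the sets $A\not\ni 0$, present once $|\cN|\ge 3$), using also $k_0=\sum_\ell\alpha_\ell>0$. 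Two observations streamline the computation. First, $M(1,\ell)=\binom{\ell}{1}\binom{|\cN|-\ell}{0}=\ell$, so at $q=1$ the system is solved by $\alpha=|\cN|^{-1}(1,0,\dots,0)$, which is just the classical voter-model representation. Second, for $2\le|\cN|\le 8$ a direct computation shows the coefficient matrix $(M(k,\ell))_{k,\ell}$ is invertible, so the system has a unique solution $\alpha(q)$ depending real-analytically on $q$, obtainable by inverting a fixed matrix and so amenable to a direct sign analysis.

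For part (a), with $|\cN|\in\{2,3,4\}$, I would solve the system explicitly. For $|\cN|=2$ one gets $\alpha=\bigl(\tfrac12,\ (1/2)^q-\tfrac12\bigr)$, and for $|\cN|=3$,
\begin{equation*}
\alpha_1=\tfrac14\bigl(1+(2/3)^q-(1/3)^q\bigr),\quad
\alpha_2=\tfrac14\bigl((2/3)^q+(1/3)^q-1\bigr),\quad
\alpha_3=\tfrac14\bigl(1+3(1/3)^q-3(2/3)^q\bigr),
\end{equation*}
with an analogous $4\times4$ computation for $|\cN|=4$. Nonnegativity of all the $\alpha_k$ on $[0,1]$, with $\alpha_1>0$ throughout and $\alpha_2,\alpha_3>0$ for $q<1$, follows in each case from elementary monotonicity of $q\mapsto(\ell/|\cN|)^q$: each bracketed quantity attached to $\alpha_2$ and $\alpha_3$ is a decreasing function of $q$ on $[0,1]$ that vanishes at $q=1$, hence is $\ge0$ there and $>0$ for $q<1$, while the bracket attached to $\alpha_1$ stays $\ge1$. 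Via \eqref{beta0def} this yields both the cancellative property and $\beta_0(A)>0$ for $|A|=3$ when $q<1$.

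For part (b) only $|\cN|\in\{5,6,7,8\}$ is new, since (a) already covers $|\cN|\le4$ for all $q\in[0,1]$. Writing $q=1-\vep$ and Taylor expanding, $a_\ell(1-\vep)=(\ell/|\cN|)^{1-\vep}=\ell/|\cN|+\vep\,r_\ell+O(\vep^2)$ with $r_\ell=(\ell/|\cN|)\log(|\cN|/\ell)$ as in \eqref{relldefn} (the first-order term being \eqref{qvrs}). By uniqueness and linearity of the inversion, $\alpha_k(1-\vep)=|\cN|^{-1}\delta_{k1}+\vep\,\beta_k+O(\vep^2)$, where $\beta=(\beta_k)$ is the solution of the \emph{same} linear system with right-hand side $(r_\ell)$ in place of $(a_\ell)$. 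Hence $\alpha_1(q)>0$ for $q$ near $1$, and for each $k\ge2$ one has $\alpha_k(q)\ge0$ for $q<1$ near $1$ provided $\beta_k>0$; the strict positivity of $\alpha_2,\alpha_3$ for $q<1$ near $1$ reduces likewise to $\beta_2>0$ and $\beta_3>0$. This leaves, for each of the four values of $|\cN|$, a finite list of explicit rational inequalities ($\beta_k\ge0$ for all $k$, and $\beta_2,\beta_3>0$) to be checked by direct computation; for $|\cN|=8$ the inverse matrix is cumbersome and this check is performed with \texttt{maple} in the Appendix.

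The step I expect to be the main obstacle is precisely this last sign verification for $|\cN|=5,6,7,8$: confirming that the first-order perturbation vector $\beta$ has all components nonnegative and $\beta_2,\beta_3>0$. (If some $\beta_k$ were to vanish, one would pass to the second-order term $\tfrac12\vep^2\gamma_k$, where $\gamma$ solves the system with right-hand side $\bigl((\ell/|\cN|)(\log(\ell/|\cN|))^2\bigr)_\ell\ge0$, and check its sign; I expect this degeneracy not to arise.) That the whole argument is confined to $|\cN|\le8$ reflects the absence of any known structural reason for these signs to persist for larger neighbourhoods, which is the content of Conjecture~\ref{conj:canc}.
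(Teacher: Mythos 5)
Your proposal is correct and follows essentially the same route as the paper: reduce to Proposition~\ref{p:nlvm}, solve $\ba=\balpha\bM$ explicitly for $|\cN|\le 4$ and check signs by elementary calculus, and for larger $|\cN|$ expand around $q=1$ (your first-order coefficient $\beta_k$ is exactly the paper's $-\alpha_k'(1)$), deferring the final sign verification for $|\cN|=5,\dots,8$ to an explicit \texttt{maple}-assisted computation. The degeneracy you flag (some $\beta_k=0$) indeed does not arise: the paper records $\alpha_\ell'(1)<0$ strictly for all $2\le\ell\le|\cN|$.
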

\begin{proof}(a) 
Suppose first that
$|\cN|=4$.
it is
straightforward to check that
\[
\bM = \begin{bmatrix}
1&2&3&4\\
3&4&3&0\\
3&2&1&4\\
1&0&1&0
\end{bmatrix}
\quad\text{and}\quad
\bM^{-1} = \frac18\begin{bmatrix}
-2&0&2&4\\
0&2&0&-6\\
2&0&-2&4\\	
1&-1&1&-1
\end{bmatrix}.
\]
Thus, by \eqref{aalpha}, for $a_\ell = (\ell/|\cN|)^q$, $\balpha
= \ba \bM^{-1}$ is given 
by
\begin{align*}
\alpha_1(q) &= -\frac14\Bigl(\frac14\Bigr)^q + \frac14\Bigl(\frac34\Bigr)^q +
\frac18\\
\alpha_2(q) &= \frac14\Bigl(\frac12\Bigr)^q -\frac18\\
\alpha_3(q) &= \frac14\Bigl(\frac14\Bigr)^q-\frac14\Bigl(\frac34\Bigr)^q+\frac18\\
\alpha_4(q) &= \frac12\Bigl(\frac14\Bigr)^q - \frac34\Bigl(\frac12\Bigr)^q +
\frac12\Bigl(\frac34\Bigr)^q -\frac18.
\end{align*}
It is now an enjoyable calculus exercise to check that each
$\alpha_\ell(q)\ge0$ for all $0\le q\le 1$ and each $\alpha_\ell(q)>0$
for $0\le q<1$. By Proposition~\ref{p:nlvm} the cancellative property
holds for $q\in[0,1]$, and \eqref{beta0def}, together with
$\alpha_2,\alpha_3>0$, give the final conclusion for any $q\in[0,1)$.
The cases $|\cN|=2,3$ are handled the same way, with simpler
calculations. This prove (a).

(b) Consider $|\cN|=8$.  It is still easy to write down $\bM$ from
\eqref{Mdef}. Then \texttt{maple} can be used to find $\bM^{-1}$ (see
the Appendix), and \eqref{aalpha} can be used to write down each
$\alpha_\ell(q)$, $1\le \ell\le |\cN|$ explicitly.  With these
formulas it is easy to check that $\alpha_1(q) \ge 2^{-|\cN|+1}$ for
$0\le q\le 1$, and also that $\alpha_\ell(1) = 0 \text{ for }2\le
\ell\le |\cN|$.  Some simple, if lengthy, arithmetic shows that
\begin{equation}\label{cangoal} \alpha_\ell'(1)=
-2^{-k_\ell}\log(\frac{n_\ell}{m_\ell})<0 \text{ for }2\le \ell\le
|\cN|,
  \end{equation} where $k_\ell,m_\ell, n_\ell$ are positive integers
with $m_\ell<n_\ell$ and $k_\ell\le |\cN|+1$. All of these quantities
are given in the Appendix below.  It follows that each
$\alpha_\ell(q)$ must be strictly positive for $q<1$ sufficiently
close to 1.  Thus Proposition~\ref{p:nlvm} implies the cancellative
property for $q\le 1$ close to $1$, and \eqref{beta0def} implies that
$\beta_0(A)>0$ if $|A|=3$, $A\subset\bar\cN$ for $q<1$ close to $1$.

The cases $|\cN|=5,6,7$ are handled the same way. We omit the details. 
\end{proof}

\begin{conjecture}\label{conj:canc} All $q$-voter models are
cancellative for any $d\ge 1$, any neighbourhood $\cN$, $|\cN|\ge 2$,
and any $0\le q\le 1$.
\end{conjecture}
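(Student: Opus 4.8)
The plan is to reduce the conjecture, via Proposition~\ref{p:nlvm}, to a positivity statement for an explicit inverse of the matrix $\bM$ in \eqref{Mdef}, and then to prove that positivity using a Bernstein‑function (subordinator) representation of $x\mapsto x^q$. Write $n=|\cN|$; note that neither $d$ nor the geometry of $\cN$ plays any role, only $n$, so it suffices to fix $n\ge 2$ and $q\in[0,1]$ and exhibit a nonnegative sequence $(\alpha_k)_{k=1}^n$ with $\sum_{k=1}^n\alpha_k M(k,\ell)=(\ell/n)^q$ for $1\le\ell\le n$. The key structural observation is that $\bM$ is a rank‑one perturbation of a (signed) Krawtchouk matrix: writing $K_k(\ell):=[u^k](1-u)^\ell(1+u)^{n-\ell}=\sum_i(-1)^i\binom{\ell}{i}\binom{n-\ell}{k-i}$, one has $2M(k,\ell)=\binom{n}{k}-K_k(\ell)$ (expand the parity indicator behind \eqref{Mdef} as $\tfrac12(1-(-1)^{\,\cdot\,})$). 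This is exactly the classical parity/annihilating‑dual transform underlying the cancellative form \eqref{canc}.

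First I would invert $\bM$ explicitly. The relevant classical facts about Krawtchouk polynomials are the orthogonality relation $\sum_{\ell=0}^n\binom{n}{\ell}K_i(\ell)K_j(\ell)=2^n\binom{n}{i}\delta_{ij}$ and the symmetry $\binom{n}{\ell}K_k(\ell)=\binom{n}{k}K_\ell(k)$; together they give $\sum_{k=0}^n\binom{n}{k}^{-1}K_k(\ell)K_k(\ell')=2^n\binom{n}{\ell}^{-1}\delta_{\ell\ell'}$, while $\sum_{k=0}^nK_k(\ell)=0^\ell 2^{n-\ell}$ and $K_0(\ell)\equiv 1$ control the boundary terms. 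Since the indices in $\bM$ run over $1,\dots,n$ rather than $0,\dots,n$ (which is precisely why the naive Krawtchouk involution $K\mapsto 2^{-n}K$ is singular here), one subtracts off the $k=0$ and $\ell=0$ contributions; this is pure rank‑one bookkeeping and yields the candidate
\[
\alpha_k \;=\; -\,2^{1-n}\binom{n}{k}^{-1}\sum_{\ell=1}^{n}a_\ell\binom{n}{\ell}K_k(\ell),\qquad 1\le k\le n,
\]
with $a_\ell=(\ell/n)^q$. Using the three displayed Krawtchouk identities one checks directly that this $(\alpha_k)$ solves \eqref{aalpha}; in particular $\bM$ is invertible for every $n$.

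It remains to prove $\alpha_k\ge 0$ for all $k$ and all $q\in[0,1]$. For $q\in(0,1)$ I would use the subordinator representation $\lambda^q=\frac{q}{\Gamma(1-q)}\int_0^\infty(1-e^{-\lambda s})s^{-1-q}\,ds$ (valid for $\lambda\ge 0$, with strictly positive constant). Substituting $a_\ell=(\ell/n)^q$ and interchanging sum and integral (legitimate since $1-e^{-\ell s/n}=O(s)$ near $0$ and $k\ge 1>q$), the inner sum collapses by the binomial theorem: for $k\ge 1$,
\[
\sum_{\ell=0}^n\bigl(1-e^{-\ell s/n}\bigr)\binom{n}{\ell}K_k(\ell)\;=\;0-[u^k]\bigl((1+e^{-s/n})+u(1-e^{-s/n})\bigr)^n\;=\;-\binom{n}{k}(1+e^{-s/n})^{n-k}(1-e^{-s/n})^{k},
\]
whence
\[
\alpha_k \;=\; 2^{1-n}\,\frac{q}{\Gamma(1-q)}\int_0^\infty s^{-1-q}\,(1+e^{-s/n})^{n-k}(1-e^{-s/n})^{k}\,ds\;>\;0 .
\]
The endpoints are immediate from the closed form for $\alpha_k$: at $q=1$ (the voter model) $\alpha_1=1/n$ and $\alpha_k=0$ for $k\ge 2$, and at $q=0$ (the threshold voter model) $\alpha_k=2^{1-n}$ for every $k$. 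By Proposition~\ref{p:nlvm} this proves the conjecture, and \eqref{beta0def} then also gives $\beta_0(A)>0$ for all $A\subset\bar\cN$ with $|A|$ odd when $q<1$, strengthening Lemma~\ref{l:qcanc} and dispensing with its computer‑assisted cases.

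The step I would expect to demand the most care is the boundary bookkeeping in the inversion of $\bM$: because the matrix is indexed by $1,\dots,n$ (equivalently, because $\1$ is forced to be a trap, as noted after \eqref{canc}), the clean Krawtchouk involution does not apply verbatim and the excluded $k=0$ and $\ell=0$ rows and columns must be tracked correctly to arrive at the closed form for $\alpha_k$. Once that form is in hand the analytic part is essentially automatic from the subordinator representation, so I do not anticipate the positivity itself to be the obstacle — the real work is linear‑algebraic.
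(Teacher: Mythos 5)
This statement is not proved in the paper at all: Conjecture~\ref{conj:canc} is exactly what the authors leave open, having established the cancellative property only for $|\cN|\le 8$ (Lemma~\ref{l:qcanc}), and for $5\le|\cN|\le 8$ only for $q$ close to $1$, by computing $\bM^{-1}$ case by case (\texttt{maple}-assisted for $|\cN|=8$) and checking signs of $\alpha_\ell(q)$ via $\alpha_\ell(1)=0$, $\alpha'_\ell(1)<0$. Your argument, as far as I can verify, is correct and settles the conjecture in full generality.

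I checked the two pillars. With $n=|\cN|$ and $K_k(\ell)=\sum_i(-1)^i\binom{\ell}{i}\binom{n-\ell}{k-i}$, the identity $2M(k,\ell)=\binom{n}{k}-K_k(\ell)$ is immediate from $1\{i\text{ odd}\}=\tfrac12(1-(-1)^i)$ and Vandermonde, and your candidate $(\bM^{-1})_{\ell k}=-2^{1-n}\binom{n}{k}^{-1}\binom{n}{\ell}K_k(\ell)$ does invert $\bM$: applying it to an arbitrary $(a_\ell)$ and using the dual orthogonality $\sum_{k=0}^n\binom{n}{k}^{-1}K_k(\ell)K_k(\ell')=2^n\binom{n}{\ell}^{-1}\delta_{\ell\ell'}$ together with $\sum_{k=1}^nK_k(\ell)=-1$ for $\ell\ge1$, the two rank-one boundary corrections cancel and one recovers $a_{\ell'}$ exactly; the formula also reproduces the paper's displayed $4\times4$ inverse entry by entry, and gives back the paper's explicit $\alpha_\ell(q)$ for $|\cN|=4$. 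For positivity, inserting $\lambda^q=\frac{q}{\Gamma(1-q)}\int_0^\infty(1-e^{-\lambda s})s^{-1-q}\,ds$ and using $\sum_{\ell=0}^n\binom{n}{\ell}x^\ell K_k(\ell)=\binom{n}{k}(1+x)^{n-k}(1-x)^k$ at $x=e^{-s/n}$ (and its vanishing at $x=1$ for $k\ge1$) yields
\[
\alpha_k(q)=\frac{2^{1-n}q}{\Gamma(1-q)}\int_0^\infty s^{-1-q}\bigl(1+e^{-s/n}\bigr)^{n-k}\bigl(1-e^{-s/n}\bigr)^{k}\,ds>0,\qquad 1\le k\le n,\ 0<q<1,
\]
with convergence at $s=0$ precisely because $k\ge1>q$; the endpoint values $\alpha_k(0)=2^{1-n}$ and $\alpha_1(1)=1/n$, $\alpha_k(1)=0$ for $k\ge2$ check out against the threshold and ordinary voter models. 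Proposition~\ref{p:nlvm} then gives cancellativity for every $n\ge2$, every $d\ge1$ and every $q\in[0,1]$, with the bonus that $\beta_0(A)>0$ for \emph{all} odd $|A|$, $A\ne\{0\}$, when $q<1$. Relative to the paper this is a genuinely different and much stronger route: a closed-form Krawtchouk inversion plus a Bernstein-function positivity argument replaces the finite, computer-assisted sign checks, covers all of $q\in[0,1)$ rather than a neighbourhood of $1$, and (per the discussion after Conjecture~\ref{conj:cct}) would remove the hypothesis $|\cN|\le8$ from Theorem~\ref{t:CCT}. The only things to write out carefully in a final version are the (routine) justification for exchanging the finite sum over $\ell$ with the integral, and the boundary bookkeeping in the inversion that you already flag.
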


\subsection{Finite range voter model perturbations and examples}\label{sec:vmp} 
For $d\ge 2$, let the probability kernel $p:\Z^d\to[0,1]$, the local
frequencies of type $i$, $f_i(x,\xi)$, and the voter model rates
$c^\VM(x,\xi)$ be as in Section~\ref{sec:proofCCTintro}.  In
particular,
\begin{equation}\label{covp} \text{ $p$ has covariance matrix
$\sigma^2 I$ for some $\sigma>0$.}
\end{equation} Consider a neighbourhood $\cN$ containing the support
of $p$ and recall the definition of a voter model perturbation with
finite range in $\cN$, given in
Definition~\ref{def:fravmpintro}. Recall also from \eqref{rsdefnintro}
the asymptotic rate function $r_s(A)=g_1(1_A)\text{ for }A\subset
\cN$.

\medskip

It follows from \eqref{giconv} or \eqref{giconv2} that by decreasing
$\vep_0$, if necessary, we may assume that
\begin{equation}\label{gepsbnd}
\sup_{0<\vep\le\vep_0}\|g_0^\vep\|_\infty+\|g_1^\vep\|_\infty=C(g)<\infty.
\end{equation}

\begin{remark}\label{vpdefns}For $d\ge 3$ Proposition~1.1 of
\cite{CDP13} shows that in the finite range case (both voter model and
perturbation depend only on sites in $\cN$), our definition of finite
range voter model perturbation coincides with the voter perturbations
considered in \cite{CP14} for $d\ge 3$ (i.e. those satisfying
(1.10)-(1.15) of that reference). The notation here is a bit different
as what we call $g^\vep_i$ is denoted by $h_i^\vep$ in \cite{CDP13},
where a related quantity is called $g^\vep_i$ and is non-negative.
This non-negativity plays a role in the definition of the dual process
used to study $\xi^{[\vep]}$. We will only use the dual process
implicitly when quoting arguments from \cite{CP14} in
Section~\ref{sec:proofCCT} so there should be no confusion.

For $d=2$ to obtain a scaling limit theorem to super-Brownian motion
we have added the condition that ${\bf 1}$ is a trap. That there is
distinct behavior if ${\bf 1}$ is not a trap is demonstrated by
Theorem 1.3 of \cite{DLZ14} where for the $2$-dimensional contact
process with rapid voting (a voter model perturbation) it is shown
that the critical birth rate for survival must diverge to $+\infty$ as
the voter rate gets large.  We also have dropped the H\"older
convergence rate in \eqref{giconv} which entered in the pde analysis
used in earlier work but will not be needed here.  Indeed, the
H\"older convergence rate will not be satisfied by the $2$-dimensional
Lotka-Volterra models described below.
\end{remark}
In two dimensions, recall from Definition~\ref{def:asysymvmp} the
notion of a finite range voter model perturbation which is
asymptotically symmetric, and which will play an important role in our
scaling limit theorems.
\begin{remark} \label{rem:symm} Clearly $0-1$ symmetry of the voter
model perturbations $c_\vep$ (for all $\vep$) implies asymptotic
symmetry with $r^a=g^a\equiv 0$.  Note also that by \eqref{giconv} (or
\eqref{giconv2}) and \eqref{asymsym}, asymptotic symmetry of a voter
model perturbation implies
\begin{equation}\label{g0g1eq} g_1(\xi)=g_0(\hxi).
\end{equation}
\end{remark}
For $d\ge 3$ finite range voter model perturbations rescale to SBM if
time is rescaled by $1/\vep$ and space was scaled down by $\sqrt \vep$
(see \cite{CP05}). If $d=2$ these scaling parameters will change, as
already noted in \cite{CMP} in the special case of Lotka-Volterra
models.  Here the scaling parameter $N>e^3$ is the unique solution
$N=N(\vep)$ of
\begin{equation}\label{Ndefn}
\vep=\frac{(\log N)^3}{N},\ \vep\in(0,\vep_0].
\end{equation}
It is a calculus exercise to verify the existence and uniqueness of
such an $N$ for $\vep\in(0,1]\supset(0,\vep_0]$.  Henceforth for $d=2$
we usually consider $N$ as our fundamental parameter and set
$\vep=\vep_N=(\log N)^3/N$. The constraint $\vep\le \vep_0$ leads to
\begin{equation}\label{Nlbnd}N\ge N(\vep_0)>e^3,
\end{equation}
where the actual value of $N(\vep_0)$ is of little concern
as we will be interested in taking $N\to\infty$.

Proceeding now with $d=2$, our rescaled lattice is $S_N=\Z^2/\sqrt N$.
For $\xi\in\{0,1\}^{\Z^2}$, define the rescaled state
$\xi^{(N)}\in\{0,1\}^{S_N}$ by $\xi^{(N)}(x)=\xi(x\sqrt N)$. Let
$\{c_\vep$, $\vep\le \vep_0\}$, be an asymptotically symmetric finite
range voter model perturbation and consider the rescaled rate function
\begin{equation}\label{cNvepN}
c^N(x,\xi^{(N)})=Nc_{\vep_N}(x\sqrt N,\xi),\ x\in S_N,\ \xi\in\{0,1\}^{\Z^2}
\end{equation}
for the rescaled voter model perturbation process 
\begin{equation}\label{rescaledef}
\xi^N_t(x)=\xi^{[\vep_N]}_{tN}(x\sqrt N),\ x\in S_N.  
\end{equation}
For $x\in S_N,\, \text{and }\xi\in\{0,1\}^{\Z^2}$ introduce 
\[ c^{N,\VM}(x,\xi^{(N)})=c^{\VM}(x\sqrt N,\xi),
\]
\begin{equation}\label{cNsdef}
c^{N,s}(x,\xi^{(N)})=\hxi(x\sqrt N)g_0^{\vep_N}(\hat\xi|_{x\sqrt
N+\cN})+\xi(x\sqrt N)g^{\vep_N}_0(\xi|_{x\sqrt N+\cN}),
\end{equation}
and
\begin{equation}\label{cNadef}
c^{N,a}(x,\xi^{(N)})=\hxi(x\sqrt N)(\log
N)^2[g_1^{\vep_N}(\xi|_{x\sqrt N+\cN})-g_0^{\vep_N}(\hxi|_{x\sqrt
N+\cN})].
\end{equation}
Then for $x\in S_N$ and $\xi\in\{0,1\}^{\Z^2}$,
\begin{align}
\label{dNdecompsb} c^N(x,\xi^{(N)})&=Nc^{\VM}(x\sqrt N,\xi)+(\log
N)^3c^*_{\vep_N}(x\sqrt N,\xi)\\
\label{dNdecompsa}&=Nc^{N,\VM}(x,\xi^{(N)})+(\log
N)c^{N,a}(x,\xi^{(N)})+(\log N)^3c^{N,s}(x,\xi^{(N)}).
\end{align}
Clearly $c^{N,s}$ is symmetric, that is
\begin{equation}\label{cssymm}
c^{N,s}(x,\xi^{(N)})=c^{N,s}(x,\hat{\xi}^{(N)}),
\end{equation}
and a short calculation shows that
\begin{equation}\label{cNazero} \text{if
}c^*_\vep(x,\xi)=c^*_\vep(x,\hxi)\text{ for all }\vep, x,\xi,\text{
then }c^{N,a}\equiv 0\text{ for all }N.
\end{equation}
Moreover it is easy to check that the decomposition in
\eqref{dNdecompsa} uniquely determines $c^{N,s}$ and $c^{N,a}$ if we
assume that $c^{N,s}$ is symmetric and for every $x\in\SN$,
$c^{N,a}(x,\xi^{(N)})=0$ if $\xi^{(N)}(x) =1$.  If for $x\in\Z^2$ and
$\xi\in\{0,1\}^{\Z^2}$,
\[c^s(x,\xi)=\hxi(x)g_0(\hxi|_{x+\cN})+\xi(x)g_0(\xi|_{x+\cN}),\text{
and }c^a(x,\xi)=\hxi(x)g^a(\xi|_{x+\cN}),\] then $c^s$ is symmetric,
and from \eqref{cNsdef}, \eqref{cNadef}, \eqref{giconv2},
$\log(1/\vep_N)/\log N\to 1$, and \eqref{asymsym} we have
\begin{equation}\label{cNsconv}
\sup_{x\in\Z^2,\xi\in\{0,1\}^{\Z^2}}|c^{N,s}(x/\sqrt
N,\xi^{(N)})-c^s(x,\xi)|+|c^{N,a}(x/\sqrt N,\xi^{(N)})-c^a(x,\xi)|\to
0\text{ as }N\to\infty.
\end{equation}
Use the definition of $c^{N,a}$ and the convergence in \eqref{asymsym}
to see that if
\[r^{N,a}(A)=(\log N)^2[g^{\vep_N}_1(1_A)-g_0^{\vep_N}(1_{\cN\setminus
A})]\text{ for }A\subset\cN,
\]
then
\begin{equation}\label{cNarep}
c^{N,a}(x/\sqrt N,\xi^{(N)})=\hxi(x)\sum_{\emptyset\neq A\subset\cN}r^{N,a}(A)1\{\xi|_{x+\cN}=1_{x+A}\}
\end{equation}
and
\begin{equation}\label{rNalim}\lim_{N\to\infty}r^{N,a}(A)=g^a(1_A):=r^a(A) \text{ for all }A\subset\cN.
\end{equation}
The fact that ${\bf 0}$ and ${\bf 1}$ are traps
implies
$g_1^{\vep_N}(1_\emptyset)=g_0^{\vep_N}(1_\cN)=0$ and so we have dropped
the $A=\emptyset$ term in the sum.
Similarly if
\[r^{N,s}(A)=g_0^{\vep_N}(1_{\cN\setminus A})\text{ for }A\subset\cN,\]
then we have
\begin{equation}\label{cNsrep} c^{N,s}(x/\sqrt
N,\xi^{(N)})=\hxi(x)\sum_{\emptyset\neq A\subset
\cN}r^{N,s}(A)1\{\xi|_{x+\cN}=1_{x+A}\}+\xi(x)\sum_{\emptyset\neq
A\subset \cN}r^{N,s}(A)1\{\xi|_{x+\cN}=1_{x+\cN\setminus A}\}
\end{equation}
and (recall also \eqref{g0g1eq})
\begin{equation} \label{rNslim}
\lim_{N\to\infty}r^{N,s}(A)=g_0(1_{\cN\setminus A})=g_1(1_A):=r^s(A)\text{ for all }A\subset\cN.
\end{equation}
It follows from \eqref{gepsbnd}, \eqref{rNalim} and \eqref{rNslim} that
\begin{equation}\label{normr}
\Vert r\Vert:=\sup_{N\ge N(\vep_0),\ \emptyset\neq A\subset\cN}|r^{N,a}(A)|\vee|r^{N,s}(A)|<\infty.
\end{equation}

The following result will be useful for some comparison results in Section~\ref{ss:compproc}.
\begin{lemma}\label{psupp} For all non-empty $A\subset\cN$, $p(A)=0$
implies that $r^{N,s}(A)\ge 0$ for all $N$.
\end{lemma}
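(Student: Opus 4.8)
The plan is to read the inequality off directly from the nonnegativity of the perturbed rate function, by testing it on one carefully chosen configuration. Fix a non-empty $A\subset\cN$ with $p(A)=0$; since $p\ge 0$ this forces $p(y)=0$ for every $y\in A$. I would then consider the configuration $\xi\in\{0,1\}^{\Z^2}$ specified on $\bar\cN$ by $\xi(0)=1$, $\xi(y)=1$ for $y\in\cN\setminus A$, and $\xi(y)=0$ for $y\in A$ (the values of $\xi$ off $\bar\cN$ play no role, as all the rates below depend only on $\xi|_{\bar\cN}$, using that $p$ is supported in $\cN$). By construction $\xi|_\cN=1_{\cN\setminus A}$.

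Next I would evaluate $c_{\vep_N}(0,\xi)$ via the decomposition \eqref{frvprates}--\eqref{cstar}. Since $\xi(0)=1$ we have $\hxi(0)=0$, so $c^*_{\vep_N}(0,\xi)=g_0^{\vep_N}(\xi|_\cN)=g_0^{\vep_N}(1_{\cN\setminus A})=r^{N,s}(A)$. For the voter part, $c^{\VM}(0,\xi)=\hxi(0)f_1(0,\xi)+\xi(0)f_0(0,\xi)=f_0(0,\xi)=\sum_{y}p(y)1\{\xi(y)=0\}=\sum_{y\in A}p(y)=p(A)=0$, where we used that within $\cN$ the configuration $\xi$ vanishes precisely on $A$. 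Hence $c_{\vep_N}(0,\xi)=0+\vep_N\,r^{N,s}(A)$.

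Finally, the requirement $c_\vep(x,\xi)\ge 0$ that is built into \eqref{frvprates} gives $\vep_N\,r^{N,s}(A)\ge 0$, and since $\vep_N>0$ this is exactly $r^{N,s}(A)\ge 0$, as claimed. There is no real obstacle here: the statement is essentially a one-line consequence of the fact that spin-flip rates must be nonnegative. The only point requiring a little care is the choice of the test configuration, so that simultaneously (i) the voter contribution to the flip rate at the origin is precisely $p(A)$, which vanishes by hypothesis, and (ii) the perturbation contribution is precisely $g_0^{\vep_N}(1_{\cN\setminus A})$; both are arranged by placing a $1$ at the origin and the configuration $1_{\cN\setminus A}$ on the neighbourhood $\cN$.
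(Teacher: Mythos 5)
Your proof is correct and is essentially identical to the paper's: both test nonnegativity of $c_{\vep}(0,\cdot)$ on the configuration with $\xi(0)=1$ and $\xi|_{\cN}=1_{\cN\setminus A}$, observe that the voter contribution $f_0(0,\xi)=p(A)=0$, and conclude $g_0^{\vep_N}(1_{\cN\setminus A})=r^{N,s}(A)\ge 0$. (You are in fact slightly more careful than the paper's displayed computation in keeping track of the factor $\vep_N$ multiplying the perturbation term.)
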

\begin{proof} Assume $p(A)=0$ for a fixed set $A$ as above.  We must show that 
\begin{equation}\label{lemgoal}
g_0^\vep(1_{\cN\setminus A})\ge 0\ \ \text{ for all $\vep\in[0,\vep_0]$}.
\end{equation}
Choose $\xi\in\{0,1\}^{\Z^2}$ such that $\xi(0)=1$ and $\xi|_{\cN}=1_{\cN\setminus A}$.  Then
\begin{align*}0\le c_\vep(0,\xi)&=f_0(0,\xi)+g_0^\vep(1_{\cN\setminus A})
=\sum_{y\in A}p(y)\hxi(y)+g_0^\vep(1_{\cN\setminus A})
=g_0^\vep(1_{\cN\setminus A}).
\end{align*}
The next to last equality holds because $\hxi(y)=0$ for $y\notin A$,
and the last equality holds by our assumption on $A$. This proves
\eqref{lemgoal}.
\end{proof} Recall the notation $\Theta^\pm(A)$ from \eqref{Theta+-}
and $K_2$ from Proposition~\ref{p:CRWnew}. We also recall $\Theta_3$
from \eqref{Thetadefnsintro} and $\Theta_2$ from
\eqref{Thetadefnsintro2}:
\begin{equation}\label{Thetadefns} \Theta_3=\sum_{\emptyset\neq
A\subset\cN}r^s(A)(\Theta^+(A)-\Theta^-(A))\quad\text{ and
}\quad\Theta_2=\sum_{\emptyset\neq
A\subset\cN}r^a(A)K_2(A,\bar\cN\setminus A).
\end{equation}
The following identities (see Remark~\ref{linearid} below) will
simplify $\Theta_3$ in some of the examples we now discuss:
\begin{equation}\label{linearid0}
\sum_{\emptyset\neq A\subset \cN}|A|(\Theta^+(A)-\Theta^-(A))=0,
\end{equation}
and
\begin{equation}\label{constantid0}
\sum_{\emptyset\neq A\subset
\cN}(\Theta^+(A)-\Theta^-(A))=\kappa=\lim_{t\to\infty}(\log t)^3\hat
P(|B^{\bar\cN}_t|=3)>0.
\end{equation}
 \medskip

\begin{example}\label{e:qv}(q-Voter Models) We begin with all $d\ge
2$, follow \cite{ASD} and consider, as in the Introduction, the
$q$-voter models with kernel $p(x)=1_\cN(x)/|\cN|$, for some
neighbourhood $\cN$.  We are interested in $q$ near 1, $q<1$, and so
let $q=1-\vep$ for $\vep\in(0,1]$ and define
\[
c^{*}_\vep(x,\xi) = \hxi(x)\dfrac{f_1(x,\xi)^{1-\vep}-f_1(x,\xi)}{\vep}
+ \xi(x)\dfrac{f_0(x,\xi)^{1-\vep}-f_0(x,\xi)}{\vep}.
\]
Then the $(1-\vep)$-voter rates may be written as
\begin{equation}\label{pert}
c^{(1-\vep)}(x,\xi) = c^\VM(x,\xi) + \vep \ c^{*}_\vep(x,\xi).
\end{equation}
Recall from \eqref{relldefn} that for $1\le \ell\le |\cN|$,
$r_\ell=(\ell/|\cN|)\log(|\cN|/\ell)$, and $\ r_0=0$, and let
\begin{equation}\label{rlep}
r^\vep_\ell = \frac{(\ell/|\cN|)^{1-\vep}-(\ell/|\cN|)}{\vep}, \text{ for }\ell\in\{0,\dots,|\cN|\}.
\end{equation}
Next, define
$g_i^\vep:\{0,1\}^{\cN}\to [0,\infty)$ for $i=0,1$ by 
\[g^\vep_i(\xi)= r^\vep_\ell\quad\text{ if }\quad\sum_{y\in\cN}1\{\xi(y)=i\}=\ell,\]
and $g_i$ in the same way, but with no superscript $\vep$'s.
Then we can write
\begin{align}\label{c*ep}c^{*}_\vep(x,\xi)
&= \sum_{\ell=1}^{|\cN|}r^\vep_l\Big(\hxi(x)1\{n_1(x,\xi)=\ell\} + \xi(x)1\{n_0(x,\xi)=\ell\}\Big)\\
\nonumber&=\hxi(x)g_1^\vep(\xi|_{x+\cN})+\xi(x)g_0^\vep(\xi|_{x+\cN}).
\end{align}
A Taylor series expansion shows that for $0< u<1$, and $0<\vep<1$,
\begin{align*}
\frac{u^{1-\vep}-u}{\vep} &=
u\log(1/u) + u\vep \sum_{m=2}^\infty \frac{\vep^{m-2}(\log
  1/u)^m}{m!}
\end{align*}
and thus
\begin{equation}\label{uepbnd}
0\le \frac{u^{1-\vep}-u}{\vep} -u\log(1/u)
\le u\vep \sum_{m=0}^\infty \frac{(\log
  1/u)^m}{m!}
= \vep.
\end{equation}
The above shows that for $\ell\in\{0,\dots,|\cN|\}$, 
\begin{equation}\label{rllim}
|r^\vep_\ell - r_\ell|\le \vep.
\end{equation}
and \eqref{rllim} implies
\[\Vert g_i^\vep-g_i\Vert_\infty\le \vep.\] As ${\bf 0}$ and ${\bf 1}$
are traps for $q$-voter models, we have verified that the collection
of $q$-voter models, $\{c^{(1-\vep)}:\vep\in(0,1]\}$ is a symmetric
voter model perturbation with finite range in $\cN$, non-negative
$g_i^\vep$, and $r_0=1,\, c_g=1$ in \eqref{giconv} (even if $d=2$).
As a final note here, using the fact that $\sup_{0<u\le 1}u\log(1/u) =
1/e$, it follows from \eqref{uepbnd} that if $0<\vep < 1-1/e$, then by
\eqref{rllim},
\begin{equation}\label{rmax} 0<r^\vep_\ell \le 1 \text{ for }1\le
\ell\le n-1.
\end{equation}

Consider now the rescaled quantities when $d=2$. We have from the above that
\begin{equation}\label{qvoterrNs}
r^{N,s}(A)=g_0^{\vep_N}(1_{\cN\setminus A})=r^{\vep_N}_{|A|}\to r_{|A|}=r^s(A),
\end{equation}
agreeing with \eqref{qvrs}.
The symmetry of $c^{(1-\vep)}$ puts us in the setting of Remark~\ref{rem:symm} and shows $c^{N,a}=c^a=r^{N,a}=r^a=0$ by \eqref{cNazero}. Therefore,
\begin{equation}\label{qvoterthetas}
\Theta_3=\Theta\text{ and }\Theta_2=0.
\end{equation}
The first was already noted in \eqref{Theta}, where the definition of $\Theta$ is given.
\end{example}

\begin{example}\label{e:LV} (Lotka-Volterra Models).  The
$2$-dimensional Lotka-Volterra model of Neuhauser-Pacala \cite{NP99}
with parameters $(\alpha_0,\alpha_1)\in(0,1)^2$ is the spin-flip
system with rate function (see \cite{CMP})
\[c^\LV(x,\xi)=c^\VM(x,\xi)+\hxi(x)(\alpha_0-1)f_1(x,\xi)^2+\xi(x)(\alpha_1-1)f_0(x,\xi)^2.\]
Here both the voter model kernel and the notation $f_i$ use the kernel
$p(x)=1_\cN(x)/|\cN|$ for some neighbourhood $\cN$.  If
$\alpha_1\vee\alpha_2\ge \frac{1}{2}$, $c^\LV$ is monotone (see
Section 1 of \cite{CP07}). In the diagonal case $\alpha_1=\alpha_2$,
$c^{\LV}$ is good cancellative (see, for example, the Proof of
Theorem~1.1 in Section~6 of \cite{CP14}).  As in Section~1 of
\cite{CMP}, for $\vep\in(0,1)$ one sets
\[\alpha_i=1-\vep+\beta_i^{(\vep)}(\log(1/\vep))^{-2}\vep,\text{ where
}\lim_{\vep\downarrow 0}\beta^{(\vep)}_i=\beta_i\in\R,\ \ i=0,1,\] so
that our voter model perturbation rates are given by
\begin{align*}c_\vep(x,\xi)&=c^{\VM}(x,\xi)+\vep
g_1^\vep(\xi\vert_{x+\cN})\hxi(x)+\vep g_0^\vep(\xi|_{x+\cN})\xi(x),\\
g_i^\vep(\xi|_\cN)&=\big(-1+(\log1/\vep)^{-2}\beta_{1-i}^{(\vep)}\big)f_i(0,\xi)^2,\
\ i=0,1.
\end{align*} If $g_i(\xi|_\cN)=-f_i(0,\xi)^2$, clearly
\[\Vert g_i^\vep-g_i\Vert_\infty\le C(\log(1/\vep))^{-2}\to 0\ \
\text{ for }i=0,1.\] Obviously both ${\bf 0}$ and ${\bf 1}$ are traps
for each $\vep$, and we also have
\begin{align*} \lim_{\vep\downarrow 0}(\log
(1/\vep))^2(g_1^\vep(\xi)-g_0^\vep(\hxi))&=\lim_{\vep\downarrow
0}(\beta_0^{(\vep)}-\beta_1^{(\vep)})f_1(0,\xi)^2\\
&=(\beta_0-\beta_1)f_1(0,\xi)^2\\ &:=g^a(\xi).
\end{align*} Therefore $\{c_\vep:\vep\in(0,1)\}$ is an asymptotically
symmetric finite range voter model perturbation.  We have
\[\beta_i^N:=\Bigl(\frac{\log
N}{\log(1/\vep_N)}\Bigr)^2\beta^{(\vep_N)}_i\to\beta_i\text{ as
}N\to\infty,\] and so, for $\emptyset\neq A\subset \cN$,
\begin{align}\nonumber r^{N,s}(A)=g_0^{\vep_N}(1_{\cN\setminus
A})&=(-1+(\log N)^{-2}\beta_1^N)\Bigl(\frac{|A|}{|\cN}\Bigr)^2\\
\label{rsLV}&\to-\Bigl(\frac{|A|}{|\cN|}\Bigr)^2=r^s(A),
\end{align} and
\begin{align*} r^{N,a}(A)&=(\log
N)^2[g^{\vep_N}_1(1_A)-g^{\vep_N}_0(1_{\cN\setminus A})]\\
&=(\beta^N_0-\beta^N_1)f_1(0,1_A)^2\\
&\to(\beta_0-\beta_1)\Bigl(\frac{|A|}{|\cN|}\Bigr)^2=r^a(A).
\end{align*} Therefore in this case we have
\begin{equation}\label{lvtheta2}
\Theta_2=\Theta_2^\LV:=(\beta_0-\beta_1)\sum_{\emptyset\neq
A\subset\cN}\Big(\frac{|A|}{|\cN|}\Big)^2 K_2(A,\bar\cN\setminus A),
\end{equation} and
\begin{equation}\label{lvtheta3}
\Theta_3=\Theta_3^\LV:=\sum_{\emptyset\neq
A\subset\cN}\Big(\frac{|A|}{|\cN|}\Big)^2(\Theta^-(A)-\Theta^+(A)).
\end{equation}
\end{example}

\begin{example}\label{e:av}
(Affine Voter Models) The 2-dimensional threshold voter model rate
function, introduced in \cite{CD91}, and corresponding to $q=0$ in
Example~\ref{e:qv}, is
\[ c^\TV(x,\xi) = 1\{\xi(x+y)\ne \xi(x) \text{ for some }y\in\cN\}
  \] for a neighbourhood $\cN$. The affine voter model with parameter
$\alpha\in[0,1]$, introduced in \cite{SS}, is the spin-flip system
with rate function
\[ c^\AV(x,\xi) = \alpha c^{\VM}(x,\xi) + (1-\alpha) c^{\TV}(x,\xi).
  \]
Here the voter model kernel is $p(x)=1_\cN(x)/|\cN|$, and
$\0$ and $\1$ are traps. The fact that $c^\TV$ is
cancellative was noted in Section~2
of \cite{CD91}. It is easy to check that a convex
combination of cancellative rate functions is cancellative,
and hence $c^\AV$ is cancellative, in fact good cancellative for
$\alpha<1$. 
Monotonicity of $c^\TV$, and hence of $c^\AV$ is clear.
Letting $\vep=1-\alpha$, 
\[
c^\AV(x,\xi) = c^\VM(x,\xi) + \vep (c^{\TV}(x,\xi)-c^\VM(x,\xi))
\]
so that our voter model perturbation rates are given by 
\begin{align*}c_\vep(x,\xi)&=c^{\VM}(x,\xi)+\vep
\big(\hxi(x)g_1^\vep(\xi\vert_{x+\cN})+\xi(x) g_0^\vep(\xi|_{x+\cN})\big),\\
g_i^\vep(\xi|_\cN)&=-f_i(0,\xi)+1\{n_i(0,\xi)\ge 1\}\ \ i=0,1.
\end{align*}
Since $g_i=g_i^\vep$ does not depend on $\vep$,
\eqref{giconv2} holds. 
Obviously each $c_\vep$ is symmetric, 
so $r^{N,a}=r^a=g^a\equiv 0$. Therefore $\{c_\vep:\vep\in(0,1)\}$ is
an asymptotically symmetric finite range voter model
perturbation such that for $A\ne\emptyset$, 
\begin{align*}
r^{N,s}(A)=r^s(A) = g_0(1_{\cN\setminus A}) = - \frac{|A|}{|\cN|}
+  1,
\end{align*}
$\Theta_2=0$, and if we use \eqref{linearid0} and then
\eqref{constantid0}, we get
\begin{equation}\label{avthetas}
\Theta_3=\sum_{\emptyset\ne A\subset\cN}
\Big(1-\frac{|A|}{|\cN|}\Big)(\Theta^+(A)-\Theta^-(A))=\sum_{\emptyset\ne
  A\subset\cN}(\Theta^+(A)-\Theta^-(A))=\kappa\ 
  :=\Theta^{\AV}_3.
\end{equation}
\end{example}

\begin{example}\label{e:GV} (Geometric Voter Models) The 2-dimensional
geometric voter model with rate function, introduced in \cite{CD91},
is
\begin{equation}\label{geom1} c^\GV(x,\xi) =
\dfrac{1-\theta^{j}}{1-\theta^{|\cN|}} \text{ if } \sum_{y\in
\cN}1\{\xi(x+y)\ne \xi(x)\}=j.
\end{equation} Here $\cN$ is a neighbourhood, $0\le \theta< 1$, and
$\0$ and $\1$ are clearly traps.  As $\theta$ ranges from 0 to 1,
these dynamics range from the threshold voter model to the voter
model. The fact that $c^\GV$ is cancellative was proved in Section~2
of \cite{CD91}, and it follows that $c^\GV$ is then good
cancellative. Monotonicity of $c^\GV$ is again elementary.  By (7.3)
in \cite{CP14}, taking $\vep=1-\theta$ (instead of $\vep^2$),
\begin{equation}\label{geom2}
c^\GV(x,\xi) = c^\VM(x,\xi) + \vep \frac{|\cN|}{2}
f_0(x,\xi)f_1(x,\xi) + O(\vep^2) \text{ as }\vep\to 0.
\end{equation}
Here, $c^\VM$ and the densities $f_i$ use the kernel
$p(x)=1_{\cN}(x)/|\cN|$, and the $O(\vep^2)$ term is uniform in
$x,\xi$. Thus, there are $g^\vep_i$ such that
\begin{align*}c_\vep(x,\xi)=c^{\VM}(x,\xi)+\vep
g_1^\vep(\xi\vert_{x+\cN})\hxi(x)+\vep
g_0^\vep(\xi|_{x+\cN})\xi(x),
\end{align*}
where if $g(x,\xi) = (|\cN|/2)f_0(x,\xi)f_1(x,\xi)$, then $\|g_i^\vep
- g\| \le C|\vep|$. Clearly $c^{\GV}$ is symmetric and so by
Remark~\ref{rem:symm} $g^a=r^a=0$.  Therefore $\{c_\vep,0\le \vep<1\}$
is an asymptotically symmetric voter model perturbation such that for
$A\ne \emptyset$,
\begin{equation*}
r^s(A) = g(1_{\cN\setminus A}) = \frac{|A| (|\cN|- 
  |A|)}{2|\cN|}. 
\end{equation*}
Thus $\Theta_2=0$ and, again using \eqref{linearid0}, we have
\begin{align}\label{gvthetas}
\Theta_3=\frac{1}{2|\cN|}\sum_{\emptyset\ne A\subset\cN}
|A|(|\cN|-|A|)(\Theta^+(A)-\Theta^-(A))&=\frac{|\cN|}{2}\sum_{\emptyset\ne
A\subset\cN}\Bigl(\frac{|A|}{|\cN|}\Bigr)^2(\Theta^-(A)-\Theta^+(A))\\
\nonumber&:=\Theta^\GV_3=\frac{|\cN|}{2}\Theta_3^\LV.
\end{align}
\end{example}

\subsection{Proof of Theorem~\ref{t:CCT} for dimensions 3
  and above}\label{sec:cct3d}
We assume $\cN$ is as in Theorem~\ref{t:CCT} for $d\ge3$.  Let
$\langle\quad\rangle_u$ denote expectation with respect to the voter
model equilibrium with density $u\in[0,1]$ and define symmetric rates
\begin{equation}\label{fdef}f(u)=\langle
(1-\xi(0))c^*(0,\xi)-\xi(0)c^*(0,\xi)\rangle_u\text { for }u\in[0,1],
\end{equation} where $c^*$ is as in \eqref{c*ep} but with $r_\ell$ in
place of $r_\ell^\vep$.  As noted in Section~1 of \cite{CDP13} $f$
will be a polynomial in $u$ of degree at most $|\cN|+1$.  As we have
$d\ge 3$, Theorem~1.2 of \cite{CP14} (as strengthened in
Remark~\ref{betainfdge3}) shows that the conclusion of
%\eqref{cctheorem} will hold with $\nu_{1/2}$ as in
Theorem~\ref{t:CCT} will hold for $q<1$ and sufficiently close to $1$,
providing the following hold for some $\vep_0\in(0,1)$:
\begin{align*} &(1)\text{ For $0<\vep<\vep_0$, $c^{(1-\vep)}(x,\xi)$
is a rate function of a cancellative process (as in \eqref{canc}).}\\
&(2)\ \{c^{(1-\vep)}(x,\xi):0<\vep<\vep_0\}\text{ is a finite range
voter model perturbation.}\\ &(3)\ f'(0)>0.
\end{align*} Here we have ignored one condition from Theorem~1.2 of
\cite{CP14} (condition (1.2) there) as the required exponential tail
bound is trivially true for our finite neighbourhood setting.  The
first two conditions have been established in Sections~\ref{sec:canc}
(Lemma~\ref{l:qcanc}) and \ref{sec:vmp}, respectively.  Finally (3)
follows from Theorem~1.2 of \cite{ASD}, where it is shown for any
neighbourhood $\cN$.  \qed

\begin{remark}\label{r:+drift} The proof of $f'(0)>0$ from \cite{ASD}
is stated for $d=3$ but holds equally well for $d\ge 3$.  In fact it
gives a stronger representation for $f$ which implies $f'(0)>0$.
Alternatively see Proposition~\ref{p:posthetabigd} for a more direct
proof.
\end{remark}

\section{A general complete convergence theorem in two dimensions}\label{sec:proofCCT}
Fix an initial state $\xi_0\in\{0,1\}^{\Z^2}$. For
$\xi,\eta\in\{0,1\}^{\Z^2}$.  We first extend the stochastic
differential equation (SDE) construction of spin-flip systems and
coupling with killed processes from Section 2 of \cite{CP07} to the
setting where $|\xi_0|$ may be infinite.  Let $\{N^{x,i}:x\in\Z^2,
i=0,1\}$ be independent Poisson point processes on $\R_+^2$ with rate
$ds\times du$.  For $R\subset\R^2$ and $T\ge 0$,
\begin{equation}\label{sigmafdefn} \cG([0,T]\times R) \text{ is the
$\sigma$-field generated by }\{N^{x,i}|_{[0,T]\times \R}:x\in
R,i=0,1\}.
\end{equation}
Consider a rate function $c:\Z^2\times\{0,1\}^{\Z^2}\to[0,\infty)$,
which is bounded continuous. Let $\hat c(x,\xi)=c(x,\hxi)$ be the rate
function for the evolution of the $0$'s. We assume
\begin{equation}
\label{L1boundunscaleda}
\sum_x\big(c(x,\xi)+\hat c(x,\xi)\big)\le C|\xi|,
\end{equation}
and
\begin{equation}
\label{pertboundunscaleda}
\sup_{x\in\Z^2}\sum_{u\in\Z^2}\sup_\xi|c(x,\xi)-c(x,\xi^{(u)})|\le C,
\end{equation}
where
\[\xi^{(u)}(x)=1(x\neq u)\xi(x)+1(x=u)\hxi(x).\]
Under \eqref{pertboundunscaleda} and boundedness and continuity of
$c$, there is a unique Feller process $\xi_t$ taking values in
$\{0,1\}^{\Z^2}$ associated with the rate function $c$ (see Theorem B3
in \cite{Lig99}). Note that the above conditions hold for $c$ iff they
hold for $\hat c$.
\begin{remark}\label{votepertlip} It is easy to check all of the above
conditions are satisfied by $c_\vep$, if $\{c_\vep:0<\vep\le \vep_0\}$
are the rates of a finite range voter model perturbation. See
Corollary~2.4 of \cite{CP07} for \eqref{L1boundunscaleda} (without the
$\hat c$ term) and \eqref{pertboundunscaleda}. Condition
\eqref{L1boundunscaleda} follows easily for $\hat c$ from the fact
that ${\bf 1}$ is a trap for our finite range voter model
perturbations in $d=2$ (just as it followed for $c$ using the fact
that ${\bf 0}$ is a trap). Note that under our boundedness assumption
on $c$, \eqref{L1boundunscaleda} for $c$ alone is equivalent to
condition (2.1) in \cite{CP07}.
\end{remark}
For $\xi_0\in\{0,1\}^{\Z^2}$ consider the SDE
\begin{multline}\label{construct2}
\xi_t(x) = \xi_0(x)+\int_0^t\int
(1-\xi_{s-}(x))1(u\le c(x,\xi_{s-}))N^{x,0}(ds,du) 
\\ -\int_0^t\int \xi_{s-}(x)1(u\le
c(x,\xi_{s-}))N^{x,1}(ds,du), \quad t\ge 0,x\in\Z^2.
\end{multline}
If $|\xi_0|<\infty$, \eqref{construct2} has a pathwise unique solution
$\xi_t=\xi_t[\xi_0]$ which has the same law as the above Feller
process with initial condition $\xi_0$. For this see Proposition
2.1(a) of \cite{CP07} and note, that monotonicity of $c$ is not needed
in Proposition~2.1(c).  Now fix $M_0\in\N$, set $I'=(-M_0,M_0)^2$, and
let $\underline{c}(x,\xi)=1(x\in I')c(x,\xi)$. Clearly $\underline{c}$
also satisfies all the hypotheses we have imposed on $c$. For our
given initial condition $\xi_0$, let $\uxi_0(x)=1(x\in
I')\xi_0(x)$. Then $|\uxi_0|<\infty$ (even if $|\xi_0|=\infty$) and so
there is a unique solution, $\uxi_t=\uxi_t[\uxi_0,I']$, of
\begin{multline}\label{killsconstruct}
\uxi_t(x) = \uxi_0(x)+\int_0^t\int
(1-\uxi_{s-}(x))1(u\le \underline{c}(x,\uxi_{s-}))N^{x,0}(ds,du) 
\\ -\int_0^t\int \uxi_{s-}(x)1(u\le
\underline{c}(x,\uxi_{s-}))N^{x,1}(ds,du), \quad t\ge 0,x\in\Z^2.
\end{multline}
Note that $\uxi_t(x)=0$ for all $t\ge 0$ and all $x\notin I'$. If, in
addition, $c$ is monotone, then by Proposition~2.1(b) of \cite{CP07},
\begin{equation}\label{monotoneorder}
\text{if $\xi_{\cdot}$ satisfies
}(\ref{construct2})\text{ where $|\xi_0|<\infty$, then
}\uxi_t\le \xi_t\ \forall t\ge 0. 
\end{equation}
We now show this continues to hold even if $|\xi_0|=\infty$. {\bf
Unless otherwise indicated, monotonicity of the rate function, $c$, is
assumed in the rest of this section.} Note that $\hat c$ is monotone
iff $c$ is. Therefore our hypotheses hold for $c$ iff they hold for
$\hat c$.
\begin{lemma}\label{l:orderinfinite} If $\xi_{\cdot}$
satisfies \eqref{construct2} then $\uxi_t\le \xi_t$ for all $t\ge 0$
a.s.
\end{lemma}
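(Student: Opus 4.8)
The plan is to deduce the infinite-$\xi_0$ case from the finite-$\xi_0$ case already established in \eqref{monotoneorder}, by approximating $\xi_0$ from below by finite configurations and passing to the limit in the SDE \eqref{construct2}. First I would fix an increasing sequence of finite boxes $I_n=(-n,n)^2\uparrow\Z^2$ and set $\xi_0^{(n)}(x)=1(x\in I_n)\xi_0(x)$, so that $|\xi_0^{(n)}|<\infty$ and $\xi_0^{(n)}\uparrow\xi_0$ pointwise. For each $n$ let $\xi^{(n)}_\cdot$ be the pathwise unique solution of \eqref{construct2} driven by the \emph{same} Poisson processes $\{N^{x,i}\}$ but with initial condition $\xi_0^{(n)}$; then by Proposition~2.1(b) of \cite{CP07} (the finite-initial-condition monotonicity), $\uxi_t\le\xi_t^{(n)}$ for all $t\ge 0$, a.s., since $\uxi_0\le\xi_0^{(n)}$ once $I'\subset I_n$. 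The same proposition also gives monotonicity in the initial condition: $\xi_0^{(n)}\le\xi_0^{(n+1)}$ forces $\xi_t^{(n)}\le\xi_t^{(n+1)}$ for all $t$, a.s., so the pointwise limit $\bar\xi_t(x):=\lim_n\xi_t^{(n)}(x)$ exists (it is an increasing $\{0,1\}$-valued limit) and automatically satisfies $\uxi_t\le\bar\xi_t$ for all $t\ge 0$, a.s.

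The crux is then to identify $\bar\xi$ with the solution $\xi$ of \eqref{construct2} with initial condition $\xi_0$, i.e.\ to show $\bar\xi$ solves the SDE and to invoke the pathwise uniqueness implicit in the statement. For this I would pass to the limit in the integral equation \eqref{construct2} written for $\xi^{(n)}$. Since $c$ is bounded and continuous and $\xi^{(n)}_{s-}(y)\to\bar\xi_{s-}(y)$ for every $y$, at each jump time of $N^{x,i}$ the indicator $1(u\le c(x,\xi^{(n)}_{s-}))$ converges to $1(u\le c(x,\bar\xi_{s-}))$ except possibly on the null set where $u=c(x,\bar\xi_{s-})$ exactly; combined with $\xi^{(n)}_{s-}(x)\uparrow\bar\xi_{s-}(x)$ this lets one take $n\to\infty$ in each of the two stochastic integrals over a fixed finite time horizon. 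Here the Lipschitz-type bound \eqref{pertboundunscaleda} together with \eqref{L1boundunscaleda} controls the total jump rate near any fixed site and rules out accumulation, so the limiting integrals are well defined and finite; this is exactly the estimate used to construct solutions of \eqref{construct2} for infinite $\xi_0$ in the first place (the Feller property via Theorem~B3 of \cite{Lig99}). The conclusion is that $\bar\xi$ satisfies \eqref{construct2} with initial data $\xi_0$, hence $\bar\xi=\xi$ a.s.\ by pathwise uniqueness, and therefore $\uxi_t\le\xi_t$ for all $t\ge 0$, a.s.

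The main obstacle I anticipate is the passage to the limit in the stochastic integrals: one must be careful that the approximating solutions $\xi^{(n)}$ are all built on the \emph{same} driving noise (so that the monotone coupling is genuine rather than merely in law) and that the limit $\bar\xi$ is itself a genuine solution of \eqref{construct2} rather than some auxiliary process. The cleanest route is probably to cite or mildly adapt the construction in Section~2 of \cite{CP07}, which already handles \eqref{construct2} for infinite initial conditions via these same Poisson equations and the bounds \eqref{L1boundunscaleda}--\eqref{pertboundunscaleda}; the monotone approximation $\xi^{(n)}\uparrow$ is then recognized as the natural graphical construction of that solution, so that no new analytic input is needed beyond quoting Proposition~2.1 of \cite{CP07} and the dominated/monotone convergence argument sketched above. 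One small point to check is that $\bar\xi$ inherits the right-continuity with left limits needed to even make sense of $\bar\xi_{s-}$ in the integrand, which again follows because each $\xi^{(n)}$ is càdlàg and the increasing limit of uniformly-bounded-rate jump processes is càdlàg on the (a.s.\ full) event that jump times do not accumulate.
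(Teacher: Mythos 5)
There is a genuine gap. Your strategy is to build the monotone limit $\bar\xi=\lim_n\xi^{(n)}$ of solutions with truncated initial data, show $\bar\xi$ solves \eqref{construct2}, and then conclude $\bar\xi=\xi$ ``by pathwise uniqueness.'' But the lemma is stated for an \emph{arbitrary} solution $\xi$ of \eqref{construct2} with $|\xi_0|=\infty$, and pathwise uniqueness in that regime is exactly what is not available: the paper explicitly remarks after Proposition~\ref{p:infinitesde} that it has ``side-stepped the general question of pathwise existence and uniqueness of solutions to \eqref{construct2} when $|\xi_0|=\infty$'' and does not need it. Without that uniqueness your argument only proves $\uxi_t\le\bar\xi_t$ for the particular monotone-limit solution, not for the given $\xi$. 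This is not a cosmetic point: in the proof of Proposition~\ref{p:infinitesde}(b) the lemma is applied to $\hxi^M=1-\xi^M$, which satisfies \eqref{hatconstruct} (i.e.\ \eqref{construct2} with $\hat c$ and the roles of $N^{x,0},N^{x,1}$ swapped) with infinite initial condition but is \emph{not} produced as a monotone limit of finitely supported initial data under the $\hat c$ dynamics, so your identification step would not cover the case the paper actually uses.

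The paper's proof avoids this entirely by arguing directly with the given solution $\xi$: since $\uxi$ only moves at jump times of the Poisson processes attached to the finite box $I'$, and these jump times (bounded rate $2|I'|\,\Vert c\Vert_\infty$) form a discrete sequence $T_1<T_2<\cdots\uparrow\infty$, one inducts over the $T_n$. Between jumps the $I'$-coordinates of both processes are frozen, and at each jump the monotonicity of $c$ shows the order is preserved, exactly as in Proposition~2.1(b) of \cite{CP07}. If you want to salvage your route you would need either to prove pathwise uniqueness of \eqref{construct2} for infinite $\xi_0$ (a separate and unaddressed problem) or to abandon the approximation of $\xi$ and instead compare $\uxi$ with $\xi$ itself jump by jump, which is the paper's argument.
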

\begin{proof} By \eqref{monotoneorder}, we may focus on
$|\xi_0|=\infty$. Define
\[\Lambda_t=\sum_{x\in I'}\int_0^t\int_0^{\Vert
c\Vert_\infty}(N^{x,0}+N^{x,1})(ds,du).\] Then $\Lambda$ is a Poisson
process with rate $2|I'|\Vert c\Vert_\infty<\infty$ and so has a
sequence of jump times $0<T_1<T_2<\dots$ increasing to infinity.  It
suffices to prove that $\uxi_t\le \xi_t$ for $t\in[0,T_n]$ for all
$n$, which we prove by induction.  Assume the inequality up to, and
including, time $T_n$. On $(T_n,T_{n+1})$,
$\uxi_t(x)=\uxi_{T_n}(x)\le\xi_{T_n}(x)= \xi_t(x)$ for all $x\in I'$
since the jump times of these coordinates are clearly included in the
jump times of $\Lambda$.  For $x\notin I'$, $\uxi_t(x)=0\le \xi_t(x)$
for all $t$, including those in $(T_n,T_{n+1})$.  At $T_{n+1}$ one
considers the unique $x\in I'$ for which $N^{x,i}$ jumps at $T_{n+1}$
for some $i$ and uses the monotonicity of $c$ to show that in each of
the two cases $\Delta\xi_{T_{n+1}}(x)<0$ or
$\Delta\uxi_{T_{n+1}}(x)>0$, one has
$\uxi_{T_{n+1}}(x)\le\xi_{T_{n+1}}(x)$.  This is done just as in the
proof of Proposition~2.1(b) in \cite{CP07}.  The remaining cases
trivially lead to the same conclusion.  This establishes the induction
step, and the $n=1$ step is handled in exactly the same way.
\end{proof}

If $\xi_{\cdot}$ satisfies
\eqref{construct2}, then $\hxi_{\cdot}$ satisfies
\begin{multline}\label{hatconstruct}
\hxi_t(x) = \hxi_0(x)+\int_0^t\int
(1-{\hxi}_{s-}(x))1(u\le\hat c(x,\hxi_{s-}))N^{x,1}(ds,du) 
\\ -\int_0^t\int \hxi_{s-}(x)1(u\le
\hat c(x,\hxi_{s-}))N^{x,0}(ds,du), \quad t\ge 0,x\in\Z^2.
\end{multline}
Note that $c$ has been replaced with $\hat c$ and the roles of
$N^{x,0}$ and $N^{x,1}$ have been reversed from that in
\eqref{construct2}.  We set $\hat\uxi_0(x)=\widehat\xi_0(x)1(x\in I')$
and define $\hat\uxi_t=\hat\uxi_t[\hat\uxi_0,I']$ as the unique
solution (by Proposition~2.1(a) of \cite{CP07} because $\hat c$
satisfies the same hypotheses as $c$) to
\begin{multline}\label{killedhatconstruct}
\hat\uxi_t(x) = \hat\uxi_0(x)+\int_0^t\int
(1-{\hat\uxi}_{s-}(x))1(u\le \underline{\hat c}(x,\hat\uxi_{s-}))N^{x,1}(ds,du) 
\\ -\int_0^t\int \hat\uxi_{s-}(x)1(u\le
\underline{\hat c}(x,\hat\uxi_{s-}))N^{x,0}(ds,du), \quad t\ge 0,x\in\Z^2.
\end{multline}
So in the notation $\hat\uxi$ we effectively take the hat first and then do the killing. 

If $M\in\N$ and $\xi_0\in\{0,1\}^{\Z^2}$, we may set
$\uxi_0(x)=\xi^M_0(x):=\xi_0(x)1(x\in(-M,M)^2)$, and denote the unique
solution to \eqref{construct2} with this initial state by $\xi^M_t$.
\begin{proposition}\label{p:infinitesde}(a) As $M\to\infty$,
$\xi^M_t(x)\uparrow \xi^\infty_t(x)$ for all $x\in\Z^2$ and $t\ge 0$
a.s. Moreover $\xi^\infty_t$ is the unique in law Feller process with
rates $c(x,\xi)$ starting at $\xi_0$.  If $|\xi_0|<\infty$ then
$\xi^\infty_t=\xi_t[\xi_0]$, the unique solution of\eqref{construct2}.

\noindent(b) We have $\uxi_t(x)\le \xi^\infty_t(x)$, and
$\hat\uxi_t(x)\le \widehat{\xi^\infty_t}(x)$ for all $x\in\Z^2$
$t\ge 0$ a.s.

\noindent(c) If $c$ is symmetric, then $P(\hat\uxi\in
\cdot)=P_{\hat\uxi_0}(\uxi\in\cdot)$, where the right-hand side is the
law of $\uxi_\cdot$ with initial state $\hat\uxi_0$, that is, the law
of ${\uxi_\cdot}[\hat\uxi_0,I']$.
\end{proposition}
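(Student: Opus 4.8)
The plan is to construct $\xi^\infty$ as a monotone limit of the finite-initial-condition processes $\xi^M$, identify its law via the martingale problem, and then read off (b) and (c) from the comparison machinery already set up in this section together with the exchangeability of the driving Poisson noise.

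For \textbf{(a)}, I would first note that $\xi^M_0\le\xi^{M+1}_0$ and that $\xi^M_\cdot$ and $\xi^{M+1}_\cdot$ solve \eqref{construct2} driven by the \emph{same} Poisson processes, so the monotone coupling argument of Proposition~2.1(b) of \cite{CP07} (the one invoked for \eqref{monotoneorder} and reused in the proof of Lemma~\ref{l:orderinfinite}) gives $\xi^M_t(x)\le\xi^{M+1}_t(x)$ for all $x\in\Z^2$, $t\ge 0$, a.s.; hence $\xi^\infty_t(x):=\lim_{M\to\infty}\xi^M_t(x)\in\{0,1\}$ exists as a nondecreasing limit a.s. To identify $\xi^\infty$, recall that by Proposition~2.1(a) of \cite{CP07} each $\xi^M_\cdot$ has the law of the Feller process with rate function $c$ started at $\xi^M_0$, so for every cylinder function $f$ on $\{0,1\}^{\Z^2}$ the process $f(\xi^M_t)-f(\xi^M_0)-\int_0^t\call f(\xi^M_s)\,ds$ is a martingale, where $\call f(\xi)=\sum_x c(x,\xi)(f(\xi^{(x)})-f(\xi))$ is a finite sum (only the finitely many $x$ in the support of $f$ contribute), is bounded, and is continuous in $\xi$ because $c$ is bounded and continuous. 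Letting $M\to\infty$ and using bounded/dominated convergence (the time integrand is bounded by a constant depending only on $f$, $\|c\|_\infty$ and the support of $f$), $f(\xi^\infty_t)-f(\xi_0)-\int_0^t\call f(\xi^\infty_s)\,ds$ is then a martingale for every cylinder $f$, so $\xi^\infty$ solves the martingale problem for the generator $\call$; since this problem is well posed (Theorem~B3 of \cite{Lig99}), $\xi^\infty_\cdot$ is (a version of) the Feller process started at $\xi_0$; one also records the routine fact, standard for finite-range bounded-rate spin systems, that $\xi^\infty$ has a c\`adl\`ag modification. Finally, if $|\xi_0|<\infty$ then $\xi^M_0=\xi_0$ for all large $M$, whence $\xi^M_\cdot=\xi_\cdot[\xi_0]$ by pathwise uniqueness, and so $\xi^\infty_\cdot=\xi_\cdot[\xi_0]$.

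For \textbf{(b)}, fix $M\ge M_0$, so that $I'\subset(-M,M)^2$ and $\uxi_0(x)=\xi^M_0(x)1(x\in I')$. Applying \eqref{monotoneorder} with $\xi^M_0$ playing the role of $\xi_0$ gives $\uxi_t\le\xi^M_t$ for all $t$, and letting $M\to\infty$ and using (a) yields $\uxi_t\le\xi^\infty_t$. For the second inequality, recall that $\widehat{\xi^M_\cdot}$ solves \eqref{hatconstruct}, i.e.\ equation \eqref{construct2} with $(c,N^{\cdot,0},N^{\cdot,1})$ replaced by $(\hat c,N^{\cdot,1},N^{\cdot,0})$; since $\hat c$ is monotone and satisfies the same hypotheses as $c$, Lemma~\ref{l:orderinfinite} applies verbatim in this relabelled set-up (the initial datum $\widehat{\xi^M_0}$ is infinite, which is exactly the case Lemma~\ref{l:orderinfinite} handles), giving $\hat\uxi_t\le\widehat{\xi^M_t}$ for all $t$, using that $1(x\in I')\widehat{\xi^M_0}(x)=1(x\in I')\widehat{\xi_0}(x)=\hat\uxi_0(x)$ for $M\ge M_0$ and that $\hat\uxi$ is the corresponding killed solution \eqref{killedhatconstruct}. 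Letting $M\to\infty$, $\widehat{\xi^M_t}\downarrow\widehat{\xi^\infty_t}$, so $\hat\uxi_t\le\widehat{\xi^\infty_t}$.

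For \textbf{(c)}, symmetry of $c$ means $\hat c(x,\xi)=c(x,\hxi)=c(x,\xi)$, so $\hat c=c$ and $\underline{\hat c}=\underline c$; hence \eqref{killedhatconstruct} is literally equation \eqref{killsconstruct} with initial condition $\hat\uxi_0$ and with the two Poisson families $(N^{x,0})_x$ and $(N^{x,1})_x$ interchanged. Writing $\Phi(\eta_0;\,\cdot\,,\,\cdot\,)$ for the pathwise-unique solution map of \eqref{killsconstruct} (which exists by Proposition~2.1(a) of \cite{CP07}, since $\underline c$ satisfies the required hypotheses and $|\eta_0|<\infty$ for $\eta_0$ supported in $I'$), we have $\uxi_\cdot[\hat\uxi_0,I']=\Phi(\hat\uxi_0;(N^{x,0})_x,(N^{x,1})_x)$ and $\hat\uxi_\cdot=\Phi(\hat\uxi_0;(N^{x,1})_x,(N^{x,0})_x)$ a.s. Since $\{N^{x,i}:x\in\Z^2,\ i=0,1\}$ is an i.i.d.\ family, $\big((N^{x,0})_x,(N^{x,1})_x\big)\eqd\big((N^{x,1})_x,(N^{x,0})_x\big)$, so applying the measurable map $\Phi(\hat\uxi_0;\,\cdot\,,\,\cdot\,)$ gives $\hat\uxi_\cdot\eqd\uxi_\cdot[\hat\uxi_0,I']$, which is the assertion $P(\hat\uxi\in\cdot)=P_{\hat\uxi_0}(\uxi\in\cdot)$. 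I expect the only genuine difficulty to be the identification step in part (a) (the martingale-problem passage to the limit and its well-posedness, plus the path-regularity book-keeping); parts (b) and (c) then follow quickly from \eqref{monotoneorder}, Lemma~\ref{l:orderinfinite}, and the exchangeability of the Poisson noise.
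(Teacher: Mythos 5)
Your proof is correct and follows the paper's own structure very closely: the monotone limit in $M$ via Proposition~2.1(b) of \cite{CP07}, the coupling \eqref{monotoneorder} together with Lemma~\ref{l:orderinfinite} applied to $\hat c$ with the roles of $N^{x,0}$ and $N^{x,1}$ reversed for part (b), and the exchangeability of the i.i.d.\ family $(N^{x,0},N^{x,1})$ for part (c). The one step where you genuinely diverge is the identification of the law of $\xi^\infty$ in (a): you pass to the limit in the martingale problem for cylinder functions (using that $\call f$ is a bounded continuous finite sum) and then invoke well-posedness, whereas the paper combines well-posedness of the martingale problem with continuity of the Feller laws in the initial condition (Proposition~6.5 of \cite{Lig85}) to conclude that the laws of $\xi^M$ converge to the required law. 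Both routes are standard; yours carries the path-regularity bookkeeping you flag (or can be run purely at the level of finite-dimensional distributions, which is all that is needed and is effectively what the paper's route delivers). Your treatment of the $|\xi_0|<\infty$ case --- observing that $\xi^M_0=\xi_0$ for all large $M$, so the approximating sequence stabilizes at $\xi_t[\xi_0]$ --- is in fact slightly more direct than the paper's, which instead derives $\xi^\infty_t\le\xi_t[\xi_0]$ by monotonicity in the initial condition and then concludes equality from equality of the laws.
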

\begin{proof} (a) By Proposition~2.1(b) of \cite{CP07}, $\xi^M_t(x)$
increases in $M$ for all $(t,x)$ a.s. and so we can define
$\xi^\infty$ as this a.s. limit. By (c) of the same Proposition the
law of $\xi^M$ is that of the unique Feller process with rates $c$ and
initial condition $\xi_0^M$. Theorem~B3 of \cite{Lig99} allows us to
apply Theorem~5.2 of \cite{Lig85} to conclude that the martingale
problem associated with the rates $c$ is well-posed, and then
Proposition~6.5 of \cite{Lig85} gives continuity of the laws in the
initial condition. This implies that $\xi^\infty$ has the required
law.  If $|\xi_0|<\infty$ and $\xi_t=\xi_t[\xi_0]$ is the unique
solution of \eqref{construct2}, then monotonicity in the initial
condition from Proposition~2.1(b) of \cite{CP07} shows that
$\xi^M_t\le \xi_t$ for all $t\ge 0$ a.s., and so taking limits we get
$\xi^\infty_t\le\xi_t$ for all $t\ge 0$ a.s. Since $\xi_{\cdot}$ and
$\xi_{\cdot}^\infty$ have the same law, they must be identical.

\noindent(b)  Let $M\ge M_0$, so that
$\underline{\xi}^M_0:=\xi^M_0(x)1(x\in I')=\uxi_0(x)$ and therefore
$\uxi_t=\uxi^M_t:=\uxi_t(\uxi^M_0,I')$. By \eqref{monotoneorder},
w.p.$1$ for any $t\ge 0$,
\[\uxi_t=\uxi^M_t\le\xi^M_t\le\xi^\infty_t.\] If
$\hat\uxi^M_t=\hat\uxi_t(\hat\xi^M_0,I')$, then, just as above with
$\hat c$ in place of $c$, $\hat \uxi_t=\hat\uxi^M_t$. The process
$\hxi^M_t=1-\xi^M_t$ satisfies \eqref{hatconstruct}, and so we may
apply Lemma~\ref{l:orderinfinite} with $\hat c$ in place of $c$ and
the roles of $N^{x,0}$ and $N^{x,1}$ reversed and so conclude that
\[\hat\uxi_t=\hat\uxi^M_t\le 1-\xi^M_t\to \widehat{\xi_t^\infty}\text{
as }M\to\infty\quad\forall t\ge 0 \ \text{a.s.},\] and so deduce the
second inequality in (b).

\noindent(c) Under symmetry of $c$, $\hat\uxi$ is the unique solution
of \eqref{killedhatconstruct} with $\hat c=c$, and so has the same law
as $\uxi[\hat\uxi_0,I']$, the unique solution of
\eqref{killsconstruct} with initial condition $\hat\uxi_0$, because
$(N^{x,0},N^{x,1})$ is equal in law to $(N^{x,1},N^{x,0})$.
\end{proof}

In view of (a) of the above we will denote $\xi_t^\infty$ by
$\xi_t[\xi_0]$ as it agrees with our earlier definition for
$|\xi_0|<\infty$. The reader will note however, we have side-stepped
the general question of pathwise existence and uniqueness of solutions
to \eqref{construct2} when $|\xi_0|=\infty$. We believe this to be the
case by uniqueness of the martingale problem and monotonicity, but
will not need it.

(a) and (b) show that for any initial state $\xi_0$, and corresponding
$\uxi_0$ and $\hat\uxi_0$, we may construct $(\xi,\uxi,\hat\uxi)$ on
the same space such that
\begin{equation}\label{couplingforintro} \uxi_t\le \xi_t \text{ and }
\hat\uxi_t\le \hxi(t) \ \ \forall t\ge0 \ a.s.
\end{equation} Here $\uxi$ evolves according to rate $c$ with killing
outside $I'$, and $\hat\uxi$ evolves according to rate $\hat c$ with
the same killing, while $\xi$ and $\hxi$ evolve according the rates
$c$ and $\hat c$, respectively with no killing. Under symmetry, the
dynamics of the two killed processes are the same but of course the
initial conditions differ.  In the symmetric case this point seems to
be made implicitly in the proof of Theorem~1.1 in Section~6 of
\cite{CP14}, but perhaps warrants the explicit construction given
above.

Turning to the complete convergence theorem, we first present an
abstract complete convergence in two dimensions essentially taken from
\cite{CP14}.  If $A\subset\Z^2$, $x_0\in\Z^2$ and
$\xi\in\{0,1\}^{\Z^2}$, let
\[A(x_0,\xi)=\{y\in A:\xi(y)=1, \xi(y+x_0)=0\}.\] Monotonicity is not
required for our first abstract complete convergence theorem.
\begin{theorem}\label{t:CCT2dgen}
Assume for $0<\vep\le \vep_0$, $\xi^{[\vep]}$ is a cancellative finite
range voter model perturbation with rate function $ c_\vep(x,\xi)$.
Assume also that for each $\vep$,
\begin{align}\label{condA} \exists x_0\in\Z^2& \text{ so that if
}|\widehat{\xi^{[\vep]}_0}|=\infty\text{ then }
\lim_{K\to\infty}\sup_{A\subset\Z^2,|A|\ge
K}\lim_{t\to\infty}P_{\xi^{[\vep]}_0}(|\xi^{[\vep]}_t|>0,\
A(x_0,\xi^{[\vep]}_t)=\emptyset)=0,
\end{align}
and
\begin{equation}\label{condB}
\limsup_{t\to\infty}P_{\delta_0}(\xi^{[\vep]}_t(0)=1)>0.
\end{equation}
There is an $\vep_1>0$ such that for $\vep\in(0,\vep_1)$, there is a
translation invariant symmetric stationary distribution
${\nu}^{\vep}_{1/2}$ with density $1/2$, satisfying the coexistence
property, such that for all initial $\xi^{[\vep]}_0$,
\begin{equation}\label{cct2dgen} \xi^{[\vep]}_t\Rightarrow
\beta_0(\xi^{[\vep]}_0)\delta_{\0}+\beta_\infty(\xi^{[\vep]}_0){\nu}^\vep_{1/2}+\beta_1(\xi^{[\vep]}_0)\delta_{\1}\quad\text{
as }t\to\infty.
\end{equation}
\end{theorem}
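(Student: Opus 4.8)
The plan is to deduce \eqref{cct2dgen} from the cancellative machinery of \cite{CP14}, combining (the cancellative form of) Proposition~4.1 of \cite{CP14} with its supporting estimates; what has to be supplied is that a cancellative finite range voter model perturbation with $\vep$ small fits that framework and that the hypotheses \eqref{condA}--\eqref{condB} provide the needed non-degeneracy. (Note the killed-process coupling of Proposition~\ref{p:infinitesde} and \eqref{couplingforintro} is not available here, since Theorem~\ref{t:CCT2dgen} does not assume monotonicity, so the argument is purely dual.) Since $\xi^{[\vep]}$ is cancellative with rates of the form \eqref{canc} and $\0$ is a trap, it satisfies the symmetry \eqref{c01symm}, hence has an annihilating dual $\zeta^\vep_t$ (the finite-subset-valued process attached to the kernel $\beta_0$ of \eqref{canc}) and the self-duality
\[
\E_{\xi_0}\big[H(\xi^{[\vep]}_t,A)\big]=\hat E_A\big[H(\xi_0,\zeta^\vep_t)\big],\qquad A\subset\Z^2\ \text{finite},
\]
with $H(\xi,A)=\prod_{y\in A}(2\xi(y)-1)$ and $\hat E_A$ denoting expectation for the dual started from $A$. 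Because finite linear combinations of the maps $\xi\mapsto H(\xi,A)$ are dense in the continuous functions on $\{0,1\}^{\Z^2}$ depending on finitely many coordinates, proving \eqref{cct2dgen} amounts to showing, for every finite $A$,
\[
\lim_{t\to\infty}\E_{\xi_0}\big[H(\xi^{[\vep]}_t,A)\big]=\beta_0(\xi_0)(-1)^{|A|}+\beta_1(\xi_0)+\beta_\infty(\xi_0)\,\langle H(\cdot,A)\rangle_{\nu^\vep_{1/2}},
\]
using $H(\0,A)=(-1)^{|A|}$ and $H(\1,A)=1$. As $\0,\1$ are traps, on $\{\tau_{\0}<\infty\}$ one has $H(\xi^{[\vep]}_t,A)\to(-1)^{|A|}$ and on $\{\tau_{\1}<\infty\}$ one has $H(\xi^{[\vep]}_t,A)\to1$, so by bounded convergence it remains to prove $\E_{\xi_0}[H(\xi^{[\vep]}_t,A);\tau_{\0}=\tau_{\1}=\infty]\to\beta_\infty(\xi_0)\langle H(\cdot,A)\rangle_{\nu^\vep_{1/2}}$.

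Next I would identify $\nu^\vep_{1/2}$. Being cancellative and satisfying \eqref{c01symm}, $\xi^{[\vep]}$ started from the Bernoulli$(1/2)$ product measure $\mu$ converges weakly to a translation invariant, symmetric limit of density $1/2$, namely $\nu^\vep_{1/2}$ (see Section~\ref{sec:canc} and the discussion after Lemma~2.1 of \cite{CP14}). Applying self-duality under $\mu$ and using $\E_\mu[\prod_{y\in B}(2\xi_0(y)-1)]=1\{B=\emptyset\}$ gives
\[
\langle H(\cdot,A)\rangle_{\nu^\vep_{1/2}}=\lim_{t\to\infty}\hat P_A(\zeta^\vep_t=\emptyset)=:\rho^\vep(A),
\]
the (monotone, hence convergent) extinction probability of the annihilating dual from $A$; it vanishes when $|A|$ is odd because \eqref{evencond} makes the parity of $|\zeta^\vep_t|$ conserved, consistent with symmetry of $\nu^\vep_{1/2}$. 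In particular $\nu^\vep_{1/2}$ has the coexistence property \emph{iff} $\rho^\vep(A)<1$ for some $A$ with $|A|$ even, i.e.\ iff the annihilating dual is genuinely supercritical; this is exactly the non-degeneracy supplied by \eqref{condB} (together with $\vep$ small), which forbids the alternative $\nu^\vep_{1/2}=\tfrac12(\delta_{\0}+\delta_{\1})$.

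The essential remaining step is the dual-survival term. Write $\hat E_A[H(\xi_0,\zeta^\vep_t)]=\hat P_A(\zeta^\vep_t=\emptyset)+\hat E_A[H(\xi_0,\zeta^\vep_t);\zeta^\vep_t\neq\emptyset]$; the first summand tends to $\rho^\vep(A)$. For the second, conditioned on survival $\zeta^\vep_t$ spreads out over a region of diameter tending to infinity, and $H(\xi_0,\zeta^\vep_t)=(-1)^{|\{y\in\zeta^\vep_t:\xi_0(y)=0\}|}$ then samples both signs from $\xi_0$ whenever $\xi_0$ has infinitely many sites of each type; this decorrelation is the quantitative content of \eqref{condA} (used as stated when $|\hxi_0|=\infty$, and in its $0$--$1$-symmetric form when $|\xi_0|=\infty$), and it forces $\hat E_A[H(\xi_0,\zeta^\vep_t);\zeta^\vep_t\neq\emptyset]\to0$ in the coexistence case $|\xi_0|=|\hxi_0|=\infty$, where $\beta_\infty(\xi_0)=1$. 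A covering argument from \eqref{condA} over translates of a fixed large set also yields $|\xi^{[\vep]}_t|\to\infty$ and $|\hxi^{[\vep]}_t|\to\infty$ on $\{\tau_{\0}=\tau_{\1}=\infty\}$; the $\beta_0(\xi_0)$ and $\beta_1(\xi_0)$ contributions are then recovered by the Markov property, restarting at a large time $s$: on $\{\tau_{\0}<\infty\}\cup\{\tau_{\1}<\infty\}$ a trap has already been reached, while otherwise $|\xi^{[\vep]}_s|,|\hxi^{[\vep]}_s|\to\infty$ reduces matters to the coexistence case just treated. Assembling the three pieces gives the displayed limit, hence \eqref{cct2dgen}, and the coexistence property of $\nu^\vep_{1/2}$ follows from the second paragraph; the quantitative estimates on the surviving dual and the value of $\vep_1$ are the perturbative input borrowed from \cite{CP14} (Proposition~4.1 there and its relatives). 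I expect this last step --- showing the surviving annihilating dual spreads at the required rate and that \eqref{condA} suffices to kill the oscillating term uniformly in $A$ --- to be the main obstacle, and the place where the smallness of $\vep$ is genuinely used.
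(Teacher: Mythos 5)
Your strategy --- annihilating duality, expanding against the functions $H(\cdot,A)$, identifying $\langle H(\cdot,A)\rangle_{\nu^\vep_{1/2}}$ with the dual extinction probability, and killing the surviving-dual term via \eqref{condA} --- is essentially the machinery \emph{inside} Proposition~4.1 of \cite{CP14}. The paper does not reprove any of this: its proof of Theorem~\ref{t:CCT2dgen} consists of verifying the hypotheses of that proposition (via Remark~4, Corollary~3.3 and Lemma~4.2 of \cite{CP14}), quoting its conclusion \eqref{wcct} for the case $|\widehat{\xi^{[\vep]}_0}|=\infty$, and then disposing of the remaining case $|\xi^{[\vep]}_0|=\infty$, $|\widehat{\xi^{[\vep]}_0}|<\infty$ by the $0$--$1$ symmetry forced by \eqref{zerotrap} (equivalence of \eqref{0trap} and \eqref{c01symm}). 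So you are attempting to unpack a result the paper deliberately black-boxes, while still deferring its quantitative core to \cite{CP14}; that is a legitimate but much longer route, and as written it leaves the hard steps as announcements.

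The concrete gap is the one piece of the paper's argument that is \emph{not} a citation and for which your proposal has no substitute: one must first show that $\xi^{[\vep]}$ is a \emph{good} cancellative process, i.e.\ that $\beta_0(A)>0$ in \eqref{canc} for some $A$ with $|A|>1$. Without this, the annihilating dual never branches, Corollary~3.3 of \cite{CP14} (which supplies exactly the dual growth/spreading estimates your third paragraph appeals to) is unavailable, and your claim that ``conditioned on survival $\zeta^\vep_t$ spreads out'' can simply fail. The paper extracts goodness from \eqref{condB} by contradiction: a cancellative process that is not good is, up to a constant time change, a voter model with kernel $q_0(y)=\beta_0(\{y\})$ (Remark~\ref{rem:votercanc}), which started from $\delta_0$ is absorbed at $\0$ a.s., violating \eqref{condB}. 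Relatedly, two of your asserted equivalences need proof rather than assertion: that \eqref{condB} yields supercriticality of the dual (and not merely ``forbids the alternative''), and that $\rho^\vep(A)<1$ for some even $|A|$ --- i.e.\ $\nu^\vep_{1/2}\neq\tfrac12(\delta_{\0}+\delta_{\1})$ --- already gives the full coexistence property $\nu^\vep_{1/2}(|\xi|=|\hxi|=\infty)=1$. Both implications are part of what Proposition~4.1 and Corollary~3.3 of \cite{CP14} deliver, and this is also where the smallness of $\vep$ enters; a proof at the level of detail you are aiming for would have to reproduce those estimates, not just name them.
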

\begin{proof} We first show that for $\vep<\vep_0$, $\xi^{[\vep]}$ is
a good cancellative process (as defined in
Section~\ref{sec:canc}). Assume not. By Remark~\ref{rem:votercanc}, up
to a constant time change, $\xi^{[\vep]}$ is a voter model with kernel
$q_0(y)=\beta_0(\{y\})$.  Therefore starting at
$\xi^{[\vep]}_0=\delta_0$, $\xi^{[\vep]}_t=\0$ for $t$ large
a.s. (e.g. see Proposition~V.4.1(b) of \cite{Lig85}), and this
contradicts \eqref{condB}, completing the proof.

Under the above hypotheses, Remark~4, Corollary~3.3, and Lemma~4.2 of
\cite{CP14} show that for some $\vep_1>0$, the hypotheses of
Proposition~4.1 of \cite{CP14} hold for $\vep\in(0,\vep_1)$. (Note
that the {\it good} cancellative property is needed to apply
Corollary~3.3.) That Proposition implies the stationary measure
$\nu^\vep_{1/2}$ (from Section~\ref{sec:canc}) satisfies the
coexistence property and
\begin{equation}\label{wcct}\text{ if
}|\widehat{\xi^{[\vep]}_0}|=\infty,\text{ then
}\xi^{[\vep]}_t\Rightarrow
\beta_0(\xi^{[\vep]}_0)\delta_{\0}+\beta_\infty(\xi^{[\vep]}_0){\nu}^\vep_{1/2}+\beta_1(\xi^{[\vep]}_0)\delta_{\1}\text{
as }t\to\infty,
\end{equation}
where we have also used \eqref{betavalues}.  The fact that $\0$ is a
trap for $\xi^{[\vep]}$ (by \eqref{zerotrap}) implies $\xi^{[\vep]}$
is symmetric by the equivalence of \eqref{0trap} and \eqref{c01symm}
noted in Section~\ref{sec:canc}.  Therefore $ \nu^\vep_{1/2}$ is also
a symmetric law ($\nu^\vep_{1/2}(\hxi\in\cdot)=\nu^\vep_{1/2}(\cdot)$)
by the above convergence. By symmetry the conclusion of \eqref{wcct}
also holds if $|\xi_0^{[\vep]}|=\infty$ and so the proof of
\eqref{cct2dgen} is complete.
\end{proof}
\begin{remark} \label{rem:sym} As noted in the above proof, a
cancellative finite range voter model perturbation is symmetric.
\end{remark}

The key condition in the above is \eqref{condA} which will imply that
a pair of nearby sites with opposite type can be found in sufficiently
large sets for large $t$.  Such pairs are to be expected if there is
to be complete convergence with coexistence.

Assume now that for some $\vep_0>0$:
\begin{equation}\label{Hyp1}
\text{For $0<\vep\le\vep_0$, $\xi^{[\vep]}$ is a  cancellative and monotone finite range voter model perturbation in $\Z^2$.}
\end{equation}
By Remarks~\ref{votepertlip} and \ref{rem:sym} we may apply
Proposition~\ref{p:infinitesde}, and so for any initial condition
$\xi_0\in\{0,1\}^{\Z^2}$ construct $\xi^{[\vep]}$,
${\underline\xi}^{[\vep]}$, $\underline{\hat\xi}^{[\vep]}$ as
solutions of \eqref{construct2}, \eqref{killsconstruct} and
\eqref{killedhatconstruct}, respectively, with $\xi^{[\vep]}_0=\xi_0$,
all on a common probability space such that
\begin{equation}\label{xiveporder} 
{\underline\xi}^{[\vep]}_t \le
\xi^{[\vep]}_t \text{ and }
\underline{\hat\xi}^{[\vep]}_t\le \widehat{\xi^{[\vep]}_t}  \
\ \forall t\ge0 \ a.s.
\end{equation}
We now assume that $M_0=KL$ for natural numbers $K,L$ chosen below.
As in Section~\ref{sec:vmp} we often use $N\ge N(\vep_0)>e^3$
satisfying $\vep=\vep_N:=\frac{(\log N)^3}{N}$ as our fundamental
parameter. Let
\begin{equation}\label{ulinexiNdef}
\underline{\xi}^N_t(x)=\underline{\xi}^{[\vep_N]}_{Nt}(\sqrt Nx),\
x\in\SN,
  \end{equation}
and
\begin{equation}\label{killedmvd}
\underline{X}^N_t=\frac{\log N}{N}\sum_{x\in\SN}\underline{\xi}^N_t(x)\delta_{x},\text{ and }
\underline{\hat X}^N_t=\frac{\log
  N}{N}\sum_{x\in\SN}\hat{\underline\xi}^{N}_{t}(x)\delta_{x}.
\end{equation}
The next condition is the key to ensure the survival of our oriented
percolation process.  It specifies the values of $K,L$ which are used
above to define our killed particle systems through $M_0=KL$. Note
that ${\underline X}^N_t(\1)={\underline X}^N_t((-M_0,M_0)^2)<\infty.$
\begin{align}\label{Hyp2} \nonumber&\text{There are $T'>1$,
$K,J'\in\N$ with $K>2$, and $L'>3$, %and $\vep_1\in(0,\vep_0]$ so that
if }\\ &0<\vep\le\vep_0,\text{ and } I_{\pm
e_i}=\pm2L'e_i+[-L'+1,L'-1]^2,\text{ then for }
L=\lfloor\sqrt{N}L'\rfloor,\\
\nonumber&\underline{X}^N_0([-L',L']^2)\ge J'\text{ implies }
P(\underline{ X}_{T'}^N(I_{e})\ge J'\text{ for all } e\in\{\pm
e_i,i=1,2\})\ge 1-6^{-5(2K+1)^3}.
\end{align}
  
Recall that if the conclusion of Theorem~\ref{t:CCT2dgen} holds we say
for $0<\vep< \vep_1$ the complete convergence theorem with coexistence
(CCT) holds for $\xi^{[\vep]}$. We use this terminology going forward.
\begin{theorem}\label{t:genCCT} Assume for $0<\vep\le\vep_0$,
$\xi^{[\vep]}$ is a cancellative and monotone finite range voter model
perturbation in $\Z^2$ satisfying \eqref{Hyp2}.  There is an
$\vep_1>0$ such that for $\vep\in(0,\vep_1)$, the complete convergence
theorem with coexistence (CCT) holds for $\xi^{[\vep]}$.
\end{theorem}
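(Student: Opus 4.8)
The plan is to deduce Theorem~\ref{t:genCCT} from the abstract result Theorem~\ref{t:CCT2dgen} by verifying its two hypotheses \eqref{condA} and \eqref{condB} for $\xi^{[\vep]}$ once $\vep$ is small, using \eqref{Hyp2} together with the coupled construction of $\xi^{[\vep]}$ and its killed companions $\underline\xi^{[\vep]},\hat{\underline\xi}^{[\vep]}$ from Proposition~\ref{p:infinitesde} and the order relations \eqref{xiveporder}. The first step is to turn \eqref{Hyp2} into a genuine comparison with supercritical oriented percolation: \eqref{Hyp2} says that, at the diffusive scale, a block $[-L',L']^2$ carrying at least $J'$ particles of the killed measure $\underline X^N$ seeds at least $J'$ such particles in each of its four neighbouring blocks $I_{\pm e_i}$ after time $T'$, with probability exceeding $1-6^{-5(2K+1)^3}$; running this simultaneously over a space--time grid of translated blocks and invoking the standard block argument (carried out for the Lotka--Volterra model in Section~6 of \cite{CP14}) produces a coupling in which the set of ``occupied'' blocks dominates the open sites of a supercritical $M$-dependent oriented percolation on the oriented lattice, the value $6^{-5(2K+1)^3}$ being exactly the error budget in that comparison. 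Because a cancellative finite range voter model perturbation is $0$--$1$ symmetric (Remark~\ref{rem:sym}), the killed $0$-process $\hat{\underline\xi}^{[\vep]}$ has the same dynamics as $\underline\xi^{[\vep]}$ (Proposition~\ref{p:infinitesde}(c)), so the identical comparison applies to it; and by the monotone coupling \eqref{xiveporder}, survival and linear spreading of either percolation forces the corresponding type to survive and spread inside $\xi^{[\vep]}$ itself.

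Next I would verify \eqref{condB}. Starting from $\delta_0$, and shrinking $\vep_0$ so that the flip rates at sites on the boundary of a cluster of $1$'s are bounded below (the voter part dominates the $O(\vep)$ perturbation), with probability bounded below the process reaches in bounded time a configuration with at least $J'$ rescaled $1$-particles in $[-L',L']^2$; from there the percolation above survives with positive probability and, being supercritical and started from a full block, keeps the block at the origin occupied along a positive-density set of block--times on the survival event. Translating back and using once more that local rates are bounded below gives $\int_0^t P_{\delta_0}(\xi^{[\vep]}_s(0)=1)\,ds\ge ct$ for all large $t$, hence $\limsup_{t\to\infty}P_{\delta_0}(\xi^{[\vep]}_t(0)=1)>0$, which is \eqref{condB}.

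The main work is \eqref{condA}, which I expect to be the real obstacle. Fix $x_0$ to be a unit vector and strengthen the block events of the comparison so that a block that is simultaneously occupied for the $1$-percolation and for the $0$-percolation forces, with high probability, a site $y$ in a designated sub-block with $\xi^{[\vep]}(y)=1$ and $\xi^{[\vep]}(y+x_0)=0$. Assume $|\widehat{\xi_0}|=\infty$. On the event $\{|\xi^{[\vep]}_t|>0\}$, restarting from a large time and using monotonicity, the $1$'s of $\xi^{[\vep]}$ dominate those of a killed subprocess which with high probability has $\ge J'$ rescaled $1$-particles in some block, and, since $\widehat{\xi_0}$ carries infinitely many $0$'s, a killed $0$-subprocess can be seeded with $\ge J'$ rescaled $0$-particles in an overlapping block; so on the event (of probability tending to $1$ as the number of seeded blocks grows) that both percolations survive, both types occupy, with positive density, a common space--time region growing linearly in $t$. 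A set $A$ with $|A|\ge K$ contains a subset of at least $cK$ points pairwise separated by a fixed distance $r$; anchoring disjoint space--time cones at these points, using the independence of the comparison percolation on disjoint cones, and the fact that on survival the occupied region eventually covers each such point, the probability that no such block delivers a pair $y\in A$ with $\xi^{[\vep]}(y)=1$, $\xi^{[\vep]}(y+x_0)=0$ is at most $(1-c')^{cK}$ up to a vanishing error, uniformly over all $A$ (bounded or not) with $|A|\ge K$. Letting $t\to\infty$, then taking the supremum over $A$, then $K\to\infty$ yields \eqref{condA}.

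With \eqref{condA} and \eqref{condB} established for all $\vep<\vep_1$ (after the shrinking of $\vep_0$ above), Theorem~\ref{t:CCT2dgen} produces the translation invariant symmetric stationary law $\nu^\vep_{1/2}$ with the coexistence property together with the convergence \eqref{cct2dgen}, i.e.\ the CCT for $\xi^{[\vep]}$. The delicate points are all in \eqref{condA}: establishing that $1$--$0$ pairs at the fixed displacement $x_0$ really are abundant throughout the occupied region, and that the relevant block events over well-separated anchor points are close to independent, with enough quantitative uniformity to cover every large set $A$; this is precisely where the two-dimensional Lotka--Volterra argument of Section~6 of \cite{CP14} must be followed closely.
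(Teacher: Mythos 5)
Your proposal follows essentially the same route as the paper: reduce to Theorem~\ref{t:CCT2dgen}, use the $0$--$1$ symmetry of a cancellative perturbation (Remark~\ref{rem:sym}) and Proposition~\ref{p:infinitesde}(c) to run the killed $1$- and $0$-processes simultaneously, upgrade \eqref{Hyp2} to a comparison with supercritical $2K$-dependent oriented percolation via the block construction of Section~6 of \cite{CP14}, and then extract \eqref{condA} and \eqref{condB}. The only omission is that the paper's notion of CCT also requires $\beta_\infty(\xi^{[\vep]}_0)>0$ whenever $\xi^{[\vep]}_0\notin\{\0,\1\}$, which your argument does not address; it follows in a few lines from \eqref{betavalues}, symmetry, monotonicity, and the fact that \eqref{condB} together with the monotonicity of $t\mapsto P_{\delta_0}(|\xi^{[\vep]}_t|>0)$ gives $P_{\delta_0}(\tau_{\0}=\infty)>0$.
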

\begin{proof}
We follow the general approach used in Section~6 of \cite{CP14} for
the $d=2$ Lotka-Volterra model.  From Theorem~\ref{t:CCT2dgen} it
suffices to establish \eqref{condA} and \eqref{condB}, as well as the
fact that $\beta_{\infty}(\xi^{[\vep]}_0)>0$ for initial conditions
distinct from ${\bf 0}$ and ${\bf 1}$ (we have suppressed the
dependence on $\vep$ in $\beta_\infty$). Remark~\ref{rem:sym} shows
that $\xi^{[\vep]}$ is symmetric. Therefore, we may use
Proposition~\ref{p:infinitesde}(c) to see that \eqref{Hyp2} implies
that,
\begin{align}\label{Hyp2b}&\underline{X}^N_0([-L',L']^2)\ge J'\text{
and }\underline{\hat{ X}}^N_0([-L',L']^2)\ge J'\text{ imply }\\
\nonumber&\qquad P(\underline{ X}_{T'}^N(I_{\pm e_i})\ge J'\text{ and
}\underline{\hat{ X}}_{T'}^N(I_{\pm e_i})\ge J'\text{ for }i=1,2)\ge
1-2\cdot6^{-5(2K+1)^3}.
\end{align}
To undo the scaling, recall $L=\lfloor\sqrt{N}L'\rfloor$ and let
$J=\frac{N}{\log N}J'$, $T=NT'$ and set $\tilde I_{\pm e_i}=\pm
2Le_i+[-L,L]^2$, $i=1,2$. In order to use Theorem~4.3 of \cite{Dur95}
we introduce a set $H\subset\{0,1\}^{\Z^2}$ of ``happy" configurations
and a good event $G_{\xi_0}=G_{\xi_0}(\vep)$ in our probability space
for each initial condition $\xi_0$.  We let
\begin{equation}\label{Hdefn}
H=\{\xi\in\{0,1\}^{\Z^2}:\, \xi([-L,L]^2)\ge J\text{ and }\hxi([-L,L]^2)\ge J\},
\end{equation}
and
\begin{equation}\label{Gdefn}
G_{\xi_0}=\{{\underline{\xi}}^{[\vep]}_T(\tilde I_{\pm e_i})\ge J,\
\hat{\underline{\xi}}^{[\vep]}_T(\tilde I_{\pm e_i})\ge J\text{ for
}i=1,2\},
\end{equation}
where $\xi_0$ is the initial condition for $\xi^{[\vep]}$, so that
$\underline\xi^{[\vep]}_0(x)=1_{I'}(x)\xi_0(x)$ and
$\hat{{\underline\xi}}^{[\vep]}_0(x)=1_{I'}(x)\hxi_0(x)$. Recall here
that $I'=(-KL,KL)^2$. For $z\in\Z^2$, $\sigma_z:\{0,1\}^{\Z^2}\to
\{0,1\}^{\Z^2}$ is the translation map,
$\sigma_z(\xi)(x)=\sigma(x+z)$.  Note that:

\noindent\phantom{\qquad}\text{(i) $G_{\xi_0}$ is
$\cG(I'\times[0,T])$-measurable for each $\xi_0$.}\\
\phantom{\qquad}\text{(ii)} If $\xi_0\in H$, then on $G_{\xi_0}$,
$\xi^{[\vep]}_T\in\sigma_{2Le}(H)$ for all $e\in\{\pm e_1,\pm
e_2\}$.\\ \phantom{\qquad}\text{(iii)} For any $\xi_0\in H$,
$P(G_{\xi_0})\ge 1-2\cdot6^{-5(2K+1)^3}:=1-\gamma'$.

\noindent Properties (i) and (ii) are clear from the definitions and
the orderings in \eqref{xiveporder}.  Property (iii) follows from
\eqref{Hyp2b}, along with a bit of arithmetic on the rescaled
intervals to show $\sqrt N I_{\pm e_i}\subset \tilde I_{\pm e_i}$,
where $N\ge e^3$ is used. Finally another bit of arithmetic shows that
$(1-\gamma')^{1/(2K+1)^3}>1-6^{-4}$, ensuring that (5.17) of
\cite{CP14} is valid. In this way we have established the set-up of
Lemma~5.2 of \cite{CP14} and we can invoke the comparison with
$2K$-dependent percolation from Theorem~4.3 of \cite{Dur95}, as
carried out in Section~5 of \cite{CP14}.  In particular, we may use
the proof of Lemma~5.3 of \cite{CP14} to conclude that \eqref{condA}
and \eqref{condB} hold for $\xi^{[\vep]}$.  Although the hypotheses of
that result require $d\ge 3$ and $f'(0)>0$ for a solution to a
reaction diffusion equation (which is not defined in $d=2$), those
hypotheses are only used to establish the set-up in Lemma~5.2 of
\cite{CP14}, which we have just verified directly, essentially using
\eqref{Hyp2}. The rest of the proof of Lemma~5.3 of \cite{CP14} only
requires arguments for general voter model perturbations and, in
particular, uses its branching coalescing dual from Section~2 of
\cite{CDP13} to bound some probabilities involving $\xi^{[\vep]}$.  In
this way the proof of Lemma~5.3 of \cite{CP14} gives us \eqref{condA},
and the proof also gives (this is (5.31) of \cite{CP14})
\begin{equation}\label{old5.31}
\inf_{\xi\neq 0}P_\xi(\xi^{[\vep]}_t\neq {\bf 0}\ \forall\, t\ge 0)\ge \rho,
\end{equation}
for some explicit $\rho>0$.  After the  proof of Lemma~5.3
in \cite{CP14}, \eqref{condB} is derived from
\eqref{old5.31} using the above oriented percolation setting
and elementary properties of voter model perturbations,
which apply equally well in our setting.  

To prove the last assertion on $\beta_\infty$, by \eqref{betavalues}
it suffices to consider $0<|\xi^{[\vep]}_0|<\infty$ or
$0<|\widehat{\xi_0^{[\vep]}}|<\infty$, and by the $0-1$-symmetry
(Remark~\ref{rem:sym}) we need only consider the first case. By
monotonicity and translation invariance we can take
$\xi^{[\vep]}_0=\delta_0$. The fact that ${\bf 0}$ is a trap implies
$P_{\delta_0}(|\xi_t^{[\vep]}|>0)$ is non-increasing in $t$.
Therefore \eqref{condB} easily implies $P_{\delta_0}(\tau_{{\bf
0}}=\infty)>0$.  But $P_{\delta_0}(\tau_{{\bf 1}}=\infty)=1$ by
\eqref{betavalues}, so we conclude $\beta_\infty(\xi^{[\vep]}_0)>0$.
\end{proof}
\begin{remark}\label{CCTdge3gen} The final paragraph in the above
proof applies equally well in $d\ge 3$ to show that the conclusion of
Theorem~1.2 of \cite{CP14} may be strengthened to include
$\beta_\infty(\xi^{[\vep]}_0)>0$ if $\xi^{[\vep]}_0$ is not ${\bf 0}$
or ${\bf 1}$.
\end{remark}
\begin{remark}\label{genvpsec5} Theorems \ref{t:CCT2dgen} and
\ref{t:genCCT} hold without the finite range assumptions. That is, we
only require that $\{\xi^{[\vep]},0<\vep\le \vep_0\}$ is a
$2$-dimensional voter model perturbation in the sense of
(1.10)--(1.15) of \cite{CP14} (where the H\"older rate of convergence
in (1.14) of \cite{CP14} (see \eqref{giconv}) is also weakened to
\eqref{giconv2}). Indeed our proofs, and those quoted in \cite{CP14},
only require these conditions.  To verify the key condition
\eqref{Hyp2}, however, we will need to work with finite range voter
model perturbations and make a critical assumption on the parameter
$\Theta_2+\Theta_3$ from \eqref{Thetadefns}.
\end{remark}
\begin{theorem}\label{t:ThetaOP} Assume $\{\xi^{[\vep]}: 0<\vep\le
\vep_0\}$ is a monotone, asymptotically symmetric finite range voter
model perturbation in $\Z^2$ with $\Theta_2+\Theta_3>0$. Then
\eqref{Hyp2} holds, perhaps with a smaller choice of $\vep_0>0$.
\end{theorem}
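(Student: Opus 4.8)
The plan is to derive \eqref{Hyp2} for the killed process $\underline\xi^{[\vep]}$ from the weak convergence theorem, Theorem~\ref{t:SBMgenintro}, applied to a killed version of the rescaled process, choosing the constants of \eqref{Hyp2} in the order $K,L',T',J'$ and then shrinking $\vep_0$ until the convergence has taken hold. Write $\theta=\Theta_2+\Theta_3>0$ and $\gamma(K)=6^{-5(2K+1)^3}$. First I would use monotonicity and the coupled SDE construction of Proposition~\ref{p:infinitesde} to reduce the hypothesis of \eqref{Hyp2} to the worst case in which $\underline\xi^{[\vep_N]}_0$ has exactly $J\approx N'J'$ occupied sites, all inside $[-L,L]^2$ (cf.\ \eqref{Hdefn}), since restricting the initial configuration to such a subset only decreases the killed process, by the ordering behind \eqref{xiveporder}. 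For these configurations $\underline X^N_0$ (see \eqref{killedmvd}) is a measure of mass $\to J'$ supported in a set shrinking to $\overline{[-L',L']^2}$; as the measures of fixed mass on a fixed compact set are weakly compact, a subsequence argument reduces matters to the following purely super-Brownian assertion: for every $X_0\in\MF$ with $\|X_0\|=J'$ and $\mathrm{supp}(X_0)\subset\overline{[-L',L']^2}$, if $\underline X$ is $\SBM(X_0,4\pi\sigma^2,\sigma^2,\theta)$ absorbed on the boundary of the fixed box $B=(-KL',KL')^2$, then
\[
P\big(\underline X_{T'}(I_e^{\circ})>J'\ \text{ for all }\ e\in\{\pm e_1,\pm e_2\}\big)\ \ge\ 1-\tfrac12\gamma(K).
\]
Granting this and the convergence $\underline X^N\To\underline X$ below, the portmanteau theorem applied to the open set $\bigcap_e\{\mu\in\MF:\mu(I_e^{\circ})>J'\}$ gives $\liminf_N P(\underline X^N_{T'}(I_e)\ge J'\ \forall e)\ge 1-\tfrac12\gamma(K)$, which is \eqref{Hyp2} for all $N$ beyond some $N_1$, i.e.\ for all $\vep\le\vep_0$ once $\vep_0$ is small.

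The convergence $\underline X^N\To\underline X$ in $D(\R_+,\MF)$ is precisely where Theorem~\ref{t:SBMgenintro} \emph{and its proof} are used. Since the killed rescaled process is dominated by the unkilled one, all the tightness and moment estimates established in the proof of Theorem~\ref{t:SBMgenintro} carry over unchanged, and the only difference in the limiting martingale problem is the Dirichlet boundary condition on $\partial B$ coming from absorbing the dual coalescing walks at the rescaled boundary of $I'$; as $\underline X$ is continuous this yields $\underline X^N_{T'}\To\underline X_{T'}$ at the fixed time $T'$. (Alternatively one can work with the unkilled limit and use the moment bounds from the same proof to show that, with $K$ fixed, the chance that $\xi^{[\vep_N]}$ ever places a $1$ outside $I'$ before time $NT'$ is a small fraction of $\gamma(K)$, only an $e^{-cK^2}$-proportion of the bounded mass being able to escape.)

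The super-Brownian estimate is the standard ``survival and spreading'' argument for supercritical SBM, modelled on Section~6 of \cite{CP14} and on \cite{CMP}. Writing $X_0=\sum_{k=1}^{J'}\nu_k$ with each $\nu_k$ a probability measure on $\overline{[-L',L']^2}$, the branching property gives $\underline X_{T'}(I_e^{\circ})\eqd\sum_{k=1}^{J'}Z^{(k)}_{T'}(I_e^{\circ})$ with the $Z^{(k)}$ independent, $Z^{(k)}$ being the absorbed SBM from $\nu_k$. One has $E[Z^{(k)}_{T'}(I_e^{\circ})]=e^{\theta T'}\langle\nu_k,S^B_{T'}1_{I_e^{\circ}}\rangle\ge c_0(K,L')\,e^{\theta T'}$, where $S^B_t$ is the heat semigroup with Dirichlet conditions on $\partial B$ and $c_0(K,L')>0$ is a uniform lower bound, over starting points in $[-L',L']^2$, for the probability of reaching the displaced box $I_e^{\circ}$ by time $T'$ without hitting $\partial B$. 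Taking $T'\asymp L'^2/\sigma^2$, so that Brownian motion reliably reaches the displaced boxes, and using that for fixed $K$ the quantity $c_0(K,L')$ stays bounded below as $L'\to\infty$ (rescale space by $L'$), I can then fix $L'>3$, hence $T'>1$, large enough that this mean is at least $2$. Since each $Z^{(k)}$ starts from unit mass, $Z^{(k)}_{T'}(I_e^{\circ})$ has a finite exponential moment and extinction probability bounded away from $1$, uniformly in $k$, so an exponential Chebyshev bound on the lower tail yields $P\big(\sum_{k=1}^{J'}Z^{(k)}_{T'}(I_e^{\circ})\le J'\big)\le e^{-c'J'}$ with $c'=c'(\theta,\sigma,K,L')>0$ independent of $J'$; a union bound over the four $e$, followed by the choice of $J'\in\N$ with $4e^{-c'J'}\le\tfrac12\gamma(K)$, finishes it.

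In summary the constants are chosen in the order: $K\in\N$ with $K>3$ (say $K=5$, which fixes $\gamma(K)$ and keeps each $I_e$ well inside $B$); then $L'>3$ large and $T'\asymp L'^2/\sigma^2$; then $J'\in\N$ large; finally $\vep_0$ small so the convergence $\underline X^N\To\underline X$ is in force for all $\vep\le\vep_0$, uniformly over the compact family of admissible initial measures. I expect the main obstacle to be the convergence of the \emph{killed} process to absorbed super-Brownian motion (or, on the alternative route, the escape estimate): this does not follow from the statement of Theorem~\ref{t:SBMgenintro} but requires revisiting its proof, although the domination by the unkilled process should make the hard estimates go through verbatim and the killing only alters the limiting generator; the SBM computations and the bookkeeping of constants are then routine, and the resulting block estimate is exactly what feeds into Theorems~\ref{t:genCCT} and \ref{t:vgen2dCCT}.
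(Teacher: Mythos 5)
Your overall architecture (positive drift $\Theta_2+\Theta_3$ forces exponential growth of the SBM limit, a block estimate for the limit transfers to $X^N$ for small $\vep$, monotonicity plus a compactness/subsequence argument handles the initial conditions, and the constants are chosen in the order $K,L',T',J'$ before shrinking $\vep_0$) matches the paper's, which itself follows Lemma~6.2 of \cite{CP14}. The divergence, and the gap, is in how you pass from the unkilled to the killed process. Your primary route asserts $\underline X^N\To\underline X$, an \emph{absorbed} super-Brownian motion, and claims the estimates of Theorem~\ref{t:SBMgenintro} ``carry over unchanged'' with ``only the limiting generator'' altered. That is not a safe claim: the entire drift analysis in Sections~\ref{s:qvdrifttm}--\ref{sec:convtoSBM} (Propositions~\ref{p:^HTht2}, \ref{p:^HTht}, the key bound Proposition~\ref{p:key2bnd}) rests on the free coalescing-walk duality and on translation invariance of the kernel, both of which are destroyed by killing at $\partial\tilde I$; identifying the limit as the Dirichlet-absorbed SBM would require redoing the exact asymptotics of $\hat H^{N,j}$ with the killed walks $\uB^{N,x}$, and nothing in the paper (or in your sketch) supplies this. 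The paper deliberately avoids proving any convergence theorem for the killed process: it proves the block estimate for the \emph{unkilled} $X^N_{T'}$ via Theorem~\ref{t:SBMgenintro}, and then transfers it to $\underline X^N_{T'}$ using only the first-moment comparison of Lemma~\ref{l:CMPL81}, namely $E[X^N_t(\1)-\underline X^N_t(\1)]\le X^N_0(\1)[ce^{ct}P(\sup_{s\le t}|\tB^{N,0}_s|>(K_0-1)L'-3)+C(t)(1\vee X_0^N(\1))(\log N)^{-1/6}]$, combined with the pointwise coupling $\uxi^N\le\xi^N$ so that $X^N_{T'}(I_e)-\underline X^N_{T'}(I_e)\le X^N_{T'}(\1)-\underline X^N_{T'}(\1)$ and Markov's inequality finishes. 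Even this first-moment bound is not ``verbatim'': it needs the killed duality \eqref{kdualeq}, the killed analogue Lemma~\ref{l:d3H_}, and the five-term decomposition $\cU_0,\dots,\cU_4$, so the work you defer is real.

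Your parenthetical alternative is essentially the paper's method, but as phrased it is too strong: you cannot make ``the chance that $\xi^{[\vep_N]}$ ever places a $1$ outside $I'$ before time $NT'$'' small --- with initial mass of order $J'N'$ and $J'$ large, some particle will escape any fixed box with probability close to one. What is small (and what Lemma~\ref{l:CMPL81} controls) is the \emph{expected escaped mass} relative to $X_0^N(\1)$, which suffices after Markov's inequality because the target event has a strict-inequality margin built in. If you replace the absorbed-SBM convergence by this first-moment comparison, your proof becomes the paper's. The super-Brownian block estimate itself (branching decomposition, first moment $e^{\theta T'}\langle\nu_k,S_{T'}1_{I_e}\rangle$, exponential Chebyshev in $J'$) is fine and is what (6.7) of \cite{CP14} provides.
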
 We give the proof in Section~\ref{sec:percsetup}. It
will follow from Theorem~\ref{t:SBMgenintro}, our weak convergence
result to super-Brownian motion with drift $\Theta_2+\Theta_3$.

We are ready for the proof of our main result,
Theorem~\ref{t:vgen2dCCT}, a general complete convergence theorem for
monotone cancellative finite range voter model perturbations.
\medskip

\noindent{\it Proof of Theorem~\ref{t:vgen2dCCT}.} By
Theorem~\ref{t:genCCT} it suffices to establish \eqref{Hyp2}, perhaps
with a smaller $\vep_0>0$. By Remark~\ref{rem:sym}, $\xi^{[\vep]}$ is
symmetric and so in particular is asymptotically symmetric by
Remark~\ref{rem:symm}. The latter Remark also shows that $r^a=0$ and
therefore $\Theta_2=0$.  Hence $\Theta_3+\Theta_2=\Theta_3>0$ (by
hypothesis), and so Theorem~\ref{t:ThetaOP} gives \eqref{Hyp2}, and we
are done.  \qed

\medskip

To apply Theorem~\ref{t:vgen2dCCT} it would be useful to have a
general, and checkable, sufficient condition for $\Theta_3>0$, which
would also apply to the $q$-voter model, and so establish
Theorem~\ref{t:CCT} for $d=2$ as a special case. This is the goal of
the next section.

\section{Positivity of the drift and the Proof of Theorem~\ref{t:CCT}
for \textit{d}=2}\label{sec:posdrift} To establish a sufficient condition for
$\Theta_3>0$ in Theorem~\ref{t:vgen2dCCT}, it will be convenient to
first work in a more general setting with any {\it general}
neighbourhood $\cN$ (recall from Section~\ref{sec:canc} that $\cN$ is
finite non-empty subset of $Z^d\setminus\{0\}$) and $d\ge 2$.  We
consider a strictly subadditive map $r:\{A:A\subset\cN\}\to\R$. This
means that
\begin{equation}\label{rsubaddset} r(A\cup B)<r(A)+r(B)\text{ for all
non-empty disjoint }A,B\subset\cN.
\end{equation} By induction on $n$ this implies that for non empty
disjoint sets $A_1,\dots,A_n$ in $\cN$,
\begin{equation} \label{rsubadd2} r(\cup_{i=1}^nA_i)\le \sum_{i=1}^n
r(A_i),\ \ \text{where strict inequality holds if $n>1$}.
\end{equation}

Later we will want to consider a finite range voter model perturbation
and take $r=r^s$.  Assume for now that $d=2$.  To motivate the above
definition recall from \eqref{qvoterrNs} that for the $q$-voter model
in $d=2$ we have $r^s(A)=r_{|A|}$, where $r_\ell$ are as in
\eqref{relldefn}. We saw in Section~\ref{sec:proofCCTintro} (recall
\eqref{rsubaddintro}) that
\begin{equation}r_{\ell_1+\ell_2}< r_{\ell_1}+r_{\ell_2}\text{ for
$0<\ell_i$, and $\ell_1+\ell_2\le |\cN|$}.\label{rsubadd}
\end{equation} and so
\begin{equation}\label{rsqvsub} \text{for the $2$-dimensional
$q$-voter model, $r^s$ is strictly subadditive}.
\end{equation}

Return now to our earlier setting with $d\ge 2$ and general $\cN$.  If
$\pi$ is a partition of $\bar\cN$, $[0]$ denotes the cell of $\pi$
containing $0$ and $|\pi|$ is the cardinality of $\pi$. We assume
\eqref{rsubaddset} and that all sets in a partition are non-empty
throughout this section.
\begin{lemma}\label{l:detpi} If $\pi$ is a fixed partition of $\bar\cN$, then
\begin{equation}\label{piineq1} \sum_{\emptyset\neq
A\subset\cN}r(A)1(A\in\pi)\ge\sum_{\emptyset\neq
A\subset\cN}r(A)1(\bar \cN\setminus A=[0]).
\end{equation}
The inequality is strict if $|\pi|>2$.
\end{lemma}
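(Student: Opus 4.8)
The plan is to analyze how the left-hand side of \eqref{piineq1} decomposes according to the structure of the partition $\pi$, and then use strict subadditivity to collapse it down to the right-hand side. Write $[0]$ for the cell of $\pi$ containing $0$, and let the remaining cells of $\pi$ be $A_1,\dots,A_k$, so that $\{A_1,\dots,A_k\}$ is a partition of $\bar\cN\setminus[0] = \cN\setminus([0]\setminus\{0\})$; note each $A_i\subset\cN$ is non-empty, and $k=|\pi|-1$. The sum on the left is then exactly $\sum_{i=1}^k r(A_i)$, since the cell $[0]$ contains $0$ and so is not counted (the indicator $1(A\subset\cN)$ excludes it). On the right-hand side, the only set $A$ with $\bar\cN\setminus A=[0]$ is $A=\bar\cN\setminus[0]=\cup_{i=1}^k A_i$, which is a subset of $\cN$ and is non-empty (it is empty only if $[0]=\bar\cN$, i.e. $|\pi|=1$, a case we may exclude since $\pi\in\cP_n(\bar\cN)$ for some $n\ge 2$ in all applications, or handle trivially). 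So the right-hand side equals $r(\cup_{i=1}^k A_i)$.

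Thus the claimed inequality \eqref{piineq1} reduces to
\[
\sum_{i=1}^k r(A_i)\ \ge\ r\Big(\bigcup_{i=1}^k A_i\Big),
\]
which is precisely \eqref{rsubadd2} applied to the disjoint non-empty sets $A_1,\dots,A_k$. That inequality gives the result, and moreover asserts that the inequality is strict when $k>1$, i.e. when $|\pi|>2$. If $k=1$, i.e. $|\pi|=2$, both sides equal $r(A_1)$ and we have equality, consistent with the statement. The case $|\pi|=1$ (if it arises) makes the left side an empty sum equal to $0$ and the right side also an empty sum; one checks the conventions make this vacuous or one simply restricts to $|\pi|\ge 2$.

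The only genuine point requiring care — and the main (minor) obstacle — is the bookkeeping: verifying that the set $\{A\subset\cN, A\neq\emptyset: \bar\cN\setminus A=[0]\}$ is the singleton $\{\bar\cN\setminus[0]\}$ and that $\bar\cN\setminus[0]$ is a non-empty subset of $\cN$, so that $r$ is legitimately being evaluated on it. Since $0\in[0]$, we have $0\notin\bar\cN\setminus[0]$, hence $\bar\cN\setminus[0]\subset\cN$; and it is non-empty precisely when $[0]\neq\bar\cN$, i.e. $|\pi|\geq 2$. With that observed, the lemma is an immediate consequence of the iterated strict subadditivity \eqref{rsubadd2}, and no further computation is needed.
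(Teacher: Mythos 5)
Your proof is correct and follows essentially the same route as the paper's: both identify the left-hand side as the sum of $r$ over the cells of $\pi$ other than $[0]$, identify the right-hand side as $r(\bar\cN\setminus[0])$ (vanishing when $|\pi|=1$), and conclude by the iterated strict subadditivity \eqref{rsubadd2}, with strictness exactly when more than one cell lies outside $[0]$. No gaps.
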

\begin{proof} The left-hand side of \eqref{piineq1} trivially is
\[1(\bar\cN\setminus[0]\neq\emptyset)\sum_{\emptyset\neq A}r(A)1(A\in\pi,A\subset\bar\cN\setminus[0]),\]
while the right-hand side equals
\[r(|\bar\cN\setminus[0])1(\bar\cN\setminus[0]\neq\emptyset).\]
So to prove the result we may assume $\bar\cN\setminus [0]\neq\emptyset$, or equivalently $|\pi|>1$. Using
\[\bar\cN\setminus[0]=\cup_{A\in \pi,A\subset\bar\cN\setminus[0]} A,\]
and the subadditivity \eqref{rsubadd2}, we have
\[r(\bar\cN\setminus[0])\le \sum_{\emptyset\neq A}r(A)1(A\in\pi,A\subset\bar\cN\setminus[0]),\]
thus giving \eqref{piineq1}. If $|\pi|= 2$ there is equality in the above, 
and by \eqref{rsubadd2} there is strict inequality if $|\pi|>2$.
\end{proof}
As an immediate consequence we have: 
\begin{lemma}\label{l:genpostheta} If $\pi$ is a random
partition of $\bar\cN$ such that 
\begin{equation}\label{picond}P(|\pi|>2)>0,
\end{equation}
then
\begin{equation}\label{piineq} E\Bigl(\sum_{\emptyset\neq
A\subset\cN}r(A)1(A\in\pi)\Bigr)>E\Bigl(\sum_{\emptyset\neq
A\subset\cN}r(A)1(\bar\cN\setminus A=[0])\Bigr).
\end{equation}
\end{lemma}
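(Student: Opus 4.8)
The statement in question is Lemma~\ref{l:genpostheta}: given a random partition $\pi$ of $\bar\cN$ with $P(|\pi|>2)>0$, we have the strict inequality \eqref{piineq}. The plan is to obtain this immediately from the deterministic Lemma~\ref{l:detpi} by integrating, using the extra positive-probability event where the inequality is strict.

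First I would recall the conclusion of Lemma~\ref{l:detpi}: for every fixed partition $\pi$ of $\bar\cN$, the pointwise inequality $\sum_{\emptyset\neq A\subset\cN}r(A)1(A\in\pi)\ge\sum_{\emptyset\neq A\subset\cN}r(A)1(\bar\cN\setminus A=[0])$ holds, and it is strict whenever $|\pi|>2$. Applying this to the realized (random) partition $\pi(\omega)$ for each $\omega$, we get that the random variable $Y:=\sum_{\emptyset\neq A\subset\cN}r(A)\big(1(A\in\pi)-1(\bar\cN\setminus A=[0])\big)$ satisfies $Y\ge 0$ almost surely, and $Y>0$ on the event $\{|\pi|>2\}$. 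Since by hypothesis \eqref{picond} this event has positive probability, we conclude $E(Y)>0$, which is exactly \eqref{piineq} after rearranging the (finite) sum through the expectation. The finiteness of $\bar\cN$ makes all the sums finite, so there is no integrability issue and linearity of expectation applies without comment.

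The only mild point to be careful about is that strict positivity on a positive-probability set does indeed force strict positivity of the expectation of a nonnegative random variable: this is standard, since $E(Y)\ge E(Y\,1_{\{|\pi|>2\}})>0$, the last inequality because $Y\,1_{\{|\pi|>2\}}$ is nonnegative and strictly positive with positive probability. There is essentially no obstacle here — the content of the lemma is entirely in the deterministic Lemma~\ref{l:detpi}, and Lemma~\ref{l:genpostheta} is just its probabilistic packaging. I would therefore keep the proof to one or two sentences, simply invoking Lemma~\ref{l:detpi} pointwise and taking expectations.
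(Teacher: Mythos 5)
Your proof is correct and is exactly the paper's argument: the paper presents Lemma~\ref{l:genpostheta} as an immediate consequence of Lemma~\ref{l:detpi}, obtained by applying the deterministic inequality pointwise, noting strictness on the positive-probability event $\{|\pi|>2\}$, and taking expectations. Nothing is missing.
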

Returning to our general $r$ in $d=2$, and recalling the definitions
of $\Theta^{\pm}(A)$ from \eqref{Theta+-}, we define
$\Theta_3=\Theta_3(r)$ by
\begin{equation}\label{Thetardef} \Theta_3=\sum_{\emptyset\neq
A\subset\cN} r(A)\Theta^{+}(A)-\sum_{\emptyset\neq A\subset\cN}
r(A)\Theta^{-}(A):=\Theta^+_3-\Theta^-_3.
\end{equation}
Note that this agrees with our earlier definition of $\Theta_3$ in
\eqref{Thetadefns} if $r=r^s$ for the finite range voter perturbations
in Section~\ref{sec:vmp}.  To use the above to show the positivity of
$\Theta_3$ in \eqref{Thetardef}, recall $K_3(A_1,A_2,A_3)$ from
Proposition~\ref{p:CRWnew} and the notation $\cP_k(\Gamma)$ and
$\cP(\Gamma)$ from Section~\ref{sec:coalrw}.  Recall also that
\eqref{rsubaddset} is still in force and the $q$-voter drift $\Theta$
in \eqref{Theta} corresponds to the special case $r(A)=r_{|A|}$ with
$r_\ell$ as in \eqref{relldefn}.

\begin{corollary} \label{postheta} If $d=2$, then
$\Theta_3>0$, and, in particular, $\Theta$ in \eqref{Theta}
is also strictly positive.
\end{corollary}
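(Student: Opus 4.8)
The plan is to rewrite $\Theta_3=\Theta^+_3-\Theta^-_3$ as a single sum over the set $\cP_3(\bar\cN)$ of partitions of $\bar\cN$ into exactly three nonempty cells, at which point the strict subadditivity of $r$ makes positivity immediate. For $\pi\in\cP_3(\bar\cN)$ write $[0]$ for the cell containing $0$, let $\pi_1,\pi_2$ be the other two cells (so $\pi_1,\pi_2\subset\cN$ and $\pi_1\cup\pi_2=\bar\cN\setminus[0]$), and set $K_3(\pi)=K_3(\pi_1,\pi_2,[0])$, which is unambiguous by the symmetry of $K_3$ in its arguments and strictly positive by Proposition~\ref{p:CRWnew}.

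First I would unwind $\Theta^+_3$. The terms summed there are indexed by pairs $(A,\{A_1,A_2\})$ with $\emptyset\neq A\subset\cN$ and $\{A_1,A_2\}\in\cP(\bar\cN\setminus A)$, and such a pair is precisely a choice of $\pi\in\cP_3(\bar\cN)$ together with a choice of one of its two cells not containing $0$ (take that cell to be $A$); each $\pi$ is therefore hit exactly twice, once with $A=\pi_1$ and once with $A=\pi_2$. Hence
\[
\Theta^+_3=\sum_{\emptyset\neq A\subset\cN}r(A)\sum_{\{A_1,A_2\}\in\cP(\bar\cN\setminus A)}K_3(A,A_1,A_2)=\sum_{\pi\in\cP_3(\bar\cN)}K_3(\pi)\bigl(r(\pi_1)+r(\pi_2)\bigr).
\]
Next I would unwind $\Theta^-_3$, whose terms are indexed by pairs $(A,\{A_1,A_2\})$ with $\emptyset\neq A\subset\cN$ and $\{A_1,A_2\}\in\cP(A)$. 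Such a pair determines the partition $\{\bar\cN\setminus A,A_1,A_2\}\in\cP_3(\bar\cN)$; conversely, given $\pi\in\cP_3(\bar\cN)$, the condition $A\subset\cN$ forces $0\in\bar\cN\setminus A$, so $\bar\cN\setminus A=[0]$, hence $A=\bar\cN\setminus[0]=\pi_1\cup\pi_2$ and $\{A_1,A_2\}=\{\pi_1,\pi_2\}$. So here each $\pi$ is hit exactly once, with $A=\pi_1\cup\pi_2$, and therefore
\[
\Theta^-_3=\sum_{\pi\in\cP_3(\bar\cN)}K_3(\pi)\,r(\pi_1\cup\pi_2),\qquad\text{so that}\qquad\Theta_3=\sum_{\pi\in\cP_3(\bar\cN)}K_3(\pi)\bigl(r(\pi_1)+r(\pi_2)-r(\pi_1\cup\pi_2)\bigr).
\]

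To conclude: every coefficient $K_3(\pi)$ is strictly positive, every bracket $r(\pi_1)+r(\pi_2)-r(\pi_1\cup\pi_2)$ is strictly positive by the strict subadditivity \eqref{rsubaddset} applied to the disjoint nonempty sets $\pi_1,\pi_2$, and the index set $\cP_3(\bar\cN)$ is nonempty since $|\bar\cN|=|\cN|+1\ge 3$. Hence $\Theta_3>0$. (Equivalently, the displayed identity follows from Lemma~\ref{l:genpostheta} applied to the random partition that equals $\pi\in\cP_3(\bar\cN)$ with probability proportional to $K_3(\pi)$ — of total mass the positive finite constant $\kappa$ of \eqref{constantid0} — since that partition has three cells and so satisfies \eqref{picond}.) Finally, for the two-dimensional $q$-voter model $\Theta$ is $\Theta_3(r)$ for $r(A)=r_{|A|}$ with $r_\ell=(\ell/|\cN|)\log(|\cN|/\ell)$, which is strictly subadditive by \eqref{rsubadd} (see \eqref{rsqvsub}), so $\Theta>0$ as well.

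The only substantive point is getting the two bijections right, and in particular noticing their asymmetry: a three‑cell partition appears twice in the sum defining $\Theta^+_3$ (once per non‑zero cell) but only once in the sum defining $\Theta^-_3$ (with distinguished set the union of the two non‑zero cells). It is precisely this asymmetry, together with strict subadditivity, that turns the difference $\Theta^+_3-\Theta^-_3$ into a sum of strictly positive terms; there is no analytic difficulty, since the positivity and finiteness of the $K_3$'s, the nonemptiness of $\cP_3(\bar\cN)$, and the strict subadditivity of the $q$-voter $r$ are all already in hand.
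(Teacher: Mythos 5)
Your proof is correct and is essentially the paper's argument: the paper packages the same regrouping by three-cell partitions of $\bar\cN$ probabilistically (a random partition with law proportional to $K_3$, fed into Lemmas~\ref{l:detpi} and \ref{l:genpostheta}), whereas you carry out the double-counting explicitly — each $\pi$ counted twice in $\Theta_3^+$ and once in $\Theta_3^-$ — and then invoke strict subadditivity directly, a correspondence you yourself note at the end.
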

\begin{proof} Let
$\kappa=\sum_{\{A_1,A_2,A_3\}\in\cP_3(\bar\cN)}K_3(A_1,A_2,A_3)>0$,
where the sum is over sets, not ordered triples, and the
positivity is clear by $|\bar\cN|\ge 5$ (see \eqref{Nprop}).
Define a random partition in $\cP_3(\bar\cN)$ by
\[P(\pi=\{A_1,A_2,A_3\})=K_3(A_1,A_2,A_3)/\kappa.\] Both are
well-defined by the symmetry of $K_3$, and \eqref{picond}
holds because $|\pi|=3$ a.s.  If $A$ is a non-empty subset
of $\bar\cN$, then
\begin{equation}\label{pimarg}P(A\in\pi)=\sum_{\{A_1,A_2\}\in\cP(\bar\cN\setminus
A)}P(\pi=\{A,A_1,A_2\})=\sum_{\{A_1,A_2\}\in\cP(\bar\cN\setminus
A)}K_3(A,A_1,A_2)/\kappa.
\end{equation} Therefore from \eqref{Thetardef}
\begin{align}\label{thetaplusform}
\Theta_3^+:=\sum_{\emptyset\neq
A\subset\cN}r(A)\Theta^+(A)=\kappa\sum_{\emptyset\neq
A\subset\cN}r(A)P(A\in\pi) =\kappa
E\Bigl(\sum_{\emptyset\neq A\subset\cN}
r(A)1(A\in\pi)\Bigr).
\end{align} Similarly, apply \eqref{pimarg} with
$\bar\cN\setminus A$ in place of $A$ to see that
\begin{equation*} \nonumber\Theta_3^-:=\sum_{\emptyset\neq
A\subset\cN}r(A)\Theta^-(A)=\kappa\sum_{\emptyset\neq
A\subset\cN}r(A)P(\bar\cN\setminus A\in\pi) =\kappa
E\Bigl(\sum_{\emptyset\neq A\subset\cN}
r(A)1(\bar\cN\setminus A\in\pi)\Bigr).
\end{equation*} Note that for $A\subset \cN$ we have
$0\in\bar\cN\setminus A$, and so $\bar\cN\setminus A\in\pi$
iff $[0]=\bar\cN\setminus A$.  This shows that the above
implies
\begin{equation}\label{thetaminusform} \Theta_3^-=\kappa
E\Bigl(\sum_{\emptyset\neq A\subset\cN}
r(A)1(\bar\cN\setminus A=[0])\Bigr).
\end{equation} So by Lemma~\ref{l:genpostheta},
\eqref{thetaplusform} and \eqref{thetaminusform} we have
$\Theta_3^+>\Theta_3^-$, and therefore, $\Theta_3>0$. As
noted above, the positivity of $\Theta$ in \eqref{Theta}
follows from the special case $r(A)=r_{|A|}$.
\end{proof}

\begin{remark}\label{linearid} If $r(A)=|A|$ (not strictly
subadditive!) one easily sees that equality holds in
Lemmas~\ref{l:detpi} and \ref{l:genpostheta}, the latter
without any condition on $\pi$. The above proof then shows
that
\begin{equation}\label{linearr} \sum_{\emptyset\neq A\subset
\cN}|A|(\Theta^+(A)-\Theta^-(A))=0.
\end{equation} This identity simplified some of the formulae
for $\Theta_3$ in the examples of Section~\ref{sec:vmp}.
If $r(A)=1$ for all $A\subset\cN$ (which is strictly
subadditive), then from \eqref{thetaplusform} we have
$\Theta_3^+=2\kappa$ because there are exactly two subsets
of $\cN$ in $\pi$ corresponding to the two sets in $\pi$
other than $[0]$.  Similarly from \eqref{thetaminusform} we
get that $\Theta_3^-=\kappa$ because there is exactly one
subset of $\cN$ whose complement in $\bar\cN$ is $[0]$,
namely $\bar\cN\setminus[0]$.  %the complement of the set in
$\pi$ containing $0$.  Therefore
\begin{equation}\label{constantid} \sum_{\emptyset\neq
A\subset
\cN}(\Theta^+(A)-\Theta^-(A))=\kappa=\sum_{\{A_1,A_2,A_3\}\in\cP_3(\bar\cN)}K_3(A_1,A_2,A_3)=\lim_{t\to\infty}(\log
t)^3\hat P(|B^{\bar\cN}_t|=3)>0.
\end{equation}
The last equality follows easily from the definition of
$K_3$ by decomposing $\hat P(|B^{\bar\cN}_t|=3)$ into the
possible partitions induced by the sets of sites which have
coalesced at time $t$ and taking limits.
\end{remark}

The following corollary is immediate from
Theorem~\ref{t:vgen2dCCT} and Corollary~\ref{postheta}. It
represents our simplest criteria for a CCT to hold in $d=2$.
\begin{corollary}\label{c:CCTsubadd} Assume $d=2$,
$\{\xi^{[\vep]}:0<\vep\le \vep_0\}$ satisfies \eqref{Hyp1}
and $r^s$, given by \eqref{rsdefnintro}, is strictly
subadditive (i.e., \eqref{rsubaddset} holds for
$r=r^s$). Then $\Theta_3>0$ and there is an $\vep_1>0$ such
that for $\vep\in(0,\vep_1)$, the complete convergence
theorem with coexistence (CCT) holds for $\xi^{[\vep]}$.
\end{corollary}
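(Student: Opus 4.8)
The plan is to obtain this as a formal consequence of Theorem~\ref{t:vgen2dCCT} together with Corollary~\ref{postheta}; essentially all the work has already been done, and what remains is to check that the hypotheses line up.

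First I would establish $\Theta_3>0$. The hypothesis is that $r^s$, as defined in \eqref{rsdefnintro}, is strictly subadditive, i.e. that \eqref{rsubaddset} holds with $r=r^s$. Observe that the general drift $\Theta_3(r)$ introduced in \eqref{Thetardef} reduces, for this particular choice $r=r^s$, to the drift $\Theta_3$ of \eqref{Thetadefns} that figures in Theorem~\ref{t:vgen2dCCT} --- this identification was noted immediately after \eqref{Thetardef}. Hence Corollary~\ref{postheta}, applied with $r=r^s$, yields $\Theta_3>0$, which is the first assertion of the corollary.

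Second I would feed this into Theorem~\ref{t:vgen2dCCT}. That theorem requires precisely that for $0<\vep\le\vep_0$, $\xi^{[\vep]}$ be a cancellative and monotone finite range voter model perturbation in $\Z^2$, which is the standing hypothesis \eqref{Hyp1}, and that $\Theta_3>0$, which we have just verified. Its conclusion --- the existence of an $\vep_1>0$ such that the complete convergence theorem with coexistence holds for $\xi^{[\vep]}$ whenever $\vep\in(0,\vep_1)$ --- is exactly the remaining assertion of the corollary.

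There is no genuine obstacle here: the statement is a repackaging of two results proved earlier in the paper, and the only point requiring a moment's care is the bookkeeping identification $\Theta_3(r^s)=\Theta_3$, so that the positivity delivered by Corollary~\ref{postheta} is the same parameter demanded by Theorem~\ref{t:vgen2dCCT}. In particular, applying this with the $q$-voter model (where $r^s(A)=r_{|A|}$, strictly subadditive by \eqref{rsubadd}) recovers Theorem~\ref{t:CCT} for $d=2$.
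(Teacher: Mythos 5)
Your proposal is correct and is exactly the paper's argument: the paper states Corollary~\ref{c:CCTsubadd} as an immediate consequence of Corollary~\ref{postheta} (applied with $r=r^s$, using the identification of $\Theta_3(r^s)$ with the $\Theta_3$ of \eqref{Thetadefns}) together with Theorem~\ref{t:vgen2dCCT}, giving no further proof. Nothing is missing.
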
 To illustrate the use of the Corollary, we
first show how it quickly gives Theorem~\ref{t:CCT} (which
was already outlined in Section~\ref{sec:proofCCTintro}).
\medskip

\noindent{\it Proof of Theorem~\ref{t:CCT} for $d=2$.} We
apply Corollary~\ref{c:CCTsubadd} above with $\xi^{[\vep]}$
the $(1-\vep)$-voter model, $\xi^{(1-\vep)}$.  The
cancellative property for $|\cN|\le 8$ is shown in
Lemma~\ref{l:qcanc} for $\vep$ small enough, and the finite
range voter perturbation property is established in
Example~\ref{e:qv}. The monotonicity of any $q$-voter model
is elementary (recall \eqref{qvattractive}).  Strict
subadditivity of $r^s$ was already noted in \eqref{rsqvsub}
and so Corollary~\ref{c:CCTsubadd} gives the result.  \qed

Recall from the examples at the end of Section~\ref{sec:vmp}
that for the Lotka-Volterra models, affine voter models and
geometric voter models, we have for non-empty $A\subset
\cN$, $r^s(A)=-(|A|/|\cN|)^2$,
$r^s(A)=-\frac{|A|}{|\cN|}+1$, and
$r^s(A)=\frac{|A|(|\cN|-|A|)}{2|\cN|}$, respectively. All of
these asymptotic rate functions are strictly subadditive, as
one can easily check.  Condition \eqref{Hyp1} was verified
for all of these models in Examples~\ref{e:LV}-\ref{e:GV},
where for the Lotka-Volterra model we take
$\alpha_1=\alpha_2\ge 1/2$.  The following theorems are then
also immediate consequences of
Corollary~\ref{c:CCTsubadd}. The first is the $d=2$ case of
Theorem~1.1 of \cite{CP14} which helped motivate the general
result here.
\begin{theorem}\label{t:CCT-LV}
Let $d=2$, let $\cN$ be a neighbourhood, and let
LV($\alpha$) denote the Lotka-Volterra model with parameters
$\alpha_1=\alpha_2=\alpha$. Then $\Theta^\LV_3>0$ and there
is an $\alpha_c\in(0,1)$ such that for all
$\alpha\in(\alpha_c,1)$, the complete convergence theorem
with coexistence holds for LV($\alpha$).
\end{theorem}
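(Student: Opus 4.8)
The plan is to deduce Theorem~\ref{t:CCT-LV} from Corollary~\ref{c:CCTsubadd}, taking $\xi^{[\vep]}$ to be the (diagonal) Lotka--Volterra model LV$(\alpha)$ with $\vep=1-\alpha$, so that everything reduces to checking the hypotheses of that corollary for all sufficiently small $\vep$. First I would record that, writing $\vep=1-\alpha$ and $\alpha_0=\alpha_1=\alpha$, the rate function is
\[c^\LV(x,\xi)=c^\VM(x,\xi)-\vep\bigl(\hxi(x)f_1(x,\xi)^2+\xi(x)f_0(x,\xi)^2\bigr),\]
so that $c^*_\vep$ has $g_i^\vep(\xi|_\cN)=-f_i(0,\xi)^2=:g_i(\xi|_\cN)$, independent of $\vep$. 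Hence this is a finite range voter model perturbation ($\1$ and $\0$ are traps and \eqref{giconv2} holds trivially), with asymptotic rate function $r^s(A)=g_1(1_A)=-(|A|/|\cN|)^2$ for non-empty $A\subset\cN$, exactly as in Example~\ref{e:LV}. Moreover it is $0$--$1$ symmetric, hence asymptotically symmetric with $\Theta_2=0$; it is good cancellative in this diagonal case (cf. Section~6 of \cite{CP14}); and it is monotone as soon as $\alpha=\alpha_0=\alpha_1\ge 1/2$, i.e. for $\vep\le 1/2$ (see Section~1 of \cite{CP07}). Thus \eqref{Hyp1} holds on $(0,\vep_0]$ with, say, $\vep_0=1/2$.

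Next I would verify the strict subadditivity \eqref{rsubaddset} of $r=r^s$. For non-empty disjoint $A,B\subset\cN$ and $a=|A|$, $b=|B|$, this is the inequality $-(a+b)^2<-a^2-b^2$, equivalently $2ab>0$, which is immediate since $a,b\ge 1$. (As noted after Corollary~\ref{c:CCTsubadd}, the same elementary computation also establishes strict subadditivity of the asymptotic rate functions for the affine and geometric voter examples.)

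With \eqref{Hyp1} and the strict subadditivity of $r^s$ in hand, Corollary~\ref{c:CCTsubadd} applies directly: it yields $\Theta_3>0$, which here equals $\Theta_3^\LV$ because $\Theta_2=0$ and $r^s(A)=-(|A|/|\cN|)^2$ (recall \eqref{lvtheta3}); and it produces an $\vep_1\in(0,\vep_0]$ such that for $\vep\in(0,\vep_1)$ the complete convergence theorem with coexistence holds for $\xi^{[\vep]}=$LV$(1-\vep)$. Setting $\alpha_c=1-\vep_1\in(0,1)$ then gives the CCT for LV$(\alpha)$ for every $\alpha\in(\alpha_c,1)$, which is the assertion. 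This also recovers the $d=2$ case of Theorem~1.1 of \cite{CP14}.

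Since Corollary~\ref{c:CCTsubadd} — and behind it Theorems~\ref{t:vgen2dCCT}--\ref{t:genCCT} together with the positivity argument of Corollary~\ref{postheta} — already does the heavy lifting, there is no genuine obstacle here. The only points requiring care are the bookkeeping that places LV$(\alpha)$ into the class \eqref{Hyp1} (in particular, noting that monotonicity is what forces the restriction to $\alpha$ near $1$), and the trivial verification that $r^s$ is strictly subadditive.
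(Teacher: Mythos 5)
Your proof is correct and follows essentially the same route as the paper: the paper also verifies \eqref{Hyp1} via Example~\ref{e:LV} (monotonicity for $\alpha\ge 1/2$, good cancellativity in the diagonal case, the finite range voter model perturbation structure with $r^s(A)=-(|A|/|\cN|)^2$), checks strict subadditivity by the same $2ab>0$ computation, and then invokes Corollary~\ref{c:CCTsubadd}. No gaps.
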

\begin{theorem}\label{t:CCT-AV} Let $d=2$, let $\cN$ be a
neighbourhood, and let AV($\alpha$) denote the affine voter
model with parameter $\alpha$. Then $\Theta^\AV_3>0$ and
there is an $\alpha_c\in(0,1)$ such that for all
$\alpha\in(\alpha_c,1)$, the complete convergence theorem
with coexistence holds for AV($\alpha$).
\end{theorem}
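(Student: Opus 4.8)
The plan is to obtain Theorem~\ref{t:CCT-AV} as a direct application of Corollary~\ref{c:CCTsubadd}, taking $\vep=1-\alpha$ and letting $\xi^{[\vep]}$ be the affine voter model AV($1-\vep$) for $\vep\in(0,\vep_0]$ with $\vep_0$ any fixed number in $(0,1)$. By that corollary, once I verify (a) that $\{\xi^{[\vep]}\}$ satisfies \eqref{Hyp1}, i.e.\ is a monotone cancellative finite range voter model perturbation on $\Z^2$, and (b) that the asymptotic rate function $r^s$ is strictly subadditive in the sense of \eqref{rsubaddset}, I immediately get both $\Theta_3>0$ and the existence of an $\vep_1>0$ for which the CCT holds for all $\vep\in(0,\vep_1)$; setting $\alpha_c=1-\vep_1\in(0,1)$ then yields the statement.

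First I would record the three ingredients of (a), all of which are essentially already contained in Example~\ref{e:av}. The threshold voter rate $c^{\TV}$ is the nonlinear voter model with $a_\ell=\1(\ell\ge1)$ and is cancellative for \emph{any} neighbourhood $\cN$ (this is shown in \cite{CD91}, and also follows from Proposition~\ref{p:nlvm}). Since a convex combination of cancellative rate functions is again cancellative, $c^{\AV}=\alpha c^{\VM}+(1-\alpha)c^{\TV}$ is cancellative, and it is good cancellative for $\alpha<1$. Monotonicity of $c^{\TV}$ is elementary and passes to the convex combination with the (monotone) voter rates, so $c^{\AV}$ is monotone. Writing $c_\vep=c^{\VM}+\vep(c^{\TV}-c^{\VM})$ displays $\{c_\vep\}$ as a finite range voter model perturbation with $\vep$-independent $g^\vep_i=g_i=-f_i(0,\cdot)+\1\{n_i(0,\cdot)\ge1\}$, so \eqref{giconv2} (indeed the stronger \eqref{giconv}) holds trivially, and $\0,\1$ are both traps. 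Hence \eqref{Hyp1} holds.

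Next I would check (b). Example~\ref{e:av} gives $r^s(A)=1-|A|/|\cN|$ for non-empty $A\subset\cN$. For disjoint non-empty $A,B\subset\cN$ with $A\cup B\subset\cN$,
\[ r^s(A\cup B)=1-\frac{|A|+|B|}{|\cN|}=r^s(A)+r^s(B)-1<r^s(A)+r^s(B), \]
so $r^s$ is strictly subadditive. Corollary~\ref{c:CCTsubadd} (via Corollary~\ref{postheta}) then yields $\Theta_3>0$, in agreement with the explicit value $\Theta^{\AV}_3=\kappa>0$ from \eqref{avthetas} and \eqref{constantid0}, and provides the required $\vep_1$.

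I do not expect a genuine obstacle here: the proof is a bookkeeping exercise feeding the affine voter model into the general theory developed above. The one point worth stressing is that, in contrast to the $q$-voter model, where establishing the cancellative property forced the restriction $|\cN|\le8$, the affine voter model inherits cancellativity from the threshold voter model for \emph{every} neighbourhood, so no constraint on $|\cN|$ is needed; only the perturbative restriction $\alpha>\alpha_c$ remains, which is intrinsic to the method.
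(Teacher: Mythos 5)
Your proposal is correct and follows essentially the same route as the paper: the paper also deduces Theorem~\ref{t:CCT-AV} directly from Corollary~\ref{c:CCTsubadd}, citing Example~\ref{e:av} for condition \eqref{Hyp1} and the strict subadditivity of $r^s(A)=1-|A|/|\cN|$, which you verify by the same one-line computation. Your closing remark about the absence of any restriction on $|\cN|$ (in contrast with the $q$-voter case) is also consistent with the paper's discussion.
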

\begin{theorem}\label{t:CCT-GV}
Let $d=2$, let $\cN$ be a neighbourhood, and let
GV($\theta$) denote the geometric voter model with parameter
$\theta$. Then $\Theta^\GV_3>0$ and there is a $\theta_c\in(0,1)$ such that for all 
$\theta\in(\theta_c,1)$, the complete convergence theorem
with coexistence holds for GV($\theta$).
\end{theorem}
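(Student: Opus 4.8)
The plan is to derive Theorem~\ref{t:CCT-GV} as a direct application of Corollary~\ref{c:CCTsubadd}. Writing $\vep = 1-\theta$ and letting $\xi^{[\vep]}$ denote GV($\theta$), it suffices to check the two hypotheses of that corollary: that $\{\xi^{[\vep]}:0<\vep\le\vep_0\}$ satisfies \eqref{Hyp1} (is a cancellative, monotone, finite range voter model perturbation in $\Z^2$) for some $\vep_0\in(0,1)$, and that its asymptotic rate function $r^s$ is strictly subadditive in the sense of \eqref{rsubaddset}.

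The first hypothesis is precisely what is recorded in Example~\ref{e:GV}: the cancellative (in fact good cancellative) property of $c^{\GV}$ comes from Section~2 of \cite{CD91}, monotonicity is elementary, and the expansion \eqref{geom2} exhibits $\{c_\vep:0\le\vep<1\}$ as a finite range voter model perturbation over the neighbourhood $\cN$ with asymptotic rate function $r^s(A)=|A|(|\cN|-|A|)/(2|\cN|)$ for nonempty $A\subset\cN$ (shrinking $\vep_0$ if needed so that the $O(\vep^2)$ error in \eqref{geom2} is under control causes no difficulty). For the second hypothesis I would carry out the elementary check alluded to just before the theorem: for disjoint nonempty $A,B\subset\cN$, put $a=|A|$, $b=|B|$, $n=|\cN|$, so $a,b\ge1$ and $a+b\le n$, and then
\[
r^s(A)+r^s(B)-r^s(A\cup B)=\frac{a(n-a)+b(n-b)-(a+b)(n-a-b)}{2n}=\frac{ab}{n}>0,
\]
which is precisely \eqref{rsubaddset} for $r=r^s$.

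Granting these two points, Corollary~\ref{c:CCTsubadd} yields $\Theta_3=\Theta^{\GV}_3>0$ together with an $\vep_1>0$ such that the CCT holds for $\xi^{[\vep]}$ whenever $\vep\in(0,\vep_1)$; setting $\theta_c=1-\vep_1\in(0,1)$ then gives the asserted statement for GV($\theta$), $\theta\in(\theta_c,1)$. There is no genuine obstacle specific to this theorem: all the heavy lifting --- the abstract two-dimensional CCT of Theorem~\ref{t:vgen2dCCT} and Corollary~\ref{c:CCTsubadd}, the scaling limit to super-Brownian motion behind it, and the cancellative property from \cite{CD91} --- has already been established, and the only model-specific input is the one-line subadditivity identity above. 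The sole point warranting a moment's care is that the \emph{good} cancellative property, needed inside the proof of Corollary~\ref{c:CCTsubadd} via Theorem~\ref{t:CCT2dgen}, holds here; and it does, since $c^{\GV}$ has $\beta_0(A)>0$ for some $A$ with $|A|>1$ for every $\theta\in[0,1)$ (when $\theta=0$ it reduces to the threshold voter model, which is still good cancellative).
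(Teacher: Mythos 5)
Your proposal is correct and follows essentially the same route as the paper: the paper also obtains Theorem~\ref{t:CCT-GV} as an immediate consequence of Corollary~\ref{c:CCTsubadd}, citing Example~\ref{e:GV} for \eqref{Hyp1} and the strict subadditivity of $r^s(A)=|A|(|\cN|-|A|)/(2|\cN|)$ (which the paper leaves as an easy check and you verify explicitly via the identity $r^s(A)+r^s(B)-r^s(A\cup B)=ab/n$). Your closing remark on the good cancellative property is a harmless extra, since Theorem~\ref{t:CCT2dgen} already handles the non-good case internally by showing it would contradict \eqref{condB}.
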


Finally, we give the promised direct proof of $f'(0)>0$ for
the $q$ voter model and $d\ge 3$. Recall that in this case
$r_\ell$ is as in \eqref{relldefn},
\begin{equation}\label{c*abs} c^{*}(x,\xi) =
\sum_{\ell=1}^{|\cN|}r_l \Big(\hxi(x)1\{n_1(x,\xi)=\ell\} +
\xi(x)1\{n_0(x,\xi)=\ell\}\Big),
\end{equation} and $f$ is given by $\eqref{fdef}$.
\begin{proposition}(\cite{ASD})\label{p:posthetabigd} Assume
$d\ge 3$ and $\cN$ is a fixed neighbourhood.  Then
$f'(0)>0$.
\end{proposition}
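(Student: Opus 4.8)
The plan is to express $f'(0)$ in terms of the same voter-model coalescing structure that appears in the proof of Corollary~\ref{postheta}, and then apply Lemma~\ref{l:genpostheta} in dimension $d\ge 3$. Recall that $f(u)=\langle(1-\xi(0))c^*(0,\xi)-\xi(0)c^*(0,\xi)\rangle_u$ where the bracket is expectation under the voter-model equilibrium $\mu_u$ with density $u$, and $c^*$ is given by \eqref{c*abs} with coefficients $r_\ell=(\ell/|\cN|)\log(|\cN|/\ell)$. First I would compute $f'(0)$ by a direct differentiation of the polynomial $f$ at $u=0$. Since $\mu_u$ is obtained by running the voter model from product measure with density $u$, the low-density behaviour of $\langle\,\cdot\,\rangle_u$ is governed by the dual system of coalescing random walks run from the relevant finite set of sites: the leading (linear in $u$) term of $\mu_u(\xi|_{\bar\cN}=1_A)$-type probabilities comes from a single ancestral particle, which after coalescence of the dual walks corresponds exactly to the partition of $\bar\cN$ into blocks that have coalesced. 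This is the standard computation (carried out, e.g., in \cite{ASD} and \cite{CDP13}); it yields a representation
\[
f'(0)=\sum_{\text{partitions }\pi\text{ of }\bar\cN} c(\pi)\Bigl(\sum_{\emptyset\ne A\subset\cN} r^s(A)1(A\in\pi)-\sum_{\emptyset\ne A\subset\cN} r^s(A)1(\bar\cN\setminus A=[0])\Bigr),
\]
where $c(\pi)\ge0$ are the probabilities (in the $d\ge3$ transient regime) that the coalescing dual walks starting from $\bar\cN$ have exactly the coalescence pattern $\pi$ at the relevant time, and $r^s(A)=r_{|A|}$ is the asymptotic rate function of the $q$-voter model. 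The key structural point, which mirrors the $d=2$ argument, is that the ``flip to $1$'' part of $c^*$ contributes the $1(A\in\pi)$ term while the ``flip to $0$'' part contributes the $1(\bar\cN\setminus A=[0])$ term (using $0\in\bar\cN\setminus A$ for $A\subset\cN$).

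Next I would invoke Lemma~\ref{l:genpostheta}. Because $|\bar\cN|\ge 2d+1\ge 7$ for $d\ge 3$, and because the dual walks are transient, there is positive probability that the coalescing dual from $\bar\cN$ produces a partition $\pi$ with $|\pi|>2$; hence the random partition $\pi$ (with law proportional to the $c(\pi)$) satisfies $P(|\pi|>2)>0$, which is condition \eqref{picond}. Since $r^s$ for the $q$-voter model is strictly subadditive by \eqref{rsqvsub} (equivalently \eqref{rsubadd}), Lemma~\ref{l:genpostheta} gives that the expectation of the bracketed difference is strictly positive, and therefore $f'(0)>0$. This is exactly the same use of Lemma~\ref{l:genpostheta} as in Corollary~\ref{postheta}, with the coalescing-random-walk weights $K_3(A_1,A_2,A_3)$ (which are special to $d=2$) replaced by the $d\ge3$ coalescence-pattern probabilities; the subadditivity input is identical.

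The main obstacle I anticipate is the bookkeeping in the first step: justifying the precise form of $f'(0)$ as a sum over partitions with the correct identification of which term in $c^*$ feeds which indicator. One must carefully expand $\mu_u(n_1(0,\xi)=\ell,\ \xi(0)=0)$ and $\mu_u(n_0(0,\xi)=\ell,\ \xi(0)=1)$ to first order in $u$ (resp.\ $1-u$, after using the $0$--$1$ symmetry of the voter equilibrium under $u\leftrightarrow 1-u$), keeping track of the dual coalescing walks from $\bar\cN=\cN\cup\{0\}$ and noting that in the transient case the dual eventually stabilises to a fixed partition whose law defines the $c(\pi)$. Once this identity is in hand, the positivity is a clean corollary of Lemma~\ref{l:genpostheta} and the elementary strict subadditivity of $\ell\mapsto r_\ell$; alternatively, one could simply quote the representation of $f$ from \cite{ASD} or Section~1.8 of \cite{CDP13} and then apply Lemma~\ref{l:genpostheta} to their formula, which is the shortest route and avoids re-deriving the dual computation here.
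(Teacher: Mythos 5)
Your proposal is correct and follows essentially the same route as the paper: the paper likewise splits $f=f_1-f_0$, uses voter-model duality to write $f_1'(0)=\hat E\bigl(\sum_{\emptyset\neq A\subset\cN}r_{|A|}1(A\in\pi_\infty)\bigr)$ and $f_0'(0)=\hat E\bigl(\sum_{\emptyset\neq A\subset\cN}r_{|A|}1(\bar\cN\setminus A=[0]_\infty)\bigr)$ for the limiting coalescence partition $\pi_\infty$ of $\bar\cN$, verifies \eqref{picond} via transience, and concludes with Lemma~\ref{l:genpostheta} and strict subadditivity. The ``bookkeeping'' you flag as the main obstacle is exactly the duality expansion the paper carries out explicitly in \eqref{f1exp}--\eqref{fder-0}.
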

\begin{proof} It follows from \eqref{fdef} and \eqref{c*abs}
that %(recall $n=|\cN|$)
\begin{align}\label{fdef2} f(u)&=\Big\langle
\sum_{\ell=1}^{|\cN|}r_\ell\hxi(0)1(n_1(0,\xi)=\ell)\Big\rangle_u-\Big\langle
\sum_{\ell=1}^{|\cN|}r_\ell\xi(0)1(n_0(0,\xi)=\ell)\Big\rangle_u\\
\nonumber:&=f_1(u)-f_0(u).
\end{align} Let $\{B^x:x\in\bar\cN\}$ be the system of
coalescing random walks introduced in
Section~\ref{sec:coalrw} under $\hat P$, but now in
dimension $d\ge 3$, and let $\{\xi^u_0(x):x\in\Z^d\}$ have
Bernoulli product measure with density $u\in[0,1]$.  Let
$\pi_t\in\cP(\bar \cN)$ be the random partition determined
by the coalescing random walks $\{B^x_t,x\in\bar\cN\}$ using
the equivalence relation $x\sim_t y$ iff
$\sigma(x,y)=\inf\{u:B^x_s=B^y_u\}\le t$, and let
$\pi_\infty=\lim_{t\to\infty}\pi_t$. In this way
$x\sim_\infty y$ iff $\sigma(x,y)<\infty$ is the associated
equivalence relation. If $A\subset\bar\cN$ and
$T\in[0,\infty]$, let
\[ [A]_T=\{\lambda\in\pi_T:\lambda\cap A\neq \emptyset\},\]
and, abusing this notation slightly, write $[x]_T$ for the
cell of $\pi_T$ containing $x$. Use the duality between the
voter model and coalescing random walk to see that
\begin{align} \nonumber
f_1(u)&=\Big\langle\sum_{\emptyset\neq
A\subset\cN}r_{|A|}1(\xi\vert_A=1,
\xi\vert_{\bar\cN\setminus A}=0)\Big\rangle_u\\
\nonumber&=\lim_{T\to\infty}\sum_{\emptyset\neq A\subset
\cN}r_{|A|}\sum_{i=1}^{|A|}\sum_{j=1}^{|\cN|+1-|A|}\hat
P(B^A_T\subset\{x:\xi_0^u(x)=1\},|B^A_T|=i,\\
\nonumber&\phantom{=\lim_{T\to\infty}\sum_{\emptyset\neq
A\subset
\cN}r_{|A|}\sum_{i=1}^{|A|}\sum_{j=1}^{|\cN|+1-|A|}\hat P(}
B_T^{\bar\cN\setminus
A}\subset\{x:\xi_0^u(x)=0\},|B^{\cN\setminus A}_T|=j)\\
\nonumber&=\sum_{\emptyset\neq A\subset
\cN}r_{|A|}\sum_{i=1}^{|A|}\sum_{j=1}^{|\cN|+1-|A|}\lim_{T\to\infty}\hat
P(\sigma(A,\bar\cN\setminus A)>T,\ |[A]_T|=i,\
|[\bar\cN\setminus A]_T|=j)u^i(1-u)^j\\
\label{f1exp}&=\sum_{\emptyset\neq A\subset
\cN}r_{|A|}\sum_{i=1}^{|A|}\sum_{j=1}^{|\cN|+1-|A|}\hat
P(\sigma(A,\bar\cN\setminus A)=\infty,\ |[A]_\infty|=i,\
|[\bar\cN\setminus A]_\infty|=j)u^i(1-u)^j.
\end{align}
Differentiate the above at $u=0$ and so conclude that
\[f_1'(0)=\sum_{\emptyset\neq A\subset \cN}r_{|A|}\hat
P(\sigma(A,\bar\cN\setminus A)=\infty,\ |[A]_\infty|=1).\]
Note that $\sigma(A,\bar\cN\setminus A)=\infty$ iff $A$ is a
union of cells in $\pi_\infty$ and so
\[(\sigma(A,\bar\cN\setminus A)=\infty \text{ and
}|[A]_\infty|=1) \iff A\in \pi_\infty.\] The above
expression for $f_1'(0)$ now becomes %(recall also that
$r_0=r_{|\cN|}=0$)
\begin{equation}\label{fder}f_1'(0)=\hat
E\Bigl(\sum_{\emptyset\neq A\subset
\cN}r_{|A|}1(A\in\pi_\infty)\Bigr).
\end{equation}

In a similar way to \eqref{f1exp}, we get
\begin{align*} f_0(u)&=\Big\langle\sum_{\emptyset\neq
A\subset \cN}r_{|A|}1(\xi\vert_A=0,\xi_{\bar\cN\setminus
A}=1)\Big\rangle_u\\ &=\sum_{\emptyset\neq A\subset
\cN}r_{|A|}\sum_{i=1}^{|A|}\sum_{j=1}^{|\cN|+1-|A|}\hat
P(\sigma(A,\bar\cN\setminus A)=\infty,\ |[A]_\infty|=i,\
|[\bar\cN\setminus A]_\infty|=j)(1-u)^i u^j.
\end{align*}
Differentiating at $u=0$ we get
\begin{align}\nonumber f_0'(0)&=\sum_{\emptyset\neq A\subset
\cN}r_{|A|}\hat P(\sigma(A,\bar\cN\setminus A)=\infty,\
|[\bar\cN\setminus A]_\infty|=1)\\ &=\hat
E\Bigl(\sum_{\emptyset\neq A\subset
\cN}r_{|A|}1(\bar\cN\setminus
A=[0]_\infty)\Bigr).\label{fder-0}
\end{align}
For the last, note that for $A\subset \cN$, the event inside
the $\hat P$ is precisely $\{\bar\cN\setminus A=[0]_\infty\}$.

Note that $\hat P(|\pi_\infty|=|\cN|+1)>0$ for $d\ge 3$ as
there is positive probability none of the walks coalesce,
and so \eqref{picond} holds.  We now may apply
Lemma~\ref{l:genpostheta} to conclude from \eqref{fder} and
\eqref{fder-0} that $f'_1(0)>f'_0(0)$, and so $f'(0)>0$.
\end{proof}

\begin{remark} The above proof applies equally well to any
$\{r_\ell:1\le \ell\le|\cN|\}$ satisfying \eqref{rsubadd}.
In fact, with only notational changes, it applies to $r$ as
in \eqref{rsubaddset} where $f$ is given by \eqref{fdef} and
\begin{equation}\label{generalf}c^*(x,\xi)=\hxi(0)\sum_{\emptyset\neq
A\subset\cN}r(A)1(\xi|_\cN=1_A)+\xi(0)\sum_{\emptyset\neq
A\subset\cN}r(A)1(\xi|_\cN=1_{\cN\setminus
A}).\end{equation}
\end{remark}

\section{A general convergence theorem to super-Brownian motion in two dimensions}\label{sec:couplingsemimart}
In Section~\ref{sec:proofCCT} we extended the SDE
construction of our particle systems to allow for infinite
initial conditions and also simultaneously deal with
${\hxi^{[\vep]}}$.  In this setting we have no need to deal
with these extensions and so no longer require monotonicity
or have to deal with the equation \eqref{hatconstruct} for
$\hxi$.  Assume the walk kernel, $p$, is as in the beginning
of Section~\ref{sec:proofCCTintro} and
$\{\xi^{[\vep]}:0<\vep\le\vep_0\}$ is an asymptotically
symmetric finite range voter model perturbation as in
Section~\ref{sec:vmp}. This assumption will be in force
throughout the rest of this work.  We continue to use
notation from Section~\ref{sec:vmp} and, as in that Section,
$N\ge N(\vep_0)>e^3$ is our fundamental parameter where
$\vep=\vep_N=(\log N)^3/N$.  Recall from \eqref{dNdecompsa}
that the rescaled voter model perturbation, $\xi^N$ in
\eqref{rescaledef} has rate function
\begin{align}\label{dNdecompsa'}
c^N(x,\xi^{(N)})=Nc^{N,\VM}(x,\xi^{(N)})+(\log
N)c^{N,a}(x,\xi^{(N)})+&(\log N)^3c^{N,s}(x,\xi^{(N)}),\\
\nonumber& x\in\SN,\ \xi^{(N)}\in\{0,1\}^\SN,
\end{align} where $c^{N,s}$ and $c^{N,a}$ are as in
\eqref{cNsdef} and \eqref{cNadef}, respectively.  Assume
throughout that $|\xi^N_0|<\infty$.

By Remark~\ref{votepertlip} we can construct $\xi^N$, as in
Section~\ref{sec:proofCCT}, as the unique
$(\cF^N_t)$-adapted solution of
\begin{multline}\label{construct} \xi^N_t(x) =
\xi^N_0(x)+\int_0^t\int (1-\xi^N_{s-}(x))1(u\le
c^N(x,\xi^N_{s-}))N^{x,0}(ds,du) \\ -\int_0^t\int
\xi^N_{s-}(x)1(u\le c^N(x,\xi^N_{s-}))N^{x,1}(ds,du), \quad
t\ge 0,\ x\in\SN.
\end{multline} Then $\xi^N$ is the unique Feller process
associated with the rate function $c^N(x,\xi)$.  Here
$\{N^{x,i}, x\in\SN,i=0,1\}$ are independent Poisson point
processes on $\R_+\times\R_+$ with intensity $ds\times du$
and $\{\cF^N_t,t\ge0\}$ is the natural right-continuous
filtration generated by these point processes.

We again use this setting to couple spin-flip systems but
now in a different manner from Section~\ref{sec:proofCCT}.
If $\bar c^N(x,\xi)$ is a rate function, also satisfying
\eqref{L1boundunscaleda} and \eqref{pertboundunscaleda},
such that for $\xi\le\bar\xi$,
\begin{equation}\label{compcond}
\begin{aligned}
\bar c^N(x,\bar\xi)&\ge c^N(x,\xi)\text{ if
}\xi(x)=\bar\xi(x)=0, \\
\bar c^N(x,\bar\xi)&\le c^N(x,\xi)\text{ if }
\xi(x)=\bar\xi(x)=1, 
\end{aligned}
\end{equation}
and $\bar\xi^N_t$ is constructed as in 
\eqref{construct} using $\bar c^N(x,\xi)$, then
\begin{equation}\label{SDEcoupling} \bar\xi^N_0\ge
\xi^N_0\text{ implies }\bar \xi^N_t \ge \xi^N_t \text{ for
all $t\ge 0$}.
\end{equation} In Proposition~2.1 of \cite{CP07} this is
proved under monotonicity of $\xi^N$ when $\xi^N$ is a
killed version of $\bar\xi^N$, but the same elementary
argument applies without monotonicity under
\eqref{compcond}.

As in \eqref{genlXNdefintro}, define the measure-valued
process associated with $\xi_t^N(x),\ x\in\SN$ by
\begin{equation}\label{genlXNdef}X^N_t=(1/N')\sum_{x\in\SN}\xi^N_t(x)\delta_x.\end{equation}
We use the SDE \eqref{construct} to see that $X^N$ satisfies
a martingale problem reminiscent of that of SBM.  Introduce
the scaled probability kernel
\begin{equation}\label{pNdefinition} p_N(x)=p(x\sqrt N),\ \
x\in\SN.
\end{equation} Unless otherwise indicated, assume $\Phi\in
C_b([0,T]\times \R^2)$ is such that
$\overset{\centerdot}{\Phi} := \frac{\partial\Phi}{\partial
t} \in C_b([0,T]\times \R^2)$.  Define
\[ A_N \Phi(s,x) = \sum_{y \in S_N} N p_N(y-x) \left(
\Phi(s,y) - \Phi (s,x) \right),
\] and
\[ D^{N,1}_t(\Phi) = \int_0^t X^N_s\big(A_N\Phi(s,\cdot) +
\dot{\Phi}(s,\cdot)\big)ds.
\]
Introduce
\[
\ell^{(j)}_N = \begin{cases}
\log N&\text{if }j=2,\\
(\log N)^3&\text{if }j=3.
\end{cases}
\]
To be consistent with the notation in \cite{CMP} for
$x\in\SN$ and $\xi\in\{0,1\}^{\SN}$ we define
\begin{equation}\label{dNjdef}
\begin{aligned}
d^{N,2}(x,\xi)&=\hxi(x)c^{N,a}(x,\xi)\\
d^{N,3}(x,\xi)&=\hxi(x)c^{N,s}(x,\xi)-\xi(x)c^{N,s}(x,\xi)
\end{aligned}
\end{equation}
and for $j=2,3$,
\begin{align}
\label{dNjsdef}
d^{N,j}(s,\xi,\Phi) &= \frac{\ell^{(j)}_N}{N'}
\sum_{x\in\SN}\Phi(s,x)d^{N,j}(x,\xi),\\
D^{N,j}_t(\Phi)&=\int_0^t d^{N,j}(s,\xi^N_s,\Phi)ds.
\label{DNjdef}
\end{align}
One may then use the stochastic calculus for Poisson
integrals and integration by parts to rewrite
$\xi^N_t(x)\Phi(t,x)$, just as in Propositions~2.2 and 2.3
in \cite{CP05}, to see that
\begin{equation} \label{SMG1}
X_t^N(\Phi(t,\cdot))  =  X^N_0(\Phi(0,\cdot)) 
+ D_t^{N,1}(\Phi) + D_t^{N,2}(\Phi) 
+  D_t^{N,3}(\Phi)+  M_t^N(\Phi),
\end{equation}
where $M^N_t(\Phi)$ is a square integrable 
$(\mathcal{F}^N_t)$-martingale 
with previsible square function 
\begin{equation}\label{SMGsqfn}\langle M^N(\Phi)\rangle_t = \langle
M^N(\Phi)\rangle_{1,t} +
\langle M^N(\Phi)\rangle_{2,t},\end{equation}
with
\begin{equation}\label{SMG2}
\begin{aligned}
\langle M^N(\Phi)\rangle_{1,t} &= \int_0^t
\frac{\log N}{N'}
\sum_{x\in\SN} \Phi(s,x)^2 c^{N,\VM}(x,\xi^N_s)ds,\\
\langle M^N(\Phi)\rangle_{2,t} &= \int_0^t\frac{1}{{N'}^2}
 \sum_{x\in\SN} \Phi(s.x)^2[\ell^{(2)}_N c^{N,a}(x,\xi^N_s)+\ell_N^{(3)}c^{N,s}(x,\xi^N_s)]\,ds.
\end{aligned}
\end{equation}
Note that in spite of the suggestive notation the last term
may in fact be negative.  The above three displays are
reminiscent of the martingale problem (MP) for a
super-Brownian motion in Section~\ref{sec:scallim}.  In the
next two sections we will take term by term limits in
\eqref{SMG1} to establish Theorem~\ref{t:SBMgenintro}, our
general weak convergence result. We restate it below for
convenience.  Recall that $\sigma^2$ is as in \eqref{covp}
and $\Theta_2,\Theta_3$ are defined in \eqref{Thetadefns}.
\begin{theorem}\label{t:SBMgen} Assume
$\{\xi^{[\vep]}:0<\vep\le \vep_0\}$ is an asymptotically
symmetric finite range voter model perturbation on $\Z^2$.
If $X_0^N\to X_0$ in $\MF$, then
\[ X^N \To \SBM(X_0,4\pi\sigma^2,\sigma^2,\Theta_2+\Theta_3)
\text{ in the Skorokhod space $D(\R_+,\MF)$ as }N\to\infty.
\]
\end{theorem}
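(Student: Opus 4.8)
The plan is to prove Theorem~\ref{t:SBMgen} by the standard three-step programme for convergence to super-Brownian motion: (i) establish tightness of $\{X^N\}$ in $D(\R_+,\MF)$; (ii) show that every subsequential limit $X$ solves the martingale problem (MP) with branching rate $4\pi\sigma^2$, diffusion coefficient $\sigma^2$, and drift $\Theta_2+\Theta_3$; and (iii) invoke the well-posedness of (MP) for $\SBM$ with drift to identify the limit uniquely. The semimartingale decomposition \eqref{SMG1}, together with the form of the predictable square function in \eqref{SMGsqfn}--\eqref{SMG2}, is the exact finite-$N$ analogue of (MP), so steps (i) and (ii) reduce to proving term-by-term convergence in \eqref{SMG1} and in the bracket process, plus enough uniform moment control on $X^N$.

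The random-walk term $D^{N,1}_t(\Phi)$ is the easy one: since $p$ has finite support and covariance $\sigma^2 I$ (by \eqref{covpintro}), $A_N\Phi(s,x)\to\tfrac{\sigma^2}{2}\Delta\Phi(s,x)$ uniformly, so $D^{N,1}_t(\Phi)\to\int_0^t X_s(\tfrac{\sigma^2}{2}\Delta\Phi+\dot\Phi)\,ds$ exactly as in the pure voter-model case of \cite{CDP00}; the only change is that the total-mass normalisation is $N'=N/\log N$ rather than $N$, which affects constants but not this asymptotic. The real work is in the two drift terms $D^{N,2}_t(\Phi)$ and $D^{N,3}_t(\Phi)$. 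By \eqref{cNarep}, \eqref{cNsrep} and \eqref{dNjdef}--\eqref{dNjsdef} each is a time integral of $\tfrac{\ell^{(j)}_N}{N'}\sum_{x\in\SN}\Phi(s,x)$ times a sum over $\emptyset\neq A\subset\cN$ of $r^{N,a}$ or $r^{N,s}$ weights multiplied by indicators that $\xi^N_s$ matches a prescribed local pattern around $x$ (of the form $1_{x+A}$ or its complement); such a pattern is present only at ``interface'' sites, so the naive count $\tfrac{\ell^{(j)}_N}{N'}\cdot|\xi^N_s|$ is far too crude. To compute the limit I would compare $\xi^N$ over a short time window of length $\delta$ with a rescaled voter model started from the same configuration (the comparison-process device of Section~\ref{ss:compproc}), use voter-model duality to replace the probability of the prescribed local pattern near a ``$1$'' of $\xi^N_s$ by a coalescing-random-walk event run for time of order $N$, and then apply the $(\log t)^{-\binom n2}$ asymptotics from Proposition~\ref{p:CRWnew}. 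A pattern of the type appearing in $D^{N,3}$ forces the dual walks from $\bar\cN$ to split into at least three cells (the set $A$ together with a non-trivial partition of $\bar\cN\setminus A$, as encoded in $\Theta^\pm(A)$ in \eqref{Theta+-}), so its probability decays like $(\log N)^{-3}$ and exactly cancels $\ell^{(3)}_N=(\log N)^3$ against the $1/N'$ mass normalisation, leaving the limit $\Theta_3=\sum_A r^s(A)(\Theta^+(A)-\Theta^-(A))$ after using $r^{N,s}(A)\to r^s(A)$ from \eqref{rNslim}; similarly a $D^{N,2}$ pattern forces only a two-cell split, decays like $(\log N)^{-1}$, and leaves $\Theta_2=\sum_A r^a(A)K_2(A,\bar\cN\setminus A)$ via \eqref{rNalim}. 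The short-window corrections, the contributions from configurations needing an extra coalescence, and the large-separation tails are controlled by Proposition~\ref{p:key2bnd} and Remark~\ref{r:coalbnds}, so altogether $D^{N,2}_t(\Phi)+D^{N,3}_t(\Phi)\to\int_0^t X_s((\Theta_2+\Theta_3)\Phi)\,ds$.

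For the martingale part, the dominant piece $\langle M^N(\Phi)\rangle_{1,t}=\int_0^t\tfrac{\log N}{N'}\sum_x\Phi(s,x)^2 c^{N,\VM}(x,\xi^N_s)\,ds$ counts (weighted) discordant edges; the same duality plus the logarithmic growth of the collision local time of two independent planar $p$-walks up to time of order $N$ produces the branching constant, and one shows $\langle M^N(\Phi)\rangle_{1,t}\to\int_0^t X_s(4\pi\sigma^2\Phi^2)\,ds$, while $\langle M^N(\Phi)\rangle_{2,t}\to0$ because it carries an extra factor $1/N'$ that defeats the $\ell^{(j)}_N$ prefactors. Since the jumps of $M^N(\Phi)$ are $O(1/N')\to0$, the martingale converges (along the subsequence) to a continuous martingale with the required square function. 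Finally, tightness follows from uniform second-moment bounds on $X^N_t(1)$ and on the square functions --- derived by the same coalescing-walk estimates, carefully tracking the extra $\log N$ that makes even moment bounds delicate in two dimensions --- which give compact containment, and then \eqref{SMG1} together with a standard criterion (compact containment plus tightness of the real processes $X^N_\cdot(\Phi)$) yields tightness in $D(\R_+,\MF)$. Combining with the term-by-term limits, every subsequential limit solves (MP) for $\SBM(X_0,4\pi\sigma^2,\sigma^2,\Theta_2+\Theta_3)$, whose well-posedness then gives $X^N\To X$. The main obstacle is precisely the exact evaluation of $D^{N,2}$ and $D^{N,3}$: one must show that on the relevant time scale the microscopic configuration relaxes to the local voter equilibrium fast enough to replace the perturbation rates by their asymptotic values, and simultaneously that the powers of $\log N$ match up with no leftover factor --- this is what forces the comparison-over-small-intervals machinery and the sharp bound in Proposition~\ref{p:key2bnd}, and it is where the two-dimensional $\log N$ bookkeeping is most dangerous.
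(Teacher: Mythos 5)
Your overall architecture --- the semimartingale decomposition \eqref{SMG1}, small-time comparison with a coupled voter model, duality with coalescing walks, the $(\log t)^{-\binom n2}$ asymptotics of Proposition~\ref{p:CRWnew}, second-moment and total-mass bounds for tightness, and identification of subsequential limits via the well-posed martingale problem --- is exactly the paper's (Sections~\ref{s:qvdrifttm}--\ref{sec:Ibound}), and your accounts of $D^{N,1}$, of $\langle M^N(\Phi)\rangle$, and of $D^{N,2}$ are correct in outline.

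There is, however, one genuine gap, and it sits at precisely the point of the $\log N$ bookkeeping you yourself flag as most dangerous. You assert that a pattern appearing in $D^{N,3}$ ``forces the dual walks from $\bar\cN$ to split into at least three cells,'' so that its probability is $O((\log N)^{-3})$ and cancels $\ell^{(3)}_N=(\log N)^3$. This is false: the dual event for $\xi^N_s|_{x+\bar\cN_N}=1_{x+A}$ is that the walks started from $x+A$ all land on $1$'s and those from $x+(\bar\cN_N\setminus A)$ all land on $0$'s, and this is perfectly compatible with the walks coalescing into exactly two clusters (one for $A$, one for its complement); that two-cluster event has probability of order $(\log N)^{-1}$, not $(\log N)^{-3}$. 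Your own description of $D^{N,2}$ --- built from the very same local patterns, which there ``force only a two-cell split'' --- shows the pattern alone cannot force three cells. Against the prefactor $(\log N)^3/N'$ the two-cluster term therefore contributes at order $(\log N)^{2}X^N_0(\1)$, which diverges, and neither Proposition~\ref{p:key2bnd} nor Remark~\ref{r:coalbnds} controls it. What saves the argument is a cancellation specific to $j=3$: since $d^{N,3}=\hxi(x)c^{N,s}-\xi(x)c^{N,s}$ with $c^{N,s}$ symmetric, on the two-cluster event the difference $I^{N,+}_2-I^{N,-}_2$ collapses to $\xi^N_0(B^{N,x+a}_{u_N})-\xi^N_0(B^{N,x}_{u_N})$, and after summing against $\Phi$ this is bounded by $\|\Phi_s\|_\Lip\,\widehat E\big(|B^{N,x+a}_{u_N}-B^{N,x}_{u_N}|\big)=O(\sqrt{u_N})$; this is the content of \eqref{dN32} in Lemma~\ref{l:dN3bnd}. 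Without this step (or an equivalent one) the drift analysis for the symmetric part fails outright; once it is added, the rest of your outline goes through as in the paper.
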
 In Remark~\ref{rem:symmSBMintro} we showed how
Theorem~\ref{t:SBM}, the rescaled limit theorem for
$2$-dimensional $q$-voter models, follows from the above.
We now describe a number of other corollaries, all of course
in two dimensions.

\begin{example}\label{e:lv2}(Lotka-Volterra Models). Recall
from Example~\ref{e:LV} that the Lotka-Volterra Models
discussed there constituted an asymptotically symmetric
finite range voter model.  The kernel $p$ is $1_\cN/|\cN|$
and so $\sigma^2$ is as in \eqref{nbhddef}.  Recall also
that $\xi^{[\vep_N]}$ denotes a Lotka-Volterra model with
parameters $(\alpha^N_1,\alpha^N_2)$, where
\[\alpha_i^N=1-\vep_N+\beta_i^{(\vep_N)}(\log(1/\vep_N))^{-2}\vep_N=1-\frac{(\log
N)^3}{N}+\beta_i^N\frac{\log N}{N},\] and
$\beta^N_i\to\beta_i\in\R$ as $N\to\infty$. Let
\begin{equation}\label{xiNXNdef}\xi^N_t(x)=\xi_{Nt}^{[\vep_N]}(x\sqrt
N)\text{ for }x\in\SN,\text{ and
}X_t^N=\frac{1}{N'}\sum_{x\in\SN}\xi^N_t(x)\delta_x.
\end{equation}
If $X^N_0\to X_0\in\MF(\R^2)$ as $N\to\infty$, then
Theorem~\ref{t:SBMgen} implies that
\[X^N \To
\SBM(X_0,4\pi\sigma^2,\sigma^2,\Theta^\LV_2+\Theta^\LV_3),\]
where $\Theta_i^\LV$ are as in Example~\ref{e:LV}.  We can
write these drifts in another way. Let $e_1,e_2$ be iid rv's
uniformly distributed over $\cN$ and, using the notation
from Section~\ref{sec:coalrw}, set
\begin{equation}\label{Kgammadef}K=E(K_3(0,e_1,e_2)1(e_1\neq e_2)),\ \text{ and }\gamma=E(K_2(\{e_1,e_2\},\{0\})).\end{equation}
Then with a bit of work one can show that $\Theta_3^\LV=K$
and $\Theta_2^\LV=(\beta_0-\beta_1)\gamma$. For example, in
the proof of the latter, the summand $A$ arising in
$\Theta_2^\LV$ will be $[e_1]_t$ (recall from the proof of
Proposition~\ref{p:posthetabigd} this is the set of initial
conditions in $\cN$ that have coalesced with $e_1$ by time
$t$) and then let $t\to\infty$.  In this way the drift of
the limiting SBM becomes $K+(\beta_0-\beta_1)\gamma$.  This
is the form of the limit derived in Theorem 1.5 of
\cite{CMP} whose proof will play an important role in the
derivation of Theorem~\ref{t:SBMgen} to come.
\end{example}

\begin{example}\label{e:av3} (Affine Voter Models). In
Example~\ref{e:av} we showed the affine voter models are an
asymptotically symmetric voter model perturbation with
kernel $p(x)=1_\cN(x)/|\cN|$, $\sigma^2$ as in
\eqref{nbhddef}, $\Theta_2=0$, and
$\Theta_3^\AV=\kappa=\lim_{t\to\infty}(\log t)^3\hat
P(|B^\cN_t|=3)>0$ (recall \eqref{constantid}). Let
$\xi^{[\vep_N]}_{Nt}(x)$, $x\in\Z^2$ be an affine voter
model with parameter $\alpha=1-\vep_N$, and define $\xi^N$
and $X^N$ as in \eqref{xiNXNdef}.  Theorem~\ref{t:SBMgen}
implies that
\[\text{if }X^N_0\to X_0\in\MF(\R^2),\text{ then }X^N \To
\SBM(X_0,4\pi\sigma^2,\sigma^2,\kappa).\]
\end{example}
\begin{example}\label{e:gm3} (Geometric Voter Models). By
Example~\ref{e:GV}, the geometric voter models give an
asymptotic symmetric voter model perturbation with $p$ and
$\sigma^2$ as above, $\Theta_2=0$ and
$\Theta^\GV_3=\frac{|\cN|}{2}\Theta_3^\LV=\frac{|\cN|}{2}K$
where $K>0$ is as in \eqref{Kgammadef}. Let $\xi^{[\vep_N]}$
be a geometric voter model with parameter $\theta=1-\vep_N$
and assume $\xi^N$ and $X^N$ are as in \eqref{xiNXNdef}.
Theorem~\ref{t:SBMgen} implies that
\[\text{if }X^N_0\to X_0\in\MF(\R^2),\text{ then }X^N \To
\SBM(X_0,4\pi\sigma^2,\sigma^2,\frac{|\cN|}{2}K).\]
\end{example}

\begin{remark}\label{r:cvrseb} We have started with a finite
range voter model perturbation $\{c_\vep:0<\vep\le \vep_0\}$
and rescaled to obtain rates $c^N$ as in
\eqref{dNdecompsa'}. At times it may be more natural to
start with the rescaled rates $c^N$ for $N\ge N_0\ge e^3$.
Assume now that
\begin{align*}c^N(x,\xi^{(N)})=Nc^{N,\VM}(x,\xi^{(N)})+(\log
N)c^{N,a,*}(x,\xi^{(N)})+&(\log
N)^3c^{N,s,*}(x,\xi^{(N)}),\\ \nonumber& x\in\SN,\
\xi^{(N)}\in\{0,1\}^\SN,
\end{align*} where for some $\R$-valued functions
$g_i^{N,a,*}$, $g^{N,s,*}$ on $\{0,1\}^\cN$, $i=0,1$,
\begin{equation}\label{cNa*}c^{N,a,*}(x,\xi^{(N)})=\hxi(x\sqrt
N)g_1^{N,a,*}(\xi|_{x\sqrt N+\cN})+\xi(x\sqrt
N)g_0^{N,a,*}(\xi|_{x\sqrt N+\cN}),
\end{equation} and
\begin{equation}\label{cNs*}c^{N,s,*}(x,\xi^{(N)})=\hxi(x\sqrt
N)g^{N,s,*}(\hxi|_{x\sqrt N+\cN})+\xi(x\sqrt
N)g^{N,s,*}(\xi|_{x\sqrt N+\cN}).
\end{equation}
We assume there are functions $g_i^{a,*}$, $g^{s,*}$ on
$\{0,1\}^\cN$, $i=0,1$, such that
\[\lim_{N\to\infty}\Bigl(\sum_{i=0}^1\Vert
g_i^{a,N,*}-g_i^{a,*}\Vert_\infty\Bigl)+\Vert
g^{s,N,*}-g^{s,*}\Vert_\infty=0,\] and also that ${\bf 0}$
and ${\bf 1}$ are traps, that is, for all $N$ and $x\in\SN$,
$c^N(x,{\bf0})=c^N(x,{\bf1})=0$.  This setting clearly
includes the $c^N$ arising in \eqref{dNdecompsa'}, and in
fact appears to be more general since \eqref{dNdecompsa'}
requires $\xi(x\sqrt N)c^{N,a}(x,\xi^{(N)})=0$. To see that
it is in fact included in \eqref{dNdecompsa'}, define for
$\xi\in\{0,1\}^\cN$,
\begin{equation}\label{gdefn0}
g_0^{\vep_N}(\xi)=g^{N,s,*}(\xi)+(\log
N)^{-2}g_0^{N,a,*}(\xi)\to g^{s,*}(\xi)\ \ \text{as
}N\to\infty,
\end{equation}
and
\begin{equation}\label{gdefn1}
g_1^{\vep_N}(\xi)=g_0^{\vep_N}(\hxi)+(\log
N)^{-2}[g_1^{N,a,*}(\xi)-g_0^{N,a,*}(\hxi)]\to
g^{s,*}(\hxi)\ \ \text{as }N\to\infty.
\end{equation}
Then one easily checks that $c^N$ is as in
\eqref{dNdecompsa'} where $c^{N,s}$ and $c^{N,a}$ are given
by \eqref{cNsdef} and \eqref{cNadef}, respectively, in terms
of the $g^{\vep_N}_i$ given above. Moreover,
\begin{align*}
g^a(\xi):=&\lim_{N\to\infty}(\log(1/\vep_N))^2(g_1^{\vep_N}(\xi)-g_0^{\vep_N}(\hxi))\\
=&\lim_{N\to\infty}(\log(1/\vep_N))^2(\log N)^{-2}[g_1^{N,a,*}(\xi)-g_0^{N,a,*}(\hxi)]=g_1^{a,*}(\xi)-g_0^{a,*}(\hxi).
\end{align*}
Use $c^N$ to define $c_{\vep_N}$ as in \eqref{cNvepN}. Then
$\{c_\vep:0<\vep\le \vep_0:=\vep_{N_0}\}$ is an
asymptotically symmetric voter model perturbation. Recalling
\eqref{rNalim}, for $A\subset\cN$ we have
\begin{equation}\label{raformula2}r^a(A)=g^a(1_A)=g_1^{a,*}(1_A)-g_0^{a,*}(1_{\cN\setminus
A}),
\end{equation}
while \eqref{gdefn0} and \eqref{rNslim} give
\begin{equation}\label{rsformula2}
r^s(A)=\lim_{N\to\infty}g_0^{\vep_N}(1_{\cN\setminus A})=g^{s,*}(1_{\cN\setminus A}).
\end{equation}
If $\xi^{[\vep]}$ is the process with rate $c_\vep$, then
\eqref{cNvepN} implies that
$\xi^N_t(x)=\xi^{[\vep_N]}_{N\cdot}(\sqrt N\cdot)$
($x\in\SN$) has rate function $c^N$. Therefore if $X^N$ is
as in \eqref{genlXNdef}, we may apply Theorem~\ref{t:SBMgen}
to conclude that \break
$X^N\To\SBM(X_0,4\pi\sigma^2,\sigma^2,\Theta_2+\Theta_3)$,
where (by \eqref{raformula2} and \eqref{rsformula2})
\begin{equation}\label{tctheta}
\Theta_2=\sum_{\emptyset\neq A\subset\cN}(g^{a,*}_1(1_A)-g_0^{a,*}(1_{\cN\setminus A}))K_2(A,\bar\cN\setminus A),\ 
 \Theta_3=\sum_{\emptyset\neq A\subset\cN}g^{s,*}(1_{\cN\setminus A})(\Theta^+(A)-\Theta^-(A)).
\end{equation}
\end{remark}
\begin{example}\label{e:lv3} As a specific example of the
above, recall the lower order weak limit theorem for
two-dimensional Lotka-Volterra models in \cite{CP08}, where
now $\alpha^N_i=1+\frac{\log N}{N}\beta_i^N$ and
$\beta^N_i\to \beta_i$, $i=0,1$.  We continue to assume
$p(x)=1_\cN/|\cN|$. This led to rescaled rates (see (1.6) in
\cite{CP08})
\begin{equation*}
c^N(x,\xi^{(N)})=Nc^{N,\VM}(x,\xi^{(N)})+\log
N\Bigl[\hxi(x\sqrt N)\beta_0^Nf_1(x\sqrt N,\xi)^2+\xi(x\sqrt
N)\beta^N_1f_0(x\sqrt N,\xi)^2\Bigr].
\end{equation*} So comparing with \eqref{cNa*} and
\eqref{cNs*} above we have $c^{N,s,*}=g^{x,*}=0$, and
$g_1^{a,*}(\xi)=\beta_{0}f_1(0,\xi)^2$ and
$g_0^{a,*}(\hxi)=\beta_{1}f_0(0,\hxi)^2=\beta_1f_1(x,\xi)^2$. So
\eqref{tctheta} implies $\Theta_3=0$ and
\[\Theta_2=(\beta_0-\beta_1)\sum_{\emptyset\neq
A\subset\cN}\Bigl(\frac{|A|}{|\cN|}\Bigr)^2K_2(A,\bar\cN\setminus
A)=\Theta_2^\LV=(\beta_0-\beta_1)\gamma,\] where $\gamma$ is
as in Example~\ref{e:lv2}, which also gives the above
expression for $\Theta_2^\LV$.  Therefore by
Remark~\ref{r:cvrseb} and Theorem~\ref{t:SBMgen}, if we
define $\xi^N$, and $X^N$ as in Example~\ref{e:lv2} and
$X^N_0\to X_0$, then
\[X^N \To
\SBM(X_0,4\pi\sigma^2,\sigma^2,(\beta_0-\beta_1)\gamma).\]
This is Theorem~1.2 of \cite{CP08} but with a seemingly
different parameter, $\gamma$, in place of the $\gamma^*$ in
\cite{CP08}. It is, however, easy to use Proposition~2.2 of
\cite{CP08} and \eqref{KnDef} with $n=2$ to check that
$\gamma^*=\gamma$.
\end{example}

\section{Controlling the drift terms, and total mass bounds}\label{s:qvdrifttm}
The goal in this section (Proposition~\ref{p:d3asymp}) is to
show that for $j=2,3$, the drift terms $D^{N,j}_t(\Phi)$
arising in \eqref{SMG1} behave asymptotically like
$\Theta_j\int_0^tX^N_s(\Phi_s)\,ds$, where
$\Phi_s(x)=\Phi(s,x)$. Use notation from the previous
section. In particular, $\xi^N$ is as in \eqref{rescaledef}
with rate function, $c^N$, as in \eqref{dNdecompsa'}.
\subsection{Small time comparison bounds}\label{ss:compproc}
We begin with some elementary bounds on the drifts.  Let
$\cN_N=\cN/\sqrt N$.  By the definitions of
$d^{N,j}(x,\xi)$, \eqref{cNarep} and \eqref{cNsrep}, we have
\begin{equation}\label{dNjrep}
\begin{aligned}
d^{N,2}(x,\xi) & =\hxi(x)\sum_{\emptyset\neq A\subset\cN_N}r^{N,a}(\sqrt NA)1\{\xi|_{x+\cN_N}=1_{x+A}\},\\
& =\sum_{\emptyset\neq A\subset\cN_N}r^{N,a}(\sqrt NA)1\{\xi|_{x+\bar\cN_N}=1_{x+A}\},\\
 d^{N,3}(x,\xi) & = \hxi(x)\sum_{\emptyset\neq A\subset\cN_N}r^{N,s}(\sqrt N A)
1\{\xi|_{x+\cN_N}=1_{x+A}\} -\xi(x)\sum_{\emptyset\neq A\subset\cN_N}
r^{N,s}(\sqrt NA)1\{\xi|_{x+\cN_N}=1_{x+\cN_N\setminus A}\}\\
& = \sum_{\emptyset\neq A\subset\cN_N}r^{N,s}(\sqrt NA)
1\{\xi|_{x+\bar\cN_N}=1_{x+A}\} -\sum_{\emptyset\neq A\subset\cN_N}
r^{N,s}(\sqrt NA)1\{\xi|_{x+\bar\cN_N}=1_{x+\bar\cN_N\setminus A}\}.
\end{aligned}
\end{equation}
Observe that at most one term in each of the above sums can
be nonzero, and this would require that $\xi(y)=1$ for some
$y\in x+\bar\cN_N$.  This implies that for $j=2,3$,
\begin{equation}\label{dNjbnd1}
|d^{N,j}(x,\xi)|\le \|r\|1\{\xi(y)=1 
\text{ for some }y\in x+\bar\cN_N\}\le\|r\|\sum_{y\in
  x+\bar\cN_N}\xi(y),
\end{equation}
and hence 
\begin{equation}\label{dNjbnd0}
|d^{N,j}(x,\xi)|\le \|r\|,
\end{equation}
and for $\Phi:\R_+\times\R^2\to\R$,
\begin{equation}\label{dNjsbnd}
|d^{N,j}(s,\xi,\Phi)|\le \Vert r\Vert |\Phi_s|_\infty|\bar\cN|\frac{\ell^{(j)}_N}{N'}\sum_{x\in\SN}\xi(x).
\end{equation}

To control the drifts over small time intervals we will
condition $d^{N,2}(s,\xi^N_s,\Phi)$ and
$d^{N,3}(s,\xi^N_s,\Phi)$ on $\cF^N_{s-u_N}$ for small $u_N$
and compare this small time evolution of $\xi^N$ with a
voter model, for which we can make explicit calculations
using duality.

Set $f_i^{(N)}(x,\xi)=\sum_{y\in\SN}p_N(y-x)1\{\xi(y)=i\}$,
$n_i^{(N)}(x,\xi)=\sum_{y\in\cN_N}1\{\xi(x+y)=i\}$, \break
$\underline p=\min\{p(y):p(y)>0\}>0$, and introduce
\begin{equation}\label{wNdef}w_N=1-\frac{\|r\|}{\underline
p}\vep_N\to 1\text { as }N\to\infty.
\end{equation}
We use the construction given in \eqref{construct} with
other rate functions to provide a coupling of $\xi^N_t$ with
some comparison processes. First, let
$\xi^{N,\VM}_t\in\{0,1\}^\SN$ be the rescaled voter model
defined as in \eqref{construct} but using the rate function
\[Nw_Nc^{N,\VM}(x,\xi),\ x\in\SN,\xi\in\{0,1\}^\SN.\] Next,
define the 1-biased rate function
\begin{align}\label{bvoterratesdef}
\bar c^{N,b}(x,\xi) &=
Nw_Nc^{N,\VM}(x,\xi) + \hxi(x)\Bigl(2+{\underline p}^{-1}\Bigr)\|r\|(\log N)^3  n^{(N)}_1(x,\xi).
\end{align}
%It is easy to check that this is a monotone rate function.
Let 
$\bxi^{N}_t$ be the 1-biased voter model constructed 
with $\bar c^{N,b}$  
as in
\eqref{construct}. 

We next verify \eqref{compcond} for $c^N$ and $\bar
c^{N,b}$, that is we will show that for $\xi\le\bar\xi$,
  \begin{equation}\label{cNborder}
  \begin{aligned} {\bar c}^{N,b}(x,\bar\xi)&\ge
c^N(x,\xi)\text{ if }\xi(x)=\bar\xi(x)=0, \\ {\bar
c}^{N,b}(x,\bxi)&\le c^N(x,\xi)\text{ if }
\xi(x)=\bar\xi(x)=1.
\end{aligned}
\end{equation} First combine \eqref{dNdecompsa},
\eqref{cNarep} and \eqref{cNsrep} to that for $x\in\SN$ and
$\xi\in\{0,1\}^\SN$,
\begin{align}\label{cNdecomprs} \nonumber
c^N(x,\xi)&=Nc^{N,\VM}(x,\xi)+\hxi(x)\Bigl[\sum_{\emptyset\neq
A\subset\cN_N}(\log N r^{N,a}(\sqrt NA)+(\log
N)^3r^{N,s}(\sqrt NA))1(\xi|_{x+\cN_N}=1_{x+A})\Bigr]\\
\nonumber&\quad+\xi(x)\Bigl[\sum_{\emptyset\neq
A\subset\cN_N} (\log N)^3 r^{N,s}(\sqrt NA)
1(\xi|_{x+\cN_N}=1_{x+\cN_N\setminus A})\Bigr]\\
\nonumber&=Nw_Nc^{N,\VM}(x,\xi)+\hxi(x)\Bigl[\sum_{\emptyset\neq
A\subset\cN_N}(\log Nr^{N,a}(\sqrt NA)+(\log
N)^3r^{N,s}(\sqrt NA))1(\xi|_{x+\cN_N}=1_{x+A})\\
\nonumber&\phantom{=Nw_Nc^{N,\VM}(x,\xi)+\hxi(x)\Bigl[\ \
}+N(1-w_N)f_1^{(N)}(x,\xi)\Bigr]\\
\nonumber&\quad+\xi(x)\Bigl[\sum_{\emptyset\neq
A\subset\cN_N}(\log N)^3 r^{N,s}(\sqrt
NA)1(\xi|_{x+\cN_N}=1_{x+\cN_N\setminus
A})+N(1-w_N)f_0^{(N)}(x,\xi)\Bigr]\\
&:=Nw_Nc^{N,\VM}(x,\xi)+\hxi(x)\tilde
c_1^N(x,\xi)+\xi(x)\tilde c_0^N(x,\xi).
\end{align}
If $\xi|_{x+\cN_N}=1_{x+\cN_N\setminus A}$ and $p(\sqrt NA)>0$, then
\[\frac{f_0^{(N)}(x,\xi)}{\underline p}=\sum_{y\in\sqrt NA}\frac{p(y)}{\underline p}\ge 1.\]
Use Lemma~\ref{psupp} and then the above to see that
\begin{align}\nonumber\tilde
c^N_0(x,\xi)&\ge\sum_{\emptyset\neq A\subset\cN_N, p(\sqrt
NA)>0} (\log N)^3 (r^{N,s}(\sqrt NA)\wedge
0)1(\xi|_{x+\cN_N}=1_{x+\cN_N\setminus A})+\frac{\Vert
r\Vert}{\underline p}(\log N)^3f_0^{(N)}(x,\xi)\\
\label{tcolb}&\ge -(\log N)^3\Vert r\Vert
\frac{f^{(N)}_0(x,\xi)}{\underline p}+\frac{\Vert
r\Vert}{\underline p}(\log N)^3f_0^{(N)}(x,\xi)= 0.
\end{align}
More simply we have 
\begin{equation}\label{tc1ub} \tilde c^N_1(x,\xi)\le
\Bigl[2\Vert r\Vert(\log N)^3+\frac{\Vert r\Vert}{\underline
p}(\log N)^3\Bigr]n_1^{(N)}(x,\xi).
\end{equation}
Turning to \eqref{cNborder}, assume now that $\xi\le \bxi$.
If $\xi(x)=\bxi(x)=1$, then by
\eqref{cNdecomprs},\eqref{tcolb} and the monotonicity of the
voter model,
\begin{align*}c^N(x,\xi)&=Nw_Nc^{N,\VM}(x,\xi)+\tilde c^N_0(x,\xi)\\
&\ge Nw_Nc^{N,\VM}(x,\xi)\ge Nw_Nc^{N,\VM}(x,\bxi)=\bar c^{N,b}(x,\bxi),
\end{align*}
the last by \eqref{bvoterratesdef}.  If $\xi(x)=\bxi(x)=0$,
then by \eqref{cNdecomprs} and \eqref{tc1ub},
\begin{align*}c^N(x,\xi)&=Nw_Nf^{(N)}_1(x,\xi) +\tilde
c^N_1(x,\xi)\\ &\le Nw_Nf^{(N)}_1(x,\bxi) +(2+{\underline
p}^{-1})\vert r\Vert (\log N)^3n_1^{(N)}(x,\xi)= \bar
c^{N,b}(x,\bxi),
\end{align*} where \eqref{bvoterratesdef} is again used in
the last equality.  This proves \eqref{cNborder}.

More simply \eqref{cNborder} also holds if $c^N(x,\xi)$ is
replaced with $w_NNc^{N,\VM}(x,\xi)$.  This is immediate
from the monotonicity of the voter model and
\eqref{bvoterratesdef}. Having verified \eqref{compcond} for
two pairs of processs we may apply the coupling result
\eqref{SDEcoupling} and conclude that if the three processes
$\xi^N,\xi^{N,\VM},\bxi^N$ have the same initial state
$\xi^N_0$, then with probability one,
\begin{equation}\label{couple1}
\xi^N_t\le \bxi^N_t,\text{ and }\xi^{N,\VM}_t\le \bxi^N_t \text{ for
  all }t\ge 0.
\end{equation}
Use these processes to define the empirical processes of
one's, ${\bar X}^N_t$ and $X^{N,\VM}_t$ respectively, as in
\eqref{genlXNdef}.  We will need to compare these processes
over small time periods. We assume $u_N$ satisfies
\begin{equation}\label{uNdef}
\frac{C_{\ref{uNdef}}}{\sqrt N}\le u_N\le(\log N)^{-p}
\text{ for some }C_{\ref{uNdef}}>0, p>6, 
\end{equation}
and recalling that as $N\ge e^3$, we have $u_N(\log N)^3<1$.  

\begin{lemma}\label{l:masscmps} For some universal
$C_{\ref{masscmps}}$ and all $N$, if
$\xi^N_0=\xi^{N,\VM}_0=\xi^N_0$ then
\begin{equation}\label{masscmps}
\begin{aligned} E\big[\bar X^N_{u_N}(\1) -
X^N_{u_N}(\1)\big] &\le C_{\ref{masscmps}}(\log
N)^{3-p}X^N_0(\1),\\ E\big[\bar X^{N}_{u_N}(\1) -
X^{N,\VM}_{u_N}(\1) \big] &\le C_{\ref{masscmps}}(\log
N)^{3-p}X^N_0(\1),\\ E\big[\big| X^{N,\VM}_{u_N}(\1) -
X^N_{u_N}(\1)\big| \big] &\le C_{\ref{masscmps}}(\log
N)^{3-p}X^N_0(\1).
\end{aligned}
\end{equation}
\end{lemma}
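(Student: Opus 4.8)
The plan is to estimate each of the three expected mass differences in \eqref{masscmps} by exploiting the coupling \eqref{couple1}, which gives $\xi^N_t\le\bxi^N_t$ and $\xi^{N,\VM}_t\le\bxi^N_t$ for all $t\ge0$, and by computing the mean mass of each of the three processes at time $u_N$ via its defining SDE. Each of the three processes $\xi^N$, $\xi^{N,\VM}$, $\bxi^N$ is built from \eqref{construct} with a rate function, so taking expectations of the semimartingale decomposition for the total mass $X^N_t(\1)=(1/N')\sum_x\xi^N_t(x)$ kills the martingale term (using \eqref{L1boundunscaleda} to justify integrability and a localization/Gronwall argument) and leaves $\frac{d}{dt}E[X^N_t(\1)]$ equal to $(1/N')$ times the sum over $x$ of the expected net flip rate at $x$. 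For the voter process $\xi^{N,\VM}$, run at rate $Nw_Nc^{N,\VM}$, the voter rates are mass-conserving in the sense that $\sum_x(\hxi(x)f^{(N)}_1(x,\xi)-\xi(x)f^{(N)}_0(x,\xi))=0$ since $p_N$ is a probability kernel; hence $E[X^{N,\VM}_{u_N}(\1)]=X^N_0(\1)$ exactly. For the biased process $\bxi^N$ the extra birth term $\hxi(x)(2+\underline p^{-1})\|r\|(\log N)^3 n^{(N)}_1(x,\xi)$ contributes a net positive drift; summing $\sum_x\hxi(x)n^{(N)}_1(x,\xi)=\sum_x\xi(x)\cdot|\{y\in\cN_N:\ \widehat\xi(x-y)=1\}|\le|\cN|\sum_x\xi(x)$ shows $\frac{d}{dt}E[\bar X^N_t(\1)]\le C(\log N)^3\,E[\bar X^N_t(\1)]$, so by Gronwall $E[\bar X^N_{u_N}(\1)]\le e^{C(\log N)^3 u_N}X^N_0(\1)$. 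Since $u_N\le(\log N)^{-p}$ with $p>6$, we have $(\log N)^3u_N\le(\log N)^{3-p}\to0$, so $e^{C(\log N)^3u_N}-1\le C'(\log N)^{3-p}$ for large $N$ (and is bounded for all $N$ by adjusting the constant, using $(\log N)^3u_N<1$), giving $E[\bar X^N_{u_N}(\1)-X^N_0(\1)]\le C_{\ref{masscmps}}(\log N)^{3-p}X^N_0(\1)$.

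The first two bounds then follow by monotonicity: since $\xi^N_{u_N}\le\bxi^N_{u_N}$ and $\xi^{N,\VM}_{u_N}\le\bxi^N_{u_N}$ pointwise a.s., and all three start from $\xi^N_0$, we get
\[
0\le E[\bar X^N_{u_N}(\1)-X^N_{u_N}(\1)]\le E[\bar X^N_{u_N}(\1)]-X^N_0(\1)\le C_{\ref{masscmps}}(\log N)^{3-p}X^N_0(\1),
\]
using in the middle step that $E[X^N_{u_N}(\1)]\ge X^N_0(\1)$ — which itself needs checking, but holds because $\0$ is a trap for $\xi^{[\vep]}$ (the perturbation has $g^\vep_1(1_\emptyset)=0$ by \eqref{zerotrap}), so the net drift of the mass of $\xi^N$ is nonnegative: whenever $\xi|_{x+\bar\cN_N}\equiv 0$ the flip rate at $x$ vanishes, hence $\sum_x\hxi(x)c^N(x,\xi)\ge\sum_x\hxi(x)c^{N,\VM}(x,\xi)$ is not quite it — more carefully, one argues directly that $\frac{d}{dt}E[X^N_t(\1)]\ge 0$ by noting $c^N(x,\xi)\ge 0$ always and that the death term $\sum_x\xi(x)c^N(x,\xi)$ is controlled; alternatively, and more robustly, one simply writes $E[X^N_{u_N}(\1)]=X^N_0(\1)+E[D^{N,1}_{u_N}(\1)]+E[D^{N,2}_{u_N}(\1)]+E[D^{N,3}_{u_N}(\1)]$ from \eqref{SMG1} with $\Phi\equiv 1$ (so $D^{N,1}\equiv0$) and uses \eqref{dNjsbnd} to bound $|E[D^{N,j}_{u_N}(\1)]|\le\|r\|\,|\cN|\,\ell^{(j)}_N\,u_N\cdot\sup_{s\le u_N}E[X^N_s(\1)]\le C(\log N)^{3}u_N\sup_{s\le u_N}E[X^N_s(\1)]$, and a Gronwall argument controls the latter by $C X^N_0(\1)$; this gives the first two bounds directly without needing the sign. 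The same representation applied to $\xi^{N,\VM}$ gives exact conservation. The third bound is then obtained from the first two by the triangle inequality,
\[
E[|X^{N,\VM}_{u_N}(\1)-X^N_{u_N}(\1)|]\le E[\bar X^N_{u_N}(\1)-X^{N,\VM}_{u_N}(\1)]+E[\bar X^N_{u_N}(\1)-X^N_{u_N}(\1)],
\]
where we crucially use that $X^{N,\VM}_{u_N}(\1)$ and $X^N_{u_N}(\1)$ are both $\le\bar X^N_{u_N}(\1)$, so $|X^{N,\VM}_{u_N}(\1)-X^N_{u_N}(\1)|\le(\bar X^N_{u_N}(\1)-X^{N,\VM}_{u_N}(\1))+(\bar X^N_{u_N}(\1)-X^N_{u_N}(\1))$ pointwise; taking expectations and applying the first two bounds (and conservation of $X^{N,\VM}$) finishes it.

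The main obstacle, and the only genuinely nontrivial point, is the justification of the Gronwall-type mean mass estimates when $|\xi^N_0|<\infty$ but the total configuration is large: one must verify that the martingale $M^N_t(\1)$ in \eqref{SMG1} is a true martingale (or at least that its expectation vanishes), which requires an integrability bound $\sup_{s\le t}E[X^N_s(\1)]<\infty$; this is where \eqref{L1boundunscaleda} and the finiteness of $|\xi^N_0|$ enter, via a standard stopping-time argument (stop when the mass exceeds a level $K$, apply optional stopping, then let $K\to\infty$ using monotone convergence after establishing the a priori linear-in-time bound $E[X^N_{t\wedge\tau_K}(\1)]\le X^N_0(\1)e^{Ct(\log N)^3}$). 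Once this bookkeeping is in place the estimates are routine, and the key structural inputs — mass conservation of the voter rates, the trap property forcing the perturbation drift to vanish on the empty configuration locally, the explicit form \eqref{bvoterratesdef} of the biased rate, and the pointwise orderings \eqref{couple1} — do all the real work.
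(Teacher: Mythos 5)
Your proof is correct and follows essentially the same route as the paper: an upper bound $E[\bar X^N_{u_N}(\1)]\le e^{C(\log N)^3u_N}X^N_0(\1)$ for the biased voter model (the paper cites Lemma~4.1 of \cite{CP05} for this, which is exactly your Gronwall computation), a matching lower bound for $E[X^N_{u_N}(\1)]$ obtained from \eqref{SMG1} with $\Phi=\1$ and the drift bound \eqref{dNjsbnd}, exact mass conservation for the voter model, and the triangle inequality via the coupling \eqref{couple1} for the third estimate. Your self-correction abandoning the (false in general) claim $E[X^N_{u_N}(\1)]\ge X^N_0(\1)$ in favour of the reverse-Gronwall bound is precisely what the paper does with its ``elementary integration by parts.''
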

\begin{proof} 
For the first inequality, by Lemma 4.1 in \cite{CP05},
$E\bar X^N_s(\1) \le e^{(2+{\underline p}^{-1})\|r\|(\log N)^3s}X^N_0(\1)$. Set
$\Phi=\1$ in \eqref{SMG1} to get 
\[E[X_t^N(\1)]=X^N_0(\1)+\int_0^tE[d^{N,2}(s,\xi^N_s,\1)+ d^{N,3}(s,\xi^N_s,\1)]\,ds,  
\]   
where by  \eqref{dNjsbnd},
$E\big[|d^{N,2}(s,\xi^N_s,\1)|+|d^{N,3}(s,\xi^N_s,\1)|\big]
\le2 \|r\||\cN|(\log N)^3E[X^N_s(\1)]$.   
An elementary integration by parts now implies
$E[X^N_s(\1)]\ge e^{-2\|r\||\cN|(\log N)^3
  s}X_0^N(\1)$.
The above inequalities with $s=u_N$ give the first
inequality. The second is even simpler since then
$X^{N,\VM}(\1)$ is a martingale.  The final inequality then
follows by the triangle inequality.
\end{proof}

Let $\Phi\in
C_b([0,T]\times \R^2)$ and $\|\Phi\|_\infty$ denote its sup norm. Define $|\Phi|_\Lip$,
respectively $|\Phi|_{1/2,N}$, to be the smallest element in
$[0,\infty]$ such that 
\begin{equation}\label{|Phi|defs}
\begin{cases}
|\Phi(s,x)-\Phi(s,y)| \le |\Phi|_\Lip|x-y|, \quad
\forall s\in [0,T], x,y\in\R^2,\\
|\Phi(s-u_N,x)-\Phi(s,x)| \le |\Phi|_{1/2,N}\sqrt{u_N},
\quad\forall s\in[u_N,T], x\in\R^2.
\end{cases}
\end{equation}%\footnote{Define $\|\Phi\|_\infty$?}
We will write $\|\Phi\|_{\Lip}=\|\Phi\|_\infty +
|\Phi|_{Lip}$, $\|\Phi\|_{1/2,N}=\|\Phi\|_\infty +
|\Phi|_{1/2,N}$, and $\|\Phi\|_N= \|\Phi\|_\infty
+|\Phi|_{Lip}+ |\Phi|_{1/2,N}$. We also will abuse notation
slightly and write
$\|\Phi_s\|_\Lip=\|\Phi_s\|_\infty+|\Phi_s|_{\Lip}$ for the
usual Lipschitz norm of $x\to\Phi_s(x)=\Phi(s,x)$.  Note
that since $u_N<\sqrt{u_N}$ for $N\ge e^3$, we have
\begin{equation}\label{holderhalf} \text{if, in addition,
$\overset{\centerdot}{\Phi} \in C_b([0,T]\times \R^2)$, then
$|\Phi|_{1/2,N}\le \|\overset{\centerdot}{\Phi}\|_\infty$
for all $N\ge e^3$.}
\end{equation} We will often suppress the dependence on $N$
in the above ``norms".

\bigskip
We claim that  for  $\xi\le\eta$, $j=2,3$,
\begin{equation}\label{dNjdiff}
|d^{N,j}(x,\eta) -d^{N,j}(x,\xi)|
 \le 2\|r\|\sum_{y\in x+\bar\cN_N}(\eta(y) - \xi(y)).
\end{equation}
If $\xi\ne\eta$ on $x+\bar\cN_N$, then the right-hand side
is at least $2\|r\|$ and so the above follows from
\eqref{dNjbnd0} and the triangle inequality. If
$\xi|_{\bar\cN_N}= \eta|_{\bar\cN_N}$ then the left-hand
side is zero, and so \eqref{dNjdiff} is trivial.
\begin{lemma}\label{l:dvmcomp} There is a constant
$C_{\ref{dvmcomp}}>0$ such that for $j=2,3$, all $T>0$,
\break$\Phi\in C_b([0,T]\times\R^2)$ and all $s\in[0,T]$,
\begin{equation}\label{dvmcomp}
E_{\xi^N_0}\big[\big|d^{N,j}(s,\xi^N_{u_N},\Phi) -
d^{N,j}(s,\xi^{N,\VM}_{u_N},\Phi)\big|\big] \le
C_{\ref{dvmcomp}}\|\Phi\|_\infty(\log N)^{6-p}X^N_0(\1).
\end{equation}
\end{lemma}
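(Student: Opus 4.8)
The plan is to reduce the difference $d^{N,j}(s,\xi^N_{u_N},\Phi)-d^{N,j}(s,\xi^{N,\VM}_{u_N},\Phi)$ to a difference of \emph{masses}, which can then be controlled by Lemma~\ref{l:masscmps}. First I would note that by \eqref{couple1} we have $\xi^N_{u_N}\le\bxi^N_{u_N}$ and $\xi^{N,\VM}_{u_N}\le\bxi^N_{u_N}$ on a common probability space, so that $\bxi^N_{u_N}$ dominates both. Writing, for each $x\in\SN$,
\[
\bigl|d^{N,j}(x,\xi^N_{u_N})-d^{N,j}(x,\xi^{N,\VM}_{u_N})\bigr|\le \bigl|d^{N,j}(x,\bxi^N_{u_N})-d^{N,j}(x,\xi^N_{u_N})\bigr|+\bigl|d^{N,j}(x,\bxi^N_{u_N})-d^{N,j}(x,\xi^{N,\VM}_{u_N})\bigr|,
\]
the monotone comparison \eqref{dNjdiff} applies to each term on the right, giving a bound by $2\|r\|\sum_{y\in x+\bar\cN_N}(\bxi^N_{u_N}(y)-\xi^N_{u_N}(y))$ plus the analogous term with $\xi^{N,\VM}$ in place of $\xi^N$.

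Next I would multiply by $\Phi(s,x)$, sum over $x\in\SN$, and insert the normalizing factor $\ell^{(j)}_N/N'$ from \eqref{dNjsdef}. Using $|\Phi(s,x)|\le\|\Phi\|_\infty$ and interchanging the order of summation in $\sum_x\sum_{y\in x+\bar\cN_N}$ (each $y$ is counted $|\bar\cN_N|=|\bar\cN|$ times, as $y\in x+\bar\cN_N$ iff $x\in y-\bar\cN_N$), this yields
\[
\bigl|d^{N,j}(s,\xi^N_{u_N},\Phi)-d^{N,j}(s,\xi^{N,\VM}_{u_N},\Phi)\bigr|\le 2\|r\|\,\|\Phi\|_\infty\,|\bar\cN|\,\frac{\ell^{(j)}_N}{N'}\sum_{y\in\SN}\bigl[(\bxi^N_{u_N}(y)-\xi^N_{u_N}(y))+(\bxi^N_{u_N}(y)-\xi^{N,\VM}_{u_N}(y))\bigr].
\]
Recalling that $X^N_t(\1)=(1/N')\sum_y\xi^N_t(y)$ and similarly for $\bar X^N$ and $X^{N,\VM}$, and that $\ell^{(j)}_N\le(\log N)^3$, the right-hand side is at most $2\|r\|\,|\bar\cN|(\log N)^3\bigl[(\bar X^N_{u_N}(\1)-X^N_{u_N}(\1))+(\bar X^N_{u_N}(\1)-X^{N,\VM}_{u_N}(\1))\bigr]\|\Phi\|_\infty$.

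Finally I would take expectations and apply the first two inequalities in Lemma~\ref{l:masscmps}, each of which bounds the expected mass difference by $C_{\ref{masscmps}}(\log N)^{3-p}X^N_0(\1)$. This produces a bound of the form $C\,\|\Phi\|_\infty(\log N)^{3}(\log N)^{3-p}X^N_0(\1)=C\,\|\Phi\|_\infty(\log N)^{6-p}X^N_0(\1)$, which is exactly \eqref{dvmcomp} with $C_{\ref{dvmcomp}}=4\|r\|\,|\bar\cN|\,C_{\ref{masscmps}}$ (absorbing constants). There is no serious obstacle here; the only points requiring a little care are the bookkeeping in the double-sum interchange (making sure the combinatorial factor $|\bar\cN|$ comes out correctly and finite) and checking that $\bxi^N$, $\xi^N$, $\xi^{N,\VM}$ are indeed realized with a common initial condition so that \eqref{couple1} is available — both of which are routine given the coupling already established via \eqref{SDEcoupling} and \eqref{cNborder}.
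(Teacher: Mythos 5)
Your proof is correct and follows essentially the same route as the paper: a triangle inequality through the dominating biased voter model $\bxi^N_{u_N}$ from the coupling \eqref{couple1}, the comparison bound \eqref{dNjdiff} applied to each of the two ordered pairs, reduction to the total mass differences $\bar X^N_{u_N}(\1)-X^N_{u_N}(\1)$ and $\bar X^N_{u_N}(\1)-X^{N,\VM}_{u_N}(\1)$, and then Lemma~\ref{l:masscmps}. The only cosmetic difference is that you apply the triangle inequality pointwise in $x$ before summing against $\Phi$, whereas the paper does it after; the combinatorial factor $|\bar\cN|$ and the exponent $6-p$ come out identically.
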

\begin{proof}
Using \eqref{dNjdiff} with $\eta=\bxi^N_{u_N}$ and
$\xi=\xi^N_{u_N}$ and in \eqref{dNjsdef} we see that
\begin{align*} \Big|d^{N,j}(s,\bxi^N_{u_N},\Phi)
-d^{N,j}(s,\xi^N_{u_N},\Phi)\Big| &\le 2\|r\|\|\Phi\|_\infty
\frac{\ell^{(j)}_N}{N'} \sum_{x\in\SN}\Big[\sum_{y\in
x+\bar\cN_N}\bxi^{N}_{u_N}(y) -\xi^N_{u_N}(y)\Big]\\
&=2\|r\||\bar\cN|\|\Phi\|_\infty \ell^{(j)}_N\big[\bar
X^N_{u_N}(\1)- X^N_{u_N}(\1)\big].
\end{align*}
Similarly,
\begin{align*}
\Big|d^{N,j}(s,\bxi^N_{u_N},\Phi)
-d^{N,j}(s,\xi^{N,\VM}_{u_N},\Phi)\Big|\le 
2\|r\||\bar\cN|\|\Phi\|_\infty
\ell^{(j)}_N\big[\bar X^N_{u_N}(\1)- X^N_{u_N}(\1)\big].
\end{align*}
It follows that the left-hand side of
\eqref{dvmcomp} is bounded by
\[
2\|r\||\bar\cN| \|\Phi\|_\infty
(\log N)^3E\big[(\bar X^N_{u_N}(\1)-
{X}^N_{u_N}(\1))+(\bar X^N_{u_N}(\1)-
{X}^{N,\VM}_{u_N}(\1))\big].
\]
Now Lemma~\ref{l:masscmps} completes the proof.
\end{proof}

The next step is to consider
$E[d^{N,j}(s,\xi^{N,\VM}_{u_N},\Phi)]$.  Under a probability
$\widehat P$, let $\{B^{N,x}_t,x\in\SN\}$ be a rate $w_NN$,
coalescing random walk system on $\SN$ with jump kernel
$p_N$ and for $A\subset \SN$ let $B^{N,A}_t=\cup_{x\in
A}\{B^{N,x}_t\}.$ For finite nonempty disjoint $A_i\subset
\SN$ define $\sigma^N(A_1,\dots,A_n)$ and $\tau^N(A_1,\dots,
A_n)$ in the same way as $\sigma$ and $\tau$ are defined in
\eqref{sigtau}, but with $\{B^{N,x}:x\in\SN\}$ in place of
$\{B^x:x\in\Z^2\}$.  Similarly define $\sigma^N(x_1,\dots
x_n)$ and $\tau^N(x_1,\dots,x_n)$ for distinct $x_i$ in
$\SN$. If $x\in\SN$, then introduce
$\sigma^N_x(A_1,\dots,A_n)=\sigma^N(x+A_1,\dots,x+A_n)$, and
similarly for $\tau_x^N(A_1,\dots,A_n)$.

The duality equation connecting the voter model
$\xi^{N,\VM}$ with the coalescing random walks
$\{B^{N,x}_t\}$ that we need is the following (see Section
III.4 of Lig85). For $x\in\SN$, finite disjoint sets
$A,B\subset\SN$ and $\xi^N_0\in\{0,1\}^{\SN}$,
\begin{equation}\label{dual1}
E_{\xi^N_0}\Big[\prod_{a\in A } \xi^{N,\VM}_t(a)\,
\prod_{b \in B } (1-\xi^{N,\VM}_t(b))\Big] =
\widehat E\Big[\prod_{a\in A } \xi^N_0(B^{N,a}_{t}) 
\prod_{b \in B } (1-\xi^N_0(B^{N,b}_{t}))\Big].
\end{equation}
For $A\subset\cN_N$ define
\begin{equation}\label{IN+-def}
\begin{aligned}
I^{N,+}(x,u_N,A,\xi^N_0)
&=\prod_{a\in A } \xi^N_0(B^{N,x+a}_{u_N}) 
\prod_{b \in \bar\cN_N\setminus A } (1-\xi^N_0(B^{N,x+b}_{u_N}))\\
I^{N,-}(x,u_N,A,\xi^N_0)&
=\prod_{a\in \bar\cN_N\setminus A  } \xi^N_0(B^{N,x+a}_{u_N}) 
\prod_{b \in A } (1-\xi^N_0(B^{N,x+b}_{u_N})).
\end{aligned}
\end{equation}
With this notation, \eqref{dNjrep} and \eqref{dual1} imply
that
\begin{equation}\label{dNjvm}
\begin{aligned}
E_{\xi_0^N}\big(d^{N,2}(x,\xi^{N,\VM}_{u_N})\big)
&=  \sum_{\emptyset\neq A\subset \cN_N} r^{N,a}\sqrt N A)
\widehat E\big(I^{N,+}(x,u_N,A,\xi^N_0)\big),\\
E_{\xi_0^N}\big(d^{N,3}(x,\xi^{N,\VM}_{u_N})\big)
&= \sum_{\emptyset\neq A\subset \cN_N} r^{N,s}(\sqrt N A)\Big[
\widehat E\big(I^{N,+}(x,u_N,A,\xi^N_0)\big) -
\widehat E\big(I^{N,-}(x,u_N,A,\xi^N_0)\big)\Big]. 
\end{aligned}
\end{equation}
If we now define
\begin{align}\label{^HN2}
\hat H^{N,2}(\xi^N_0,u_N,\Phi_s) &=
\frac{\ell^{(2)}_N}{N'}\sum_{x\in \SN}\Phi(s,x)
\sum_{\emptyset\neq A\subset \cN_N} r^{N,a}(\sqrt NA)
\widehat E[I^{N,+}(x,u_N,A,\xi^N_0)]\\
\label{^HN3}\hat H^{N,3}(\xi^N_0,u_N,\Phi_s) &=
\frac{\ell^{(3)}_N}{N'}\sum_{x\in \SN}\Phi(s,x)
\sum_{\emptyset\neq A\subset \cN_N} r^{N,s}(\sqrt NA)
\Big(\widehat E[I^{N,+}(x,u_N,A,\xi^N_0)]
\\\notag &\phantom{=
\frac{\ell^{(3)}_N}{N'}\sum_{x\in \SN}\Phi(s,x)
\sum_{A\subset \cN_N} r^{N,s}({\sqrt N}A)
\Big(}-
\widehat E[I^{N,-}(x,u_N,A,\xi^N_0)]\Big), 
\end{align}
then for $j=2,3$, %by \eqref{dNjdef} and \eqref{dual2}, 
\begin{equation}\label{dN3rep1}
E_{\xi^N_0}[d^{N,j}(s,\xi^{N,\VM}_{u_N},\Phi)] =
\hat H^{N,j}(\xi^N_0,u_N,\Phi_s).
\end{equation}
\begin{lemma}\label{l:d3H} There is a constant $C_{\ref{d3H}}$
such for $j=2,3$, all $T>0$, $\Phi\in C_b([0,T]\times\R^2)$
and all $s\in[u_N,T]$, 
\begin{equation}\label{d3H}
\Big|E_{\xi^N_0}\big(d^{N,j}(s,\xi^N_{s},\Phi)|\cF^N_{s-u_N}\big) -
\hat H^{N,j}(\xi^N_{s-u_N},u_N,\Phi_{s-u_N}) \Big|
\le C_{\ref{d3H}}\|\Phi\|_{1/2,N}(\log
N)^{3-\frac{p}{2}}X^N_{s-u_N}(\1). 
\end{equation}
\end{lemma}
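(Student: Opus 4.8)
The plan is to use the Markov property at time $s-u_N$ together with the coupling $\xi^N_t \le \bxi^N_t$ and $\xi^{N,\VM}_t \le \bxi^N_t$ from \eqref{couple1} to replace the true spin-flip dynamics of $\xi^N$ over the short time window $[s-u_N,s]$ by the voter model dynamics of $\xi^{N,\VM}$, whose one-window evolution is exactly computable via the duality \eqref{dual1} and is captured by $\hat H^{N,j}$ through \eqref{dN3rep1}. Write $\xi_0^N$ for the (random) state $\xi^N_{s-u_N}$; by the Markov property and translation invariance, $E_{\xi^N_0}(d^{N,j}(s,\xi^N_s,\Phi)\mid\cF^N_{s-u_N})$ equals $E_{\xi^N_{s-u_N}}[d^{N,j}(s,\xi^N_{u_N},\Phi)]$, where the processes on the right are run afresh from $\xi^N_{s-u_N}$. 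Then decompose
\begin{align*}
E_{\xi^N_{s-u_N}}[d^{N,j}(s,\xi^N_{u_N},\Phi)] - \hat H^{N,j}(\xi^N_{s-u_N},u_N,\Phi_{s-u_N})
&= \Big(E_{\xi^N_{s-u_N}}[d^{N,j}(s,\xi^N_{u_N},\Phi)] - E_{\xi^N_{s-u_N}}[d^{N,j}(s,\xi^{N,\VM}_{u_N},\Phi)]\Big)\\
&\quad + \Big(E_{\xi^N_{s-u_N}}[d^{N,j}(s,\xi^{N,\VM}_{u_N},\Phi)] - E_{\xi^N_{s-u_N}}[d^{N,j}(s-u_N,\xi^{N,\VM}_{u_N},\Phi)]\Big)\\
&\quad + \Big(E_{\xi^N_{s-u_N}}[d^{N,j}(s-u_N,\xi^{N,\VM}_{u_N},\Phi)] - \hat H^{N,j}(\xi^N_{s-u_N},u_N,\Phi_{s-u_N})\Big).
\end{align*}
The third bracket is exactly $0$ by \eqref{dN3rep1} (with $s$ there replaced by $s-u_N$, which is legitimate since $s-u_N\ge 0$).

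For the first bracket, I would apply Lemma~\ref{l:dvmcomp} directly, conditionally on $\xi^N_{s-u_N}$: it bounds $E_{\xi^N_{s-u_N}}[|d^{N,j}(s,\xi^N_{u_N},\Phi)-d^{N,j}(s,\xi^{N,\VM}_{u_N},\Phi)|]$ by $C_{\ref{dvmcomp}}\|\Phi\|_\infty(\log N)^{6-p}X^N_{s-u_N}(\1)$. Since $p>6$, $(\log N)^{6-p}\le (\log N)^{3-p/2}$ for $N$ large (indeed $6-p \le 3 - p/2 \iff p/2 \le -3+$ wait—one checks $6-p - (3-p/2) = 3 - p/2 < 0$ when $p>6$), so this term is absorbed into the right side of \eqref{d3H} after replacing $\|\Phi\|_\infty$ by $\|\Phi\|_{1/2,N}$. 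For the second bracket, I would use the time-regularity bound: $d^{N,j}(s,\xi,\Phi)-d^{N,j}(s-u_N,\xi,\Phi) = \frac{\ell^{(j)}_N}{N'}\sum_x(\Phi(s,x)-\Phi(s-u_N,x))d^{N,j}(x,\xi)$, so by \eqref{|Phi|defs} and \eqref{dNjbnd1} this is at most $|\Phi|_{1/2,N}\sqrt{u_N}\cdot\|r\||\bar\cN|\frac{\ell^{(j)}_N}{N'}\sum_x\xi(x) = |\Phi|_{1/2,N}\sqrt{u_N}\,\|r\||\bar\cN|\,\ell^{(j)}_N X^N(\1)/\log\ \text{(appropriately)}$; taking expectations and using the martingale property of $X^{N,\VM}(\1)$ (so $E X^{N,\VM}_{u_N}(\1)=X^N_{s-u_N}(\1)$), and $\sqrt{u_N}\le (\log N)^{-p/2}$, $\ell^{(j)}_N\le(\log N)^3$, this contributes at most $C\|\Phi\|_{1/2,N}(\log N)^{3-p/2}X^N_{s-u_N}(\1)$.

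Combining the three brackets and choosing $C_{\ref{d3H}}$ to dominate the constants gives \eqref{d3H}. The main obstacle is a bookkeeping one rather than a conceptual one: carefully justifying the conditional use of Lemma~\ref{l:dvmcomp} and Lemma~\ref{l:masscmps} (which are stated for deterministic initial conditions) at the random time $s-u_N$ via the Markov property, and checking that the coupling \eqref{couple1} can be run from the random state $\xi^N_{s-u_N}$ so that all three processes $\xi^N,\xi^{N,\VM},\bxi^N$ share that initial state. One should also verify that the rescaled $\hat H^{N,j}$ in the statement uses $\Phi_{s-u_N}$, matching exactly the output of \eqref{dN3rep1} at time $s-u_N$, so that no extra time-shift error is incurred beyond the second bracket already accounted for. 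None of these steps requires an idea not already present in the preceding lemmas; the estimate is simply the composition of Lemmas~\ref{l:dvmcomp}, \ref{l:masscmps}, the duality identity, and the elementary time-Lipschitz bound on $\Phi$.
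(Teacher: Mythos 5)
Your proposal is correct and follows essentially the same route as the paper: Markov property at $s-u_N$, Lemma~\ref{l:dvmcomp} to swap $\xi^N$ for $\xi^{N,\VM}$ over the window (absorbing $(\log N)^{6-p}$ into $(\log N)^{3-p/2}$ since $p>6$), the duality identity \eqref{dN3rep1} to identify the voter-model expectation with $\hat H^{N,j}$, and the $|\Phi|_{1/2,N}\sqrt{u_N}$ time-regularity bound for the shift from $\Phi_s$ to $\Phi_{s-u_N}$. The only cosmetic difference is that the paper performs the time-shift estimate on $\hat H^{N,j}$ via the coalescing-walk representation while you perform it on $E[d^{N,j}(\cdot,\xi^{N,\VM}_{u_N},\Phi)]$ via the mass martingale property; by \eqref{dual1} these are the same computation.
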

\begin{proof} 
By the Markov property, Lemma~\ref{l:dvmcomp} and
\eqref{dN3rep1}, the left-hand side of \eqref{d3H} is
bounded above by
\begin{equation}\label{d3H1}
C_{\ref{dvmcomp}}\|\Phi\|_\infty(\log
N)^{6-p}X^N_{s-u_N}(\1) + \Big|
\hat H^{N,j}(\xi^N_{s-u_N},u_N,\Phi_{s}) - 
\hat H^{N,j}(\xi^N_{s-u_N},u_N,\Phi_{s-u_N})
\big]\Big|.
\end{equation}
Use \eqref{dN3rep1} and then \eqref{dNjbnd1} with the voter
duality \eqref{dual1} to see that the second term above is
bounded by
\begin{align*}
\|r\|\frac{\ell^{(j)}_N}{N'}\sum_{x\in\SN}&|\Phi(s,x)-\Phi(s-u_N,x)|
\widehat E\Big[\sum_{y\in \bar\cN_N}\xi^N_{s-u_N}(B^{N,x+y}_{u_N})\Big]
\\
&\le \|r\||\Phi|_{1/2}\sqrt u_N\ell^{(j)}_N \sum_{y\in \bar\cN_N} 
\widehat E\Bigl[\frac{1}{N'}\sum_{x\in\SN}
\xi^N_{s-u_N}(x+B^{N,y}_{u_N})\Bigr]\\
&= \|r\||\Phi|_{1/2}|\bar\cN|(\log N)^{3-\frac{p}{2}}X^N_{s-u_N}(\1). 
\end{align*}
Inserting this bound into \eqref{d3H1} completes the proof.
\end{proof}

We next further decompose the $\hat H^{N,j}$.  Consider
$\emptyset\ne A\subset\cN_N$. If
$I^{N,\pm}(x,u_N,A,\xi^N_0)\ne 0$ then necessarily $2\le
|B^{N,x+\bar \cN_N}_{u_N}|\le |\bar\cN|$, and by defining
\begin{equation}\label{INi}
I^{N,\pm}_i(x,u_N,A,\xi^N_0)= 
I^{N,\pm}(x,u_N,A,\xi^N_0)1\{|B^{N,x+\bar \cN_N}_{u_N}|=i\},
\end{equation}
we can write
\begin{equation}\label{INrep}
I^{N,\pm}(x,u_N,A,\xi^N_0) = \sum_{i=2}^{|\bar\cN|}
I^{N,\pm}_i(x,u_N,A,\xi^N_0).
\end{equation}
Letting
\begin{equation}\label{hNiAdef}
h^{N,\pm}_{i,j}(\xi^N_0,u_N,A,\Phi_s) =
\frac{\ell^{(j)}_N}{N'} \sum_{x\in\SN}\Phi(s,x) 
I^{N,\pm}_i(x,u_N,A,\xi^N_0)
\end{equation}
and
\begin{equation}
\label{^HNidef}
\begin{aligned}
\hat H^{N,2}_i(\xi^N_0,u_N,\Phi_s) &=
\sum_{\emptyset\neq A\subset \cN_N} r^{N,a}(\sqrt NA)\widehat E\Big(
h_{i,2}^{N,+}(\xi^N_0,u_N,A,\Phi_s)\Big)\\
\hat H^{N,3}_i(\xi^N_0,u_N,\Phi_s) &=
\sum_{\emptyset\neq A\subset \cN_N} r^{N,s}(\sqrt NA)\widehat E\Big(
h_{i,3}^{N,+}(\xi^N_0,u_N,A,\Phi_s)
- h_{i,3}^{N,-}(\xi^N_0,u_N,A,\Phi_s)\Big),
\end{aligned}
\end{equation}
we obtain the decomposition
\begin{equation}
\label{HNrep}
\hat H^{N,j}(\xi^N_0,u_N,\Phi_s) =
\sum_{i=2}^{|\bar\cN|} \hat H^{N,j}_i(\xi^N_0,u_N,\Phi_s).
\end{equation}

We will now obtain simple bounds on the summands in
\eqref{HNrep}, leaving a more detailed analysis of the main
terms $\hat H^{N,2}_2(\xi^N_0,u_N,\Phi_s)$ and $\hat
H^{N,3}_3(\xi^N_0,u_N,\Phi_s)$ to
Section~\ref{ss:exacd3asymp}.

\begin{lemma}\label{l:dN3bnd} There are constants
$C_{\ref{dN2i}}$, $C_{\ref{dN3i}}$ such that for any $T>0$,
$\phi\in C_b([0,T]\times\R^2)$ and $s\in [0,T]$,
\begin{align} \Large|\hat
H^{N,2}_i(\xi^N_0,u_N,\Phi_s)\Large|&\le
C_{\ref{dN2i}}\|r\|\|\Phi_s\|_\infty (\log N)^{1-\binom
i2}X^N_0(\1), \quad 2\le i\le |\bar\cN|,
\label{dN2i}\\ \Large|\hat
H^{N,3}_i(\xi^N_0,u_N,\Phi_s)\Large|&\le
C_{\ref{dN3i}}\|r\|\|\Phi_s\|_\infty (\log N)^{3-\binom
i2}X^N_0(\1), \quad 3\le i\le |\bar\cN|,
\label{dN3i} \intertext{and}\label{dN32} |\hat
H^{N,3}_2(\xi^N_0,u_N,\Phi_s)| &\le C_{\ref{dN3i}}\|r\|
\|\Phi_s\|_{\Lip} (\log N)^{(6-p)/2}X^N_0(\1).
\end{align}
\end{lemma}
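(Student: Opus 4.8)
The three bounds all come from combining the duality-based representations \eqref{^HNidef}, the coalescing-random-walk asymptotics in Remark~\ref{r:coalbnds}, and the mass bound $E_{\xi^N_0}[X^{N,\VM}_{u_N}(\1)]=X^N_0(\1)$ (martingale property of the voter mass). The key observation is that in each summand $\hat H^{N,j}_i$, the indicator $1\{|B^{N,x+\bar\cN_N}_{u_N}|=i\}$ inside $I^{N,\pm}_i$ forces $i$ of the $|\bar\cN|$ walks started in $x+\bar\cN_N$ to have coalesced to distinct positions by time $u_N$; since $|r^{N,a}|,|r^{N,s}|\le \|r\|$ by \eqref{normr}, and $\prod_{a\in A}\xi^N_0(B^{N,x+a}_{u_N})\prod_{b}(1-\xi^N_0(B^{N,x+b}_{u_N}))$ is dominated by $\sum_{y\in x+\bar\cN_N}\xi^N_0(B^{N,x+y}_{u_N})$ (as in \eqref{dNjbnd1}), we get, after summing over $x\in\SN$ and translating the walk system,
\[
\big|\hat H^{N,j}_i(\xi^N_0,u_N,\Phi_s)\big|\le C\|r\|\|\Phi_s\|_\infty\,\ell^{(j)}_N\sum_{y\in\bar\cN_N}\widehat E\Big[\tfrac1{N'}\sum_{x\in\SN}\xi^N_0(x+B^{N,y}_{u_N})\,1\{|B^{N,x+\bar\cN_N}_{u_N}|=i\}\Big].
\]
Here I would break the inner expectation using that the event $\{|B^{N,x+\bar\cN_N}_{u_N}|=i\}$ depends only on the differences of the walks, which by translation invariance of the coalescing system has probability $\widehat P(|B^{N,\bar\cN_N}_{u_N}|=i)$, independent of $x$ and of $\xi^N_0$; the remaining $\sum_x \xi^N_0(x+B^{N,y}_{u_N})/N'$ has expectation $X^N_0(\1)$. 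This yields the master estimate
\[
\big|\hat H^{N,j}_i(\xi^N_0,u_N,\Phi_s)\big|\le C\|r\|\|\Phi_s\|_\infty\,|\bar\cN|\,\ell^{(j)}_N\,\widehat P\big(|B^{N,\bar\cN_N}_{u_N}|=i\big)\,X^N_0(\1).
\]

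Now I invoke Remark~\ref{r:coalbnds}: rescaling space by $\sqrt N$ and time by $w_N N$, the system $\{B^{N,x}\}$ at time $u_N$ is a $\Z^2$ coalescing system run for time $w_N N u_N$, and since $u_N\ge C_{\ref{uNdef}}/\sqrt N$ by \eqref{uNdef} we have $w_N N u_N\ge c\sqrt N\ge c' t^r$ for the relevant range, so Remark~\ref{r:coalbnds} with $S=\bar\cN$ gives $\widehat P(|B^{N,\bar\cN_N}_{u_N}|=i)\le C(\log(N u_N))^{-\binom i2}\le C'(\log N)^{-\binom i2}$ (using $u_N\ge N^{-1/2}$ so $\log(Nu_N)\ge\tfrac12\log N$). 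Substituting $\ell^{(2)}_N=\log N$ gives \eqref{dN2i} with exponent $1-\binom i2$, and $\ell^{(3)}_N=(\log N)^3$ gives \eqref{dN3i} with exponent $3-\binom i2$ for $i\ge 3$.

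For the last bound \eqref{dN32}, the crude estimate above gives only $(\log N)^{3-1}=(\log N)^2$ since $\binom 22=1$, which is far from $(\log N)^{(6-p)/2}$. The extra gain must come from a cancellation between $h^{N,+}_{2,3}$ and $h^{N,-}_{2,3}$ in the definition of $\hat H^{N,3}_2$: when exactly two walks from $x+\bar\cN_N$ survive, on this event $I^{N,+}_2$ and $I^{N,-}_2$ involve the same two surviving walk positions, and the difference $I^{N,+}_2-I^{N,-}_2$ is a difference of products of the {\it same} $\xi^N_0$-values, leaving a residual of order the probability that the two surviving clusters are not at neighbouring lattice sites relative to the kernel, or more precisely is controlled by a $\|\Phi_s\|_{\Lip}$-weighted spatial displacement times the probability $\widehat P(|B^{N,\bar\cN_N}_{u_N}|=2)$ times the {\it further} smallness coming from requiring the walks to have not yet coalesced over time $\sim Nu_N\ge N^{1/2}(\log N)^{-p}$; combined with $u_N\le(\log N)^{-p}$ this produces the exponent $(6-p)/2$. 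Concretely I would follow the argument in the corresponding step of \cite{CMP} (the analogous decomposition there), writing $I^{N,+}_2-I^{N,-}_2$ as a sum over the two surviving cluster-positions $z_1,z_2$ of $\xi^N_0(z_1)(1-\xi^N_0(z_2))-\xi^N_0(z_2)(1-\xi^N_0(z_1))=\xi^N_0(z_1)-\xi^N_0(z_2)$, then using the Lipschitz bound on $\Phi$ together with $|z_1-z_2|$ being the displacement of two coalescing-but-not-yet-coalesced walks over time $u_N$, whose contribution after summing against $\sqrt{u_N}$ (the order of this displacement) and the $(\log N)^3$ prefactor is $O((\log N)^3\sqrt{u_N})\le O((\log N)^{3-p/2})=O((\log N)^{(6-p)/2})$.

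\textbf{Main obstacle.} The routine bounds \eqref{dN2i} and \eqref{dN3i} are straightforward given Remark~\ref{r:coalbnds}; the real work is \eqref{dN32}, where one cannot afford to bound $|I^{N,+}_2|$ and $|I^{N,-}_2|$ separately and must exploit the $+/-$ cancellation together with the Lipschitz regularity of $\Phi$ and the fact that on $\{|B^{N,\bar\cN_N}_{u_N}|=2\}$ the two surviving walks, conditioned not to have coalesced, are still only $O(\sqrt{u_N})$ apart (so $\Phi$ evaluated at them differs by $O(|\Phi|_{\Lip}\sqrt{u_N})$ and the two initial-condition values $\xi^N_0$ at those two nearby points appear with opposite sign). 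Making this cancellation quantitative — i.e.\ showing the difference of the two products is genuinely $O(\sqrt{u_N})$ in the relevant averaged sense rather than $O(1)$ — is the crux; I expect to import the relevant estimate essentially verbatim from the parallel computation in \cite{CMP}, adjusting only for the slightly more general kernel $p$ and rate $w_N N$ here.
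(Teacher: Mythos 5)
Your proposal is correct and follows essentially the same route as the paper: the crude bounds \eqref{dN2i}--\eqref{dN3i} come from dominating $|I^{N,\pm}_i|$ by single walk values, resumming over $x$ by translation invariance to produce $X^N_0(\1)\,\ell^{(j)}_N\,\widehat P(|B^{N,\bar\cN_N}_{u_N}|=i)$, and invoking Remark~\ref{r:coalbnds}; and \eqref{dN32} comes from the telescoping $I^{N,+}_2-I^{N,-}_2=\big(\xi^N_0(B^{N,x+a}_{u_N})-\xi^N_0(B^{N,x}_{u_N})\big)1\{\Omega^N_x(A)\}$, which after the translation-invariance resummation becomes a difference of $\Phi$-values controlled by $\|\Phi_s\|_\Lip\,\widehat E|B^{N,a}_{u_N}-B^{N,0}_{u_N}|=O(\sqrt{u_N})$, giving $(\log N)^3\sqrt{u_N}\le(\log N)^{(6-p)/2}$. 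The only cosmetic difference is that the paper carries out the \eqref{dN32} cancellation explicitly rather than importing it from \cite{CMP}, and the digression in your sketch about the non-coalescence probability contributing extra smallness is unnecessary — the entire gain is the Lipschitz factor times the $O(\sqrt{u_N})$ separation of the two surviving walks.
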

\begin{proof}
We start with the observation that for $\emptyset\neq A\subset\cN_N$ and
any $a\in A$, 
\begin{equation}\label{INpmbnd}
|I^{N,\pm}_i(x,u_N,A,\xi_0^N)|
\le 
\xi^N_0(B^{N,x+a}_{u_N})1\{|B_{u_N}^{N,x+\bar\cN_N}|=i\} +
\xi^N_0(B^{N,x}_{u_N})1\{|B_{u_N}^{N,x+\bar\cN_N}|=i\}.
\end{equation}
We will bound $E\big(|h^{N,\pm}_{i,j}| \big)$ with this
inequality.  By translation invariance,
\begin{align*}
\frac{\ell^{(j)}_N}{N'}&\sum_{x\in\SN}|\Phi(s,x)|
\widehat E\big(\xi^N_0(B^{N,x+a}_{u_N}) 1\{
|B_{u_N}^{N,x+\bar\cN_N}|=i\} \big)\\
&= 
\frac{\ell^{(j)}_N}{N'}\sum_{x\in\SN}\sum_{w\in\SN}|\Phi(s,x)|
\xi^N_0(x+w) \widehat P\big(B^{N,x+a}_{u_N}=x+w,
|B_{u_N}^{N,x+\bar\cN_N}|=i \big)  \\
&= 
\frac{\ell^{(j)}_N}{N'}\sum_{x\in\SN}\sum_{w\in\SN}|\Phi(s,x)|
\xi^N_0(x+w) \widehat P\big(B^{N,a}_{u_N}=w,
|B_{u_N}^{N,\bar\cN_N}|=i \big)  \\
&\le \|\Phi_s\|_{\infty} 
X^N_0(\1) \ell^{(j)}_N \widehat P\big(|B_{u_N}^{N,\bar\cN_N}|=i \big). 
\end{align*}
The same bound holds if we replace $B^{N,x+a}_{u_N}$ with
$B^{N,x}_{u_N}$, and thus we have shown that
\begin{equation}\label{hNijbnd}
\widehat E\big( |h^{N,\pm}_{ij}(\xi^N_0,u_N,A,\Phi_s)|\big) \le
2\|\Phi_s\|_\infty X^N_0(\1)\ell^{(j)}_N \widehat
P\big(|B_{u_N}^{N,\bar\cN_N}|=i \big)  .
\end{equation}
By Remark~\ref{r:coalbnds}, the lower bound on $u_N$ in
\eqref{uNdef}, and the fact that $w_N\ge 1/2$ for $N$ large,
\[
\ell^{(j)}_N \widehat P\big(|B_{u_N}^{N,\bar\cN_N}|=i \big) =
\begin{cases}
O((\log N)^{1-\binom i2}),& j=2,\\
O((\log N)^{3-\binom i2}),& j=3.
\end{cases}
\]
Using this bound in \eqref{hNijbnd} above, and substituting into
\eqref{^HNidef} we obtain \eqref{dN2i} and \eqref{dN3i}.

The proof of \eqref{dN32} is more delicate, as it relies on
cancellation.  For $x\in S_N$ define
\begin{equation}\label{OmegaN}
\Omega^N_x(A) = \{\sigma^N_x(A,\bar\cN_N\setminus A)>u_N,
\tau^N_x(A,\bar\cN_N\setminus A)<u_N\}.
\end{equation}
If  $|B^{N,\bar\cN_N}_{u_N}|=2$, and $B^{N,x+A}_{u_N}$ and 
$B^{N,x+\bar\cN_N\setminus A}_{u_N}$ are disjoint,
then 
$B^{N,x+\bar\cN_N\setminus A}_{u_N}=B^{N,x}_{u_N}$ and 
for any $a\in A$, $B^{N,x+A}_{u_N}=B^{N,x+a}_{u_N}$. 
Thus
\begin{align*}
I^{N,+}_2(x,u_N,A,\xi_0^N) - &I^{N,-}_2(x,u_N,A,\xi_0^N)\\
&=
\Big(\xi^N_0(B^{N,x+a}_{u_N})(1-\xi^N_0(
B^{N,x}_{u_N})) -
(1-\xi^N_0(B^{N,x+a}_{u_N}))\xi^N_0(
B^{N,x}_{u_N})\Big)
1\{\Omega^N_x(A)\}\\
&=
\Big(\xi^N_0(B^{N,x+a}_{u_N})
-\xi^N_0(B^{N,x}_{u_N}\Big)
1\{\Omega^N_x(A)\}\\
&=
\sum_{w\in\SN}\xi^N_0(w)1\{B^{N,x+a}_{u_N}=w\}1\{\Omega^N_x(A)\}
-\sum_{w\in\SN}\xi^N_0(w)1\{B^{N,x}_{u_N}=w\}
1\{\Omega^N_x(A)\}.
\end{align*}
Now by translation invariance,
\begin{multline*}
\widehat E[I^{N,+}_2(x,u_N,A,\xi_0^N) - I^{N,-}_2(x,u_N,A,\xi_0^N)]\\
\\=\sum_{w\in\SN}\xi^N_0(w) \widehat E[1\{B^{N,a}_{u_N}=w-x\}1\{\Omega^N_0(A)\}]
-\sum_{w\in\SN}\xi^N_0(w) \widehat E[1\{B^{N,0}_{u_N}=w-x\}1\{\Omega^N_0(A)\}].
\end{multline*}
Plugging into the definition of 
$h^{N,\pm}_{2,3}(\xi^N_0,u_N,A,\Phi_s)$ gives
\begin{align*}
\widehat E\big((h^{N,+}_{2,3}-h^{N,-}_{2,3})(\xi^N_0,u_N,A,\Phi_s)\big)
&= \frac{(\log
N)^3}{N'}\sum_{w\in\SN}\xi^N_0(w) \sum_{x\in \SN}\Phi(s,x)
\Big[\widehat E\big(1\{B^{N,a}_{u_N}=w-x\}1\{\Omega^N_0(A)\}\big)\\
&\qquad\qquad\qquad\qquad -\widehat E\big(1\{B^{N,0}_{u_N}=w-x\}1\{\Omega^N_0(A)\}\big)\Big]\\
&= \frac{(\log
N)^3}{N'}\sum_{w\in\SN}\xi^N_0(w)
\widehat E\Big[(\Phi(s,w-B^{N,a}_{u_N}) -
\Phi(s,w-B^{N,0}_{u_N}))1\{\Omega^N_0(A)\}\Big].
\end{align*}
By the above, 
\begin{align}\notag
\Big|\widehat E\big((h^{N,+}_{2,3}-h^{N,-}_{2,3})(\xi^N_0,u_N,A,\Phi_s)\big)]\Big|
&\le 
\frac{(\log N)^3}{N'}\sum_{w\in \SN}
\xi^N_0(w)\widehat E\big(\big|\Phi(s, w-B^{N,a}_{u_N})
- \Phi(s, w-B^{N,0}_{u_N})\big|\big)\\
\notag &\le \|\Phi_s\|_\Lip
\frac{(\log N)^3}{N'}\sum_{w}\xi^N_0(w)
\widehat E\big(|B^{N,a}_{u_N}-
B^{N,0}_{u_N}|\big)\\
\notag &= \|\Phi_s\|_{\Lip} X^N_0(\1) (\log N)^3
\widehat E\big(|a|+|B^{N,0}_{2u_N}|\big)\\
\notag &\le \|\Phi_s\|_{\Lip} X^N_0(\1) (\log N)^3
\big(c/\sqrt N +\sqrt {\sigma^2 4u_N} \big)\\
&\le C(\cN,\sigma^2)\|\Phi_s\|_\Lip X^N_0(\1) (\log N)^{(6-p)/2},
\label{lipbnd}
\end{align}
where $c=\max_{e\in\cN}|e|$ and we recall from \eqref{covp} that 
$\widehat E\big(|B^{N,0}_s|^2\big)=2w_N\sigma^2 s\le 2\sigma^2 s$.  Using this bound in
\eqref{^HNidef} we obtain  \eqref{dN32}. 
\end{proof}

In order to use the above results to effectively handle the
drift terms $d^{N,2}(x,\xi^N_s,\Phi)$ and
$d^{N,3}(x,\xi^N_s,\Phi)$ we must first obtain bounds on the
first and second moments of the total mass, which will play
an important roll in what follows. Therefore we interrupt
our current analysis to handle the total mass next.
  
\subsection{Total mass bounds}

We now introduce a particular choice of $u_N$, namely
\begin{equation}\label{tNdef}
t_N = (\log N)^{-19}.
\end{equation}
\begin{lemma}\label{dn3ubnds} There is a constant
$C_{\ref{dN3cnd1}}>0$ so that for  $j=2,3$, all $T>0$ and
$\Phi\in C_b([0,T]\times\R^2)$, 
\begin{align}\label{crude1}
|d^{N,j}(x,\xi^N_s,\Phi)|\le \|r\| \|\Phi\|_\infty
|\bar\cN|\ell_N^{(j)} X^N_s(\1)\quad \forall
s\in[0,T]
\end{align}
and
\begin{equation}\label{dN3cnd1}
\Big|E[d^{N,j}(s,\xi^N_{s},\Phi)|\cF^N_{s-t_N}]\Big|
\le C_{\ref{dN3cnd1}} \|\Phi\|X^N_{s-t_N}(\1)\quad \forall
s\in[t_N,T]. 
\end{equation}
\end{lemma}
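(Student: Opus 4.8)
The bound \eqref{crude1} is immediate: it is exactly \eqref{dNjsbnd} evaluated at $\xi=\xi^N_s$, using that $\frac1{N'}\sum_{x\in\SN}\xi^N_s(x)=X^N_s(\1)$ and $\|\Phi_s\|_\infty\le\|\Phi\|_\infty$.

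For \eqref{dN3cnd1} the plan is to run the small-time comparison machinery of Section~\ref{ss:compproc} with the particular choice $u_N=t_N=(\log N)^{-19}$, i.e.\ with exponent $p=19$ in \eqref{uNdef}. One first checks that $t_N$ does satisfy \eqref{uNdef}: the upper bound $t_N\le(\log N)^{-p}$ is trivial, and the lower bound $C_{\ref{uNdef}}/\sqrt N\le t_N$ holds for a suitable (small) constant because $(\log N)^{-19}\sqrt N$ is bounded below by a positive constant on $\{N\ge N(\vep_0)\}$ — an elementary one–variable calculus check. Then Lemma~\ref{l:d3H} shows that $E\big(d^{N,j}(s,\xi^N_s,\Phi)\mid\cF^N_{s-t_N}\big)$ differs from $\hat H^{N,j}(\xi^N_{s-t_N},t_N,\Phi_{s-t_N})$ by at most $C_{\ref{d3H}}\|\Phi\|_{1/2,N}(\log N)^{3-p/2}X^N_{s-t_N}(\1)$; with $p=19$ the factor $(\log N)^{3-p/2}=(\log N)^{-13/2}$ is bounded uniformly in $N\ge e^3$, so this error term is at most a constant times $\|\Phi\|\,X^N_{s-t_N}(\1)$. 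It therefore remains to bound $|\hat H^{N,j}(\xi^N_{s-t_N},t_N,\Phi_{s-t_N})|$ by a constant times $\|\Phi\|\,X^N_{s-t_N}(\1)$, for $j=2,3$.

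This last estimate follows from the decomposition \eqref{HNrep}, $\hat H^{N,j}=\sum_{i=2}^{|\bar\cN|}\hat H^{N,j}_i$, together with Lemma~\ref{l:dN3bnd}. For $j=2$: by \eqref{dN2i} each summand satisfies $|\hat H^{N,2}_i|\le C\|r\|\|\Phi_s\|_\infty(\log N)^{1-\binom i2}X^N_0(\1)$, which is $O(1)$ in $\log N$ for $i=2$ (since $\binom22=1$) and $o(1)$ for $i\ge3$; summing over $i$ gives $|\hat H^{N,2}|\le C\|\Phi\|X^N_0(\1)$. For $j=3$: the terms with $i\ge3$ satisfy $|\hat H^{N,3}_i|\le C\|r\|\|\Phi_s\|_\infty(\log N)^{3-\binom i2}X^N_0(\1)$ by \eqref{dN3i}, which is $O(1)$ for $i=3$ (since $\binom32=3$) and $o(1)$ for $i\ge4$, while the exceptional term is controlled by \eqref{dN32}: $|\hat H^{N,3}_2|\le C\|r\|\|\Phi_s\|_{\Lip}(\log N)^{(6-p)/2}X^N_0(\1)=C\|r\|\|\Phi_s\|_{\Lip}(\log N)^{-13/2}X^N_0(\1)$, again bounded. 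Summing over $i$, evaluating at initial state $\xi^N_{s-t_N}$, and combining with the error estimate from Lemma~\ref{l:d3H} yields \eqref{dN3cnd1}.

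There is no real obstacle: the lemma is pure bookkeeping from the estimates already assembled in Section~\ref{ss:compproc}. The only point worth highlighting is the deliberately large exponent $19$ in $t_N=(\log N)^{-19}$, chosen so that every error term — the $(\log N)^{3-p/2}$ in Lemma~\ref{l:d3H} and the $(\log N)^{(6-p)/2}$ in \eqref{dN32} — is uniformly bounded, while the genuine contributions $\hat H^{N,2}_2$ and $\hat H^{N,3}_3$ are themselves $O(1)$ in $\log N$; these last two are exactly the terms given a sharper treatment in Section~\ref{ss:exacd3asymp}.
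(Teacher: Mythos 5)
Your proof is correct and is essentially the paper's own argument: \eqref{crude1} is read off from \eqref{dNjsbnd}, and \eqref{dN3cnd1} is obtained by combining Lemma~\ref{l:d3H} (with $u_N=t_N$, i.e.\ $p=19$) with the decomposition \eqref{HNrep} and the bounds of Lemma~\ref{l:dN3bnd}. You have simply made explicit the exponent bookkeeping that the paper leaves to the reader.
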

\begin{proof} \eqref{crude1} holds by \eqref{dNjsbnd}, while
\eqref{dN3cnd1} follows from Lemma~\ref{l:d3H},
\eqref{HNrep}, and Lemma~\ref{l:dN3bnd}.
\end{proof}

\begin{proposition}\label{p:mobnds} There exists a
$c_{\ref{mobnds}}>0$, and for $T>0$ a constant
$C_{\ref{mobnds}}>0$ depending on $T$, such that for any $t\le T$,
\begin{equation}\label{mobnds}
\begin{aligned}
(a)&\qquad\qquad   E[X^N_t(\1)]\le (1 + C_{\ref{mobnds}}(\log N)^{-16})X^N_0(\1)
\exp(c_{\ref{mobnds}}t),\\
(b)&\qquad\qquad E[(X^N_t(\1))^2]\le C_{\ref{mobnds}}(X^N_0(\1)+(X^N_0(\1))^2).
\end{aligned}
\end{equation}
\end{proposition}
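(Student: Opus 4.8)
The plan is to prove (a) first by a Gronwall argument applied to $m_N(t):=E[X^N_t(\1)]$, and then bootstrap to (b) using the martingale representation \eqref{SMG1} together with the moment bounds on the drift and quadratic variation terms.

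\textbf{Part (a).} Set $\Phi\equiv \1$ in \eqref{SMG1}; since $D^{N,1}_t(\1)=0$ (the generator $A_N$ annihilates constants) and $M^N_t(\1)$ is a mean-zero martingale, we get
\[
m_N(t)=X^N_0(\1)+\int_0^t E\big[d^{N,2}(s,\xi^N_s,\1)+d^{N,3}(s,\xi^N_s,\1)\big]\,ds.
\]
The crude bound \eqref{dNjsbnd} gives $E[d^{N,j}(s,\xi^N_s,\1)]\le \|r\||\bar\cN|(\log N)^3 m_N(s)$, which is too weak by a factor $(\log N)^3$. Instead I would use the sharper conditional estimate \eqref{dN3cnd1} from Lemma~\ref{dn3ubnds} with $u_N=t_N=(\log N)^{-19}$: for $s\ge t_N$, $\big|E[d^{N,j}(s,\xi^N_s,\1)]\big|=\big|E\big[E[d^{N,j}(s,\xi^N_s,\1)\mid\cF^N_{s-t_N}]\big]\big|\le C_{\ref{dN3cnd1}}m_N(s-t_N)\le C_{\ref{dN3cnd1}}m_N(s)$, the last step because $E[X^N_\cdot(\1)]$ is "almost monotone": more precisely, for $s\le t_N$ one uses the crude bound \eqref{dNjsbnd} and $t_N(\log N)^3=(\log N)^{-16}$ to absorb the short initial interval into the $(\log N)^{-16}$ error term, while for $s\ge t_N$ one compares $m_N(s-t_N)$ with $m_N(s)$ by iterating the crude bound over the interval $[s-t_N,s]$ of length $t_N$, losing only a factor $\exp(C t_N(\log N)^3)=\exp(C(\log N)^{-16})\le 1+C'(\log N)^{-16}$. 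This yields
\[
m_N(t)\le \big(1+C(\log N)^{-16}\big)X^N_0(\1)+\int_0^t C_{\ref{dN3cnd1}}\,m_N(s)\,ds,
\]
and Gronwall's inequality gives (a) with $c_{\ref{mobnds}}=C_{\ref{dN3cnd1}}$.

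\textbf{Part (b).} Apply $\Phi\equiv\1$ in \eqref{SMG1} and square; by $(x+y+z)^2\le 3(x^2+y^2+z^2)$ (dropping $D^{N,1}=0$),
\[
E[(X^N_t(\1))^2]\le 3\Big(X^N_0(\1)^2 + E\big[(D^{N,2}_t(\1)+D^{N,3}_t(\1))^2\big] + E[(M^N_t(\1))^2]\Big).
\]
For the drift term, Cauchy--Schwarz and \eqref{dN3cnd1} give, after conditioning and using part (a) to control $E[X^N_{s-t_N}(\1)^2]$ in terms of $E[X^N_s(\1)^2]$ plus harmless errors, a bound $E[(D^{N,2}_t+D^{N,3}_t)^2]\le C\int_0^t E[(X^N_s(\1))^2]\,ds + (\text{initial-segment error})$. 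For the martingale term, $E[(M^N_t(\1))^2]=E[\langle M^N(\1)\rangle_t]=E[\langle M^N(\1)\rangle_{1,t}]+E[\langle M^N(\1)\rangle_{2,t}]$; from \eqref{SMG2}, $\langle M^N(\1)\rangle_{1,t}=\int_0^t \frac{\log N}{N'}\sum_x c^{N,\VM}(x,\xi^N_s)\,ds$ and since $c^{N,\VM}(x,\xi^{(N)})\le 1$ and $\frac{\log N}{N'}=\frac{(\log N)^2}{N}$, the sum over $x$ with $\xi^N_s(x)$ or a neighbour equal to $1$ is $O(X^N_s(\1))$ times $|\bar\cN|$, giving $E[\langle M^N(\1)\rangle_{1,t}]\le C\int_0^t m_N(s)\,ds\le C'T X^N_0(\1)$ by part (a); the branching term has the correct $(\log N)$-balance because $\frac{\log N}{N'}\cdot(\log N)^{?}$... — more carefully, the voter quadratic variation is the dominant one and contributes $O(X^N_0(\1))$, while $\langle M^N(\1)\rangle_{2,t}$ involves $\frac{1}{(N')^2}\sum_x[\ell^{(2)}_N c^{N,a}+\ell^{(3)}_N c^{N,s}]\le \frac{(\log N)^3}{(N')^2}\|r\||\bar\cN| \sum_x\xi^N_s(x)\cdot(\text{const})=\frac{(\log N)^5}{N^2}\cdot N'\cdot X^N_s(\1)\cdot C= C(\log N)^4 N^{-1} X^N_s(\1)\to 0$, hence also $O(X^N_0(\1))$. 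Combining, $E[(X^N_t(\1))^2]\le C(X^N_0(\1)+X^N_0(\1)^2)+C\int_0^t E[(X^N_s(\1))^2]\,ds$, and Gronwall gives (b).

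\textbf{Main obstacle.} The delicate point is justifying that one may replace $m_N(s-t_N)$ and $E[X^N_{s-t_N}(\1)^2]$ by their values at time $s$ up to a $(1+o(1))$ factor — i.e.\ the "almost-monotonicity" of the first and second moments over the short lag $t_N$ — while simultaneously keeping the contribution of the initial segment $[0,t_N]$ (where only the crude $(\log N)^3$ bound is available) of size $O((\log N)^{-16})X^N_0(\1)$. This requires carefully tracking that $t_N\ell^{(j)}_N=(\log N)^{-16}$ is exactly what makes both the lag-comparison error and the initial-segment error acceptable, which is why $t_N=(\log N)^{-19}$ is chosen. A secondary care point in (b) is verifying the $N^{-1}(\log N)^{\text{const}}$ smallness of $\langle M^N(\1)\rangle_{2,t}$, i.e.\ that the genuinely super-Brownian branching rate lives only in $\langle M^N(\1)\rangle_{1,t}$, which is bounded by part (a); all other quadratic-variation contributions vanish in the limit and are in particular $O(X^N_0(\1))$ uniformly in $N\ge N(\vep_0)$.
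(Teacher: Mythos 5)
Your part (a) is essentially the paper's argument: split the drift integral at $t_N$, use the crude bound \eqref{crude1} on $[0,t_N]$ (after a preliminary Gronwall there showing $E[X^N_s(\1)]\le e^{C}X^N_0(\1)$ for $s\le t_N$, so that this segment contributes $O(t_N(\log N)^3)X^N_0(\1)=O((\log N)^{-16})X^N_0(\1)$), and the conditional bound \eqref{dN3cnd1} on $[t_N,t]$. Your ``almost monotonicity'' step $E[X^N_{s-t_N}(\1)]\le(1+o(1))E[X^N_s(\1)]$ is justifiable (the crude drift bound controls $|\frac{d}{ds}E[X^N_s(\1)]|$ by $C(\log N)^3E[X^N_s(\1)]$, giving a lower as well as an upper bound over the lag), but it is an unnecessary detour: the change of variables $\int_{t_N}^t E[X^N_{s-t_N}(\1)]\,ds=\int_0^{t-t_N}E[X^N_u(\1)]\,du\le\int_0^t E[X^N_u(\1)]\,du$ feeds directly into Gronwall.

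Part (b) has a genuine gap in the martingale term. You claim $E[\langle M^N(\1)\rangle_{1,t}]\le C\int_0^t E[X^N_s(\1)]\,ds$ by noting $c^{N,\VM}\le 1$ and counting the sites where it is nonzero; but that count is at most $|\bar\cN|\,N'X^N_s(\1)$, so the prefactor $\log N/N'$ leaves you with $|\bar\cN|(\log N)\int_0^t X^N_s(\1)\,ds$ --- a factor $\log N$ too large. Part (a) then only yields $E[\langle M^N(\1)\rangle_T]\le C(\log N)X^N_0(\1)$, which does not give a second-moment bound uniform in $N$. The correct estimate $E[\langle M^N(\1)\rangle_T]\le CX^N_0(\1)$ (the paper's \eqref{qvbnd1}) requires showing that the expected interface density $E\big[X^N_s\big((\log N)f_0^{(N)}(\cdot,\xi^N_s)\big)\big]$ is $O(X^N_0(\1))$, i.e.\ that a typical occupied site has only an $O(1/\log N)$ fraction of vacant neighbours. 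This is Lemma~\ref{l:oldl5.2}(b): one conditions back a time $t_N$, compares with the voter model via the coupling behind Lemma~\ref{l:masscmps}, and uses duality together with the two-dimensional coalescing-walk asymptotics $\widehat P(\sigma^N(0,y)>t_N)=O(1/\log N)$ (cf.\ \eqref{sqfnconst}). This interface estimate is the technical heart of the two-dimensional case and cannot be replaced by the first-moment bound of part (a). Your treatment of $\langle M^N(\1)\rangle_{2,t}$ and of the squared drifts is sound in outline (the paper's \eqref{DNtN} and \eqref{DNt1t2} make the conditioning and orthogonality precise), and there too no lag-comparison of second moments is needed, only a change of variables.
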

Therefore, for $T>0$ there is a constant $C_{\ref{stbnd}}>0$,
depending on $T$, such that for all $s,t\in[0,T]$,
\begin{equation}\label{stbnd}
E[X^N_s(\1)X^N_t(\1)] \le C_{\ref{stbnd}}(X^N_0(\1)+(X^N_0(\1))^2). 
\end{equation}

\noindent{\sl{Proof of (a).}}
By \eqref{SMG1},
\begin{equation}\label{DN3bnd}
E[X^N_t(\1)] = X^N_0(\1)  +
\int_0^{t\wedge t_N}\sum_{j=2}^3E[d^{N,j}(s,\xi^N_s,\1)]ds
+ \int_{t\wedge
  t_N}^{t}\sum_{j=2}^3E\Big[E[d^{N,j}(s,
\xi^N_{s},\1)|\cF^N_{s-t_N}]\Big]ds. 
\end{equation}
Use the two bounds from Lemma~\ref{dn3ubnds} (with $\Phi=1$)
and $\ell^{(j)}_N\le (\log N)^3$ in \eqref{DN3bnd} to get
\begin{equation}\label{XN1bnd}
E[X^N_t(\1)] \le X^N_0(\1)  +
2\|r\||\bar\cN|(\log N)^3\int_0^{t\wedge t_N}E[X^N_s(\1)]ds
+ C_{\ref{dN3cnd1}} \int_{0}^{(t-t_N)^+}E[X^N_s(\1)]ds.
\end{equation}
This implies that for $t\le t_N$,
\begin{equation}\label{XN1bnd1}
E[X^N_t(\1)] \le X^N_0(\1)  +
2\|r\||\bar\cN|(\log N)^3\int_0^{t}E[X^N_s(\1)]ds,
\end{equation}
and thus by Gronwall's inequality, for $t\le t_N$, 
\begin{equation}\label{XN1bnd2}
E[X^N_t(\1)] \le \exp(2\|r\||\bar\cN|(\log N)^3t)X^N_0(\1)
\le \exp(2\|r\||\bar\cN|(\log N)^{-16})X^N_0(\1) \le
e^{2\|r\||\bar\cN|}X^N_0(\1).
\end{equation}
By plugging this bound into \eqref{XN1bnd} we obtain for all
$t>0$, (recall $N\ge3$)
\begin{equation*}
E[X^N_t(\1)] \le X^N_0(\1) +
e^{2\|r\||\bar\cN|}2\|r\||\bar\cN|(\log N)^{-16}X^N_0(\1) +
C_{\ref{dN3cnd1}}\int_{0}^{t}E[X^N_s(\1)]ds.
\end{equation*}
Another use of Gronwall's inequality completes the proof of part (a).
\qed

\medskip

Before proving (b) we establish some preparatory results
which will also be useful later. Let
\begin{equation}\label{KNdefn}K_N=\sum_{y\in\N_N}p_N(y)\log
N\widehat P(\sigma^N(0,y)>t_N).\end{equation}
By Lemma~A.3(ii) in \cite{CDP00}, 
\begin{equation}\label{sqfnconst}
\lim_{N\to\infty}K_N=2\pi\sigma^2.
\end{equation}	
From \eqref{SMG1} we have 
\begin{equation}\label{XN^2}
E[X^N_t(\1)^2] \le 4 \Big[
X^N_0(\1)^2  + E[\langle M^N(\1)\rangle_t] +
E[D^{N,2}_t(\1)^2]+ E[D^{N,3}_t(\1)^2] \Big].
\end{equation}
Recall from \eqref{SMGsqfn} that $\langle M^N(\Phi)\rangle_t$ = $\langle
M^N(\Phi)\rangle_{1,t} + \langle M^N(\Phi)\rangle_{2,t}$.
\begin{lemma}\label{l:oldl5.1} For any $T>0$ there is a
constant $C_{\ref{CMPL51b}}>0$, depending on $T$, so that
for any bounded Borel function $\Phi$ on $[0,T]\times\R^2$
and all $t\in[0,T]$, 
\begin{align}
\notag
\langle M^N(\Phi)\rangle_{1,t} &= 2\int_0^t
X^N_s((\log N)\Phi_s^2f^{(N)}_0(\cdot,\xi^N_s))ds +
\int_0^t \tilde m^N_{1,s}(\Phi)ds\\
\label{CMPL51b}
&\qquad\qquad\text{ where }
\ds |\tilde m^N_{1,s}(\Phi)| \le C_{\ref{CMPL51b}}
\frac{\|\Phi\|^2_\Lip (\log
  N)}{\sqrt N} X^N_s(\1),
\intertext{and }
\label{CMPL51a}
\langle M^N(\Phi)\rangle_{2,t}&=\int_0^t\tilde m^N_{2,s}(\Phi)ds,\ \text{where } |\tilde m^N_{2,s}(\Phi)|\le 
2\|r\||\bar\cN|\|\Phi\|_\infty^2\frac{\ell^{(3)}_N}{N'}
X^N_s(\1) ds.
\end{align}
\end{lemma}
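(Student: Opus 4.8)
The plan is to compute the previsible square function $\langle M^N(\Phi)\rangle_t$ directly from its definition in \eqref{SMGsqfn}--\eqref{SMG2} and identify the ``leading'' part. Recall that $\langle M^N(\Phi)\rangle_{1,t}$ involves the voter rate $c^{N,\VM}(x,\xi^N_s)=c^{\VM}(x\sqrt N,\xi)$, which equals $\hxi(x)f_1^{(N)}(x,\xi)+\xi(x)f_0^{(N)}(x,\xi)$ in the rescaled notation. So
\[
\langle M^N(\Phi)\rangle_{1,t}=\int_0^t\frac{\log N}{N'}\sum_{x\in\SN}\Phi(s,x)^2\big(\hxi^N_s(x)f_1^{(N)}(x,\xi^N_s)+\xi^N_s(x)f_0^{(N)}(x,\xi^N_s)\big)\,ds.
\]
First I would rewrite the $\hxi^N_s(x)f_1^{(N)}(x,\xi^N_s)$ term. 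Using $\hxi(x)=1-\xi(x)$ and then swapping the roles of $x$ and a neighbour $y$ via $f_1^{(N)}(x,\xi)=\sum_y p_N(y-x)\xi(y)$, one rewrites
\[
\frac{\log N}{N'}\sum_x\Phi(s,x)^2\hxi^N_s(x)f_1^{(N)}(x,\xi^N_s)=\frac{\log N}{N'}\sum_{x,y}p_N(y-x)\Phi(s,x)^2\xi^N_s(y)(1-\xi^N_s(x)).
\]
Reindexing (set $y$ as the summation site, relabel) and using the symmetry $p_N(y-x)=p_N(x-y)$, this becomes $\frac{\log N}{N'}\sum_y\Phi(s,y)^2\xi^N_s(y)f_0^{(N)}(y,\xi^N_s)$ up to the error coming from replacing $\Phi(s,x)^2$ by $\Phi(s,y)^2$, which by the Lipschitz property \eqref{|Phi|defs} and $|x-y|\le c/\sqrt N$ for $x$ a neighbour of $y$ is bounded pointwise by $2\|\Phi_s\|_\Lip\|\Phi_s\|_\infty\, c/\sqrt N$ per site carrying a particle. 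Summing this error against $\frac{\log N}{N'}\xi^N_s(y)$ produces exactly the claimed error term $\tilde m^N_{1,s}(\Phi)$ with the stated bound $C\|\Phi\|_\Lip^2(\log N)N^{-1/2}X^N_s(\1)$. Combining the two (now nearly equal) contributions gives the main term $2\int_0^t X^N_s((\log N)\Phi_s^2 f_0^{(N)}(\cdot,\xi^N_s))\,ds$, proving \eqref{CMPL51b}.

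For \eqref{CMPL51a}, the term $\langle M^N(\Phi)\rangle_{2,t}$ from \eqref{SMG2} is $\int_0^t\frac1{{N'}^2}\sum_x\Phi(s,x)^2[\ell_N^{(2)}c^{N,a}(x,\xi^N_s)+\ell_N^{(3)}c^{N,s}(x,\xi^N_s)]\,ds$. I would bound this crudely: since $|c^{N,a}(x,\xi)|$ and $|c^{N,s}(x,\xi)|$ are, by \eqref{cNarep}, \eqref{cNsrep} (or more directly by the bounds underlying \eqref{dNjbnd0}--\eqref{dNjbnd1}), at most $\|r\|$ times an indicator that some site in $x+\bar\cN_N$ carries a $1$, hence at most $\|r\|\sum_{y\in x+\bar\cN_N}\xi^N_s(y)$, and since $\ell_N^{(2)}\le\ell_N^{(3)}=(\log N)^3$, summing over $x$ and using translation invariance of $\bar\cN_N$ gives $\frac1{{N'}^2}\sum_x\Phi(s,x)^2[\cdots]\le 2\|r\||\bar\cN|\|\Phi_s\|_\infty^2\frac{(\log N)^3}{{N'}^2}\sum_y\xi^N_s(y)=2\|r\||\bar\cN|\|\Phi_s\|_\infty^2\frac{\ell^{(3)}_N}{N'}X^N_s(\1)$ (recalling $X^N_s(\1)=\frac1{N'}\sum_y\xi^N_s(y)$). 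That is precisely the asserted bound on $\tilde m^N_{2,s}(\Phi)$.

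The only mildly delicate point is the bookkeeping in the reindexing step for $\langle M^N(\Phi)\rangle_{1,t}$: one must be careful that the swap $x\leftrightarrow y$ is exact at the level of the double sum $\sum_{x,y}p_N(y-x)(\cdots)$ before isolating the $\Phi^2$-difference, and that the residual is genuinely $O(N^{-1/2})$ rather than $O(\log N\cdot N^{-1/2})$ being the best possible — here the $\log N$ is unavoidable and already built into the statement, so no sharpness is needed. Everything else is a routine application of $|\Phi_s(x)-\Phi_s(y)|\le|\Phi|_\Lip|x-y|$, the finite-range support of $p_N$ inside $\cN_N$, and the pointwise bounds \eqref{dNjbnd0}--\eqref{dNjbnd1} on the perturbation rates; I do not expect any substantive obstacle.
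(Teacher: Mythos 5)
Your proposal is correct, and it follows the same route as the paper: for \eqref{CMPL51b} the paper simply cites Lemma~4.8 of \cite{CP08}, which carries out exactly the symmetrization you perform (swapping $x$ and $y$ in the double sum $\sum_{x,y}p_N(y-x)\hxi(x)\xi(y)\Phi(s,x)^2$, using symmetry of $p_N$, and absorbing the $|\Phi(s,x)^2-\Phi(s,y)^2|\le 2\|\Phi\|_\infty|\Phi|_{\Lip}\,c/\sqrt N$ discrepancy into $\tilde m^N_{1,s}$), so your write-up is just a self-contained version of the cited computation. For \eqref{CMPL51a} the paper argues exactly as you do, via $|c^{N,a}(x,\xi)|=|d^{N,2}(x,\xi)|$, $|c^{N,s}(x,\xi)|=|d^{N,3}(x,\xi)|$ and the pointwise bound \eqref{dNjbnd1}.
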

\begin{proof}
\eqref{CMPL51b} holds by the corresponding result in
Lemma~4.8 of \cite{CP08}. Note that the result there is
actually an identity and bound for a generic state
$\xi\in\{0,1\}^\SN$ (although it is not stated as such) and
applies immediately to our setting as well.  For
\eqref{CMPL51a} note first that by their definitions,
$|c^{N,a}(x,\xi)|=|d^{N,2}(x,\xi)|$ and
$|c^{N,s}(x,\xi)|=|d^{N,3}(x,\xi)|$. The second result now
follows immediately by using \eqref{dNjsbnd} to bound the
absolute value of the integrand of the integrals in
\eqref{SMG2} for $\langle M^N(\Phi)\rangle_{2,t}$.
\end{proof}
\begin{lemma}\label{l:oldl5.2}
There is a $C_{\ref{CMPL52}}>0$ so that for any bounded
Borel $\Phi$ on $\R^2$:\\
\noindent(a)\begin{equation}\label{usefulbnd} \frac{\log
N}{N'}\sum_{x\in\SN}\sum_{y\in\cN_N}\Phi(x)p_N(y)\hat
E(\xi_0^N(B^{N,x}_{t_N})1(\sigma^N(x,x+y)>t_N))=K_NX_0^N(\Phi)+\cE_N,
\end{equation}
where
\begin{equation}\label{usefulbndb}
|\cE_N|\le C_{\ref{CMPL52}}|\Phi|_\Lip(\log N)^{-17/2}X_0^N(\1).
\end{equation}

\noindent (b) If, in addition, $\Phi\ge 0$, then
\begin{equation}
\label{CMPL52}
E\Big[X^N_{t_N}((\log N)\Phi f^{(N)}_0(\cdot,\xi^N_{t_N}))\Big]
\le C_{\ref{CMPL52}} \Big(\|\Phi\|_\Lip (\log N)^{-17/2}X^N_0(\1)
+ X^N_0(\Phi)\Big).
\end{equation}
\end{lemma}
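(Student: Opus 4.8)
The plan is to prove Lemma~\ref{l:oldl5.2} by the duality machinery already set up in this section, decomposing the coalescence time of the two walks $B^{N,x}$ and $B^{N,x+y}$ against the threshold $t_N$ and exploiting translation invariance together with the asymptotics \eqref{sqfnconst}.

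\medskip

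\noindent\textbf{Part (a).}
First I would use the voter duality \eqref{dual1} (with $A=\{x\}$, $B=\emptyset$), applied to the voter system $\xi^{N,\VM}$, but here we only need the single-walk identity $\hat E(\xi^N_0(B^{N,x}_{t_N}))=E_{\xi^N_0}(\xi^{N,\VM}_{t_N}(x))$, so in fact the left-hand side of \eqref{usefulbnd} is already written as an expectation over the coalescing walks and no further duality is needed. The key point is the event $\{\sigma^N(x,x+y)>t_N\}$: on this event the walks $B^{N,x}$ and $B^{N,x+y}$ have not met by time $t_N$, and on the complementary event $\{\sigma^N(x,x+y)\le t_N\}$ they have coalesced, so $B^{N,x}_{t_N}=B^{N,x+y}_{t_N}$ is distributed as a single rate-$w_NN$ walk. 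The strategy is to write
\[
\hat E(\xi_0^N(B^{N,x}_{t_N})1(\sigma^N(x,x+y)>t_N)) = \hat E(\xi_0^N(B^{N,x}_{t_N})) - \hat E(\xi_0^N(B^{N,x}_{t_N})1(\sigma^N(x,x+y)\le t_N)),
\]
and compare the right-hand side across the shift by $y$. Summing against $p_N(y)$ and $\Phi(x)(\log N)/N'$, the leading term is
\[
\frac{\log N}{N'}\sum_{x,y}\Phi(x)p_N(y)\hat E(\xi_0^N(B^{N,x}_{t_N}))\hat P(\sigma^N(x,x+y)>t_N)
\]
once one uses the independence structure correctly; more precisely, conditioning on the two walks and using translation invariance of the walk system, $\hat P(\sigma^N(x,x+y)>t_N)=\hat P(\sigma^N(0,y)>t_N)$ depends only on $y$, so that the sum over $x$ factors as $X^N_0$ applied to a translate of $\Phi$ by $B^{N,0}_{t_N}$. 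Replacing $\Phi(x-B^{N,0}_{t_N})$ by $\Phi(x)$ costs at most $|\Phi|_\Lip\hat E|B^{N,0}_{t_N}|\le |\Phi|_\Lip\sqrt{2\sigma^2 t_N}=|\Phi|_\Lip O((\log N)^{-19/2})$, which is even smaller than the claimed error. Collecting, the main term is exactly $K_N X^N_0(\Phi)$ by the definition \eqref{KNdefn} of $K_N$ (with the walk started at $0$), and the remaining errors — the displacement error just mentioned, plus a term bounding the correction coming from the distinction between $B^{N,x}_{t_N}$ run at rate $w_NN$ and the idealized object, controlled by $|w_N-1|=O(\vep_N)=O((\log N)^3/N)$ — are all $O(|\Phi|_\Lip (\log N)^{-17/2}X^N_0(\1))$, giving \eqref{usefulbndb}. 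I expect the only real bookkeeping obstacle here is organizing the conditioning so the factorization $\hat E(\xi^N_0(\cdot))\cdot\hat P(\sigma^N>t_N)$ is legitimate; this is handled by first conditioning on the entire trajectory of $B^{N,x}$, under which $\{\sigma^N(x,x+y)>t_N\}$ is a function of the $B^{N,x+y}$-trajectory only, and using that the coalescing system restricted to the pair behaves as two independent walks until they meet.

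\medskip

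\noindent\textbf{Part (b).}
For (b) I would apply the Markov property at time $t_N$ and then the duality \eqref{dual1}: writing $f^{(N)}_0(x,\xi)=\sum_{y\in\cN_N}p_N(y)(1-\xi(x+y))$, we get
\[
E\big[X^N_{t_N}((\log N)\Phi f^{(N)}_0(\cdot,\xi^N_{t_N}))\big] = \frac{\log N}{N'}\sum_{x\in\SN}\sum_{y\in\cN_N}\Phi(x)p_N(y)\, E\big[\xi^N_{t_N}(x)(1-\xi^N_{t_N}(x+y))\big],
\]
but since here $\xi^N$ is the actual perturbation, not the voter model, I would first invoke the coupling \eqref{couple1}: $\xi^N_{t_N}\le\bar\xi^N_{t_N}$ pointwise, so $\xi^N_{t_N}(x)(1-\xi^N_{t_N}(x+y))\le\bar\xi^N_{t_N}(x)$, and then use the biased-voter growth bound $E\bar X^N_{t_N}(\1)\le e^{(2+\underline p^{-1})\|r\|(\log N)^3 t_N}X^N_0(\1)$ from Lemma~4.1 of \cite{CP05} together with $t_N=(\log N)^{-19}$, which makes the exponent negligible; this alone gives the $\|\Phi\|_\infty(\log N)\cdot X^N_0(\1)$-type bound but loses a factor of $\log N$. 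To get the sharper \eqref{CMPL52} one replaces this crude step by comparison with the voter model: bound $\xi^N_{t_N}(x)(1-\xi^N_{t_N}(x+y))$ using that over the short interval $[0,t_N]$ the perturbation and voter model differ in $L^1$ by $O((\log N)^{3-p}X^N_0(\1))$-type terms (Lemma~\ref{l:masscmps}-style estimates, here with $u_N=t_N$), reducing to the voter quantity $E_{\xi^N_0}[\xi^{N,\VM}_{t_N}(x)(1-\xi^{N,\VM}_{t_N}(x+y))]$, which by duality equals $\hat E[\xi^N_0(B^{N,x}_{t_N})(1-\xi^N_0(B^{N,x+y}_{t_N}))]\le \hat E[\xi^N_0(B^{N,x}_{t_N})1(\sigma^N(x,x+y)>t_N)]$. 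Summing against $(\log N)\Phi(x)p_N(y)/N'$ and $\Phi\ge0$, this is precisely bounded by the left-hand side of \eqref{usefulbnd}, so part (a) gives $K_N X^N_0(\Phi) + O(\|\Phi\|_\Lip(\log N)^{-17/2}X^N_0(\1))$, and since $K_N\to 2\pi\sigma^2$ is bounded we get $\le C(X^N_0(\Phi)+\|\Phi\|_\Lip(\log N)^{-17/2}X^N_0(\1))$ as claimed. The main obstacle is the sharpness: a naive bound replacing $1-\xi^N_{t_N}(x+y)$ by $1$ produces $X^N_{t_N}((\log N)\Phi)$ whose expectation carries an extra $\log N$; the point is that the constraint forces the two dual walks not to have coalesced, costing exactly a factor $(\log N)^{-1}$ via \eqref{sqfnconst}, and the perturbation-versus-voter swap must be done carefully so this gain is not destroyed — I would handle that swap exactly as in Lemmas~\ref{l:masscmps} and \ref{l:dvmcomp}, whose error terms are $O((\log N)^{3-p})$ with $p>6$ and hence subdominant.
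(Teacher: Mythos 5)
Your part (b) is exactly the paper's argument: compare with the voter model over $[0,t_N]$ via the coupling with the biased voter model (this swap costs $O(\|\Phi\|_\infty(\log N)^{-15}X^N_0(\1))$ after the $\log N$ prefactor, hence is subdominant), apply the duality \eqref{dual1}, observe that $\xi^N_0(B^{N,x}_{t_N})\hxi^N_0(B^{N,x+y}_{t_N})\neq 0$ forces $B^{N,x}_{t_N}\ne B^{N,x+y}_{t_N}$, i.e.\ $\sigma^N(x,x+y)>t_N$, and then invoke part (a). No issues there.

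Part (a) also follows the paper's computation in outline, but the justification you give for the key identity is wrong. You claim the leading term is $\hat E(\xi^N_0(B^{N,x}_{t_N}))\,\hat P(\sigma^N(x,x+y)>t_N)$ and defend the factorization by saying that, conditionally on the trajectory of $B^{N,x}$, the event $\{\sigma^N(x,x+y)>t_N\}$ is a function of the $B^{N,x+y}$-trajectory only. That is false: $\sigma^N(x,x+y)$ is the first meeting time of the two paths, so it depends on both trajectories, and the endpoint $B^{N,x}_{t_N}$ is genuinely correlated with the non-coalescence event. No factorization is needed, and the paper never uses one. The correct mechanism --- which you do gesture at in your ``more precisely'' sentence and should make the actual argument --- is to write $B^{N,x}_{t_N}=x+B^{N,0}_{t_N}$, keep the \emph{joint} law $\hat P(B^{N,0}_{t_N}=w,\sigma^N(0,y)>t_N)$, replace $\Phi(x)$ by $\Phi(x+w)$ at Lipschitz cost, and then note that $\sum_x\Phi(x+w)\xi^N_0(x+w)=N'X^N_0(\Phi)$ for every $w$; summing the joint probability over $w$ then simply marginalizes it to $\hat P(\sigma^N(0,y)>t_N)$, giving $K_NX^N_0(\Phi)$ exactly. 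One arithmetic correction: the displacement error is $|\Phi|_\Lip(\log N)\hat E(|B^{N,0}_{t_N}|)X^N_0(\1)\le C|\Phi|_\Lip(\log N)^{1-19/2}X^N_0(\1)=C|\Phi|_\Lip(\log N)^{-17/2}X^N_0(\1)$; you dropped the $\log N$ prefactor and called this term ``even smaller than the claimed error,'' whereas it is in fact the dominant error and is precisely where the exponent $-17/2$ comes from. Finally, the $|w_N-1|$ correction you mention is unnecessary, since $K_N$ in \eqref{KNdefn} is defined directly in terms of the rate-$w_NN$ walks.
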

\begin{proof} (a) Let $\Sigma_N$ denote the left-hand side
of \eqref{usefulbnd}.  We may write $B^{N,x}$ as $x+B^{N,0}$
and sum over the values of $B^{N,0}_{t_N}$ to see that
\begin{align*} \Sigma_N&=\frac{\log
N}{N'}\sum_{x\in\SN}\sum_{w\in\SN}\sum_{y\in\cN_N}p_N(y)[\Phi(x)-\Phi(x+w)]\xi_0^N(x+w)\hat
P(B_{t_N}^{N,0}=w,\sigma^N(0,y)>t_N)\\ &\ \ +\frac{\log
N}{N'}\sum_{x\in\SN}\sum_{w\in\SN}\Phi(x+w)\xi_0^N(x+w)\sum_{y\in\cN_N}p_N(y)\hat
P(B_{t_N}^{N,0}=w,\sigma^N(0,y)>t_N)\\
&:=\Sigma_N^{(1)}+\Sigma_N^{(2)}.
\end{align*}
Use the implicit Lipschitz continuity of $\Phi$ to see that
\begin{align} \nonumber |\Sigma_N^{(1)}|&\le
|\Phi|_\Lip\sum_{w\in\SN}|w|\Bigl(\frac{\log
N}{N'}\sum_{x\in\SN}\xi^N_0(x+w)\Bigr)\sum_{y\in\cN_N}p_N(y)\hat
P(B^{N,0}_{t_N}=w,\sigma^N(0,y)>t_N)\\
\label{err3x}&\le |\Phi|_\Lip X_0^N(\1)\log N\hat
E(|B^{N,0}_{t_N}|)\le |\Phi|_\Lip X_0^N(\1)\log N\sqrt
2\sigma\sqrt{t_N}.
\end{align} The fact that $w_N\le 1$ is used in the last
line as well. Sum over $x$ first and then $w$, to see that
$\Sigma^{(2)}_N=K_NX_0^N(\Phi)$. This gives (a).

(b) Assume now $\Phi\ge 0$.  We will compare with the
corresponding expression for $\xi^{N,\VM}_{t_N}$, and use
the duality \eqref{dual1} to compute the latter, and hence
see that the left-hand side of \eqref{CMPL52} equals
\begin{multline}\label{CMPL52a}
\frac{\log N}{N'}
\sum_{x\in \SN}\sum_{y\in\cN}\Phi(x)p_N(y)
E_{\xi^N_0}[\xi^N_{t_N}(x)\hxi^N_{t_N}(x+y)
-\xi^{N,\VM}_{t_N}(x)\hxi^{N,\VM}_{t_N}(x+y)]\\
+\frac{\log N}{N'}
\sum_{x\in \SN}\sum_{y\in\cN_N}\Phi(x)p_N(y)
\hat
E[\xi^N_0(B^{N,x}_{t_N})\hxi^N_0(B^{N,x+y}_{t_N})]. 
\end{multline}
By the coupling \eqref{couple1}, Lemma~\ref{l:masscmps}, and
the triangle inequality, the absolute value of the first sum
in \eqref{CMPL52a} is bounded by
\begin{align}\notag
\|\Phi\|_\infty\frac{\log N}{N'}
\sum_{x\in \SN}\sum_{y\in\cN}p_N(y) & \Big(
E[\bar\xi^N_{t_N}(x)-{\xi}^{N}_{t_N}(x)]+E[\bar\xi^N_{t_N}(x)-\xi^{N,\VM}_{t_N}(x)]\\
&
\notag+E[\bar \xi^N_{t_N}(x+y)
-{\xi}^{N}_{t_N}(x+y)]+E[\bar \xi^N_{t_N}(x+y)
-{\xi}^{N.\VM}_{t_N}(x+y)]\Bigr) \\
\notag &=
2\|\Phi\|_\infty(\log N) \Bigl(E[\bar X^N_{t_N}(\1) -
{X}^N_{t_N}(\1)]+E[\bar X^N_{t_N}(\1) -{X}^{N,\VM}_{t_N}(\1)]\Bigr)\\
&\le C\|\Phi\|_\infty (\log N)^{-15}X^N_0(\1).
\label{CMPL52b}
\end{align}
The second sum in \eqref{CMPL52a} is bounded by $\Sigma_N$
(the left-hand side of part (a)), and so (a) and
\eqref{sqfnconst} give the required bound. 	 
\end{proof}

\noindent{\sl{Proof of
    Proposition~\ref{p:mobnds}(b).}}  It now
follows quite easily from Lemmas~\ref{l:oldl5.1} and
\ref{l:oldl5.2}(b), as well as part (a), that there is  
a constant $C_{\ref{qvbnd1}}$ depending on $T$ such that,
\begin{equation}\label{qvbnd1}
E[\langle M^N(\1)\rangle_T] \le C_{\ref{qvbnd1}} X^N_0(\1).
\end{equation}
Here one uses the Markov property and
Lemma~\ref{l:oldl5.2}(b) to bound $\int_{t_N}^{T}
X^N_s((\log N)f^{(N)}_0(\xi^N_s))ds$, and the obvious crude
bound on the integrand to handle the integral from $0$ to
$t_N$.

Turning to the last terms in \eqref{XN^2}, we consider more
generally $E[(D^{N,j}_t(\Phi))^2]$, where $\Phi:\R^2\to\R$
is Lipschitz continuous.   We claim that for
$j=2,3$,
\begin{equation}\label{DNtN}
E[ (D^{N,j}_{t\wedge t_N}(\Phi))^2]
\le \|\Phi\|^2_\infty|\bar \cN|^2 (\log N)^6t_N
E\Big[\int_0^{t\wedge t_N}X^N_s(\1)^2ds \Big],
\end{equation}
and there is a constant $C_{\ref{DNt1t2}}>0$ such that if 
$t_2>t_1\ge t_N$, then
\begin{multline}\label{DNt1t2}
E[
(D^{N,j}_{t_2}(\Phi) -
D^{N,j}_{t_1}(\Phi))^2] 
\\ \le 
C_{\ref{DNt1t2}}\Big( \|\Phi\|_\Lip^2(t_2-t_1)+ \|\Phi\|_\infty^2(\log
N)^6 (t_N\wedge (t_2-t_1))\Big) 
\int_{t_1}^{t_2} E[X^N_s(\1)^2] ds .
\end{multline}
The proofs of the corresponding inequalities (66) and (69)
in \cite{CMP} apply here without change if (41) and (48)
there are replaced by \eqref{crude1} and \eqref{dN3cnd1}
here, using $\ell^{(2)}_N=\log N\le (\log N)^3$. (The
increment bound is stronger than we need here but will be
useful later in establishing tightness.) Now use
\eqref{qvbnd1}, \eqref{DNtN} and \eqref{DNt1t2} in
\eqref{XN^2} to see there is a constant $C_{\ref{XN2(1)}}$
depending on $T$ such that if $t\le T$ then
\begin{equation}\label{XN2(1)} 
E[
X^N_t(\1)^2] \le
C_{\ref{XN2(1)}}\Big[ X^N_0(\1)^2+ X^N_0(\1) + 
\int_0^{t}E[X^N_{s}(\1)^2]ds \Big].
\end{equation}
A simple Gronwall argument finishes the proof of
Proposition~\ref{p:mobnds} (b). 
\qed

\subsection{Exact drift asymptotics and first moment measure bounds}\label{ss:exacd3asymp}	

Introduce, for $\delta>0$,
\begin{equation}\label{calNdef}
\scrI^N(\delta,\xi_0^N)=\int\int 1_{\{|w-z|<\delta\}}
dX_0^N(w)dX_0^N(z),
\end{equation}
and define 
\begin{equation}
\scrI^N(\xi_0^N)=\scrI^N\Big(\sqrt{t_N(\log
N)^{1/2}},\xi^N_0\Big).
\end{equation}

\begin{lemma}\label{p:2walks} There is a 
$C_{\ref{2walks}}$ so that for $N\ge N(\vep_0)$, distinct
$a_0, a_1, a_2\in 
\bar{\cN}_N$, and $\xi^N_0\in\{0,1\}^{S_N}$,
\begin{align}\label{2walks}
\nonumber \frac{(\log N)^3}{N'}&\sum_{x\in \SN}\widehat P\Big(
\xi^N_0(B^{N,x+a_1}_{t_N}) = \xi^N_0(B^{N,x+a_2}_{t_N}) =1,
\sigma^N_x(a_0,a_1,a_2)>t_N\Big)\\
&\le
C_{\ref{2walks}}\Bigl(\frac{1}{t_N\log
  N}\scrI^N(\xi^N_0)+(\log
N)^{-(1/2)}X^N_0(\1)\Bigr),
\end{align}
and
\begin{align}\label{2walksM}
\nonumber \frac{(\log N)}{N'}&\sum_{x\in \SN}\widehat P\Big(
\xi^N_0(B^{N,x+a_1}_{t_N}) = \xi^N_0(B^{N,x+a_2}_{t_N}) =1,
\sigma^N_x(a_1,a_2)>t_N\Big)\\
&\le
C_{\ref{2walks}}\Bigl(\frac{1}{t_N\log
  N}\scrI^N(\xi^N_0)+(\log
N)^{-(1/2)}X^N_0(\1)\Bigr).
\end{align}
\end{lemma}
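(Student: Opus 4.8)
\textbf{Proof proposal for Lemma~\ref{p:2walks}.} The plan is to prove the two displays in parallel, since the only difference is the power of $\log N$ in the prefactor and whether we carry along the third walk $B^{N,x+a_0}$ or not; the third walk plays no role in the bound (we will simply drop the event $\sigma^N_x(a_0,a_1,a_2)>t_N$ down to the event that $a_1$ and $a_2$ have not coalesced, since $\sigma^N_x(a_0,a_1,a_2)>t_N$ implies $\sigma^N_x(a_1,a_2)>t_N$), so \eqref{2walks} follows from \eqref{2walksM} once we keep the correct power. Thus I would reduce everything to estimating, for fixed distinct $a_1,a_2\in\bar\cN_N$,
\[
\Sigma_N := \frac{\log N}{N'}\sum_{x\in\SN}\widehat P\big(\xi^N_0(B^{N,x+a_1}_{t_N})=\xi^N_0(B^{N,x+a_2}_{t_N})=1,\ \sigma^N_x(a_1,a_2)>t_N\big).
\]

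The key step is to condition on the two (non-coalescing, on the relevant event) walk endpoints. Writing $B^{N,x+a_i}_{t_N}=x+a_i+\widetilde B^{(i)}_{t_N}$ where the $\widetilde B^{(i)}$ are coalescing walks started at $a_1,a_2$, translation invariance lets me write
\[
\Sigma_N = \frac{\log N}{N'}\sum_{x\in\SN}\sum_{w_1,w_2\in\SN}\xi^N_0(x+w_1)\xi^N_0(x+w_2)\,\widehat P\big(B^{N,a_1}_{t_N}=w_1,\ B^{N,a_2}_{t_N}=w_2,\ \sigma^N(a_1,a_2)>t_N\big).
\]
On the event $\{\sigma^N(a_1,a_2)>t_N\}$, the difference $B^{N,a_1}_{t_N}-B^{N,a_2}_{t_N}$ is a single rate-$2w_NN$ random walk on $\SN$ killed at $0$, started at $a_1-a_2$ (this is the standard $n=2$ observation recalled after \eqref{CRWold}); its transition densities on scale $1/\sqrt N$ are controlled by the local CLT, which is exactly the content of the estimates in Appendix~A of \cite{CDP00} (used already in \eqref{sqfnconst}). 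The time $t_N=(\log N)^{-19}$ corresponds to diffusive spatial scale $\sqrt{t_N}$, and the survival probability $\widehat P(\sigma^N(a_1,a_2)>t_N)$ is $\sim 2\pi\sigma^2/\log N$ by that same local CLT / Green's function asymptotics (this is why $K_N\to 2\pi\sigma^2$). So after summing over $w_2$ with $w_1$ (equivalently over the endpoint $w_1$ and the increment $w_2-w_1$), I expect a bound of the form
\[
\Sigma_N \le C\,\frac{\log N}{N'}\sum_{x\in\SN}\sum_{w_1,v\in\SN}\xi^N_0(x+w_1)\xi^N_0(x+w_1+v)\,q^N_{t_N}(v),
\]
where $q^N_{t_N}$ is the (killed, rescaled) heat kernel at time $t_N$, satisfying $\sum_v q^N_{t_N}(v)=O(1/\log N)$, $q^N_{t_N}(v)\le C/(Nt_N)$ for all $v$, and $q^N_{t_N}$ supported (up to negligible Gaussian tails) on $|v|\lesssim \sqrt{t_N}$.

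Finally I would split the pair of sites $x+w_1$, $x+w_1+v$ according to whether $|v|<\sqrt{t_N(\log N)^{1/2}}$ or not. On the ``close'' part, bound $q^N_{t_N}(v)\le C/(Nt_N)$ and $\log N\cdot\frac{1}{N'}\cdot\frac{1}{Nt_N}=\frac{(\log N)^2}{N^2 t_N}$; summing $\xi^N_0(x+w_1)\xi^N_0(x+w_1+v)$ over $x,w_1,v$ with $|v|<\delta:=\sqrt{t_N(\log N)^{1/2}}$ reproduces $N'^2\scrI^N(\xi^N_0)\cdot$(const) by the definition \eqref{calNdef}–\eqref{XNdefn} (recall $X^N_0$ has atoms of mass $1/N'$), giving a term of order $\frac{1}{t_N\log N}\scrI^N(\xi^N_0)$ — matching the first term on the right of \eqref{2walks}. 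On the ``far'' part $|v|\ge \delta$, the Gaussian tail gives $\sum_{|v|\ge\delta}q^N_{t_N}(v)\le C\exp(-c(\log N)^{1/2})\le C(\log N)^{-1}$, and cruder still, using only $\xi^N_0(x+w_1+v)\le 1$ and summing $\xi^N_0(x+w_1)$ over $x,w_1$ to get $N'X^N_0(\1)$, produces a term bounded by $C(\log N)^{-1/2}X^N_0(\1)$ (here $\log N\cdot\frac{1}{N'}\cdot N'X^N_0(\1)$ times a tail factor that beats any power of $\log N$; the stated $(\log N)^{-1/2}$ is a comfortable over-estimate) — matching the second term. Collecting the two pieces gives \eqref{2walksM}, and inserting the extra factor $(\log N)^2$ (since $\ell^{(3)}_N=(\log N)^3$ versus $\ell^{(2)}_N=\log N$) together with dropping the third walk gives \eqref{2walks}.

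\medskip

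\emph{Main obstacle.} The delicate point is getting the \emph{exact} order $1/(t_N\log N)$ in front of $\scrI^N$ rather than something larger: this requires the killed heat-kernel bound $q^N_{t_N}(v)\le C/(Nt_N)$ to hold uniformly in $v$ at the relevant scale, including near the killing point $0$ and accounting for the rate-$2w_N N$ (with $w_N\to 1$) time change, and it requires the survival probability $\widehat P(\sigma^N(0,a_1-a_2)>t_N)$ to be $O(1/\log N)$ uniformly over the finitely many differences $a_1-a_2$ with $a_i\in\bar\cN_N$. Both are consequences of the local central limit theorem estimates for two-dimensional coalescing walks from Appendix~A of \cite{CDP00} and the potential-kernel asymptotics used there (and in Section~\ref{s:crw}), but assembling them with the right constants and checking the $t_N=(\log N)^{-19}$ scaling is compatible with ``$t_N\log N\to\infty$'' (so that the diffusive scale $\sqrt{t_N}$ is still much larger than the lattice spacing $1/\sqrt N$, i.e. $N t_N\to\infty$) is where the real work lies; everything else is bookkeeping with the definition of $\scrI^N$.
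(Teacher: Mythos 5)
Your overall strategy — translation invariance to reduce the sum over $x$ to the law of the difference walk $D_t=B^{N,a_2}_t-B^{N,a_1}_t$, then splitting the pair of endpoints according to whether $|y_2-y_1|<\delta_N:=\sqrt{t_N(\log N)^{1/2}}$ (close pairs counted by $\scrI^N$) or not (Gaussian tail times $X^N_0(\1)$) — is exactly the structure of the argument the paper relies on (it simply cites the bound on $\mathcal T_N(\Phi)$ from \cite{CMP}). But there is a genuine gap in the close‑pair estimate, and it propagates. After summing over $x$, the close part of \eqref{2walksM} is
\begin{equation*}
\frac{\log N}{N'}\sum_{\substack{y_1,y_2:\ \xi^N_0(y_i)=1\\ |y_1-y_2|<\delta_N}}\widehat P\big(D_{t_N}=y_2-y_1,\ \sigma^N(a_1,a_2)>t_N\big),
\end{equation*}
and the number of such pairs is $N'^2\scrI^N(\xi^N_0)$. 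If, as in your displayed computation, you only use the local CLT bound $\widehat P(D_{t_N}=v)\le C/(Nt_N)$, the total is $\frac{\log N}{N'}\cdot\frac{C}{Nt_N}\cdot N'^2\scrI^N=\frac{C}{t_N}\scrI^N$ — your own arithmetic ($\frac{(\log N)^2}{N^2t_N}\times N'^2$) gives $\frac{1}{t_N}$, not the claimed $\frac{1}{t_N\log N}$. To land on the stated bound you need the \emph{product} estimate $\widehat P(D_{t_N}=v,\sigma>t_N)\le C/(Nt_N\log N)$, which does not follow from having the survival bound $O(1/\log N)$ and the pointwise bound $O(1/(Nt_N))$ separately; it requires applying the Markov property at time $t_N/2$ (survive on $[0,t_N/2]$, then use the uniform heat‑kernel bound on $[t_N/2,t_N]$). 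You flag both ingredients in your ``main obstacle'' paragraph but never combine them, and the written estimate is off by a factor of $\log N$.

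The same missing multiplicative step invalidates your reduction of \eqref{2walks} to \eqref{2walksM}. Dropping the event $\{\sigma^N_x(a_0,a_1,a_2)>t_N\}$ to $\{\sigma^N_x(a_1,a_2)>t_N\}$ and then ``inserting the extra factor $(\log N)^2$'' yields a bound that is $(\log N)^2$ times the right-hand side, which is useless. The three‑walk prefactor $(\log N)^3$ can only be absorbed because the three‑walk non‑collision event itself has probability of order $(\log N)^{-3}$ (this is \eqref{CRWold} with $n=3$, versus $n=2$ for \eqref{2walksM}), and that factor must again be combined with the local CLT bound via the Markov property at $t_N/2$: $\widehat P(D_{t_N}=v,\ \sigma^N(a_0,a_1,a_2)>t_N)\le \widehat P(\sigma^N(a_0,a_1,a_2)>t_N/2)\cdot\sup_z\widehat P(D_{t_N}=v\mid D_{t_N/2}=z)\le C(\log N)^{-3}(Nt_N)^{-1}$. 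So the two displays must each be proved with the $n=3$ and $n=2$ asymptotics respectively; neither reduces to the other. With this splitting inserted (and with the Gaussian, rather than merely Chebyshev, tail for the far pairs, which you do use), the rest of your bookkeeping is correct and reproduces the cited argument of \cite{CMP}.
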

\begin{proof} The proof of \eqref{2walks} may be found in
the derivation of (46) in \cite{CMP} with $\Phi=\1$. See, in
particular, the bound on $\mathcal{T}_N(\Phi)$ in that proof
on pages 1214 and 1215, and note we are setting $\eta=1/2$
in the final display of Section~4.2 of that article.  In
that result the stochastic process $\xi^N_u(\1)$ is a
rescaled two-dimensional Lotka-Volterra model but may be
replaced in the proof by a fixed point
$\xi^N_0\in\{0,1\}^{S_N}$ and so holds equally well in our
setting.  The derivation of \eqref{2walksM} is the same. One
just replaces $\sigma_x^N(a_0,a_1,a_2)$ with
$\sigma_x^N(a_1,a_2)$.  Note that \eqref{CRWold} is used in
the proofs with $n=3$ and $n=2$, respectively.
\end{proof}

For nonempty sets $A\subset \bar\cN$ define
\begin{align*}
\Theta^{N}_2(A)
&= (\log N) \widehat P(\sigma^N(A/\sqrt N,(\bar\cN\setminus
A)/\sqrt N)>t_N,\, 
\tau^N(A/\sqrt N,(\bar\cN\setminus A)/\sqrt N)<t_N)\\
&=(\log N) \widehat P(\sigma(A,\bar\cN\setminus
A)>Nw_Nt_N,\,\tau(A,\bar\cN\setminus A)<Nw_Nt_N)\\
\Theta^{N}_2&=
\sum_{\emptyset\neq A\subset\cN}r^{N,a}(A)\Theta^{N}_2(A) .
\end{align*}
By Proposition~\ref{p:CRWnew} and \eqref{rNalim},
\begin{equation}\label{Tht2lim}
\lim_{N\to\infty}\Theta^{N}_2=
\Theta_2.
\end{equation}

\begin{proposition}\label{p:^HTht2} 
%For $T>0$ 
There is a 
$C_{\ref{^HTht2}}>0$ such
that for any $T>0$, $\phi\in C_b([0,T]\times\R^2)$, 
and $0\le s,u\le T$ with $|u-s|\le t_N$,
\begin{multline}
\Big| \hat H^{N,2}(\xi^N_0,t_N,\Phi_s) 
-\Theta^N_2 X^{N}_{0}(\Phi_u)\Big| \le
C_{\ref{^HTht2}}\|\Phi\|
\Bigl(\frac{1}{t_N\log
  N}\scrI^N(\xi^N_{0})+(\log
N)^{-1/2}X^N_{0}(\1)\Bigr).
\label{^HTht2}
\end{multline}
\end{proposition}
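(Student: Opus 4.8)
The plan is to decompose $\hat H^{N,2}(\xi^N_0,t_N,\Phi_s)$ according to the number of surviving coalescing walks among the $|\bar\cN|$ walks started from $x+\bar\cN_N$, and to show that only the two-walk term contributes to the limit while all higher terms are negligible. Recall from \eqref{HNrep} that $\hat H^{N,2}(\xi^N_0,t_N,\Phi_s)=\sum_{i=2}^{|\bar\cN|}\hat H^{N,2}_i(\xi^N_0,t_N,\Phi_s)$. By \eqref{dN2i} in Lemma~\ref{l:dN3bnd}, for $i\ge 3$ we have $|\hat H^{N,2}_i(\xi^N_0,t_N,\Phi_s)|\le C\|r\|\|\Phi_s\|_\infty(\log N)^{1-\binom i2}X^N_0(\1)\le C\|r\|\|\Phi_s\|_\infty(\log N)^{-2}X^N_0(\1)$, which is certainly $O(\|\Phi\|(\log N)^{-1/2}X^N_0(\1))$. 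So the whole content is to analyze $\hat H^{N,2}_2(\xi^N_0,t_N,\Phi_s)$ and show it is close to $\Theta^N_2 X^N_0(\Phi_u)$.

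First I would unwind the definitions \eqref{^HNidef}, \eqref{hNiAdef}, \eqref{INi} and \eqref{IN+-def} to write, for $\emptyset\neq A\subset\cN_N$,
\[
\widehat E(h^{N,+}_{2,2}(\xi^N_0,t_N,A,\Phi_s))=\frac{\log N}{N'}\sum_{x\in\SN}\Phi(s,x)\,\widehat E\big[I^{N,+}(x,t_N,A,\xi^N_0)1\{|B^{N,x+\bar\cN_N}_{t_N}|=2\}\big].
\]
On the event $\{|B^{N,x+\bar\cN_N}_{t_N}|=2\}$ the product defining $I^{N,+}$ is $\xi^N_0(B^{N,x+a}_{t_N})(1-\xi^N_0(B^{N,x}_{t_N}))$ when $B^{N,x+A}_{t_N}$ and $B^{N,x+\bar\cN_N\setminus A}_{t_N}$ are disjoint (i.e.\ on $\Omega^N_x(A)$ from \eqref{OmegaN}, so $B^{N,x+\bar\cN_N\setminus A}_{t_N}=B^{N,x}_{t_N}$ and $B^{N,x+A}_{t_N}=B^{N,x+a}_{t_N}$ for any $a\in A$), and is $0$ otherwise. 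Thus I can replace $I^{N,+}(x,t_N,A,\xi^N_0)1\{|B^{N,x+\bar\cN_N}_{t_N}|=2\}$ by $\xi^N_0(B^{N,x+a}_{t_N})(1-\xi^N_0(B^{N,x}_{t_N}))1\{\Omega^N_x(A)\}$. The step that actually produces the drift: on $\Omega^N_x(A)$ one should further drop the factor $(1-\xi^N_0(B^{N,x}_{t_N}))$, i.e.\ replace it by $1$; the error incurred is controlled by a term of the form
\[
\frac{\log N}{N'}\sum_{x\in\SN}|\Phi(s,x)|\,\widehat P\big(\xi^N_0(B^{N,x+a}_{t_N})=\xi^N_0(B^{N,x}_{t_N})=1,\ \sigma^N_x(a,0)>t_N\big),
\]
which is exactly bounded by the right-hand side of \eqref{2walksM} in Lemma~\ref{p:2walks} (with $a_1=a$, $a_2=0$), up to the constant $\|\Phi\|$. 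Once this factor is removed we are left with $\frac{\log N}{N'}\sum_{x\in\SN}\Phi(s,x)\widehat E[\xi^N_0(B^{N,x+a}_{t_N})1\{\Omega^N_x(A)\}]$; writing $B^{N,x+a}=x+a+B^{N,0}$ (with rate $w_NN$), translating, and summing over the value $w$ of $B^{N,0}_{t_N}$ gives $\sum_w \big(\frac{\log N}{N'}\sum_x \Phi(s,x)\xi^N_0(x+a+w)\big)\widehat P(B^{N,0}_{t_N}=w,\Omega^N_0(A))$. Replacing $\Phi(s,x)$ by $\Phi(s,x+a+w)$ costs an error $\le |\Phi|_\Lip \widehat E(|a+B^{N,0}_{t_N}|)\log N\, X^N_0(\1)\le C|\Phi|_\Lip(\log N)(\,|a|+\sqrt{t_N})X^N_0(\1)=O(\|\Phi\|(\log N)^{-8}X^N_0(\1))$ since $|a|\le c/\sqrt N$ and $t_N=(\log N)^{-19}$; and similarly replacing $\Phi_s$ by $\Phi_u$ with $|u-s|\le t_N$ costs an error $\le C\|\Phi\|\sqrt{t_N}(\log N)X^N_0(\1)$ using $|\Phi|_{1/2,N}$ (cf.\ \eqref{|Phi|defs}) — both absorbed into the $(\log N)^{-1/2}X^N_0(\1)$ term. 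After these swaps the sum factors as $\big(\log N\,\widehat P(\Omega^N_0(A))\big)\cdot X^N_0(\Phi_u)=\Theta^N_2(A)X^N_0(\Phi_u)$, recognizing $\widehat P(\Omega^N_0(A))=\widehat P(\sigma^N(A/\sqrt N,(\bar\cN\setminus A)/\sqrt N)>t_N,\tau^N(\cdots)<t_N)$. Multiplying by $r^{N,a}(\sqrt N A)$ and summing over $\emptyset\neq A\subset\cN_N$ gives $\Theta^N_2 X^N_0(\Phi_u)$, and collecting all errors (the $i\ge 3$ terms, the removal of the $(1-\xi^N_0(\cdot))$ factor via Lemma~\ref{p:2walks}, the two $\Phi$-translation errors) yields \eqref{^HTht2}.

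The main obstacle is the step where we drop the indicator $(1-\xi^N_0(B^{N,x}_{t_N}))$: this is not a lower-order perturbation by brute force (each term is $O(1)$ in size), and it is precisely where the two-point coalescing asymptotic \eqref{2walksM} — equivalently the content imported from the $\mathcal T_N(\Phi)$ estimate in \cite{CMP} — is essential, since the surviving mass is tied to the quantity $\scrI^N(\xi^N_0)$ through the probability that two walks stay apart yet both land on occupied sites. I would want to be careful that the summation over $A\subset\cN_N$ (there are only finitely many $A$, $2^{|\cN|}$ of them, independent of $N$) and the boundedness $\|r\|<\infty$ from \eqref{normr} let me absorb the per-$A$ errors into a single constant $C_{\ref{^HTht2}}$; and that $a\in A$ may be chosen arbitrarily since on $\Omega^N_0(A)$ all walks from $A$ have already coalesced (that is what $\tau^N<t_N$ buys us), so $B^{N,a}_{t_N}$ does not depend on the choice. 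Everything else is bookkeeping with the moment bounds already in hand.
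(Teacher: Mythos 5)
Your proposal is correct and follows essentially the same route as the paper's proof: isolate the $i=2$ term of \eqref{HNrep}, bound the $i\ge3$ terms by \eqref{dN2i}, replace $I^{N,+}_2$ by $\xi^N_0(B^{N,x+a}_{t_N})1\{\Omega^N_x(A)\}$ at a cost controlled by \eqref{2walksM}, and then translate and resum to produce $\Theta^N_2(A)X^N_0(\Phi_s)$ up to the Lipschitz and $|\Phi|_{1/2,N}$ errors. The only cosmetic difference is that you carry an extra harmless $\log N$ in the $\Phi_s\to\Phi_u$ swap; all the key steps and the identification of \eqref{2walksM} as the essential input match the paper.
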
 
\begin{proof}
In view of Lemma~\ref{l:dN3bnd} we will start with the
difference $\hat
H^{N,2}_2(\xi^N_0,t_N,\Phi_s)-\Theta^{N}_2X^N_0(\Phi_s)$.
Let $\emptyset\neq A\subset\cN_N$, and recall the definition
of $\Omega^N_x(A)$ in \eqref{OmegaN}, taking $u_N=t_N$.
For any fixed $a\in A$,
\begin{equation}
I^{N,+}_2(x,t_N,A,\xi^N_0) = \big[\xi^N_0(B^{N,x+a}_{t_N}) 
-\xi^N_0(B^{N,x+a}_{t_N})\xi^N_0(B^{N,x}_{t_N})\big]
1\{\Omega^N_x(A)\},
\end{equation}
and thus
\begin{multline}
|\hat E\big( I^{N,+}_2(x,t_N,A,\xi^N_0)\big)-
\hat P\big(B^{N,x+a}_{t_N}\in\xi^N_0, \Omega^N_x(A)\big)|
\\ \le %\max_{e\in\cN_N}
\hat P\big(B^{N,x+a}_{t_N}\in\xi^N_0,
B^{N,x}_{t_N}\in\xi^N_0, \sigma^N(x+a,x)>t_N\big).
\end{multline}
If we choose an $a=a(A)\in A$ for each non-empty
$A\subset \cN_N$, define
\begin{equation}
\tilde H^{N,2}_2(\xi^N_0,t_N,\Phi_s) =
\sum_{\emptyset\neq A\subset \cN_N} r^{N,a}(\sqrt N A)
\frac{\log N}{N'}\sum_{x\in\SN} \Phi(s,x)
\hat P(B^{N,x+a}_{t_N}\in\xi^N_0, \Omega^N_x(A)).
\end{equation}
It then follows from Proposition~\ref{p:2walks} that for a
constant $C$
\begin{equation}\label{tildeH2}
|\hat H^{N,2}_2(\xi^N_0,t_N,\Phi_s) -
\tilde H^{N,2}_2(\xi^N_0,t_N,\Phi_s)| \le
C\Vert\Phi\Vert_\infty \Bigl(\frac{1}{t_N\log
  N}\scrI^N(\xi^N_0)+(\log
N)^{-(1/2)}X^N_0(\1)\Bigr).
\end{equation}

To evaluate $\tilde H^{N,2}_{t_N}$, for fixed $A$ we have
\begin{align*}
\frac{\log N}{N'}\sum_{x\in \SN} &\Phi(s,x) \hat P
(B^{N,x+a}\in\xi^N_0,\Omega^N_x(A))\\
& = 
\frac{\log N}{N'}\sum_{x\in \SN} \Phi(s,x)
\sum_{w\in \SN} \xi^N_0(x+w) \hat P 
(B^{N,x+a}_{t_N}=x+w,\Omega^N_x(A)) \\
& =
\frac{\log N}{N'}\sum_{x\in \SN} \sum_{w\in\SN} 
[\Phi(s,x)-\Phi(s,x+w)] \xi^N_0(x+w)
\hat P(B^{N,a}_{t_N}=w,\Omega^N_0(A))\\
& \qquad +
\frac{\log N}{N'}\sum_{x\in \SN} \sum_{w\in\SN} 
\Phi(s,x+w) \xi^N_0(x+w)
\hat P(B^{N,a}_{t_N}=w,\Omega^N_0(A)).
\end{align*}
As in the proof of \eqref{dN32} (see
\eqref{lipbnd}),  
\begin{multline}
\frac{\log N}{N'}\sum_{x\in \SN} \sum_{w\in\SN} 
|\Phi(s,x)-\Phi(s,x+w)| \xi^N_0(x+w)
\hat P(B^{N,a}_{t_N}=w)\\
\le C(\cN)\|\Phi_s\|_\Lip X^N_0(\1)(\log N)^{-17/2}.
\label{Lip1}
\end{multline}
Summing first over $x$ and then over $w$,
\begin{equation}\label{=XP(Om)}
\frac{\log N}{N'}\sum_{x\in \SN} \sum_{w\in\SN} 
\Phi(s,x+w) \xi^N_0(x+w)
\hat P(B^{N,0}_{t_N}=w,\Omega^N_0(A))
= X^N_0(\Phi_s) (\log N)\hat P(\Omega^N_0(A)).
\end{equation}
Combining \eqref{tildeH2}--\eqref{=XP(Om)}, summing over 
$\emptyset\neq A\subset\cN$, and using the
definition of $\Theta^{N,2}$
we get
\begin{equation}
\begin{aligned}
| \hat H^{N,2}_2(\xi^N_0,&t_N,\Phi_s)
- \Theta^{N}_2X^N_0(\Phi_s) |
\\
&\le C\Vert\Phi\Vert\Bigl(\frac{1}{t_N\log
  N}\scrI^N(\xi^N_0)+(\log
N)^{-(1/2)}X^N_0(\1)\Bigr). %+ C(\log N)^{-1/2}X^N_0(\1)
\end{aligned}
\end{equation}
Now combine this with the easy bound $|X^N_0(\Phi(s,\cdot)
-X^N_0(\Phi(u,\cdot)|\le |\Phi|_{1/2,N}\sqrt{t_N} X^N_0(\1)$
and the boundedness of $\{\Theta^N_2\}$ to derive
\eqref{^HTht2}, but with $\hat H^{N,2}_2$ in place of $\hat
H^{N,2}$. Finally, use \eqref{dN2i} for $i\ge 3$ and the
above bound in the representation \eqref{HNrep} for $\hat
H^{N,2}$ to complete the proof.
\end{proof}

The proof of Proposition~\ref{p:^HTht} below, the analogue
of Proposition~\ref{p:^HTht2} for $\hat H^{N,3}$, is more
involved and requires additional notation. 
For nonempty finite disjoint sets $A,A_1,A_2\subset \bar\cN$
let
\begin{align*}
\Theta^N_3&(A,A_1,A_2)\\
&= (\log N)^3 \widehat P(\sigma^N(A/\sqrt N,A_1/\sqrt N,A_2/\sqrt N)>t_N,\,
\tau^N(A/\sqrt N,A_1/\sqrt N,A_2/\sqrt N)<t_N)\\
&=(\log N)^3 \widehat P(\sigma(A,A_1,A_2)>Nw_Nt_N,\,\tau(A,A_1,A_2)<Nw_Nt_N),
\end{align*}
and define
\begin{align}
\nonumber&\Theta^{N,+}_3(A) = \sum_{\{A_1,A_2\}\in
  \cP(\bar\cN\setminus A)}
\Theta^N_3(A,A_1,A_2),\\
\nonumber&\Theta^{N,-}_3(A) = \sum_{\{A_1,A_2\}\in \cP(A)}
\Theta^N_3(\bar\cN\setminus A,A_1,A_2),\\
 &
 \Theta^{N\pm}_3=\sum_{\emptyset\neq A\subset\cN}r^{N,s}(A)\Theta^{N,\pm}(A)
  \label{THtNdef}
\end{align}
and
$\Theta^N_3=\Theta^{N,+}_3-\Theta^{N,-}_3$. Proposition~\ref{p:CRWnew}
and \eqref{rNslim} imply that if $\Theta_3$ is as in
\eqref{Thetadefns}, then
\begin{equation}\label{Tht3lim}
\lim_N\Theta^N_3=\Theta_3.
\end{equation}
\begin{proposition}\label{p:^HTht} 
There is a constant
$C_{\ref{^HTht}}>0$ such
that for any $T>0$, $\phi\in C_b([0,T]\times\R^2)$, 
and $0\le s,u\le T$ with $|u-s|\le t_N$,
\begin{multline}
\Big| \hat H^{N,3}(\xi^N_0,t_N,\Phi_s) 
- \Theta^N_3 X^{N}_{0}(\Phi_u)\Big| \le
C_{\ref{^HTht}}\|\Phi\|
\Bigl(\frac{1}{t_N\log
  N}\scrI^N(\xi^N_{0})+(\log
N)^{-1/2}X^N_{0}(\1)\Bigr).
\label{^HTht}
\end{multline}
\end{proposition}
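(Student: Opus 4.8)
The plan is to follow the blueprint of Proposition~\ref{p:^HTht2}, but now working with the three-walk quantities $\hat H^{N,3}$ instead of the two-walk quantity $\hat H^{N,2}$. By the decomposition \eqref{HNrep}, $\hat H^{N,3}=\sum_{i=2}^{|\bar\cN|}\hat H^{N,3}_i$, and by Lemma~\ref{l:dN3bnd} the terms with $i\ge 4$ are $O((\log N)^{3-\binom i2}X^N_0(\1))=O((\log N)^{-3}X^N_0(\1))$, which is absorbed into the error in \eqref{^HTht}, and the $i=2$ term is $O((\log N)^{(6-p)/2}X^N_0(\1))$, also negligible since $p>6$. So the whole content is to show that the main term $\hat H^{N,3}_3(\xi^N_0,t_N,\Phi_s)$ is within the stated error of $\Theta^N_3 X^N_0(\Phi_u)$.

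For that main term I would fix $\emptyset\neq A\subset\cN_N$ and analyze $\hat E(h^{N,+}_{3,3}-h^{N,-}_{3,3})$ on the event $\{|B^{N,x+\bar\cN_N}_{t_N}|=3\}$. On this event the three surviving walk-clusters partition $x+\bar\cN_N$ into three cells; the leading contribution comes from the partition $\{B^{N,x+A}_{t_N},\,B^{N,x+\bar\cN_N\setminus A}_{t_N}\}$ being refined into exactly the pieces recorded in $\Theta^{N}_3(A,A_1,A_2)$. Concretely, on the event $\Omega^{N,(3)}_x(A):=\{\sigma^N_x(A,\bar\cN_N\setminus A)>t_N,\ \tau^N_x(A,\bar\cN_N\setminus A)<t_N,\ |B^{N,x+\bar\cN_N}_{t_N}|=3\}$, one of $B^{N,x+A}_{t_N}$, $B^{N,x+\bar\cN_N\setminus A}_{t_N}$ is a single point and the other splits into two; picking representatives $a_1\ne a_2$ in the larger set one writes $I^{N,+}_3-I^{N,-}_3$ as an alternating sum of products of three occupation indicators $\xi^N_0(B^{N,x+\cdot}_{t_N})$, minus a telescoping remainder controlled by events where a pair among the three chosen labels has coalesced. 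The remainder terms are exactly of the form appearing in \eqref{2walks} of Lemma~\ref{p:2walks} (three walks from $\bar\cN_N$, with a non-coalescence constraint on the other two), so they contribute at most $C\|\Phi\|_\infty(\frac{1}{t_N\log N}\scrI^N(\xi^N_0)+(\log N)^{-1/2}X^N_0(\1))$. The genuine three-point terms are then handled as in \eqref{Lip1}--\eqref{=XP(Om)}: shift $B^{N,x+a}=x+B^{N,a}$, sum over the displacement $w$, use the implicit Lipschitz bound on $\Phi$ together with $\hat E|B^{N,a}_{t_N}|=O(\sqrt{t_N})$ to discard the $\Phi(s,x)-\Phi(s,x+w)$ difference at cost $O(\|\Phi_s\|_\Lip(\log N)^{-17/2}X^N_0(\1))$, and then sum first over $x$ and then over $w$ to land on $X^N_0(\Phi_s)(\log N)^3\hat P(\Omega^{N,(3)}_0(A)\text{ refined appropriately})$. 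Summing over $A$ and over the refining partitions $\{A_1,A_2\}$, and recognizing $\sum_{\{A_1,A_2\}\in\cP(\bar\cN\setminus A)}\Theta^N_3(A,A_1,A_2)+\sum_{\{A_1,A_2\}\in\cP(A)}\Theta^N_3(\bar\cN\setminus A,A_1,A_2)$ via \eqref{THtNdef}, assembles $\Theta^N_3 X^N_0(\Phi_s)$; finally swapping $\Phi_s$ for $\Phi_u$ costs $|\Phi|_{1/2,N}\sqrt{t_N}X^N_0(\1)$, using boundedness of $\{\Theta^N_3\}$ (from \eqref{Tht3lim}).

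I expect the main obstacle to be the combinatorial bookkeeping of the refined partitions: unlike the two-walk case, where $\Omega^N_x(A)$ is a single clean event, here one must carefully match the ``which of the two clusters splits, and how'' structure of $\{|B^{N,x+\bar\cN_N}_{t_N}|=3\}$ against the definition of $\Theta^{N,\pm}_3(A)$, and verify that the $I^{N,+}_3-I^{N,-}_3$ difference really does reduce to three-point occupation terms plus pair-coalescence remainders of the type bounded by Lemma~\ref{p:2walks}. The analytic estimates (Lipschitz truncation, random-walk displacement bounds, and the $\scrI^N$ bound) are all by now routine adaptations of the arguments for \eqref{dN32} and Proposition~\ref{p:^HTht2}; the care is entirely in the algebra of indicators and partitions, and in checking that every error term that survives is one of the two admissible shapes in \eqref{^HTht}.
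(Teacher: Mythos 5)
Your overall architecture matches the paper's: reduce to the $i=3$ term via \eqref{HNrep} and Lemma~\ref{l:dN3bnd}, strip the occupation indicators on the ``split'' cluster down to a single one using Lemma~\ref{p:2walks}, then run the translation-invariance/Lipschitz argument of \eqref{Lip1}--\eqref{=XP(Om)} to produce $\Theta^N_3(\cdot)\,X^N_0(\Phi_s)$, and finally swap $\Phi_s$ for $\Phi_u$ at cost $|\Phi|_{1/2,N}\sqrt{t_N}X^N_0(\1)$. However, the combinatorial core --- which you yourself flag as the ``main obstacle'' and do not carry out --- contains two concrete errors as written. First, your event $\Omega^{N,(3)}_x(A)$ is empty: $\tau^N_x(A,\bar\cN_N\setminus A)<t_N$ means both $x+A$ and $x+(\bar\cN_N\setminus A)$ have internally coalesced to single points, which together with $\sigma^N_x(A,\bar\cN_N\setminus A)>t_N$ forces $|B^{N,x+\bar\cN_N}_{t_N}|=2$, contradicting the requirement $|B^{N,x+\bar\cN_N}_{t_N}|=3$. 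The correct events are the refined $\Omega^N_x(A,A_1,A_2)=\{\sigma^N_x(A,A_1,A_2)>t_N,\ \tau^N_x(A,A_1,A_2)<t_N\}$ with $\{A_1,A_2\}$ a partition of $\bar\cN_N\setminus A$ (or of $A$), and it is exactly these that define $\Theta^N_3(A,A_1,A_2)$.

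Second, the quantity you assemble, $\sum_{\{A_1,A_2\}\in\cP(\bar\cN\setminus A)}\Theta^N_3(A,A_1,A_2)+\sum_{\{A_1,A_2\}\in\cP(A)}\Theta^N_3(\bar\cN\setminus A,A_1,A_2)$, equals $\Theta^{N,+}_3(A)+\Theta^{N,-}_3(A)$, whereas the target coefficient is the difference $\Theta^{N,+}_3(A)-\Theta^{N,-}_3(A)$. The sign structure comes from a case split your sketch elides: on the three-cluster event with $x+A$ and its complement non-colliding, exactly one of the two clusters splits into two. When the cluster required to lie in $\xi^N_0$ splits, two distinct surviving walks both land in $\xi^N_0$ and the entire contribution is negligible by Lemma~\ref{p:2walks}; when the cluster required to lie in $\hxi^N_0$ splits, dropping its two $\hxi^N_0$ indicators (again via Lemma~\ref{p:2walks}) leaves a single $\xi^N_0$ indicator times $1\{\Omega^N_x(\cdot,\cdot,\cdot)\}$. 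The $I^{N,+}_3$ version of the latter produces $\Theta^{N,+}_3(A)$ and the $I^{N,-}_3$ version produces $\Theta^{N,-}_3(A)$, and they enter with opposite signs because $\hat H^{N,3}_3$ is built from $I^{N,+}_3-I^{N,-}_3$. Without this dichotomy and the resulting minus sign you do not recover $\Theta^N_3$, so the proposal as it stands has a genuine gap precisely at the step that carries the content of the proposition.
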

\begin{proof} Let us start with 
the difference  
$\hat H^N_3(\xi^N_{0},t_N,\Phi_s) - \Theta^N
X^{N}_{0}(\Phi(u,\cdot))$. Assume $\emptyset\neq A\subset
\cN_N$.  
It is easy to see that if 
$I^N_3(x,t_N,A,\xi^N_0)\ne 0$, then exactly one of 
$|B^{N,A}_{t_N}|$ or $|B^{N,\bar\cN_N\setminus A}_{t_N}|$ must
be 2, while the other must be 1. 
To account for these possibilities, we introduce
\begin{align*}
\chi^{N,+}_1(x,t_N,A,\xi^N_0)
&= 1\{B^{N,x+A}_{t_N}\subset \xi^N_0,|B^{N,x+A}_{t_N}|=2,
B^{N,x+\bar\cN_N\setminus A}_{t_N}\subset
\hxi^N_0,|B^{N,x+\bar\cN_N\setminus A}_{t_N}|=1\},\\
\chi^{N,-}_1(x,t_N,A,\xi^N_0) 
&=1\{B^{N,x+A}_{t_N}\subset \hxi^N_0,|B^{N,x+A}_{t_N}|=1,
B^{N,x+\bar\cN_N\setminus A}_{t_N}\subset
\xi^N_0,|B^{N,x+\bar\cN_N\setminus A}_{t_N}|=2\},\\
\chi^{N,+}_2(x,t_N,A,\xi^N_0)
&= 1\{B^{N,x+A}_{t_N}\subset \xi^N_0,|B^{N,x+A}_{t_N}|=1,
B^{N,x+\bar\cN_N\setminus A}_{t_N}\subset
\hxi^N_0,|B^{N,x+\bar\cN_N\setminus A}_{t_N}|=2\},\\
\chi^{N,-}_2(x,t_N,A,\xi^N_0) 
&=1\{B^{N,x+A}_{t_N}\subset \hxi^N_0,|B^{N,x+A}_{t_N}|=2,
B^{N,x+\bar\cN_N\setminus A}_{t_N}\subset
\xi^N_0,|B^{N,x+\bar\cN_N\setminus A}_{t_N}|=1\},
\end{align*}
and for $i=1,2$, 
\begin{align*}
\hat\Sigma^{N,i}_3&(\xi^N_0,t_N,\Phi_s)\\
&= 
\sumA r^{N,s}(\sqrt NA) \frac{(\log N)^3}{N'} \sum_{x\in\SN}\Phi(s,x)
\widehat E[\chi_i^{N,+}(x,t_N,A,\xi^N_0) - \chi_i^{N,-}(x,t_N,A,\xi^N_0)]. 
\end{align*}
It follows that
\begin{equation*}
(I^{N,+}_3-I^{N,-}_3)(x,t_N,A,\xi^N_0)=
(\chi^{N,+}_1 - \chi^{N,-}_1) (x,t_N,A,\xi^N_0)+ (\chi^{N,+}_2 -
\chi^{N,-}_2)(x,t_N,A,\xi^N_0), 
\end{equation*}
and thus
\begin{equation}\label{HN3decomp}
\hat H^{N}_3(\xi^N_0, t_N,\Phi_s) =
\hat\Sigma^{N,1}_3(\xi^N_0,t_N,\Phi_s)
+ \hat \Sigma^{N,2}_3(\xi^N_0,t_N,\Phi_s).
\end{equation}
The term $\hat\Sigma^{N,1}_3(\xi^N_0,t_N,\Phi_s)$ is
small. This is because 
both $\chi^{N,+}_1$ and $\chi^{N,-}_1$ are bounded above by
\[
\sum_{\text{distinct }a_0,a_1,a_2\in \bar\cN}
1\{B^{N,x+a_1}_{t_N}\in
\xi^N_0,B^{N,x+a_2}_{t_N}\in \xi^N_0,
\sigma^N_x(a_0,a_1,a_2)>t_N 
\},
\]
which implies by Lemma~\ref{p:2walks} that 
\begin{equation}\label{SigN13bnd}
|\hat \Sigma^{N,1}_3(\xi^N_0,t_N,\Phi_s)| \le
C_{\ref{2walks}}\binom{|\bar\cN|}{3}\|\Phi\|_\infty\Bigl(\frac{1}{t_N\log
  N}\scrI^N(\xi^N_0)+(\log  
N)^{-1/2}X^N_0(\1)\Bigr) .
\end{equation}

To handle $\hat\Sigma^{N,2}_3$ it is convenient to define
\begin{align*}
\hat\Sigma^{N,2,\pm}_3(\xi^N_0,t_N,\Phi_s) =
\sum_{\emptyset\neq A\subset\cN_N} r^{N,s}(\sqrt NA)\frac{(\log N)^3}{N'} \sum_{x\in\SN}\Phi(s,x)
\widehat E\big(\chi_2^{N,\pm}(x,t_N,A,\xi^N_0)\big),
\end{align*}
so that 
\begin{equation}\label{SigN23dcmp}
\hat \Sigma^{N,2}_3(\xi_0^N,t_N,\Phi_s)
=\hat\Sigma^{N,2,+}_3(\xi_0^N,t_N,\Phi_s)
-\hat\Sigma^{N,2,-}_3(\xi_0^N,t_N,\Phi_s).  
\end{equation}
Consider $\hat\Sigma^{N,2,+}_3$, and let
 $\emptyset\neq A\subset\cN_N$.  
On the event defining
$\chi^{N,+}_2(x,t_N,A,\xi^N_0)$  there must
exist nonempty disjoint $A_1,A_2$ whose union is
$\bar\cN_N\setminus A$, 
such that none of the walks starting from the three disjoint sets $A,A_1$ and $A_2$ 
meet by time $t_N$, 
$|B^{N,x+A_1}_{t_N}|=|B^{N,x+A_2}_{t_N}|=1$ and
$B^{N,x+A_1}_{t_N},B^{N,x+A_2}_{t_N}\subset\hxi^N_0$. 
Thus, if we define
\[
\Omega^N_x(A,A_1,A_2)=\{
\sigma^N_x(A,A_1,A_2)>t_N,\tau^N_x(A,A_1,A_2)<t_N\},
\]
then for any 
$a\in A$ and $a_i\in A_i$,
\begin{equation}\label{e:+1}
 \widehat E\big(\chi^{N,+}_2(x,t_N,A,\xi^N_0)\big) =
\sum_{\substack{\{A_1,A_2\}\in\\
  \cP(\bar\cN_N\setminus A)}}\widehat P\Big( \Omega^N_x(A,A_1,A_2),
B^{N,x+a}_{t_N}\in \xi^N_0,
B^{N,x+a_1}_{t_N}\in \hxi^N_0,
B^{N,x+a_2}_{t_N}\in \hxi^N_0\Big).
\end{equation}

The next step is to see that we may drop requirement that
$B^{N,x+a_1}_{t_N},B^{N,x+a_2}\in \hxi^N_0$ above. Observe
that
\begin{align}
\notag \Big|\widehat P\Big( &\Omega^N_x(A,A_1,A_2),
B^{N,x+a}_{t_N}\in \xi^N_0,
B^{N,x+a_1}_{t_N}\in \hxi^N_0,
B^{N,x+a_2}_{t_N}\in \hxi^N_0\Big)\\
\notag&\qquad\qquad\qquad - \widehat P\Big( \Omega^N_x(A,A_1,A_2),
B^{N,x+a}_{t_N}\in \xi^N_0\Big)\Big|\\
\notag&\le \widehat P\Big( \sigma^N_x(A,A_1,A_2)>t_N,
B^{N,x+a}_{t_N}\in \xi^N_0,
B^{N,x+a_1}_{t_N}\text{ or }
B^{N,x+a_2}_{t_N}\in \xi^N_0\Big)\\
&\label{e:+2} \le
2\max_{\text{distinct }a,a_1,a_2\in\bar\cN_N}\widehat P(\sigma^N_x(a,a_1,a_2)>t_N,
B^{N,x+a}_{t_N}\in\xi^N_0,B^{N,x+a_1}_{t_N}\in\xi^N_0) .
\end{align}
If we let
\begin{multline}\label{tildsig}
\widetilde \Sigma^{N,2,+}_3(\xi^N_0,t_N,\Phi_s)\\ =
\sum_{A\subset\cN} r^{N,s}(\sqrt N A)\frac{(\log N)^3}{N'}
\sum_{x\in\SN}\Phi(s,x)
\sum_{\substack{\{A_1,A_2\}\in\\
  \cP(\bar\cN_N\setminus A)}}\widehat P\Big( \Omega^N_x(A,A_1,A_2),
B^{N,x+a}_{t_N}\in \xi^N_0\Big),
\end{multline}
it follows from the above and Lemma~\ref{p:2walks} that
there is a constant $C_{\ref{e:+3}}$ such that
\begin{multline}
\Big|\hat\Sigma^{N,2,+}_3(\xi^N_0,t_N,\Phi_s)
-\widetilde\Sigma^{N,2,+}_3(\xi^N_0,t_N,\Phi_s)\Big|\\ \le
C_{\ref{e:+3}}\|\Phi\|_\infty \Bigl(\frac{1}{t_N\log 
  N}\scrI^N(\xi^N_0)+(\log
N)^{-1/2}X_0^N(\1)\Bigr).\label{e:+3} 
\end{multline}

Again, consider a fixed $\emptyset\neq A\subset\cN_N$ and
let $\{A_1,A_2\}\in \cP(\bar\cN_N\setminus A)$. By
translation invariance,
\begin{align}\label{e:+4}
\nonumber\frac{(\log N)^3}{N'}&\sum_{x\in\SN}\Phi(s,x)\widehat P\Big(
\Omega^N_x(A,A_1,A_2),
B^{N,x+a}_{t_N}\in \xi^N_0)\\
\nonumber&= 
\frac{(\log
  N)^3}{N'}\sum_{x\in\SN}\sum_{w\in\SN}\Phi(s,x )\xi^N_0(x+w)\widehat P\Big( 
\Omega^N_x(A,A_1,A_2),
B^{N,x+a}_{t_N}=x+w\Big) \\
\nonumber&= \frac{(\log N)^3}{N'}
\sum_{x\in\SN}\sum_{w\in\SN}
[\Phi(s,x)-\Phi(s,x+w)]\xi^N_0(x+w)
\widehat P\Big(
\Omega^N_0(A,A_1,A_2),
B^{N,a}_{t_N}=w\Big) \\
&\qquad + \frac{(\log N)^3}{N'}
\sum_{x\in\SN}\sum_{w\in\SN}
\Phi(s,x+w )\xi^N_0(x+w)
\widehat P\Big(\Omega^N_0(A,A_1,A_2),
B^{N,a}_{t_N}=w\Big).
\end{align}
As in the proof of \eqref{dN32} (see
\eqref{lipbnd}),  
\begin{multline}\label{e:+5}
\frac{(\log N)^3}{N'}
\sum_{x\in\SN}\sum_{w\in\SN}
\Big|(\Phi(s,x)-\Phi(s,x+w))\Big|\xi^N_0(x+w)
\widehat P\Big(
\Omega^N_0(A,A_1,A_2),
B^{N,a}_{t_N}=w\Big) \\
\le
C_{\ref{e:+5}}\|\Phi\|_\Lip X^N_0(\1)(\log N)^{-13/2}.
\end{multline}
For the final sum in \eqref{e:+4}, summing first over $x$
and then over $w$ yields
\begin{multline}
\frac{(\log N)^3}{N'}\sum_{x\in\SN}\sum_{w\in\SN}
\Phi(s,x+w)\xi^N_0(x+w)
\widehat P\Big(\Omega^N_0(A,A_1,A_2),
B^{N,a}_{t_N}=w\Big) 
\\= \Theta^N_3(\sqrt NA,\sqrt NA_1,\sqrt NA_2)
X^N_0(\Phi_s).\label{e:+6}
\end{multline}
Combining \eqref{e:+4}--\eqref{e:+6} and using the
definition of $\tilde \Sigma$ we find that
\begin{multline}
\Big| \frac{(\log N)^3}{N'}\sum_{x\in\SN}\Phi(s,x)\widehat P\Big(
\Omega^N_x(A,A_1,A_2),
B^{N,x+a}_{t_N}\in \xi^N_0\Big) -
\Theta^N_3(\sqrt NA,\sqrt NA_1,\sqrt NA_2)
X^N_0(\Phi_s)\Big|\\ \le C_{\ref{e:+5}}\|\Phi\|_\Lip X^N_0(\1) (\log N)^{-13/2},
\end{multline} 
\begin{equation}
\Big| \tilde\Sigma^{N,2,+}(\xi^N_0,t_N,\Phi_s) - 
\Theta^{N,+}
X^N_0(\Phi_s)\Big|\\ \le C_{\ref{e:+5}}\binom{|\bar\cN_N|}{3}
(\log N)^{-13/2} \|\Phi\|_\Lip X^N_0(\1) ,
\end{equation}
and by \eqref{e:+3},
\begin{multline}\label{SigN23+bnd}
\Big|\hat\Sigma^{N,2,+}_3(\xi^N_0,t_N,\Phi_s) -
\Theta^{N,+}
X^N_0(\Phi_s)\Big|\\ \le
C_{\ref{SigN23+bnd}}\|\Phi\|_\Lip \Bigl(\frac{1}{t_N\log 
  N}\scrI^N(\xi^N_0)+(\log
N)^{-1/2}X_0^N(\1)\Bigr).
\end{multline}
In the above we have rescaled the sets and summed over
subsets of $\cN$ (or $\bar\cN$) instead of $\cN_N$ (or $\bar
\cN_N$).  The same bound holds for the difference
$\hat\Sigma^{N,2,-}_3(\xi^N_0,t_N,\Phi_s) - \Theta^{N,-}
X^N_0(\Phi_s)$, and thus in view of \eqref{HN3decomp},
\eqref{SigN13bnd}, \eqref{SigN23dcmp}, \eqref{SigN23+bnd},
and the simple bound $|X^N_0(\Phi(s,\cdot)
-X^N_0(\Phi(u,\cdot)|\le |\Phi|_{1/2,N}\sqrt{t_N}
X^N_0(\1)$, we conclude that
\begin{equation*} \Big| \hat H_3^N(\xi^N_0,t_N,\Phi_s) -
\Theta^N X^{N}_{0}(\Phi_u)\Big| \le C_{\ref{^HTht}}\|\Phi\|
\Bigl(\frac{1}{t_N\log N}\scrI^N(\xi^N_{0})+(\log
N)^{-1/2}X^N_{0}(\1)\Bigr).
\end{equation*} Combine this with Lemma~\ref{l:dN3bnd} and
\eqref{HNrep} to obtain \eqref{^HTht}.
\end{proof}

As an immediate consequence of Lemma~\ref{l:d3H} (with
$u_N=t_N$), Proposition~\ref{p:^HTht2} and
Proposition~\ref{p:^HTht} we obtain the following result.
\begin{proposition}\label{r:conddriftbnd} There is a
constant $C_{\ref{dsThtX}}>0$ such that for $j=2,3$, any
$T>0$, $\Phi\in C_b([0,T]\times\R^2)$, and $s\in[t_N,T]$,
\begin{multline}\label{dsThtX}
\Big|E\big(d^{N,j}(s,\xi^N_s,\Phi)|\cF^N_{s-t_N}\big) -
\Theta^N_jX^N_{s-t_N}(\Phi)\Big| \\ \le
C_{\ref{dsThtX}}\|\Phi\| \Bigl(\frac{1}{t_N\log
N}\scrI^N(\xi^N_{s-t_N})+(\log
N)^{-1/2}X_{s-t_N}^N(\1)\Bigr).
\end{multline}
\end{proposition}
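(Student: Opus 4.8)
The plan is to derive Proposition~\ref{r:conddriftbnd} essentially as a straightforward bookkeeping step that combines three results already in hand: Lemma~\ref{l:d3H} (applied with the specific choice $u_N = t_N$, which is admissible since $t_N = (\log N)^{-19}$ satisfies \eqref{uNdef} with $p = 19 > 6$ and trivially satisfies the lower bound $C_{\ref{uNdef}}/\sqrt N \le t_N$ for large $N$), Proposition~\ref{p:^HTht2}, and Proposition~\ref{p:^HTht}. The first of these controls the gap between the conditional drift $E(d^{N,j}(s,\xi^N_s,\Phi)\,|\,\cF^N_{s-t_N})$ and $\hat H^{N,j}(\xi^N_{s-t_N},t_N,\Phi_{s-t_N})$, while the latter two control the gap between $\hat H^{N,j}(\xi^N_{s-t_N},t_N,\Phi_{s-t_N})$ (for $j=2$ and $j=3$ respectively) and $\Theta^N_j X^N_{s-t_N}(\Phi)$. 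The triangle inequality then yields \eqref{dsThtX}, with $C_{\ref{dsThtX}}$ a sum of the relevant constants.

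First I would invoke Lemma~\ref{l:d3H} with $u_N = t_N$ and with the Markov property at time $s - t_N$: this gives, for $s \in [t_N, T]$,
\[
\Big|E\big(d^{N,j}(s,\xi^N_s,\Phi)\,\big|\,\cF^N_{s-t_N}\big) - \hat H^{N,j}(\xi^N_{s-t_N}, t_N, \Phi_{s-t_N})\Big| \le C_{\ref{d3H}}\|\Phi\|_{1/2,N}(\log N)^{3 - 19/2} X^N_{s-t_N}(\1),
\]
and since $3 - 19/2 < -1/2$ and $\|\Phi\|_{1/2,N} \le \|\Phi\|$, this error is bounded by $C\|\Phi\|(\log N)^{-1/2} X^N_{s-t_N}(\1)$, which is absorbed into the right-hand side of \eqref{dsThtX}. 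Next, I would apply Proposition~\ref{p:^HTht2} (for $j = 2$) and Proposition~\ref{p:^HTht} (for $j = 3$) with initial state $\xi^N_{s-t_N}$ in place of $\xi^N_0$, and with $s$ there being $s - t_N$ here and $u$ there being $s - t_N$ as well, so $|u - s| = 0 \le t_N$: this gives
\[
\Big|\hat H^{N,j}(\xi^N_{s-t_N}, t_N, \Phi_{s-t_N}) - \Theta^N_j X^N_{s-t_N}(\Phi)\Big| \le C\|\Phi\|\Bigl(\tfrac{1}{t_N\log N}\scrI^N(\xi^N_{s-t_N}) + (\log N)^{-1/2}X^N_{s-t_N}(\1)\Bigr).
\]
Adding the two bounds completes the proof.

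There is genuinely no hard part here — the statement is explicitly flagged in the excerpt as "an immediate consequence" of Lemma~\ref{l:d3H}, Proposition~\ref{p:^HTht2}, and Proposition~\ref{p:^HTht}, so the only thing to be careful about is that the hypotheses of those three results line up correctly under the substitution $\xi^N_0 \rightsquigarrow \xi^N_{s-t_N}$ and $u_N \rightsquigarrow t_N$. The one point that deserves a remark is the use of the Markov property: the quantities $\hat H^{N,j}$ and $\scrI^N$ are deterministic functions of the configuration, so conditioning on $\cF^N_{s-t_N}$ and using that $\xi^N$ is Markov lets us treat $\xi^N_{s-t_N}$ as a fixed initial condition and apply the fixed-configuration bounds. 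The error term structure — one piece of order $(t_N \log N)^{-1}\scrI^N$ and one of order $(\log N)^{-1/2}X^N(\1)$ — is common to all three input results, so it propagates cleanly. If anything is an "obstacle," it is purely notational: ensuring the arguments $s$ versus $s - t_N$ and the role of $u$ in Propositions~\ref{p:^HTht2} and \ref{p:^HTht} are matched so that the $|u-s| \le t_N$ hypothesis holds (here with equality to $0$), and that $\|\Phi\|$ (the full norm $\|\Phi\|_N$) dominates both $\|\Phi\|_{1/2,N}$ and $\|\Phi\|_\Lip$, which it does by definition.
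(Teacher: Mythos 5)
Your proposal is correct and is exactly the paper's argument: the paper states the proposition as "an immediate consequence of Lemma~\ref{l:d3H} (with $u_N=t_N$), Proposition~\ref{p:^HTht2} and Proposition~\ref{p:^HTht}," which is precisely the triangle-inequality combination you carry out, with the exponent $3-19/2=-13/2\le -1/2$ absorbing the Lemma~\ref{l:d3H} error into the stated bound. The hypothesis checks you flag (that $t_N$ satisfies \eqref{uNdef}, that the fixed-configuration bounds apply pointwise to $\xi^N_{s-t_N}$, and that $\|\Phi\|$ dominates the subordinate norms) are the right ones and all hold.
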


Turning briefly to the martingale square function, and recalling \eqref{CMPL51b} and the definition of $K_N$ from \eqref{KNdefn}, we have the following analogue of the above.
\begin{proposition}\label{p:condsqfn}There is a $C_{\ref{dssqfn}}$
such that for any bounded Lipschitz continuous
$\Phi:\R^2\to\R$ and $s\ge t_N$, 
\begin{multline}\label{dssqfn}
\Big|E\big(X^N_s(\log N\Phi
f_0^{(N)}(\cdot,\xi^N_s))|\cF^N_{s-t_N}\big)
-K_NX^N_{s-t_N}(\Phi)\Big|\\ \le
C_{\ref{dssqfn}}\|\Phi\|_\Lip\Bigl(\frac{1}{t_N\log
N}\scrI^N(\xi^N_{s-t_N})+(\log
N)^{-1/2}X_{s-t_N}^N(\1)\Bigr).
\end{multline}
\end{proposition}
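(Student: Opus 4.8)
The plan is to run exactly the template behind Proposition~\ref{r:conddriftbnd}: condition with the Markov property at time $s-t_N$, compare $\xi^N$ with the voter model $\xi^{N,\VM}$ over the short interval $[0,t_N]$, pass to coalescing random walks via the voter duality \eqref{dual1}, and then reduce to the two estimates already in hand, namely Lemma~\ref{l:oldl5.2}(a) and the $n=2$ two-walk bound \eqref{2walksM} of Lemma~\ref{p:2walks}. Since $p_N$ is supported on $\cN_N$ we may write $y=x+z$ with $z\in\cN_N$, and the Markov property gives
\[
E\big(X^N_s((\log N)\Phi f_0^{(N)}(\cdot,\xi^N_s))\,\big|\,\cF^N_{s-t_N}\big) = \frac{\log N}{N'}\sum_{x\in\SN}\sum_{z\in\cN_N}\Phi(x)p_N(z)\,E_{\xi^N_{s-t_N}}\big[\xi^N_{t_N}(x)(1-\xi^N_{t_N}(x+z))\big],
\]
so it is enough to prove the stated bound with $\xi^N_{s-t_N}$ replaced by a deterministic state $\xi^N_0\in\{0,1\}^\SN$ and $X^N_{s-t_N}$ by $X^N_0$.

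Next I would replace $\xi^N$ by $\xi^{N,\VM}$. Using the coupling \eqref{couple1} with the $1$-biased voter model $\bxi^N$ together with Lemma~\ref{l:masscmps}, exactly as in the derivation of \eqref{CMPL52b}, replacing $E_{\xi^N_0}[\xi^N_{t_N}(x)(1-\xi^N_{t_N}(x+z))]$ by $E_{\xi^N_0}[\xi^{N,\VM}_{t_N}(x)(1-\xi^{N,\VM}_{t_N}(x+z))]$ inside the sum costs at most $C\|\Phi\|_\infty(\log N)^{-15}X^N_0(\1)$. By the duality \eqref{dual1} with $A=\{x\}$ and $B=\{x+z\}$,
\[
E_{\xi^N_0}\big[\xi^{N,\VM}_{t_N}(x)(1-\xi^{N,\VM}_{t_N}(x+z))\big] = \hat E\big[\xi^N_0(B^{N,x}_{t_N})(1-\xi^N_0(B^{N,x+z}_{t_N}))\big].
\]
On $\{\sigma^N(x,x+z)\le t_N\}$ the two walks have coalesced, so $\xi^N_0(B^{N,x}_{t_N})(1-\xi^N_0(B^{N,x+z}_{t_N}))=0$ there, and the indicator $1\{\sigma^N(x,x+z)>t_N\}$ may be inserted without changing anything. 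Dropping the factor $1-\xi^N_0(B^{N,x+z}_{t_N})$ produces the error term $\hat E[\xi^N_0(B^{N,x}_{t_N})\xi^N_0(B^{N,x+z}_{t_N})1\{\sigma^N(x,x+z)>t_N\}]$; summing $\frac{\log N}{N'}\sum_x\sum_z p_N(z)|\Phi(x)|$ against it and applying \eqref{2walksM} with $a_1=0,\ a_2=z$ for each of the finitely many $z\in\cN_N$ bounds this contribution by $C\|\Phi\|_\infty\big(\frac{1}{t_N\log N}\scrI^N(\xi^N_0)+(\log N)^{-1/2}X^N_0(\1)\big)$.

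After these two substitutions the surviving term is
\[
\frac{\log N}{N'}\sum_{x\in\SN}\sum_{z\in\cN_N}\Phi(x)p_N(z)\,\hat E\big[\xi^N_0(B^{N,x}_{t_N})1\{\sigma^N(x,x+z)>t_N\}\big],
\]
which is precisely the left-hand side of \eqref{usefulbnd}; Lemma~\ref{l:oldl5.2}(a) then identifies it as $K_N X^N_0(\Phi)+\cE_N$ with $|\cE_N|\le C|\Phi|_\Lip(\log N)^{-17/2}X^N_0(\1)$. Collecting the three errors --- all dominated by $C\|\Phi\|_\Lip\big(\frac{1}{t_N\log N}\scrI^N(\xi^N_0)+(\log N)^{-1/2}X^N_0(\1)\big)$ once $t_N=(\log N)^{-19}$ is used --- and restoring $\xi^N_{s-t_N}$ for $\xi^N_0$, gives \eqref{dssqfn}. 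There is no genuinely new obstacle here: the only substantive inputs are Lemma~\ref{p:2walks} and Lemma~\ref{l:oldl5.2}(a), already established, and the one place that needs real care is the short-time comparison with the voter model and keeping the Lipschitz-versus-sup-norm dependence of the error terms consistent, exactly as in the proofs of Propositions~\ref{p:^HTht2} and \ref{p:^HTht}.
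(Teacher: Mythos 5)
Your proposal is correct and follows essentially the same route as the paper's proof: the Markov property reduction, the voter-model comparison costing $C\|\Phi\|_\infty(\log N)^{-15}X^N_0(\1)$ as in \eqref{CMPL52b}, the duality-plus-indicator decomposition into the two sums handled by \eqref{2walksM} and Lemma~\ref{l:oldl5.2}(a), and the final collection of errors are exactly the steps in the text. No gaps.
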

\begin{proof} By \eqref{CMPL52a} and \eqref{CMPL52b},
\begin{align}
\label{dcompI}E(X^N_{t_N}(\log N\Phi f_0^{(N)}(\cdot,\xi^N_{t_N}))
&=\frac{\log
  N}{N'}\sum_{x\in\SN}\sum_{y\in\cN_N}\Phi(x)p_N(y)
\hat E(\xi^N_0(B^{N,x}_{t_N})\hxi_0^N(B^{N,x+y}_{t_N}))+R_N\\
\nonumber&:=S_N+R_N,
\end{align}
where 
\begin{equation}\label{err1}
|R_N|\le C\|\Phi\|_\infty(\log N)^{-15}X_0^N(\1).
\end{equation}
Clearly
\begin{align*}S_N&=\frac{\log
N}{N'}\sum_{x\in\SN}\sum_{y\in\cN_N}\Phi(x)p_N(y)\hat
E(\xi^N_0(B^{N,x}_{t_N})1(\sigma^N(x,x+y)>t_N))\\ &\ \
-\frac{\log
N}{N'}\sum_{x\in\SN}\sum_{y\in\cN_N}\Phi(x)p_N(y)\hat
E(\xi^N_0(B^{N,x}_{t_N})\xi_0^N(B^{N,x+y}_{t_N})1(\sigma^N(x,x+y)>t_N)\\
&:=S_N^1-S_N^2.
\end{align*}
\eqref{2walksM} shows that 
\begin{equation}\label{err2}
|S^2_N|\le C_{\ref{2walks}}\|\Phi\|_\infty\Bigl(\frac{1}{t_N\log N}\scrI^N(\xi^N_0)+(\log
N)^{-(1/2)}X^N_0(\1)\Bigr).
\end{equation}
By Lemma~\ref{l:oldl5.2} (a), 
\[S^1_N=K_NX_0^N(\Phi)+\cE_N,\]
where
\begin{equation}\label{err3}
|\cE_N|\le C_{\ref{CMPL52}}|\Phi|_\Lip(\log N)^{-17/2}X_0^N(\1).
\end{equation}
Use the error bounds \eqref{err1}, \eqref{err2} and
\eqref{err3} in the decomposition \eqref{dcompI}, and then
apply the Markov property to complete the proof. 
\end{proof}

In view of the above results, to obtain exact asymptotics
for the limiting drift arising from the $q$-voter
perturbation term as well as the square function of the
martingale term, we will show that $\frac{1}{t_N\log
  N}E\left[ \int_{t_N}^{t\vee t_N} 
\scrI^N(\xi^{N}_{s-t_N})ds \right]$ is negligible for large
$N$.   

\begin{proposition} \label{p:key2bnd} Let $(\delta_N)_{N \ge
3}$ be a positive sequence such that $\lim_{N \to \infty}
\delta_N = 0$ and \\ $\liminf_{N \to \infty} \sqrt{N}
\delta_N >0$.  For any $T$, there exists
$C_{\ref{key2bnd}}$, depending on $T$ and $\{\delta_N\}$,
such that for any $t \le T$, for any $N\ge 3$,
\begin{equation}\label{key2bnd} E [
\scrI^N(\delta_N,\xi^N_t)] \le C_{\ref{key2bnd}} \Big(
X_0^N(1)+X_0^N(1)^2\Big) \Big[ \delta_N\Big(1+ \log
\Big(1+\frac{t}{\delta_N}\Big)\Big) +
\frac{\delta_N}{t+\delta_N}\Big].
\end{equation}
\end{proposition}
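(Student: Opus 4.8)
The quantity $\scrI^N(\delta,\xi)$ is a double integral against $X_0^N\times X_0^N$ of the indicator that two points are within $\delta$, so the plan is to bound $E[\scrI^N(\delta_N,\xi^N_t)]$ by estimating, for each ordered pair of occupied sites $x,y\in\SN$, the probability that the rescaled perturbation process puts mass at two points within $\delta_N$ of each other at time $t$. The natural route, following the analogous estimates in \cite{CMP} and \cite{CP08}, is to dominate $\xi^N_t$ above by the $1$-biased voter model $\bar\xi^N_t$ constructed in Section~\ref{ss:compproc} (recall \eqref{couple1} and \eqref{bvoterratesdef}), since $\bar c^{N,b}$ differs from $N w_N c^{N,\VM}$ only by a lower-order $(\log N)^3$ birth term. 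Then
\[
E[\scrI^N(\delta_N,\xi^N_t)]\le \frac{1}{(N')^2}\sum_{x,y\in\SN}1\{|x-y|<\delta_N\}\,E_{\xi^N_0}[\bar\xi^N_t(x)\bar\xi^N_t(y)]\Big|_{\text{translated}},
\]
and one reduces to controlling the two-point function of the biased voter model.

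The two-point function of $\bar\xi^N$ is handled, as usual, by its dual system of branching--coalescing random walks: a pair of dual walks from $x$ and $y$ (at rate $w_N N$) coalesce at a rate governed by the Green's function of the planar walk, which contributes the $1/\log$ decay, while the extra $(\log N)^3$ branching rate over a time interval of order $t\le T$ produces only $O(\log N)$ factors that are absorbed by the $(\log N)^3$ normalization in $\scrI^N$. More precisely, I would write $E_{\xi^N_0}[\bar\xi^N_t(x)\bar\xi^N_t(y)]$ in terms of the probability that the two dual lineages have not coalesced by time $w_N N t$ (times the one-point contributions when they have), use the local central limit theorem / Green's function asymptotics for the rescaled walk $p_N$ — exactly the kind of estimate recorded in Lemma~A.3 of \cite{CDP00} and used in \eqref{sqfnconst} — to get that the non-coalescence probability for walks started at lattice distance $|x-y|\sqrt N$ after time of order $N s$ behaves like $\log^+(|x-y|)/\log N$ type quantities. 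Summing $1\{|x-y|<\delta_N\}$ against the resulting kernel over the lattice $\SN$, together with the first and second moment bounds on the total mass from Proposition~\ref{p:mobnds} (which is why the right-hand side carries $X_0^N(1)+X_0^N(1)^2$), and integrating the branching contribution over $[0,t]$, is what produces the $\delta_N(1+\log(1+t/\delta_N))$ term; the isolated $\delta_N/(t+\delta_N)$ term comes from the part of the double sum where the two initial points are themselves already within $\delta_N$, i.e.\ the ``diagonal'' contribution $\scrI^N(\delta_N,\xi^N_0)$ propagated forward, which is not smoothed by the heat flow over time $t$.

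I would organize the argument as: (i) the domination $\xi^N_t\le\bar\xi^N_t$ and reduction to $E[\bar X^N_t\times\bar X^N_t(\{|w-z|<\delta_N\})]$; (ii) the dual representation of the biased-voter two-point function and separation of the ``coalesced'' part (bounded by a one-point function, hence by Proposition~\ref{p:mobnds}(a) and $\delta_N^2$-volume of the strip) from the ``non-coalesced'' part; (iii) Green's-function / LCLT asymptotics for $p_N$ to bound the non-coalescence kernel, with care that the time has been sped up by $N$ so the relevant asymptotics are in $\log N$; (iv) carrying out the lattice sum $\sum_{|x-y|<\delta_N}$ against this kernel, which under the constraints $\delta_N\to 0$ and $\liminf\sqrt N\delta_N>0$ (so that the strip contains $\asymp N\delta_N^2$ lattice pairs per unit mass) yields the stated bound; (v) treating the extra branching events by a Gronwall-type integration over $[0,t]$, picking up only the benign $(1+\log(1+t/\delta_N))$ factor. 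The main obstacle I expect is step (iv) together with the bookkeeping in step (iii): one must track the two competing length scales — the macroscopic $\delta_N$ and the intrinsic diffusive scale over time $s$, which is order $1$ on the rescaled lattice — and show that the $\log N$ factors from the $(\log N)^3$ speed-up and from $N'=N/\log N$ exactly cancel the $1/\log$ gain from coalescence, leaving the clean polynomial-in-$\delta_N$ and logarithmic-in-$t/\delta_N$ dependence, uniformly for $t\le T$ and $N\ge 3$. This is precisely the two-dimensional phenomenon (``extra $\log N$'') flagged in the introduction, and it is where the sharp constants matter; I would lean heavily on the corresponding computation in Section~5 of \cite{CMP}, adapting it from a rescaled Lotka--Volterra model to a generic fixed configuration $\xi^N_0$, which is a purely notational change.
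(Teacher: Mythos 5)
There is a genuine gap, and it is fatal to the proposed route. Your steps (i) and (v) rest on the claim that the extra $(2+\underline p^{-1})\|r\|(\log N)^3\,n_1^{(N)}$ birth term in the $1$-biased voter model $\bar\xi^N$ ``produces only $O(\log N)$ factors'' over a time interval of order $t\le T$, to be absorbed by a Gronwall integration. This is false: Lemma~4.1 of \cite{CP05} (quoted in the proof of Lemma~\ref{l:masscmps}) gives $E[\bar X^N_s(\1)]\le e^{(2+\underline p^{-1})\|r\|(\log N)^3 s}X^N_0(\1)$, so over a macroscopic time $s=t$ of order one the domination $\xi^N_t\le\bar\xi^N_t$ yields a bound of size $e^{c(\log N)^3 t}$, which is superpolynomially large in $N$ and useless. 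This is precisely why the paper only ever invokes the biased-voter comparison over windows of length $u_N\le(\log N)^{-p}$ with $p>6$ (see \eqref{uNdef}), where $(\log N)^3u_N\to0$ and the comparison costs only $(\log N)^{3-p}$. A two-point-function computation for $\bar\xi^N$ at time $t$ cannot recover the stated polynomial-in-$\delta_N$ bound.

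The paper's actual argument is structurally different: it smooths the close-pair indicator by the heat kernel, bounding $\scrI^N(\delta_N,\xi^N_t)$ by $\frac{\delta_N}{N}\sum_z X^N_t(p^{N,z}_{\delta_N})^2$, and then expands each $X^N_t(p^{N,z}_{\delta_N})$ via the semimartingale decomposition \eqref{SMG1} with the time-reversed kernel $\phi^z_s=p^{N,z}_{t-s+\delta_N}$ as (space-time) test function. Squaring gives the four terms $\cT_0,\dots,\cT_3$ of \eqref{3.8bound1}: the initial term $\cT_0$ yields $\delta_N/(t+\delta_N)$ via \eqref{pNtbound}; the square-function terms $\cT_1,\cT_2$ and the squared drift term $\cT_3$ yield the $\delta_N(1+\log(1+t/\delta_N))$ contribution, using the short-window voter comparison, the coalescing-walk asymptotics \eqref{CRWold}, an orthogonality decomposition of the drift integral, and the moment bounds of Proposition~\ref{p:mobnds}. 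Your proposal engages with none of the drift analysis (the $\cT_3$ term is the most delicate part of the proof) and cannot be repaired simply by tightening constants; the global domination step must be abandoned in favour of the semimartingale/heat-kernel approach or some equivalent that confines all comparisons with voter-type processes to time windows of length $o((\log N)^{-3})$.
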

The proof of this key result is similar in outline to that
of the same result, Proposition~3.10 in \cite{CMP}, for
Lotka-Volterra models, although additional complications
will arise in the present setting. We will prove it in
Section~\ref{sec:Ibound} below but assume it in what
follows.

\begin{corollary}\label{c:keyint} If $T>0$ there exists a constant
$C_{\ref{keyint}}$ depending on $T$ such that for any $t\le T$,
\begin{equation}\label{keyint}
\frac{1}{t_N\log N}E\left[ \int_{t_N}^{t\vee t_N}
\scrI^N(\xi^{N}_{s-t_N})ds  \right]
\le C_{\ref{keyint}} (\log N)^{-1/2}(X^N_0(\1)+X^N_0(\1)^2). 
\end{equation}
\end{corollary}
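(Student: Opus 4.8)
The plan is to deduce \eqref{keyint} directly from Proposition~\ref{p:key2bnd}, applied pointwise in the time variable and then integrated in time. Write $\delta_N:=\sqrt{t_N(\log N)^{1/2}}=t_N^{1/2}(\log N)^{1/4}$, so that by the definition of $\scrI^N(\xi^N_\cdot)$ we have $\scrI^N(\xi^N_r)=\scrI^N(\delta_N,\xi^N_r)$ for every $r\ge0$; the identity that will matter at the end is $\delta_N^2=t_N(\log N)^{1/2}$. First I would check that $(\delta_N)_{N\ge3}$ is admissible for Proposition~\ref{p:key2bnd}: since $t_N=(\log N)^{-19}\to0$ we have $\delta_N\to0$, while $\sqrt N\,\delta_N=(Nt_N)^{1/2}(\log N)^{1/4}\to\infty$ because $Nt_N=N(\log N)^{-19}\to\infty$; in particular $\liminf_N\sqrt N\,\delta_N>0$. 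Also, if $t\le t_N$ then the integral in \eqref{keyint} is over an empty interval and the bound is trivial, so it suffices to treat $t>t_N$.

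Next, for each $s\in[t_N,t\vee t_N]\subseteq[t_N,t_N+T]$, apply Proposition~\ref{p:key2bnd} with the time parameter there taken to be $r:=s-t_N\in[0,T]$ and with the sequence $(\delta_N)$ above: this bounds $E[\scrI^N(\xi^N_{s-t_N})]$ by $C_{\ref{key2bnd}}\big(X_0^N(\1)+X_0^N(\1)^2\big)$ times the bracketed quantity in \eqref{key2bnd} evaluated at $r$. Integrating over $s\in[t_N,t\vee t_N]$ and substituting $r=s-t_N$ then reduces matters to integrating that bracket over $r\in[0,T]$. A direct computation (using $\delta_N^2\le1$ for large $N$) shows that this integral is $O(\delta_N^2)$ up to a factor growing no faster than a fixed power of $\log\log N$, produced by the time-integration of the logarithmic and reciprocal pieces of the bracket in \eqref{key2bnd}.

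Finally I would multiply through by $(t_N\log N)^{-1}$ and insert the explicit values. Since $\delta_N^2=t_N(\log N)^{1/2}$, we have $\delta_N^2/(t_N\log N)=(\log N)^{-1/2}$, so the previous two steps yield the bound $C_{\ref{keyint}}\,(\log N)^{-1/2}\big(X_0^N(\1)+X_0^N(\1)^2\big)$ asserted in \eqref{keyint}, the slowly varying $\log\log N$ factor being absorbed in the same way as in the corresponding Lotka--Volterra estimate in \cite{CMP}. Because the substantive input is Proposition~\ref{p:key2bnd}, which is assumed here, I do not anticipate a genuine obstacle; the only points requiring care are the verification of the admissibility hypotheses on $(\delta_N)$ and the accounting of the at-most-logarithmic factors produced by the time-integration, and these are dominated by the polynomial-in-$\log N$ gain coming from the smallness of $t_N$.
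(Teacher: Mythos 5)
There is a genuine gap, and it sits exactly at the step you describe as ``a direct computation.'' The time integral of the bracket in \eqref{key2bnd} is \emph{not} $O(\delta_N^2)$ up to $\log\log N$ factors: since
\[
\int_0^T \Big[\delta_N\Big(1+\log\Big(1+\tfrac{r}{\delta_N}\Big)\Big)+\frac{\delta_N}{r+\delta_N}\Big]\,dr
\;\ge\; \int_0^T\delta_N\,dr \;=\; T\delta_N ,
\]
and the full integral is in fact $\Theta_T\big(\delta_N\log(1/\delta_N)\big)$, you gain only one power of $\delta_N$ from the integration, not two. With your choice $\delta_N=\sqrt{t_N(\log N)^{1/2}}=(\log N)^{-37/4}$ this is fatal: after dividing by $t_N\log N=(\log N)^{-18}$ you obtain a bound of order $\delta_N\log(1/\delta_N)/(t_N\log N)\asymp(\log N)^{35/4}\log\log N\to\infty$, so the argument as written proves nothing. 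The identity $\delta_N^2/(t_N\log N)=(\log N)^{-1/2}$ that you rely on at the end is only reached by the incorrect replacement of $\delta_N$ by $\delta_N^2$ in the integrated bound.

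The repair is in how Proposition~\ref{p:key2bnd} must be invoked. Its proof controls $E\big[\iint 1_{\{|x-y|\le\sqrt{\delta_N}\}}dX^N_t(x)dX^N_t(y)\big]$ by $C(X^N_0(\1)+X^N_0(\1)^2)\,\delta_N(\cdots)$ (see the final display of Section~\ref{sec:Ibound}): the parameter $\delta_N$ there is the \emph{square} of the distance cutoff. Since $\scrI^N(\xi^N_r)$ uses the cutoff $\sqrt{t_N(\log N)^{1/2}}$, the proposition should be applied with $\delta_N=t_N(\log N)^{1/2}=(\log N)^{-37/2}$ (which still satisfies $\delta_N\to0$ and $\sqrt N\delta_N\to\infty$). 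Then $\delta_N/(t_N\log N)=(\log N)^{-1/2}$, the time integration costs only the harmless factor $\log(1/\delta_N)\asymp\log\log N$, and one arrives at a bound of order $(\log N)^{-1/2}\log\log N\le C_\delta(\log N)^{-1/2+\delta}$ --- which is exactly the bookkeeping described in the parenthetical remark of the paper's proof (quoting Corollary~3.11 of \cite{CMP}). In short: your outline (apply the proposition pointwise in time and integrate) is the intended one, but the quadratic relation between the proposition's parameter and the cutoff in $\scrI^N$ is what produces the factor $(\log N)^{-1/2}$, not the false estimate $\int_0^T[\cdots]\,dr=O(\delta_N^2)$.
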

\begin{proof}Given Proposition~\ref{p:key2bnd}, the
elementary proof of the above is identical to that of
Corollary~3.11 in \cite{CMP}. (The bound in the final line
of the proof there leads to an upper bound as above but
instead with $(\log N)^{-\eta+\delta}$ for any $\delta>0$
and $\eta\in(0,1)$, and hence, in particular, $(\log
N)^{-1/2}$).\end{proof}

We are finally ready to give the asymptotic behavior of the
drifts, $D^{N,j}_t(\Phi)$, $j=2,3$, and so prove the main
result of this Section.
\begin{proposition}\label{p:d3asymp} For $j=2,3$, any
$\Phi:\R^2\to\R$ with $\|\Phi\|_\Lip<\infty$ and all $t>0$,
\[\lim_{N\to\infty}E\Bigl(\Bigl|D^{N,j}_t(\Phi)-\Theta_j\int_0^tX_s^N(\Phi)\,ds\Bigr|\Bigr)=0.\]
\end{proposition}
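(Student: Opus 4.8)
The plan is to decompose the drift $D^{N,j}_t(\Phi)=\int_0^t d^{N,j}(s,\xi^N_s,\Phi)\,ds$ by inserting a conditional expectation given $\cF^N_{s-t_N}$ for $s\ge t_N$, and to handle the initial sliver $s\in[0,t_N]$ crudely. Specifically I would write
\begin{align*}
D^{N,j}_t(\Phi)-\Theta_j\int_0^t X^N_s(\Phi)\,ds
&=\int_0^{t\wedge t_N}\Big(d^{N,j}(s,\xi^N_s,\Phi)-\Theta_j X^N_s(\Phi)\Big)\,ds\\
&\quad+\int_{t\wedge t_N}^t\Big(d^{N,j}(s,\xi^N_s,\Phi)-E[d^{N,j}(s,\xi^N_s,\Phi)\mid\cF^N_{s-t_N}]\Big)\,ds\\
&\quad+\int_{t\wedge t_N}^t\Big(E[d^{N,j}(s,\xi^N_s,\Phi)\mid\cF^N_{s-t_N}]-\Theta^N_j X^N_{s-t_N}(\Phi)\Big)\,ds\\
&\quad+\int_{t\wedge t_N}^t\Theta^N_j\big(X^N_{s-t_N}(\Phi)-X^N_s(\Phi)\big)\,ds
+(\Theta^N_j-\Theta_j)\int_{t\wedge t_N}^t X^N_s(\Phi)\,ds.
\end{align*}
The first term is $O(t_N(\log N)^3)\cdot$ (mass) in expectation by \eqref{crude1} and Proposition~\ref{p:mobnds}(a), which $\to 0$ since $t_N(\log N)^3=(\log N)^{-16}$. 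The last two terms go to zero: $\Theta^N_j\to\Theta_j$ by \eqref{Tht2lim}/\eqref{Tht3lim}, and $E|X^N_{s-t_N}(\Phi)-X^N_s(\Phi)|\to 0$ using the semimartingale decomposition \eqref{SMG1}, the total mass bounds, and the fact that over a time increment $t_N$ the drift contributes $O(t_N(\log N)^3)$ and the martingale contributes $O(\sqrt{t_N})$ (from $E\langle M^N(\Phi)\rangle$ controlled via Lemma~\ref{l:oldl5.1} and \eqref{qvbnd1}).

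The third term is handled by Proposition~\ref{r:conddriftbnd}: its expected absolute value is at most
\[
C\|\Phi\|\int_{t_N}^{t}E\Big[\tfrac{1}{t_N\log N}\scrI^N(\xi^N_{s-t_N})+(\log N)^{-1/2}X^N_{s-t_N}(\1)\Big]\,ds,
\]
and Corollary~\ref{c:keyint} together with Proposition~\ref{p:mobnds}(a) shows this is $O((\log N)^{-1/2})(X^N_0(\1)+X^N_0(\1)^2)\to 0$. The second term is a martingale-type cancellation: writing $\Delta^N_s=d^{N,j}(s,\xi^N_s,\Phi)-E[d^{N,j}(s,\xi^N_s,\Phi)\mid\cF^N_{s-t_N}]$, I would bound $E\big[(\int_{t_N}^t\Delta^N_s\,ds)^2\big]$ by first splitting $[t_N,t]$ into blocks of length $t_N$; within a block the integrand is a.s.\ bounded by $2\|r\||\bar\cN|\|\Phi\|_\infty(\log N)^3 X^N_s(\1)$ (by \eqref{crude1}), while across blocks separated by more than $t_N$ the conditional-mean-zero property gives a telescoping/orthogonality structure so only the ``diagonal'' block-pairs and adjacent pairs survive. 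This yields a bound of order $t_N\cdot(\log N)^6\cdot$ (second moment of mass) $\cdot\, t=(\log N)^{-13}\cdot O(1)\to 0$, using \eqref{stbnd}. (Alternatively one can mimic the argument for \eqref{DNt1t2}.)

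The main obstacle is the second term — the fluctuation of the drift integrand around its $t_N$-delayed conditional mean. The crude pointwise bound \eqref{crude1} carries a factor $(\log N)^3$, and one must exploit both the short correlation length $t_N=(\log N)^{-19}$ and the conditional-mean-zero structure so that the $(\log N)^6$ from squaring is beaten by a single factor $t_N$ from one block length (the other block-length factor being absorbed into the $\int$); this is exactly the same delicate balance $p>6$ and $t_N=(\log N)^{-19}$ were chosen to achieve, so I would reuse the block-decomposition estimates behind \eqref{DNtN}–\eqref{DNt1t2} essentially verbatim, now applied to $\Delta^N_s$ rather than to increments of $D^{N,j}$. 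Assembling the five pieces and taking $N\to\infty$ (all the right-hand sides being constants times $X^N_0(\1)+X^N_0(\1)^2$ times a null sequence, and $X^N_0(\1)$ bounded since $X^N_0\to X_0$ in $\MF$) completes the proof.
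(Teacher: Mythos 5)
Your decomposition, and the tools you invoke for each piece, are essentially those of the paper: the initial sliver is killed by \eqref{crude1} and Proposition~\ref{p:mobnds}(a); the fluctuation term $d^{N,j}-E[d^{N,j}\mid\cF^N_{s-t_N}]$ is handled by exactly the orthogonality-in-blocks-of-length-$t_N$ argument the paper uses (giving $(\log N)^6 t_N$ for the square, i.e.\ $(\log N)^{-13/2}$ after taking the square root); the conditional-expectation-versus-$\Theta^N_jX^N_{s-t_N}(\Phi)$ term is Proposition~\ref{r:conddriftbnd} plus Corollary~\ref{c:keyint}; and $\Theta^N_j\to\Theta_j$ disposes of the last piece. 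So far, so good.

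The one step that does not survive scrutiny is your fourth term, $\int_{t_N}^t\Theta^N_j(X^N_{s-t_N}(\Phi)-X^N_s(\Phi))\,ds$, which you propose to control via the semimartingale decomposition \eqref{SMG1} with ``drift $O(t_N(\log N)^3)$ plus martingale $O(\sqrt{t_N})$''. You have forgotten the generator term $\int_{s-t_N}^s X^N_u(A_N\Phi)\,du$ in \eqref{SMG1}. The Proposition only assumes $\|\Phi\|_\Lip<\infty$, and for a merely Lipschitz $\Phi$ the best pointwise bound is $\|A_N\Phi\|_\infty\le N\|\Phi\|_\Lip\max_{y\in\mathrm{supp}(p_N)}|y|=O(\sqrt N\,\|\Phi\|_\Lip)$; the second-order cancellation that makes $A_N\Phi$ bounded requires $\Phi\in C^2_b$ (this is precisely why the paper flags the regularity of $\Phi$ only at \eqref{DNoneinc}, in the tightness argument where $\Phi\in C^3_b$). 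So the increment of that term over an interval of length $t_N$ is only $O(\sqrt N\,t_N)$ by this route, which diverges. The fix is trivial and is what the paper does: do not compare $X^N_{s-t_N}(\Phi)$ with $X^N_s(\Phi)$ pathwise at all, but change variables, $\int_{t_N}^t X^N_{s-t_N}(\Phi)\,ds=\int_0^{t-t_N}X^N_u(\Phi)\,du$, so that the discrepancy with $\int_0^tX^N_s(\Phi)\,ds$ is just $\int_{t-t_N}^tX^N_s(\Phi)\,ds$, whose expectation is at most $\|\Phi\|_\infty t_N\sup_{s\le t}E[X^N_s(\1)]\to0$ by Proposition~\ref{p:mobnds}(a). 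With that replacement (cf.\ the last two terms of \eqref{trianglebound}) your argument is complete and coincides with the paper's.
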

\begin{proof} Let $t\in(0,T]$.  A little thought shows the above expectation is bounded by
\begin{align}
\nonumber E\Bigl(&\int_0^{t_N\wedge t}|d^{N,j}(s,\xi^N_s,\Phi)|
\,ds\Bigr)+E\Bigl(\Bigl(\int_{t_N}^{t_N\vee
  t}d^{N,j}(s,\xi^N_s,\Phi)-E(d^{N,j}(s,\xi^N_s,\Phi)|\cF^N_{s-t_N})\,ds\Bigr)^2\Bigr)^{1/2}\\ 
\label{trianglebound}&+E\Bigl(\int_{t_N}^{t_N\vee
  t}|E(d^{N,j}(s,\xi^N_s,\Phi)|\cF^N_{s-t_N})-\Theta^N_jX_{s-t_N}^N(\Phi)|\,ds\Bigr)\\ 
\nonumber&+|\Theta^N_j-\Theta_j|\|\Phi\|_\infty
E\Bigl(\int_0^{(t-t_N)^+}X_s^N(\1)\,ds\Bigr)+|\Theta_j|\|\Phi\|_\infty
E\Bigl(\int_{(t-t_N)^+}^tX_s^N(\1)\,ds\Bigr). 
\end{align}
Proposition~\ref{p:mobnds}(a) and \eqref{crude1} shows the
first term is at most
\[C\|\Phi\|_\infty (\log N)^3 t_N X^N_0(\1)\to 0\text{ as
}N\to\infty.\] Use an orthogonality argument as in the
derivation of (67) in \cite{CMP} and also \eqref{crude1} to
bound the second term in \eqref{trianglebound} by
\[C\|\Phi\|_\infty(\log N)^3
t_N^{1/2}\Bigl[E\Bigl(\int_{t_N}^{t_N\vee
t}X^N_s(\1)^2\,ds\Bigr)\Bigr]^{1/2}\le
C_T\|\Phi\|_\infty(\log N)^{-13/2}(1+X_0^N(\1))\to0\text{ as
}N\to\infty,
\]
where the inequality holds by
Proposition~\ref{p:mobnds}(b). Since
$|\Theta^N_j-\Theta_j|\to 0$ (by \eqref{Tht2lim} and
\eqref{Tht3lim}), we may apply Proposition~\ref{p:mobnds}(a)
to see that the last two terms in \eqref{trianglebound} are
bounded by
\[C_T\|\Phi\|_\infty X_0^N(\1)[|\Theta^N_j-\Theta_j|+t_N]\to 0\text{ as }N\to\infty.\]
Finally we may use Proposition~\ref{r:conddriftbnd},
Corollary~\ref{c:keyint} and Proposition~\ref{p:mobnds}(a)
to bound the remaining (middle) term of
\eqref{trianglebound} by
\[C_T\|\Phi\|_\Lip(\log N)^{-1/2}(X_0^N(\1)+X_0^N(\1)^2)\to
0\text{ as }N\to\infty.\]
Combining the above displays, we may complete the proof.
\end{proof}
\begin{remark}\label{r:d3ebnd} If we keep track of the bounds
in the above proof (and drop the
$|\Theta^N_j-\Theta_j|$ term) we get for $j=2,3$, 
\begin{equation}\label{d3errorbnd}
E\Bigl(\Bigl|D^{N,j}_t(\Phi)-\Theta^N_j\int_0^tX_s^N(\Phi)\,ds\Bigr|\Bigr)\le
C_T\|\Phi\|_\Lip(1+X_0^N(\1)^2)(\log N)^{-1/2} \quad\text{for all }t\in[0,T].
\end{equation}
\end{remark}

\begin{proposition}\label{p:sqmasymp}
For any $\Phi:\R^2\to\R$ with $\|\Phi\|_\Lip<\infty$ and all
$t>0$,
\[\lim_{N\to\infty}E\Bigl(\Bigl|\langle
M^N(\Phi)\rangle_t-4\pi\sigma^2\int_0^tX_s^N(\Phi^2)\,ds\Bigr|\Bigr)=0.\]
\end{proposition}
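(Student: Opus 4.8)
The plan is to recall the decomposition of the square function from \eqref{SMGsqfn}, $\langle M^N(\Phi)\rangle_t = \langle M^N(\Phi)\rangle_{1,t} + \langle M^N(\Phi)\rangle_{2,t}$, and treat the two pieces separately. The second piece is negligible: by \eqref{CMPL51a} of Lemma~\ref{l:oldl5.1}, $|\langle M^N(\Phi)\rangle_{2,t}| \le C\|r\||\bar\cN|\|\Phi\|_\infty^2 \frac{(\log N)^3}{N'}\int_0^t X^N_s(\1)\,ds$, and since $\frac{(\log N)^3}{N'} = \frac{(\log N)^4}{N} \to 0$, Proposition~\ref{p:mobnds}(a) gives $E(|\langle M^N(\Phi)\rangle_{2,t}|) \le C_T\|\Phi\|_\infty^2 X^N_0(\1)(\log N)^4/N \to 0$. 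So the entire contribution comes from $\langle M^N(\Phi)\rangle_{1,t}$, and we must show it converges in $L^1$ to $4\pi\sigma^2\int_0^t X^N_s(\Phi^2)\,ds$ (where $\Phi^2(x) := \Phi(x)^2$, noting $\|\Phi^2\|_\Lip < \infty$ since $\Phi$ is bounded Lipschitz).

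First I would apply \eqref{CMPL51b} of Lemma~\ref{l:oldl5.1} with $\Phi$ replaced by $\Phi$ (whose argument here is the time-independent function), writing $\langle M^N(\Phi)\rangle_{1,t} = 2\int_0^t X^N_s((\log N)\Phi^2 f_0^{(N)}(\cdot,\xi^N_s))\,ds + \int_0^t \tilde m^N_{1,s}(\Phi)\,ds$, where the error term satisfies $|\tilde m^N_{1,s}(\Phi)| \le C_{\ref{CMPL51b}}\|\Phi\|_\Lip^2(\log N)N^{-1/2}X^N_s(\1)$; integrating and using Proposition~\ref{p:mobnds}(a), this error is $O((\log N)N^{-1/2})\to 0$ in expectation. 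Thus it remains to show $E\big(\big|2\int_0^t X^N_s((\log N)\Phi^2 f_0^{(N)}(\cdot,\xi^N_s))\,ds - 4\pi\sigma^2\int_0^t X^N_s(\Phi^2)\,ds\big|\big)\to 0$, i.e. $E\big(\big|\int_0^t X^N_s((\log N)\Phi^2 f_0^{(N)}(\cdot,\xi^N_s))\,ds - 2\pi\sigma^2\int_0^t X^N_s(\Phi^2)\,ds\big|\big)\to 0$.

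This is now structurally identical to the drift analysis of Proposition~\ref{p:d3asymp}, with $K_N \to 2\pi\sigma^2$ (by \eqref{sqfnconst}) playing the role of $\Theta^N_j \to \Theta_j$, and Proposition~\ref{p:condsqfn} playing the role of Proposition~\ref{r:conddriftbnd}. I would mimic \eqref{trianglebound}: split the integral as $\int_0^{t_N\wedge t} + \int_{t_N}^{t_N\vee t}$; on $[0,t_N\wedge t]$ use the crude bound $X^N_s((\log N)\Phi^2 f_0^{(N)}(\cdot,\xi^N_s)) \le (\log N)\|\Phi\|_\infty^2 X^N_s(\1)$ together with Proposition~\ref{p:mobnds}(a) to get a term $O((\log N) t_N X^N_0(\1)) \to 0$; on $[t_N, t_N\vee t]$ insert $E(\,\cdot\,|\cF^N_{s-t_N})$, bounding the martingale-difference part by an orthogonality/$L^2$ argument (as for the second term of \eqref{trianglebound}, using the crude bound and Proposition~\ref{p:mobnds}(b)) by $O((\log N) t_N^{1/2}(1+X^N_0(\1)))$, and bounding the remaining part via Proposition~\ref{p:condsqfn}, Corollary~\ref{c:keyint} and Proposition~\ref{p:mobnds}(a) by $C_T\|\Phi^2\|_\Lip(\log N)^{-1/2}(X^N_0(\1)+X^N_0(\1)^2) \to 0$; finally the replacement of $K_N$ by $2\pi\sigma^2$ and of the upper limit $(t-t_N)^+$ by $t$ contributes $C_T\|\Phi\|_\infty^2 X^N_0(\1)(|K_N - 2\pi\sigma^2| + t_N) \to 0$ by \eqref{sqfnconst}. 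Combining these estimates completes the proof. The main obstacle — already handled by the machinery we are quoting — is the control of $\frac{1}{t_N\log N}E\int_{t_N}^{t\vee t_N}\scrI^N(\xi^N_{s-t_N})\,ds$ via Corollary~\ref{c:keyint}, which is what makes the conditional estimate of Proposition~\ref{p:condsqfn} usable; once that is in hand the argument is a routine repetition of the proof of Proposition~\ref{p:d3asymp}.
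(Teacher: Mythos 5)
Your proposal is correct and follows exactly the paper's route: the paper's proof of Proposition~\ref{p:sqmasymp} is precisely the observation that $\|\Phi^2\|_\Lip<\infty$, followed by rerunning the argument of Proposition~\ref{p:d3asymp} with Proposition~\ref{p:condsqfn} in place of Proposition~\ref{r:conddriftbnd} and \eqref{sqfnconst} in place of \eqref{Tht2lim} and \eqref{Tht3lim}, and then disposing of the error terms via \eqref{CMPL51b}, \eqref{CMPL51a} and Proposition~\ref{p:mobnds}(a). You have simply written out in detail the steps the paper leaves implicit, and all your estimates match.
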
 
\begin{proof} Note that $\|\Phi\|_\Lip$ finite implies that
$\|\Phi^2\|_\Lip$ is finite. Arguing exactly as in the proof
of Proposition~\ref{p:d3asymp}, using
Proposition~\ref{p:condsqfn} in place of
Proposition~\ref{r:conddriftbnd} and \eqref{sqfnconst} in
place of \eqref{Tht2lim} and \eqref{Tht3lim}, we get
\[\lim_{N\to\infty}E\bigl(\Big|\int_0^tX_s^N(2\log
N\Phi^2f_0^{(N)}(\cdot,\xi^N_s))\,ds-\int_0^t4\pi\sigma^2X^N_s(\Phi^2)\,ds\Big|\Bigr)=0.\]
Now use \eqref{CMPL51b}, \eqref{CMPL51a}, and
Proposition~\ref{p:mobnds}(a) to complete the proof.
\end{proof}

Define $(P^N_t,t\ge0)$ as the semigroup of the rate-$N$ random
walk on $\SN$ with jump kernel $p_N$. By translation invariance we can
have $P^N_t$ operate on functions on the plane, even though $\SN$ is the natural state space. 

As an application of the control of the drift terms
$d^{N,j}$ given by Proposition~\ref{p:d3asymp}, we obtain an
effective bound on the mean measures of our rescaled
$q$-voter models.
\begin{lemma}\label{l:CMPL35} There exists a
$c_{\ref{firstmomeasbnd}}>0$, and for any $T>0$ a constant
$C_{\ref{firstmomeasbnd}}(T)>0$ so that for all $t\in[0, T]$
and any $\Psi:\R^2\to\R_+$ such that $\|\Psi\|_{\Lip}\le T$,
\begin{equation}\label{firstmomeasbnd} E[X^N_t(\Psi)] \le
e^{c_{\ref{firstmomeasbnd}}t} X^N_0(P^N_t(\Psi)) +
C_{\ref{firstmomeasbnd}}(\log N)^{-1/2} (X^N_0(\1) +
X^N_0(\1)^2) .
\end{equation}
\end{lemma}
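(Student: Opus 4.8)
The plan is to run a Gronwall argument on a first-moment function, feeding the backward heat flow of $P^N$ into the semimartingale decomposition \eqref{SMG1} so that the random-walk drift $D^{N,1}$ drops out. Fix $t\in(0,T]$ and $\Psi\ge 0$ with $\|\Psi\|_{\Lip}\le T$, and for $s\in[0,t]$ set $\Phi(s,x)=P^N_{t-s}\Psi(x)$. Since the discrete operator $A_N$ is precisely the generator of the semigroup $(P^N_u)$ (both send $f$ to $\sum_{y}Np_N(y)(f(\cdot+y)-f(\cdot))$), one has $\dot\Phi(s,\cdot)=-A_N\Phi(s,\cdot)$, hence $A_N\Phi(s,\cdot)+\dot\Phi(s,\cdot)\equiv 0$ and $D^{N,1}_r(\Phi)=0$ for all $r\le t$; as $A_N$ is a bounded operator, $\Phi\in C_b([0,t]\times\R^2)$ with $\dot\Phi\in C_b$, so \eqref{SMG1} applies on $[0,t]$. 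Crucially, $\Phi$ is regular uniformly in $N$: $\|\Phi\|_\infty\le\|\Psi\|_\infty$, $|\Phi|_\Lip\le|\Psi|_\Lip$ since $P^N_u$ contracts Lipschitz constants, and $|\Phi|_{1/2,N}\le\sqrt2\,\sigma|\Psi|_\Lip$ since $\|P^N_{t_N}\Psi-\Psi\|_\infty\le|\Psi|_\Lip(2\sigma^2 t_N)^{1/2}$ by the $L^2$ displacement bound for the rate-$N$ walk; thus $\|\Phi\|_N\le C(T)$.

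Because $|\xi^N_0|<\infty$, $M^N(\Phi)$ is a mean-zero $(\cF^N_t)$-martingale, so taking expectations in \eqref{SMG1} at $r\in[0,t]$ gives, with $h_N(r):=E[X^N_r(P^N_{t-r}\Psi)]$,
\[
h_N(r)=X^N_0(P^N_t\Psi)+E[D^{N,2}_r(\Phi)]+E[D^{N,3}_r(\Phi)].
\]
To estimate $E[D^{N,j}_r(\Phi)]=\int_0^r E[d^{N,j}(s,\xi^N_s,\Phi)]\,ds$ for $j=2,3$, split the integral at $t_N$. On $[0,t_N\wedge r]$, the crude bound \eqref{crude1} with Proposition~\ref{p:mobnds}(a) gives a contribution of order $(\log N)^3 t_N X^N_0(\1)=(\log N)^{-16}X^N_0(\1)$. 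On $[t_N,t_N\vee r]$, by Fubini and the tower property the contribution is $\int_{t_N}^{t_N\vee r}E[E(d^{N,j}(s,\xi^N_s,\Phi)\mid\cF^N_{s-t_N})]\,ds$, and Proposition~\ref{r:conddriftbnd} (applied at $s$ with the test function at the conditioning time $s-t_N$) shows $E(d^{N,j}(s,\xi^N_s,\Phi)\mid\cF^N_{s-t_N})$ equals $\Theta^N_j X^N_{s-t_N}(P^N_{t-(s-t_N)}\Psi)$ up to an error at most $C_T\|\Phi\|_N\big(\tfrac1{t_N\log N}\scrI^N(\xi^N_{s-t_N})+(\log N)^{-1/2}X^N_{s-t_N}(\1)\big)$; since $E[X^N_{s-t_N}(P^N_{t-(s-t_N)}\Psi)]=h_N(s-t_N)$, integrating and applying Corollary~\ref{c:keyint} and Proposition~\ref{p:mobnds}(a) yields
\[
E[D^{N,j}_r(\Phi)]=\Theta^N_j\int_0^{(r-t_N)^+}h_N(u)\,du+R^{(j)}_r,\qquad |R^{(j)}_r|\le C_T(\log N)^{-1/2}(X^N_0(\1)+X^N_0(\1)^2).
\]
(Unlike in Proposition~\ref{p:d3asymp}, no $L^2$-orthogonality step is needed here, since we take only expectations.)

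Combining, and writing $\varepsilon_N:=2C_T(\log N)^{-1/2}(X^N_0(\1)+X^N_0(\1)^2)$, one obtains $h_N(r)\le X^N_0(P^N_t\Psi)+\varepsilon_N+(|\Theta^N_2|+|\Theta^N_3|)\int_0^r h_N(u)\,du$, using $h_N\ge 0$ (as $P^N_{t-u}\Psi\ge 0$) to pass to absolute values and to enlarge the integration domain from $(r-t_N)^+$ to $r$. Since $\Theta^N_j\to\Theta_j$, the prefactor is bounded by a constant $c_{\ref{firstmomeasbnd}}$ uniformly in $N$, and $h_N(r)\le\|\Psi\|_\infty E[X^N_r(\1)]<\infty$ by Proposition~\ref{p:mobnds}(a), so Gronwall's inequality gives $h_N(r)\le(X^N_0(P^N_t\Psi)+\varepsilon_N)e^{c_{\ref{firstmomeasbnd}}r}$, and $r=t$ yields \eqref{firstmomeasbnd} with $C_{\ref{firstmomeasbnd}}(T)=C_Te^{c_{\ref{firstmomeasbnd}}T}$. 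The only genuinely new ingredient is the first step — choosing the test function $P^N_{t-s}\Psi$ so that $D^{N,1}$ vanishes and verifying its $N$-uniform regularity; after that it is bookkeeping with drift estimates already in hand, the one delicate point being to match the $t_N$-time shift so that the Gronwall loop closes on $h_N$ itself.
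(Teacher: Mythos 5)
Your proof is correct and follows essentially the same route as the paper's: both take the backward heat flow $P^N_{t-s}\Psi$ as the test function so that $D^{N,1}$ vanishes, and then control $D^{N,2},D^{N,3}$ via the conditional drift estimates of Section~\ref{s:qvdrifttm} together with Corollary~\ref{c:keyint} and Proposition~\ref{p:mobnds}(a). The only real difference is the mechanism for absorbing the drift: the paper inserts a discount $e^{-cs}$ with $c>\sup_N\Theta^N$ so that the inequality closes in one step (citing \eqref{d3errorbnd} and the norm bound from Lemma~3.5 of \cite{CMP}), whereas you run Gronwall on $h_N(r)=E[X^N_r(P^N_{t-r}\Psi)]$ and verify the $N$-uniform bound on $\|\Phi\|_N$ directly -- both are valid.
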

\begin{proof} Fix $T>0$, let $t\in[0,T]$ and $\Psi$ be as in
the Lemma, and let $c\in\R$.  Define $\Phi\in
C_b([0,t]\times\R^d)$ by
$\Phi(s,x)=e^{-cs}P^N_{t-s}\Psi(x)$, so that
\begin{equation}\label{HeatEq}
\dot\Phi(s,x)=-A_N\Phi(s,x)-c\Phi(s,x)\in
C_b([0,t]\times\R^2).
\end{equation} Let
$d^N(s,\xi^N_s,\Phi)=d^{N,2}(s,\xi^N_s,\Phi)+d^{N,3}(s,\xi^N_s,\Phi)$
and $\Theta^N= \Theta^N_2+ \Theta^N_3$.  By \eqref{SMG1} and
\eqref{HeatEq} we have
\begin{equation}\label{semimartI}
E(X^N_t(\Phi(t,\cdot)))=X^N_0(P^N_t(\Psi))+\int_0^tE(d^{N}(s,\xi^N_s,\Phi)-cX^N_s(\Phi))\,ds.
\end{equation} Now choose $c>1\vee\sigma\vee (\sup_{N\ge
e^3}\Theta^N)$ (recall \eqref{Tht2lim} and
\eqref{Tht3lim}). It is then easy to see (and in fact is
shown in the proof of Lemma 3.5 in \cite{CMP}--see p. 1232)
that $c>1\vee\sigma$ implies
\begin{equation}\label{phinormbnd} \|\Phi\|\le
2c\|\Psi\|_{\Lip}\le 2cT,
\end{equation} where the last inequality is by
hypothesis. Now use \eqref{d3errorbnd} and
\eqref{phinormbnd} to deduce that
\[ E\Bigl(\int_{0}^td^{N}(s,\xi^N_s,\Phi)\,ds\Bigr)\le
\int_0^tcE(X^N_s(\Phi))\,ds+C_T\|\Psi\|_\Lip (\log
N)^{-1/2}(1+X_0^N(\1)^2),\ \ \forall t\in[0,T].\] Finally
use this in \eqref{semimartI}, noting that
$\Phi(t,x)=e^{-ct}\Psi(x)$, to complete the proof with
$c_{\ref{firstmomeasbnd}}=c$.
\end{proof}

\section{Convergence to super-Brownian motion: Proof of Theorem~\ref{t:SBMgen}}\label{sec:convtoSBM}
We assume the hypotheses of Theorem~\ref{t:SBMgen} whose proof is the objective of this section.
\subsection{Relative compactness}
A collection of stochastic processes $\{Y^N:N\ge \alpha\}$
with sample paths in $D(\R_+,S)$ for some Polish space $S$
is $C$-relatively compact iff for every sequence
$N_k\uparrow \infty$ in $[\alpha,\infty)$ there is a
subsequence $\{N'_k\}$ so that $Y^{N'_k}$ converges weakly
in $D(\R_+,S)$ to a process with continuous paths. The same
definition applies to a given {\it sequence} of processes.

\begin{proposition}\label{p:tness} The set $\{X^N,N\ge 
  N(\vep_0)\}$ is $C$-relatively compact in $D(\R^+,\MF(\R^2))$.
\end{proposition}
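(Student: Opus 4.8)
The plan is to prove $C$-relative compactness of $\{X^N\}$ in $D(\R_+,\MF(\R^2))$ by the standard two-step scheme: first establish compact containment (the masses don't escape to infinity or blow up), and then establish tightness of the real-valued processes $t\mapsto X^N_t(\Phi)$ for a suitable convergence-determining class of test functions $\Phi$, together with a sup-jump bound forcing continuity of limit points. Throughout I will work from the semimartingale decomposition \eqref{SMG1}, the total-mass bounds in Proposition~\ref{p:mobnds}, the first-moment measure bound in Lemma~\ref{l:CMPL35}, and the drift/square-function asymptotics in Propositions~\ref{p:d3asymp} and \ref{p:sqmasymp}; the structure of the argument closely follows the corresponding tightness proof for rescaled Lotka-Volterra models in \cite{CMP}.

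First I would verify \emph{compact containment}: for each $T,\eta>0$ there is a compact $K\subset\R^2$ with $\sup_N P(\sup_{t\le T}X^N_t(K^c)>\eta)<\eta$, together with the total-mass control $\sup_N P(\sup_{t\le T}X^N_t(\1)>\Lambda)\to 0$ as $\Lambda\to\infty$. The latter follows from Proposition~\ref{p:mobnds}(a) and a maximal inequality applied to the submartingale-type bound obtained by taking $\Phi=\1$ in \eqref{SMG1} (the drift terms are controlled by \eqref{crude1} and \eqref{dN3cnd1}, the martingale part by \eqref{qvbnd1}), using $X^N_0(\1)\to X_0(\1)<\infty$. For the spatial containment one applies Lemma~\ref{l:CMPL35} with $\Psi$ approximating $1_{K^c}$ (or a smooth bump supported far away), using that $X^N_0\to X_0$ in $\MF$ so $X^N_0(P^N_t\Psi)$ is small uniformly in $t\le T$ for $K$ large, and that the error term $C(\log N)^{-1/2}(X_0^N(\1)+X_0^N(\1)^2)\to 0$; a maximal inequality in $t$ upgrades the first-moment bound to a uniform-in-$t$ statement.

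Next I would prove tightness of $\{X^N_\cdot(\Phi)\}$ for $\Phi\in C_b^3(\R^2)$ (a convergence-determining class for $\MF(\R^2)$, after adding $\Phi\equiv 1$) via the Aldous-Rebolledo / Jakubowski criterion: decompose $X^N_t(\Phi)=X^N_0(\Phi)+D^{N,1}_t(\Phi)+D^{N,2}_t(\Phi)+D^{N,3}_t(\Phi)+M^N_t(\Phi)$ as in \eqref{SMG1} and show each piece is $C$-tight. The finite-variation drift $D^{N,1}_t(\Phi)=\int_0^tX^N_s(A_N\Phi(s,\cdot)+\dot\Phi)ds$ has an integrand uniformly bounded by $C\|\Phi\|_{C^3}X^N_s(\1)$ (since $A_N\Phi\to\tfrac{\sigma^2}{2}\Delta\Phi$ uniformly), giving equicontinuity modulo the mass bound from Proposition~\ref{p:mobnds}; likewise $D^{N,2}$ and $D^{N,3}$ are handled by the increment estimate \eqref{DNt1t2} combined with the second-moment bound \eqref{XN2(1)}, which gives $E[(D^{N,j}_{t_2}(\Phi)-D^{N,j}_{t_1}(\Phi))^2]\le C(|t_2-t_1|+(\log N)^6 t_N)(1+X_0^N(\1)^2)$ and hence, since $(\log N)^6 t_N=(\log N)^{-13}\to 0$, Kolmogorov-type tightness with continuous limits. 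For the martingale term, $\langle M^N(\Phi)\rangle$ is controlled by Proposition~\ref{p:sqmasymp} (and \eqref{CMPL51a}, \eqref{qvbnd1}), so $\{\langle M^N(\Phi)\rangle_t\}$ is $C$-tight, and the maximal jump of $M^N(\Phi)$ is at most $\|\Phi\|_\infty/N'\to 0$, so any limit point of $M^N(\Phi)$ is continuous; by the Aldous-Rebolledo criterion $\{M^N(\Phi)\}$ is $C$-tight. Jakubowski's criterion then upgrades tightness of each $X^N_\cdot(\Phi)$, together with compact containment, to $C$-relative compactness of $\{X^N\}$ in $D(\R_+,\MF(\R^2))$.

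The main obstacle is the spatial compact containment: unlike the real-valued tightness, it is not purely a consequence of the semimartingale bookkeeping but requires genuinely controlling the mean measure $E[X^N_t(\cdot)]$ away from the support of $X_0$, uniformly over $t\le T$ and $N$. This is exactly what Lemma~\ref{l:CMPL35} is designed for, and the delicate point is that its error term involves $(X^N_0(\1))^2$ and $(\log N)^{-1/2}$ rather than being sharp in $N$; one must check that this suffices (it does, since the bound is uniform in $\Psi$ with $\|\Psi\|_{\Lip}\le T$ and the $\log N$ error vanishes), and then a Garsia-type or $L^2$-maximal argument in $t$ — exploiting the semimartingale structure again to bound $\sup_{t\le T}X^N_t(\Psi)$ by $X^N_0(\Psi)$ plus controllable drift and martingale contributions — promotes the pointwise-in-$t$ estimate to the required uniform one. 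Apart from this, the argument is routine given the estimates already assembled in the preceding sections.
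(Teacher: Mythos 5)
Your proposal is correct and follows essentially the same route as the paper: Jakubowski's criterion combined with (i) compact containment proved via Lemma~\ref{l:CMPL35} applied to smooth approximations of $1_{\{|x|>n\}}$ together with Doob's $L^2$ inequality for the martingale part, and (ii) $C$-tightness of $X^N_\cdot(\Phi)$ for $\Phi\in C^3_b$ obtained term by term from \eqref{SMG1} using \eqref{DNt1t2}, the second-moment bounds of Proposition~\ref{p:mobnds}, the square-function estimates, and the vanishing maximal jump of $M^N(\Phi)$. The only cosmetic differences are that the paper converts the $(\log N)^6(t_N\wedge(t_2-t_1))$ factor into a clean $(t_2-t_1)^{3/2}$ modulus via $\min(a,b)\le\sqrt{ab}$ before applying Kolmogorov's criterion (handling $[0,t_N]$ separately with the crude bound \eqref{crude1}), and it uses Kolmogorov on $\langle M^N(\Phi)\rangle$ rather than invoking Aldous--Rebolledo by name.
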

To prove this it clearly suffices to show that for every
$N_k\uparrow\infty$ $(N_k\ge N(\vep_0))$, $\{X^{N_k}\}$, is
$C$-relatively compact.  To ease the notation we take
$N_k=k$ as the proof in the general case is the same.  Hence
we reduce to the case of showing $\{X^N:N\in\N,N\ge
N(\vep_0)\}$ is relatively compact.  This result will follow
from Jakubowski's theorem (see Theorem~II.4.1 in [P2002])
and the following two lemmas.

\begin{lemma}\label{l:Phit}
For any $\Phi\in C_b^3(\R^2)$, the sequence
$\{X^N(\Phi), N\in\N^{\ge
  N(\vep_0)}\}$ is $C$-relatively compact in $D(\R_+,\R)$.
\end{lemma}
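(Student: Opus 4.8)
The plan is to work from the semimartingale decomposition \eqref{SMG1}. Fix $T>0$ and view $\Phi\in C_b^3(\R^2)$ as a time-independent function (so $\dot\Phi=0$), so that \eqref{SMG1} reads
\[X^N_t(\Phi)=X^N_0(\Phi)+V^N_t(\Phi)+M^N_t(\Phi),\qquad V^N_t(\Phi):=\int_0^tX^N_s(A_N\Phi)\,ds+D^{N,2}_t(\Phi)+D^{N,3}_t(\Phi),\]
where $M^N(\Phi)$ is an $(\cF^N_t)$-martingale whose square function is given by \eqref{SMG2}. I would then invoke the standard $C$-relative compactness criterion for sequences of real-valued semimartingales $Y^N=Y^N_0+A^N+M^N$ (see e.g.\ [P2002]): it suffices that $\{Y^N_0\}$ is tight, that the finite-variation part $\{A^N\}$ and the square function $\{\langle M^N\rangle\}$ are $C$-relatively compact in $D(\R_+,\R)$, and that $\sup_{t\le T}\big(|\Delta A^N_t|\vee|\Delta M^N_t|\big)\to0$ in probability. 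In our case $Y^N_0=X^N_0(\Phi)\to X_0(\Phi)$ is convergent since $X^N_0\to X_0$ in $\MF$; $V^N(\Phi)$ is continuous so $\Delta A^N\equiv0$; and each spin flip changes $X^N(\Phi)$ by at most $\|\Phi\|_\infty/N'$, so $\sup_{t\le T}|\Delta M^N_t(\Phi)|\le\|\Phi\|_\infty/N'\to0$. Thus the work is to show $\{V^N(\Phi)\}$ and $\{\langle M^N(\Phi)\rangle\}$ are $C$-relatively compact. A preliminary fact I would record is that, since $p$ has finite support and covariance $\sigma^2I$ by \eqref{covp}, a Taylor expansion gives $\sup_N\|A_N\Phi\|_\infty<\infty$ and $A_N\Phi\to\tfrac{\sigma^2}{2}\Delta\Phi$ uniformly (at rate $O(N^{-1/2})$).

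The key observation is that $\{V^N(\Phi)\}$ and $\{\langle M^N(\Phi)\rangle\}$ both reduce, up to asymptotically negligible errors, to processes $t\mapsto\int_0^tX^N_s(h)\,ds$ with $\sup_N\|h\|_\infty\le C$. Indeed $\int_0^\cdot X^N_s(A_N\Phi)\,ds$ already has this form; by Proposition~\ref{p:d3asymp} one may write $D^{N,j}_t(\Phi)=\Theta^N_j\int_0^tX^N_s(\Phi)\,ds+R^{N,j}_t$ with $\Theta^N_j\to\Theta_j$ (by \eqref{Tht2lim}--\eqref{Tht3lim}); and by Proposition~\ref{p:sqmasymp}, $\langle M^N(\Phi)\rangle_t=4\pi\sigma^2\int_0^tX^N_s(\Phi^2)\,ds$ plus an error tending to $0$ in $L^1$ for each fixed $t$. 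For bounded $h$ I would prove $C$-relative compactness of $\int_0^\cdot X^N_s(h)\,ds$ directly from Proposition~\ref{p:mobnds}: compact containment from $\sup_{t\le T}|\int_0^tX^N_s(h)\,ds|\le\|h\|_\infty\int_0^TX^N_s(\1)\,ds$ and $E[(\int_0^TX^N_s(\1)\,ds)^2]\le T\int_0^TE[X^N_s(\1)^2]\,ds\le C$ (using $\sup_NX^N_0(\1)<\infty$); the modulus-of-continuity condition by covering $[0,T]$ with $O(1/\delta)$ intervals of length $\delta$, applying Chebyshev on each with $E[(\int_{k\delta}^{(k+2)\delta}X^N_s(\1)\,ds)^2]\le C\delta^2$, and noting that the $\delta^2$ beats the union bound. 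Plugging this into the decompositions above, and using that the modulus of continuity of a process $R$ is at most $2\sup_t|R_t|$, it then suffices to show $E[\sup_{t\le T}|R^{N,j}_t|]\to0$ for $j=2,3$ (so that $R^{N,j}\to0$ uniformly on compacts in probability, hence is trivially $C$-relatively compact). For the square function the comparison process $4\pi\sigma^2\int_0^\cdot X^N_s(\Phi^2)\,ds$ is $C$-relatively compact by the same bounded-$h$ argument, and since $\langle M^N(\Phi)\rangle$ is increasing and converges at each fixed time to it by Proposition~\ref{p:sqmasymp}, $\{\langle M^N(\Phi)\rangle\}$ is $C$-relatively compact.

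To obtain $E[\sup_{t\le T}|R^{N,j}_t|]\to0$ I would re-run the proof of Proposition~\ref{p:d3asymp} but keep the supremum over $t\in[0,T]$ inside the expectations: writing the integrand over $[t_N,T]$ as its conditional mean given $\cF^N_{s-t_N}$ plus the complementary ``martingale-difference'' term, every piece but the latter is dominated by a quantity monotone in $t$ (the $[0,t_N]$ contribution, the conditional-mean terms controlled via Proposition~\ref{r:conddriftbnd} and Corollary~\ref{c:keyint}, and the short-range increments controlled by \eqref{DNt1t2}), while the martingale-difference term is handled by Doob's $L^2$ maximal inequality at the cost of a harmless constant, together with the moment bounds of Proposition~\ref{p:mobnds}. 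Once $\{V^N(\Phi)\}$ and $\{\langle M^N(\Phi)\rangle\}$ are $C$-relatively compact, the martingale tightness criterion quoted above (applicable since the jumps of $M^N(\Phi)$ vanish uniformly) gives $C$-relative compactness of $\{M^N(\Phi)\}$, hence of $\{X^N(\Phi)\}$.

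I expect the main obstacle to be precisely this uniform-in-time control of the drift error $R^{N,j}$: the relevant estimates (Propositions~\ref{p:d3asymp} and \ref{r:conddriftbnd}, Corollary~\ref{c:keyint}) are stated for fixed times or under $\int_0^t(\cdot)\,ds$, so one must carry the supremum over $[0,T]$ through the block decomposition of the proof of Proposition~\ref{p:d3asymp}, where Doob's inequality, the powers of $\log N$ hidden in $t_N=(\log N)^{-19}$, and the two competing terms in \eqref{DNt1t2} must be balanced. Everything else is a routine assembly of results already established.
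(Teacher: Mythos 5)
Your overall architecture is sound and starts from the same place as the paper: the semimartingale decomposition \eqref{SMG1}, tightness of each piece separately, vanishing jumps of $M^N(\Phi)$, and reduction of the martingale part to its square function. The genuine divergence is in how the drift terms are handled. The paper never invokes Propositions~\ref{p:d3asymp} or \ref{p:sqmasymp} at this stage; it proves $C$-relative compactness of $D^{N,2}(\Phi)$ and $D^{N,3}(\Phi)$ directly from the second-moment increment bound \eqref{DNt1t2} (combined with the crude bound \eqref{crude1} on $[0,t_N]$ and Proposition~\ref{p:mobnds}(b)), which yields $E[(D^{N,j}_{s_2}(\Phi)-D^{N,j}_{s_1}(\Phi))^2]\le C(s_2-s_1)^{3/2}$ and hence Kolmogorov's criterion; the square function is treated the same way via \eqref{sqfncontrol}, and the identification results are saved for the second half of Section~8. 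Your route instead identifies the drift as $\Theta^N_j\int_0^t X^N_s(\Phi)\,ds$ plus an error $R^{N,j}$ and asks for $\sup_{t\le T}|R^{N,j}_t|\to 0$. What the paper's route buys is that no uniform-in-time control of the error is ever needed; what your route buys is that tightness and identification are established simultaneously, but at the cost of upgrading a fixed-time estimate to a supremum.

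That upgrade is the one soft spot. The term $t\mapsto\int_{t_N}^{t}\big[d^{N,j}(s,\xi^N_s,\Phi)-E(d^{N,j}(s,\xi^N_s,\Phi)\,|\,\cF^N_{s-t_N})\big]\,ds$ is \emph{not} a martingale in $t$ (the integrand is centered only conditionally on $\cF^N_{s-t_N}$, a lag of $t_N$ into the past), so Doob's maximal inequality does not apply as stated; you would need a blocking argument (split $[t_N,T]$ into intervals of length $t_N$ and treat the sums over alternate blocks as martingale increments), which is where the orthogonality used in \eqref{trianglebound} actually lives. Alternatively, you can avoid the supremum entirely: the increment bound \eqref{DNt1t2} together with your own bound on increments of $\int_0^\cdot X^N_s(\Phi)\,ds$ gives $E[(R^{N,j}_{s_2}-R^{N,j}_{s_1})^2]\le C(s_2-s_1)^{3/2}$, so $\{R^{N,j}\}$ is $C$-tight by Kolmogorov, and pointwise convergence to $0$ (Proposition~\ref{p:d3asymp}) then forces $R^{N,j}\Rightarrow 0$ uniformly on compacts. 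But note that this repair uses exactly the estimate \eqref{DNt1t2} on which the paper's shorter proof is built, so at that point the detour through Proposition~\ref{p:d3asymp} is no longer doing any work for tightness. Your treatment of $\langle M^N(\Phi)\rangle$ via monotonicity plus pointwise comparison with $4\pi\sigma^2\int_0^\cdot X^N_s(\Phi^2)\,ds$ is fine, as is the bounded-$h$ lemma and the treatment of $D^{N,1}$ and the jumps.
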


\begin{lemma}\label{l:ccc} For any $\vep>0$, $T>0$ there
exists $A>0$ such that
\[
\sup_{N\in\N^{\ge N(\vep_0)}}P\left( \sup_{t\le T} X^N_t(B(0,A)^c)>\vep\right)
< \vep. 
\]
\end{lemma}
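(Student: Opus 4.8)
The plan is to reduce the assertion to a first moment estimate by an optional stopping argument, handling the finitely many ``small'' $N$ separately. Fix $\vep,T>0$ and take a cutoff $\Phi_A(x)=\phi(x/A)$, where $\phi\in C^3_b(\R^2)$ satisfies $0\le\phi\le1$, $\phi\equiv1$ on $\{|x|\ge1\}$ and $\phi\equiv0$ on $\{|x|\le1/2\}$; thus $\1_{B(0,A)^c}\le\Phi_A$, $\Phi_A$ is supported in $B(0,A/2)^c$, $\|\Phi_A\|_\infty=1$, $\|D^2\Phi_A\|_\infty\le CA^{-2}$ and $\|\Phi_A\|_\Lip\le2$ (for $A\ge1$). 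Since $X^N_t(B(0,A)^c)\le X^N_t(\Phi_A)$ it suffices to bound $P(\sup_{t\le T}X^N_t(\Phi_A)>\vep)$. Let $\tau_A=\inf\{t\ge0:X^N_t(\Phi_A)>\vep\}$. By right-continuity of $X^N$ one has $X^N_{\tau_A\wedge T}(\Phi_A)\ge\vep$ on $\{\tau_A\le T\}$, whence
\[
\vep\,P\Bigl(\sup_{t\le T}X^N_t(\Phi_A)>\vep\Bigr)=\vep\,P(\tau_A\le T)\le E\bigl[X^N_{\tau_A\wedge T}(\Phi_A)\bigr].
\]

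To estimate the right-hand side I would evaluate the semimartingale identity \eqref{SMG1} (with the time-independent test function $\Phi_A$) at the bounded stopping time $\tau_A\wedge T$. Optional stopping removes the martingale term, which is square integrable by the estimates behind \eqref{qvbnd1} (these apply verbatim to the bounded Lipschitz $\Phi_A$), leaving $E[X^N_{\tau_A\wedge T}(\Phi_A)]=X^N_0(\Phi_A)+\sum_{j=1}^3E[D^{N,j}_{\tau_A\wedge T}(\Phi_A)]$. Here $X^N_0(\Phi_A)\le X^N_0(B(0,A/2)^c)$ is small for $A$ large, uniformly in $N$, since the convergent sequence $\{X^N_0\}$ is tight in $\MF$. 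For the diffusive term, a Taylor expansion using the symmetry of $p$ and \eqref{covpintro} gives $\|A_N\Phi_A\|_\infty\le\sigma^2\|D^2\Phi_A\|_\infty\le C\sigma^2A^{-2}$, so $|E[D^{N,1}_{\tau_A\wedge T}(\Phi_A)]|\le\|A_N\Phi_A\|_\infty\int_0^TE[X^N_s(\1)]\,ds\le C\sigma^2A^{-2}C_TX^N_0(\1)$ by Proposition~\ref{p:mobnds}(a).

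The heart of the matter is the perturbation drifts $D^{N,j}$, $j=2,3$. I would write $E[D^{N,j}_{\tau_A\wedge T}(\Phi_A)]=\int_0^TE\bigl[\1\{s<\tau_A\}\,d^{N,j}(s,\xi^N_s,\Phi_A)\bigr]\,ds$ and, for $s\ge t_N$ (with $t_N=(\log N)^{-19}$), split $\1\{s<\tau_A\}=\1\{s-t_N<\tau_A\}-\1\{s-t_N<\tau_A\le s\}$; the contributions of $s<t_N$ and of the last indicator are $O((\log N)^{-16})$ by the crude bound \eqref{crude1}, the inequality $\int_0^T\1\{\tau_A\le s<\tau_A+t_N\}\,ds\le t_N$, and Proposition~\ref{p:mobnds}. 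On $\{s-t_N<\tau_A\}\in\cF^N_{s-t_N}$ I would condition on $\cF^N_{s-t_N}$ and apply Proposition~\ref{r:conddriftbnd}: up to an error $C\|\Phi_A\|_\Lip\bigl(\tfrac1{t_N\log N}\scrI^N(\xi^N_{s-t_N})+(\log N)^{-1/2}X^N_{s-t_N}(\1)\bigr)$, the conditional drift equals $\Theta^N_jX^N_{s-t_N}(\Phi_A)$. After integrating in $s$ and taking expectations, Corollary~\ref{c:keyint} bounds the $\scrI^N$-error by $C(\log N)^{-1/2}(X^N_0(\1)+X^N_0(\1)^2)$, while the leading term is dominated by $\sup_N|\Theta^N_j|\int_0^TE[X^N_s(\Phi_A)]\,ds$; here Lemma~\ref{l:CMPL35} (enlarging $T$ if necessary so that $\|\Phi_A\|_\Lip\le T$) gives $E[X^N_s(\Phi_A)]\le e^{cT}X^N_0(P^N_s\Phi_A)+C(\log N)^{-1/2}(X^N_0(\1)+X^N_0(\1)^2)$, and a Doob $L^2$-inequality for the rescaled random walk yields $X^N_0(P^N_s\Phi_A)\le X^N_0(B(0,A/4)^c)+C\sigma^2TA^{-2}X^N_0(\1)$. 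Collecting everything, $E[X^N_{\tau_A\wedge T}(\Phi_A)]\le g_N(A)+\delta(N)$, where $g_N(A)\le X^N_0(B(0,A/2)^c)+CX^N_0(B(0,A/4)^c)+CA^{-2}(1+X^N_0(\1))\to0$ as $A\to\infty$ uniformly in $N\ge N(\vep_0)$ (tightness of $\{X^N_0\}$ and $\sup_NX^N_0(\1)<\infty$), and $\delta(N)=C(\log N)^{-1/2}(1+X^N_0(\1)+X^N_0(\1)^2)+C(\log N)^{-16}(\cdots)\to0$ as $N\to\infty$.

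To finish: choose $A_0$ with $\sup_Ng_N(A_0)<\vep^2/2$ and then $N_1$ with $\delta(N)<\vep^2/2$ for $N\ge N_1$; for such $N$, $E[X^N_{\tau_{A_0}\wedge T}(\Phi_{A_0})]<\vep^2$, hence $P(\sup_{t\le T}X^N_t(B(0,A_0)^c)>\vep)<\vep$. For each of the finitely many integers $N$ with $N(\vep_0)\le N<N_1$, the process $\xi^N$ makes a.s.\ finitely many flips on $[0,T]$ (by \eqref{L1boundunscaleda} and Proposition~\ref{p:mobnds}(a)), so $\bigcup_{t\le T}\mathrm{supp}\,\xi^N_t$ is a.s.\ bounded and $P(\sup_{t\le T}X^N_t(B(0,A)^c)>\vep)\to0$ as $A\to\infty$; pick $A_N$ achieving $<\vep$. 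Then $A:=A_0\vee\max_{N(\vep_0)\le N<N_1}A_N$ works, since $X^N_t(B(0,A)^c)\le X^N_t(B(0,A')^c)$ whenever $A\ge A'$. The main obstacle is exactly the treatment of $D^{N,2},D^{N,3}$ at the stopping time: their total variation over $[0,T]$ is of order $(\log N)^3$, so one must genuinely exploit the cancellation encoded in the conditional drift asymptotics of Section~\ref{s:qvdrifttm} (Proposition~\ref{r:conddriftbnd}, resting on Propositions~\ref{p:^HTht2}, \ref{p:^HTht} and \ref{p:key2bnd}), combined with the fact---quantified by the semigroup factor $P^N_s\Phi_A$ in Lemma~\ref{l:CMPL35}---that far-away mass is neither recreated near the origin over the short lag $t_N$ nor transported back by the diffusive motion.
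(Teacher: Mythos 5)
Your argument is correct in substance but follows a genuinely different route from the paper's. The paper splits $X^N_t(h_n)=M^N_t(h_n)+Y^N_t(h_n)$ and controls the two suprema separately: $\sup_{t\le T}|M^N_t(h_n)|$ via Doob's $L^2$ inequality applied to the square function (using Lemma~\ref{l:CMPL35} and \eqref{meantozero} to get \eqref{MNtightness}), and $\sup_{t\le T}|Y^N_t(h_n)|$ via the $L^2$ increment bounds \eqref{DN3inc1}, \eqref{DN3inc2}, \eqref{DNoneinc} and a chaining argument quoted from \cite{CMP} (this is \eqref{YNtight}). You instead run a first-passage/optional-stopping argument: stop at $\tau_A$, kill the martingale term in \eqref{SMG1} by optional stopping, and reduce everything to a first-moment estimate for the drift at the stopped time, which you then control with the conditional drift asymptotics (Proposition~\ref{r:conddriftbnd}, Corollary~\ref{c:keyint}) and the mean-measure bound of Lemma~\ref{l:CMPL35}. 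This buys you a cleaner maximal estimate (no Doob inequality for $M^N(\Phi_A)$ and no chaining for $Y^N$), at the cost of having to decouple the random indicator $\1\{s<\tau_A\}$ from $d^{N,j}(s,\xi^N_s,\Phi_A)$ via the $\cF^N_{s-t_N}$-measurable event $\{s-t_N<\tau_A\}$; both proofs ultimately rest on the same inputs from Section~\ref{s:qvdrifttm}.

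One step is stated too optimistically, though it is easily repaired. For the boundary-layer term you claim
$E\bigl[\int_0^T\1\{\tau_A\le s<\tau_A+t_N\}\,|d^{N,j}(s,\xi^N_s,\Phi_A)|\,ds\bigr]=O((\log N)^{-16})$
by combining \eqref{crude1} with $\int_0^T\1\{\tau_A\le s<\tau_A+t_N\}\,ds\le t_N$ and Proposition~\ref{p:mobnds}(a). Since the event $\{\tau_A\le s<\tau_A+t_N\}$ is correlated with $X^N_s(\1)$, you cannot simply multiply the $t_N$ bound on the time integral by $\sup_{s\le T}E[X^N_s(\1)]$; you need, e.g., Cauchy--Schwarz in $ds\,dP$ together with the second-moment bound of Proposition~\ref{p:mobnds}(b), which yields a bound of order $(\log N)^3t_N^{1/2}\bigl(X^N_0(\1)+X^N_0(\1)^2\bigr)^{1/2}=O((\log N)^{-13/2})$. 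This still vanishes as $N\to\infty$, so the conclusion is unaffected.
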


\medskip

\noindent{\sl{Proof of Lemma~\ref{l:Phit}.}} We use
\eqref{SMG1} to establish $C$-relative compactness of
$\{X^N(\Phi):N\in\N^{\ge N(\vep_0)}\}$ by establishing the
$C$-relative compactness of each of the terms on the
right-hand side.  For the latter we proceed as in Lemma~6.1
of \cite{CMP}.  Consider first $\{D^{N,j}(\Phi):N\in\N^{\ge
N(\vep_0)}\}$ for $j=2\text{ or }3$.  Use
Proposition~\ref{p:mobnds}(b) in \eqref{DNt1t2} and
$\min(a,b)\le \sqrt{ab}$ for $a,b>0$, to see that for
$t_N\le s_1<s_2\le T$,
\begin{align} \nonumber
E((D^{N,j}_{s_2}(\Phi)-D^{N,j}_{s_1}(\Phi))^2)&\le
C_T\|\Phi\|_\Lip^2[(s_2-s_1)^2+\log^6N\sqrt{t_N}(s_2-s_1)^{3/2}](X_0^N(\1)+X_0^N(\1)^2)\\
&\le
C_T\|\Phi\|_\Lip^2(X_0^N(\1)+X_0^N(\1)^2)(s_2-s_1)^{3/2}.\label{DN3inc1}
\end{align}
Moreover, we have from \eqref{crude1} that for $j=2,3$, 
\begin{equation}\label{dn3roughbnd}
|d^{N,j}(s,\xi_s^N,\Phi)|\le |\bar\cN|(\log N)^3\|\Phi\|_\infty X_s^N(\1),
\end{equation}
and so by Proposition~\ref{p:mobnds}(b) for $0\le s_1<s_2\le t_N$,
\begin{align}\label{DN3inc2} \nonumber
E\Bigl(\Bigl(\int_{s_1}^{s_2}|d^{N,j}(s,\xi^N_s,\Phi)|\,ds\Bigr)^2\Bigr)&\le
C|\bar\cN| \|\Phi\|_\infty^2(X^N_0(\1)+X^N_0(\1)^2)(\log
N)^6(s_2-s_1)^2\\ \nonumber &\le C
\|\Phi\|_\infty^2(X^N_0(\1)+X^N_0(\1)^2)(\log
N)^6\sqrt{t_N}(s_2-s_1)^{3/2}\\ &\le
C\|\Phi\|_\infty^2(X^N_0(\1)+X^N_0(\1)^2)(s_2-s_1)^{3/2}.
\end{align}
The $C$-relative compactness of
$\{D^{N,j}(\Phi):N\in\N^{\ge N(\vep_0)}\}$ is now immediate
from $D_0^{N,j}=0$, \eqref{DN3inc1}, \eqref{DN3inc2} and
Kolmogorov's criterion.

Turning to the $C$-relative compactness of
$\{M^N(\Phi):N\in\N^{\ge N(\vep_0)}\}$, as in Lemma~6.1 of
\cite{CMP}, because the maximum jump of $M^N(\Phi)$ goes to
zero as $N\to\infty$, it suffices to prove $C$-relative
compactness of $\{\langle M^N(\Phi)\rangle:N\in\N^{\ge
N(\vep_0)}\}$.  From \eqref{SMGsqfn}, \eqref{CMPL51b},
\eqref{CMPL51a}, and the second moment bound in
Proposition~\ref{p:mobnds}(b), we have for $0\le s_1<s_2\le
T$,
\begin{align}\label{MnincI} \nonumber E\Bigl(\Bigl(\langle
M^N(\Phi)\rangle_{s_2}-\langle
M^N(\Phi)\rangle_{s_1}\Bigr)^2\Bigr)&\le
C_T\|\Phi\|_\Lip^4(X_0^N(\1)+X_0^N(\1)^2)(s_2-s_1)^2\\
&\quad+C\|\Phi\|_\infty^4E\Bigl(\Bigl(\int_{s_1}^{s_2}Z^N_s\,ds\Bigr)^2\Bigr),
\end{align}
where
\begin{equation}\label{Ybound}Z^N_s=X^N_s(\log Nf_0^{(N)}(\xi^N_s))\le \log NX^N_s(\1).\end{equation}
Using \eqref{CMPL52}, the above upper bound, and arguing
exactly as in the proof of (104)--(106) in Lemma~6.1 of
\cite{CMP} (conditioning back in time by $t_N$ and employing
the Markov property) we can show that for $T\ge s_2>s_1\ge
t_N$,
\begin{equation}\label{MnincII}E\Bigl(\Bigl(\int_{s_1}^{s_2}
Z^N_s\,ds\Bigr)^2\Bigr)\le
C_T(X^N_0(\1)+X^N_0(\1)^2)(s_2-s_1)^{3/2}.\end{equation} For
$0\le s_1<s_2\le t_N$ we may argue as in \eqref{DN3inc2}
using \eqref{CMPL51a}, \eqref{CMPL51b}, and the upper bound
in \eqref{Ybound}, to see that
\[E\Bigl(\Bigl(\langle M^N(\Phi)\rangle_{s_2}-\langle
M^N(\Phi)\rangle_{s_1}\Bigr)^2\Bigr)\le
C_T\|\Phi\|_{\Lip}^4(X^N_0(\1)+X^N_0(\1)^2)(s_2-s_1)^{3/2}.\]
Combine the above with \eqref{MnincI}, and \eqref{MnincII}
to conclude that
\begin{equation}\label{sqfncontrol} E\Bigl(\Bigl(\langle
M^N(\Phi)\rangle_{s_2}-\langle
M^N(\Phi)\rangle_{s_1}\Bigr)^2\Bigr)\le
C_T\|\Phi\|_{\Lip}^4(X^N_0(\1)+X^N_0(\1)^2)(s_2-s_1)^{3/2}\text{
for } 0\le s_1<s_2\le T.
\end{equation}
The $C$-relative compactness of
$\{\langle M^N(\Phi)\rangle:N\in\N^{\ge N(\vep_0)}\}$ now
follows from Kolmogorov's criterion.

Finally, the simple argument in Lemma~6.1 of \cite{CMP}
using Proposition~\ref{p:mobnds}(b) shows for $0\le
s_1<s_2\le T$,
\begin{equation}\label{DNoneinc}
E\Bigl(\Bigl(D^{N,1}_{s_2}(\Phi)-D^{N,1}_{s_1}(\Phi)\Bigr)^2\Bigr)\le
C_{T,\Phi}(X^N_0(\1)+X^N_0(\1)^2)(s_2-s_1)^{2}.
\end{equation} (This is one place where the assumed
regularity of $\Phi$ is used.)  The $C$-relative compactness
of $\{D^{N,1}(\Phi):N\in\N^{\ge N(\vep_0)}\}$ follows as
usual. As $X_0^N(\Phi)\to X_0(\Phi)$ by hypothesis, the
above results give the $C$-relative compactness of
$\{X^N(\Phi)\}$.
\qed

\medskip

\noindent{\sl{Proof of Lemma~\ref{l:ccc}.}}  Let
$\{h_n:n\in\N\}$ be a sequence of $[0,1]$-valued functions
in $C^3_b(\R^2)$ such that
\begin{equation}\label{hnprops}\1_{\{|x|>n+1\}}\le h_n(x)\le
\1_{\{|x|>n\}},\ \text{and} \ \sup_n\|h_n\|_\Lip\le C.
\end{equation} For example, if $h:\R\to[0,1]$ is $C^\infty$,
increasing, and $1_{\{z>1\}}\le h(z)\le 1_{\{z>0\}}$ we can
take $h_n(x)=h(|x|-n)$. It clearly suffices to show that for
each fixed $\vep,T>0$,
\begin{equation}\label{hndecay}
\lim_{n\to\infty}\sup_{N\ge N(\vep_0)}P\Bigl(\sup_{t\le T}X_t^N(h_n)\ge\vep\Bigr)=0.
\end{equation}
By \eqref{SMG1}
\begin{equation}\label{hndecomp}
X^N_t(h_n)=M^N_t(h_n)+Y^N_t(h_n),
\end{equation}
where
\[Y^N_t(h_n)=X_0^N(h_n)+\int_0^tX_s^N(A_Nh_n)\,ds+D^{N,2}_t(h_n)+D^{N,3}_t(h_n).\]
Now argue as in the derivation of (110) in the proof of
Lemma~6.1 in \cite{CMP}, using \eqref{SMGsqfn},
\eqref{CMPL51b}, \eqref{CMPL51a}, the second inequality in
\eqref{hnprops}, and \eqref{CMPL52} to conclude that for
some $\vep_N\to 0$, independent of $n$,
\begin{equation}\label{sqfnbound1} E(\langle
M^N(h_n)\rangle_T)\le \vep_N+\int_0^T
C_TE(X^N_s(h_n^2))\,ds\le
\vep_N+\int_0^TC_TX_0^N(P^N_s(h_n^2))\,ds.
\end{equation} In the last inequality we used
Lemma~\ref{l:CMPL35} and absorbed some of the constants and
terms there into the $C_T$ and $\vep_N$.  Chebychev's
inequality (recall \eqref{hnprops}) shows that for any
$K>0$,
\[\lim_{n\to\infty}\sup_{N\ge N(\vep_0)}\sup_{|x|\le K,s\le
T} P_s^N(h_n)(x)=0.\]
The tightness of $\{X_0^N\}$ now shows 
\begin{equation}\label{meantozero}
\lim_{n\to 0}\sup_{N\ge N(\vep_0)}\sup_{s\le T}X^N_0(P^N_s(h_n))=0,
\end{equation}
and so the integral on the righthand side of
\eqref{sqfnbound1} approaches $0$ as $n\to\infty$, uniformly
in $N$.  For each $N$ fixed it is elementary to use
\eqref{SMG2} to see that $\lim_nE(\langle
M^N(h_n)\rangle_T)=0$.  It now follows from Doob's strong
inequality $L^2$ and the above that %for any $K>0$,
\begin{equation}\label{MNtightness}
\lim_{n\to\infty}\sup_{N\ge N(\vep_0)}E(\sup_{t\le T}
M_t^N(h_n)^2)=0.
\end{equation}

To prove \eqref{hndecay}, by \eqref{hndecomp} it now clearly
suffices to show
\begin{equation}\label{YNtight} \lim_{n\to\infty}\sup_{N\ge
N(\vep_0)} P(\sup_{t\le T} |Y^N_t(h_n)|\ge \vep)=0.
\end{equation} This is (115) in the proof of Lemma~6.1 in
\cite{CMP} for the Lotka-Volterra model, and the proof given
there now goes through without change in our more general
setting. The required inputs are \eqref{DN3inc1},
\eqref{DN3inc2}, \eqref{DNoneinc}, \eqref{MNtightness},
Lemma~\ref{l:CMPL35}, and \eqref{meantozero}.
\qed

\subsection{Identification of the limit}

{\sl Proof of Theorem~\ref{t:SBM}.} By the $C$-relative
compactness, established above, it remains only to show that
the sequential limit points of $\{X^N\}$ coincide.  By the
Skorokhod representation theorem we may assume that for a
sequence $N_k\uparrow\infty,\ N_k\ge N(\vep_0)$, we have
 \[X^{N_k}\rightarrow X\in
C(R_+,M_F(\R^2))\quad\text{a.s.}\] We will take limits in
\eqref{SMG1} and \eqref{SMGsqfn} to see that the law of $X$
satisfies (MP), the martingale problem characterizing the
law of SBM$(X_0,,4\pi\sigma^2,\sigma^2,\Theta)$. To see
this, fix $\Phi\in C_b^3(\R^2)$. By Lemma 2.6 of
\cite{CDP00} we have
 \[\|A_N\Phi-(\sigma^2/2)\Delta\Phi\|_\infty\to 0\hbox { as
}N\to\infty,\] and therefore by
Proposition~\ref{p:mobnds}(a) for all $t>0$,
 \begin{equation}\label{easydr}\lim_{N\to\infty}E\Bigl(\Bigl|\int_0^t
X^{N}_s(A_N\Phi)\,ds-\int_0^tX^N_s((\sigma^2/2)\Delta\Phi)\,ds\Bigr|\Bigr)=0.\end{equation}
One now can use Propositions~\ref{p:d3asymp} and
\ref{p:sqmasymp}, and \eqref{easydr} to argue exactly as in
the proof of Proposition~3.2 of \cite{CP05} and take limits
along $\{N_k\}$ in \eqref{SMG1}, \eqref{SMG2} to see that
$X$ satisfies (MP) and so is
SBM$(X_0,,4\pi\sigma^2,\sigma^2,\Theta)$. (Only the last two
paragraphs of the proof there are used.) The other inputs
needed there are the $C$-relative compactness of
$\{X^N(\Phi)\}$, $\{M^N(\Phi)\}$, and $\{D^{N,j}(\Phi)\}$
for $j=1,2,3$, established in the proof of
Lemma~\ref{l:Phit}, as well as (take $s_1=0, s_2=T$ in
\eqref{sqfncontrol})
 \[E((\langle M^N(\Phi)\rangle_T)^2)\le C(T,\Phi)\ \
\text{for all }N.\] The latter allows one to conclude that
the limiting $M(\phi)$ is a martingale with the appropriate
square function.  \qed

\section{
Proof of Proposition~\ref{p:key2bnd}}\label{sec:Ibound}
We need an elementary 
bound for $p_t^N(x):=NP(B^{N,0}_t=x)$: 
\begin{equation}\label{pNtbound}
p^N_t(x)\le \frac{C_{\ref{pNtbound}}}{t}\text{ for all $t>0$, $x\in\SN$, $N\ge3$}.
\end{equation}
For example, see (A7) in \cite{CDP00}. 

Assume $\delta_N>0$ converges to zero, and also satisfies
\begin{equation}\label{deltacondn}\liminf_N\sqrt N\delta_N>0.
\end{equation}
Fix $T\ge 1$ and consider $0\le t\le T$.  Let
$p^{N,z}_s(x)=p^N_s(z-x)$ and
$\phi^z_s=p^{N,z}_{t-s+\delta_N}$.  Argue as in the proof of
Proposition~3.10 in Section~5.2 of \cite{CMP}, using the
semimartingale decomposition \eqref{SMG1} to see that for a
universal constant $C_{\ref{3.8bound1}}$ we have
\begin{equation}\label{3.8bound1}E\Bigl(\int\int1_{\{|x-y|\le\sqrt{\delta_N}\}}dX^N_t(x)dX^N_t(y)\Bigr)\le C_{\ref{3.8bound1}}[\cT_0+\cT_1+\cT_2+\cT_3],
\end{equation}
where
\begin{align*}
\cT_0&=\frac{\delta_N}{N}\sum_{z\in\SN}X_0^N(p^{N,z}_{t+\delta_N})^2=\int\int
\delta_Np^N_{2(t+\delta_N)}(y-x)dX^N_0(x)dX^N_0(y),\\
\cT_1&=\frac{\delta_N}{N}\sum_{z\in\SN}E(\langle
M(\phi^z)\rangle_{1,t}),\\
\cT_2&=\frac{\delta_N}{N}\sum_{z\in\SN}E(\langle
M(\phi^z)\rangle_{2,t}),\\
\cT_3&=\sum_{j=2}^3\frac{\delta_N}{N}\sum_{z\in\SN}E\Bigl(\Bigl(\int_0^t
d^{N,j}(s,\xi^N_s,\phi^z)\,ds\Bigr)^2\Bigr):=\sum_{j=2}^3\cT^{(j)}_{3}.
\end{align*}

By \eqref{pNtbound}, 
\begin{equation}\label{e:T0}
\cT_0\le \frac{C_{\ref{e:T0}}\delta_N}{t+\delta_N}\, X_0^N(\1)^2.
\end{equation}

For $\cT_2$, use \eqref{SMG2},
$|c^{N,a}(x,\xi^N_s)|=|d^{N,2}(x,\xi^N_s)|$,
$|c^{N,s}(x,\xi^N_s)|=|d^{N,3}(x,\xi^N_s)|$, and then
Chapman-Kolmogorov and \eqref{pNtbound}, to conclude that
\begin{align}\label{e:T2bnd} \nonumber
\cT_2&\le\frac{\delta_N}{N}E\Bigl(\int_0^t\frac{(\log
N)^3}{(N')^2}\sum_{x\in\SN}\sum_{z\in\SN}p^N_{t-s+\delta_N}(z-x)^2\Bigl[|d^{N,2}(x,\xi^N_s)|+|d^{N,3}(x,\xi^N_s)|\Bigr]\,ds\Bigr)\\
&\le C\frac{\delta_N}{N}(\log N)^3
E\Bigl(\int_0^t\frac{N}{(N')^2} (t-s+\delta_N)^{-1}
\sum_{x\in\SN}[|d^{N,2}(x,\xi^N_s)|+|d^{N,3}(x,\xi^N_s)|]\,ds\Bigr).
\end{align} From the bounds in \eqref{dNjbnd1} we have
\[
\frac{1}{N'}\sum_{x\in\SN} |d^{N,j}(x.\xi^N_s)|\le \frac{\Vert r\Vert}{N'}\sum_{x\in\SN}\sum_{e\in\bar \cN_N}\xi_s^N(x+e)= \Vert r\Vert|\bar\cN|X^N_s(\1).
\]
Use the above in \eqref{e:T2bnd} and conclude from
Proposition~\ref{p:mobnds}(a) that
\begin{align}\label{e:T2} \cT_2\le \frac{\delta_N(\log
N)^4}{N}
E\Bigl(\int_0^t\frac{C}{t-s+\delta_N}X^N_s(\1)\,ds\Bigr) \le
C_T\delta_N
X_0^N(\1)\log\Bigl(1+\frac{t}{\delta_N}\Bigr)\frac{(\log
N)^4}{N}.
\end{align}

Turning to $\cT_1$, from \eqref{SMG2} and
Chapman-Kolmogorov, we have
\begin{align}\label{e:T1bnd} \nonumber \cT_1&=\delta_N\log N
E\Bigl(\int_0^t\frac{1}{N'}p^N_{2(t-s)+2\delta_N}(0)\Bigr[\sum_{x,y\in\SN}p_N(y-x)(\xi^N_s(x)\hxi_s^N(y)+\hxi_s^N(x)\xi^N_s(y))\Bigl]\,ds\Bigr)\\
&\le C_{\ref{e:T1bnd}}\delta_N\log
N\int_0^t(t-s+\delta_N)^{-1}E\Bigl(\frac{1}{N'}\sum_{e\in\cN_N}\sum_{x\in\SN}p_N(e)\xi_s^N(x)\hxi^N_s(x+e)\Bigr)\,ds,
\end{align}
the last by \eqref{pNtbound}.  Set
$u_N=\frac{\delta_N}{2}\wedge (\log N)^{-p}$ for some $p\ge
11$.  Note that \eqref{deltacondn} shows that $u_N$
satisfies \eqref{uNdef}.  If $\xi^N\in\{0,1\}^\SN$, define
\[G^N(\xi^N)=\frac{1}{N'}\sum_{e\in\cN_N}\sum_{x\in\SN}p_N(e)\xi^N(x)\hxi^N(x+e),\]
and for $s\ge u_N$, let
\begin{align}\label{DeltaNdef}
\Delta_N(s)=\Bigl|E(G^N(\xi^N_s)|\cF^N_{s-u_N})-\widehat
E\Bigl(\frac{1}{N'}\sum_{e\in\cN_N}\sum_{x\in\SN}p_N(e)\xi_{s-u_N}^N(B^{N,x}_{u_N})\hat\xi^N_{s-u_n}(B^{N,x+e}_{u_N})\Bigr)\Bigr|.
\end{align}
The Markov property and coalescing duality for the voter
model show that
\begin{equation}\label{DeltaNdef2}
\Delta_N(s)=\Bigl|E_{\xi^N_{s-u_N}}(G^N(\xi^N_{u_N}))-E_{\xi^N_{s-u_N}}(G^N(\xi^{N,\VM}_{u_N}))\Bigr|.
\end{equation}
We assume that $\xi^N$, $\overline\xi^N$ (the biased voter
model with rates as in \eqref{bvoterratesdef}), and
$\xi^{N,\VM}$ are constructed as in \eqref{construct} and
\eqref{couple1}, all starting at $\xi^0_N$ (with finitely
many ones as usual).  Now use the elementary inequality
\begin{align}\label{prodbnd}
|\prod_{i=1}^m\xi^N(x_i)\prod_{i=m+1}^{m+k}\hxi^N(x_i)-\prod_{i=1}^m\eta^N(x_i)\prod_{i=m+1}^{m+k}&\widehat\eta^N(x_i)|\le\sum_{i=1}^{m+k}
|\xi^N(x_i)-\eta^N(x_i)|\\ \nonumber&\ \ \forall\,
\xi^N,\eta^N\in\{0,1\}^\SN,\ x_i\in\SN,
\end{align}
with $m=k=1$, and the coupling
$\xi^{N,\VM}_{u_N}\vee\xi^N_{u_N}\le \overline\xi^N_{u_N}$,
to conclude that for all $s\ge u_N$,
\begin{align}\label{e:Deltabnd} \nonumber \Delta_N(s)&\le
E_{\xi^N_{s-u_N}}\Bigl(|G^N(\xi^N_{u_N})-G^N(\xi^{N,\VM}_{u_N})|\Bigr)\\
\nonumber&\le
E_{\xi^N_{\xi^N_{s-u_N}}}\Bigl(\frac{1}{N'}\sum_{e\in\cN_N}p_N(e)
\sum_{x\in\SN}[(\overline\xi^N_{u_N}(x)-\xi^N_{u_N}(x))+
(\overline\xi^N_{u_N}(x+e)-\xi^N_{u_N}(x+e))]\\
\nonumber&\phantom{\le
  E_{\xi_0^N}\Bigl(\frac{1}{N'}\sum_{e\in\cN_N}p_N(e)\sum_{x\in\SN}}
+[(\overline\xi^N_{u_N}(x)-\xi^{N,\VM}_{u_N}(x))
+(\overline\xi^{N}_{u_N}(x+e)-\xi^{N,\VM}_{u_N}(x+e))]\Bigr)\\
&\le 2E_{\xi^N_{s-u_N}}(2\overline
X^N_{u_N}(\1)-X_{u_N}^N(\1)-X_{u_N}^{N,\VM}(\1))\le C(\log
N)^{3-p}X^N_{s-u_N}(\1).
\end{align}
The last inequality holds by Lemma~\ref{l:masscmps}. 
For small $s$ in \eqref{e:T1bnd} we will use the crude bound
\begin{equation}\label{e:4T1bnd}
\frac{1}{N'}\sum_{e\in\cN_N}\sum_{x\in\SN}p_N(e)\xi_s^N(x)\hxi^N_s(x+e)\le X^N_s(\1).
\end{equation}
Use \eqref{DeltaNdef}, \eqref{e:Deltabnd} and \eqref{e:4T1bnd} (the latter for $s\le u_N$) in \eqref{e:T1bnd} to see that
\begin{align}\label{e:T1decomp} \nonumber \cT_1\le &
C\delta_N\log N\int_0^{u_N\wedge
t}(t-s+\delta_N)^{-1}E(X_s^N(\1))\,ds+C\delta_N\log
N\int_{u_N}^{u_N\vee
t}(t-s+\delta_N)^{-1}E(\Delta_N(s))\,ds\\
\nonumber&+C\delta_N\log N\int_{u_N}^{u_N\vee
t}(t-s+\delta_N)^{-1}E\Bigl(\widehat
E\Bigl(\frac{1}{N'}\sum_{e\in\cN_N}\sum_{x\in\SN}p_N(e)\xi_{s-u_N}^N(B^{N,x}_{u_N})\hxi^N_{s-u_n}(B^{N,x+e}_{u_N})\Bigr)\Bigr)\,ds\\
\nonumber\le& C_T\delta_N\log N X^N_0(\1)\int_0^{u_N\wedge
t}(t-s+\delta_N)^{-1}\,ds +C\delta_N(\log
N)^{4-p}X^N_0(\1)\int_{u_N}^{u_N\vee
t}(t-s+\delta_N)^{-1}\,ds\\ \nonumber&+C\delta_N\log
N\int_{u_N}^{u_N\vee t}(t-s+\delta_N)^{-1}E\Bigl(\widehat
E\Bigl(\frac{1}{N'}\sum_{e\in\cN_N}\sum_{x\in\SN}p_N(e)\xi_{s-u_N}^N(B^{N,x}_{u_N})1(\sigma^N(x,x+e)>u_N)\Bigr)\Bigr)\,ds\\
:=&\cT_{1,1}+\cT_{1,2}+\cT_{1,3},
\end{align}
where the mean mass bound from Proposition~\ref{p:mobnds}
(a) is again used in the second inequality. For $s\le
u_N(\le \delta_N/2)$ we have $s\le \delta_N/2\le
(t+\delta_N)/2$ and so
\begin{equation}\label{e:T11bnd}
\cT_{1,1}\le C_T \delta_N\log N X_0^N(\1)u_N2(t+\delta_N)^{-1}\le C_TX_0^N(\1)\frac{\delta_N}{t+\delta_N}.
\end{equation}
We also have
\begin{align}\label{e:T12bnd}
\cT_{1,2}\le C\delta_N(\log N)^{4-p}X_0^N(\1)1(t>u_N)\log \Bigl(1+\frac{t}{\delta_N}\Bigr).
\end{align}
Next use translation invariance of the coalescing walks,
Proposition~\ref{p:mobnds} (a), and \eqref{CRWold} to obtain
\begin{align}\nonumber\cT_{1,3}&\le C\delta_N\log
N\int_{u_N}^{u_N\vee
t}(t-s+\delta_N)^{-1}E\Bigl(\frac{1}{N'}\sum_{w\in\SN}\xi^N_{s-u_N}(w)\\
\nonumber&\phantom{\le C\delta_N\log N\int_{u_N}^{u_N\vee
t}(t-s+\delta_N)}\times\sum_{e\in\cN_N}p_N(e)\sum_{x\in\SN}\widehat
P(B_{u_N}^{N,0}=w-x,\sigma^N(0,e)>u_N)\Bigr)\,ds\\
\nonumber&\le C\delta_N\log
N\Bigl[\sum_{e\in\cN_N}p_N(e)\widehat
P(\sigma^N(0,e)>u_N)\Bigr]\int_{u_N}^{t\vee
u_N}(t-s+\delta_N)^{-1}E(X^N_{s-u_N}(\1))\,ds\\
\nonumber&\le C_T\delta_NX_0^N(\1)\frac{\log N}{\log
(Nu_N)}1(t>u_N)\log\Bigl(\frac{t-u_N+\delta_N}{\delta_N}\Bigr)\\
\label{e:T13bnd}&\le C_T\delta_NX_0^N(\1)\log
\Bigl(1+\frac{t}{\delta_N}\Bigr).
\end{align}
In the last line we have used $\delta_N\ge cN^{-1/2}$ for
$N$ large (which we may assume), and hence the same for
$u_N$.  Use \eqref{e:T11bnd}-\eqref{e:T13bnd} in
\eqref{e:T1decomp} to conclude
\begin{equation}\label{e:T1} \cT_1\le
C_TX_0^N(\1)\Bigl[\delta_N\log\Bigl(1+\frac{t}{\delta_N}\Bigr)+\frac{\delta_N}{t+\delta_N}\Bigr].
\end{equation}

We next decompose $\cT^{(j)}_3$ as
\begin{align}\label{T3decomp}
\nonumber\cT^{(j)}_3\le&\frac{3\delta_N}{N}\sum_{z\in\SN}E\Bigl(\Bigl(\int_{u_N}^{t\vee
u_N}d^{N,j}(s,\xi^N_s,\phi^z)-E(d^{N,j}(s,\xi^N_s,\phi^z)|\cF^N_{s-u_N})\,ds\Bigr)^2\Bigr)\\
\nonumber&+\frac{3\delta_N}{N}\sum_{z\in\SN}E\Bigl(\Bigl(\int_{u_N}^{t\vee
u_N}E(d^{N,j}(s,\xi^N_s,\phi^z)|\cF^N_{s-u_N})\,ds\Bigr)^2\Bigr)\\
\nonumber&+\frac{6\delta_N}{N}\sum_{z\in\SN}E\Bigl(\int_0^{u_N\wedge
t}\Bigl[\int_0^{s_1}d^{N,j}(s_1,\xi^N_{s_1},\phi^z)d^{N,j}(s_2,\xi^N_{s_2},\phi^z)\,ds_2\Bigr]\,ds_1\Bigr)\\
:=&\cT^{(j)}_{3,1}+\cT^{(j)}_{3,2}+\cT^{(j)}_{3,3}.
\end{align}
Equation \eqref{dNjbnd1} implies,
\begin{equation}\label{dN*bound}
\frac{1}{N'}\sum_{x\in\SN}|d^{N,j}(x,\xi^N_s)|\le
\frac{C}{N'}\sum_{x\in\SN}\Bigl[\Bigl(\sum_{e\in\bar\cN_N}\xi^N_s(x+e)\Bigr)\Bigr]\le
C_{\ref{dN*bound}}X_s^N(\1).
\end{equation} Recall that
\begin{equation}\label{dN3formula}
d^{N,j}(s,\xi^N_s,\phi^z)=\frac{\ell_N^{(j)}}{N'}\sum_{x\in\SN}p^N_{t-s+\delta_N}(z-x)d^{N,j}(x,\xi^N_s).
\end{equation}

For $\cT^{(j)}_{3,3}$, use \eqref{dN3formula} (take absolute
values and use the triangle inequality), do the sum over $z$
first, and then apply Chapman-Kolmogorov and
\eqref{pNtbound} to conclude that
\begin{align}
\label{CKstep}\cT^{(j)}_{3,3}\le
\frac{6\delta_N}{N}\int_0^{u_N\wedge
t}\int_0^{s_1}&\frac{(\log
N)^6}{(N')^2}NC_{\ref{pNtbound}}(2(t+\delta_N)-s_1-s_2)^{-1}\\
\nonumber&\times\sum_{x_1\in\SN}\sum_{x_2\in\SN}E(|d^{N,j}(x_1,\xi^N_{s_1})|\,|d^{N,j}(x_2,\xi^N_{s_2})|)\,ds_2ds_1.
\end{align} An application of \eqref{dN*bound} and the
second moment bound %Proposition~\ref{p:mobnds} (a)
\eqref{stbnd} now gives
\begin{align}\label{T33bound} \nonumber \cT^{(j)}_{3,3}&\le
C\delta_N (\log N)^6 (X_0^N(\1)+X^N_0(\1)^2)
\int_0^{u_n}\int_0^{s_1}(2(t+\delta_N)-s_1-s_2)^{-1}\,ds_2ds_1\\
&\le C\delta_N(\log N)^{6-p}(X^N_0(\1)+X_0^N(\1)^2),
\end{align}
the last by a bit of calculus. 

For $\cT^{(j)}_{3,1}$, first use the orthogonality
\[E\Bigl(\prod_{i=1}^2(d^{N,j}(s_i,\xi^N_{s_i},\phi^z)-E(d^{N,j}(s_i,\xi^N_{s_i},\phi^z)|\cF^N_{s_i-u_N}))\Bigr)=0\text{
if }s_2-u_N>s_1\] to see that
\[\cT^{(j)}_{3,1}\le\frac{6\delta_N}{N}\sum_{z\in
\SN}E\Bigl(\int_{u_N}^{u_N\vee
t}\Bigl[\int_{s_1}^{s_1+u_N}\prod_{i=1}^2\Bigl(|d^{N,j}(s_i,\xi^N_{s_i},\phi^z)|+E\Bigl(|d^{N,j}(s_i,\xi^N_{s_i},\phi^z)|\Bigr|\cF^N_{s_i-u_N}\Bigr)\Bigr)\,ds_2\Bigr]ds_1\Bigr).\]
Argue just as in \eqref{CKstep}, but now with a different
product inside the integral, to conclude that
\begin{align*} \cT^{(j)}_{3,1}\le& C\delta_N(\log
N)^6\int_{u_N}^{u_N\vee
t}\Bigl[\int_{s_1}^{s_1+u_N}\frac{1}{N'}\sum_{x_1\in\SN}\frac{1}{N'}\sum_{x_2\in\SN}(2(t+\delta_N)-s_1-s_2)^{-1}\\
&\phantom{\frac{C\delta_N(\log N)^6}{N}\int_{u_N}^{u_N\vee
t}\Bigl[\int}\times
E\Bigl(\prod_{i=1}^2\Bigl[|d^{N,j}(x_i,\xi^N_{s_i})|+E(|d^{N,j}(x_i,\xi^N_{s_i})||\cF^N_{s_i-u_N})\Bigr]\Bigr)ds_2ds_1.
\end{align*}
Now bring the sums through the expectation and product and
apply \eqref{dN*bound} to conclude
\begin{align*} \cT^{(j)}_{3,1}\le &C\delta_N(\log
N)^6\int_{u_N}^{u_N\vee
t}\Bigl[\int_{s_1}^{s_1+u_N}(2(t+\delta_N)-s_1-s_2)^{-1}E\Bigl(\Bigl(X^N_{s_1}(\1)+E(X^N_{s_1}(\1)|\cF^N_{s_1-u_N})\Bigr)\\
&\phantom{C\delta_N(\log N)^6\int_{u_N}^{u_N\vee
t}(2(t+\delta_N)-s_1-s_2)}\times\Bigl(X^N_{s_2}(\1)+E(X^N_{s_2}(\1)|\cF^N_{s_2-u_N})\Bigr)\Bigr)\,ds_2\Bigr]ds_1\\
\le &C\delta_N(\log N)^6\int_{u_N}^{u_N\vee
t}\Bigl[\int_{s_1}^{s_1+u_N}(2(t+\delta_N)-s_1-s_2)^{-1}\\
&\phantom{C\delta_N(\log N)^6\int_{u_N}^{u_N\vee t}}\times
E((X^N_{s_1}(\1)+X^N_{s_1-u_N}(\1))(X^N_{s_2}(\1)+X^N_{s_2-u_N}(\1)))\,ds_2\Bigr]ds_1,
\end{align*}
where in the last we have used the Markov property and mean
mass bound from Propostion~\ref{p:mobnds}. The second moment
bound \eqref{stbnd} from the same result therefore shows
that
\begin{align}\label{T31bound}
\nonumber\cT^{(j)}_{3,1}\le & C_T\delta_N(X^N_0(\1)+X^N_0(\1)^2)(\log N)^6u_N\int_{u_N}^{u_N\vee t} \frac{1}{2(t-s_1+\delta_N)-u_N}\,ds_1\\
\nonumber\le &C_T\delta_N(X^N_0(\1)+X^N_0(\1)^2)(\log N)^{6-p}1(t>u_N)\log\Bigl(\frac{2(t+\delta_N)-3u_N}{2\delta_N-u_N}\Bigr)\\
\nonumber\le &C_T\delta_N(X^N_0(\1)+X^N_0(\1)^2)(\log N)^{6-p}\log\Bigl(\frac{2(t+\delta_N)}{\delta_N}\Bigr)\quad\text{ (recall $u_N\le \delta_N$)}\\
= &C_T(X^N_0(\1)+X^N_0(\1)^2)(\log N)^{6-p}\delta_N\Bigr[\log 2+\log\Bigl(1+\frac{t}{\delta_N}\Bigr)\Bigl].
\end{align}

Turning to $\cT^{(j)}_{3,2}$, for $j=2,3$ introduce
\begin{equation}\label{HNdef}H^{N,j}(\xi^N_0,x,u)=E_{\xi^N_0}(d^{N,j}(x,\xi^N_{u})),
\end{equation}
and recall from \eqref{dNjvm}
 that $\hat H^{N,j}(\xi^N_0,x,u):=E_{\xi^N_0}(d^{N,j}(x,\xi^{N,\VM}_{u}))$ satisfies
\begin{equation}\label{hatHNdef}
\begin{aligned} \hat
H^{N,2}(\xi^N_0,x,u_N)&=\sum_{\emptyset\neq
A\subset\cN_N}r^{N,a}(\sqrt N A)\widehat
E(I^{N,+}(x,u_N,A,\xi_0^N)),\\ \hat
H^{N,3}(\xi^N_0,x,u_N)&=\sum_{\emptyset\neq
A\subset\cN_N}r^{N,s}(\sqrt N A)\widehat
E((I^{N,+}-I^{N,-})(x,u_N,A,\xi_0^N)).
\end{aligned}
\end{equation}
The Markov property of $\xi^N$ and then Chapman-Kolmogorov, imply that for $j=2,3$, 
\begin{align*} &\cT^{(j)}_{3,2}\\
&=\frac{6\delta_N}{N}E\Bigl(\int_{u_N}^{t\vee
u_N}\Bigl[\int_{u_N}^{s_1}\frac{(\ell_N^{(j)})^2}{(N')^2}\sum_{x_1\in\SN}
\sum_{x_2\in\SN}\Bigl[\sum_{z\in\SN}\prod_{i=1}^2p^N_{t-s_i+\delta_N}(z-x_i)\Bigr]
\prod_{k=1}^2H^{N,j}(\xi^N_{s_k-u_N},x_k,u_N)\,ds_2\Bigr]ds_1\Bigr)\\
&=6\delta_N E\Bigl(\int_{u_N}^{t\vee
u_N}\Bigl[\int_{u_N}^{s_1}\frac{(\ell_N^{(j)})^2}{(N')^2}\sum_{x_1\in\SN}\sum_{x_2\in\SN}
p^N_{2(t+\delta_N)-s_1-s_2}(x_2-x_1)\prod_{k=1}^2H^{N,j}(\xi^N_{s_k-u_N},x_k,u_N)\,ds_2\Bigr]ds_1\Bigr).
\end{align*}
The bound \eqref{dN*bound} and Proposition~\ref{p:mobnds}(a) show that 
\begin{align}\label{sumHbnd}
\frac{1}{N'}\sum_{x\in\SN}|H^{N,j}(\xi^N_0,x,u_N)|\le
E_{\xi_0^N}\Bigl(\frac{1}{N'}\sum_{x\in\SN}|d^{N^j}(x,\xi^N_{u_N})|\Bigr)
\le CE_{\xi_0^N}(X^N_{u_N}(\1))\le
C_{\ref{sumHbnd}}X^N_0(\1).
\end{align}
Similarly we have
\begin{equation}\label{sumhatHbnd}
\frac{1}{N'}\sum_{x\in\SN}|\hat H^{N,j}(\xi^N_0,x,u_N)|\le C_{\ref{sumhatHbnd}}X^N_0(\1).
\end{equation}

Now use \eqref{dNjdiff}, the coupling from \eqref{couple1},
and the triangle inequality to conclude that
\[|d^{N,j}(x,\xi_{u_N}^N)-d^{N,j}(x,\xi_{u_N}^{N,\VM})|\le
C\sum_{y\in
x+\bar\cN_N}(2\overline\xi^N_{u_N}(y)-\xi_{u_N}^N(y)-\xi_{u_N}^{N,\VM}(y))\
\ \text{ for $j=2,3$}.\]
This implies that 
\begin{align}\label{Hincbound}
\nonumber\frac{1}{N'}\sum_{x\in\SN}|H^{N,j}(\xi^N_0,x,u_N)-\hat
H^{N,j}(\xi^N_0,x,u_N)|&\le CE(2\overline
X^N_{u_N}(\1)-\underline X^N_{u_N}(\1)-\underline
X^{N,\VM}_{u_N}(\1))\\ &\le C(\log N)^{3-p}X_0^N(\1).
\end{align}
In the last we used Lemma~\ref{l:masscmps}. For $j=2,3$,
define
\[\hat\cT^{(j)}_{3,2}=6\delta_N E\Bigl(\int_{u_N}^{t\vee
u_N}\Bigl[\int_{u_N}^{s_1}\frac{(\ell_N^{(j)})^2}{(N')^2}\sum_{x_1\in\SN}\sum_{x_2\in\SN}
p^N_{2(t+\delta_N)-s_1-s_2}(x_2-x_1)\prod_{k=1}^2\hat
H^{N,j}(\xi^N_{s_k-u_N},x_k,u_N)\,ds_2\Bigr]ds_1\Bigr).\]
Use \eqref{pNtbound}, and then \eqref{sumHbnd},
\eqref{sumhatHbnd} and \eqref{Hincbound}, to conclude
\begin{align*} |\cT^{(j)}_{3,2}-\hat \cT^{(j)}_{3,2}|&\le
C\delta_N\int_{u_N}^{t\vee
u_N}\int_{u_N}^{s_1}(\ell_N^{(j)})^2(2(t+\delta_N)-s_1-s_2)^{-1}\\
&\quad\times\frac{1}{N'}\sum_{x_1\in\SN}\frac{1}{N'}\sum_{x_2\in\SN}\Bigl|E\Bigl(\prod_{k=1}^2
H^{N,j}(\xi^N_{s_k-u_N},x_k,u_N)-\prod_{k=1}^2 \hat
H^{N,j}(\xi^N_{s_k-u_N},x_k,u_N)\Bigr)\Bigr|\,ds_2ds_1\\
&\le C\delta_N(\log N)^6\int_{u_N}^{t\vee
u_N}\int_{u_N}^{s_1}(2(t+\delta_N)-s_1-s_2)^{-1}
E\Bigl(\Bigl[\frac{1}{N'}\sum_{x_1\in\SN}|H^{N,j}(\xi^N_{s_1-u_N},x_1,u_N)|\Bigr]\\
&\phantom{\le C\delta_N(\log N)^6}\times
\Bigl[\frac{1}{N'}\sum_{x_2\in\SN}|H^{N,j}(\xi^N_{s_2-u_N},x_2,u_N)-\hat
H^{N,j}(\xi^N_{s_2-u_N},x_2,u_N)|\Bigr]\Bigr)\,ds_2sds_1\\
&\quad+C\delta_N(\log N)^6\int_{u_N}^{t\vee
u_N}\int_{u_N}^{s_1}(2(t+\delta_N)-s_1-s_2)^{-1}
E\Bigl(\Bigl[\frac{1}{N'}\sum_{x_2\in\SN}|\hat
H^{N,j}(\xi^N_{s_2-u_N},x_2,u_N)|\Bigr]\\ &\phantom{\le
C\delta_N(\log
N)^6}\times\Bigl[\frac{1}{N'}\sum_{x_1\in\SN}|H^{N,j}(\xi^N_{s_1-u_N},x_1,u_N)-\hat
H^{N,j}(\xi^N_{s_1-u_N},x_1,u_N)|\Bigr]\Bigr)\,ds_2sds_1\\
&\le C\delta_N(\log N)^6(\log N)^{3-p} \int_{u_N}^{t\vee
u_N}\int_{u_N}^{s_1}(2(t+\delta_N)-s_1-s_2)^{-1}E(X^N_{s_1-u_N}(\1)X^N_{s_2-u_N}(\1))ds_2ds_1\\
&\le C_T\delta_N(\log
N)^{-2}(X_0^N(\1)+X_0^N(\1)^2)\int_{u_N}^{t\vee
u_N}\int_{u_N}^{s_1}(2(t+\delta_N)-s_1-s_2)^{-1}ds_2ds_1.
\end{align*}
In the last line we have again used
Proposition~\ref{p:mobnds}(b). The above integral is
uniformly bounded in $N$ and $t\le T$, and so for $j=2,3$,
\begin{equation}\label{TvshatT}
|\cT^{(j)}_{3,2}-\hat \cT^{(j)}_{3,2}|\le C_T\delta_N(X_0^N(\1)+X_0^N(\1)^2)(\log N)^{-2}.
\end{equation}

Next consider $\hat \cT^{(3)}_{3,2}$. For
$i\in\{2,...,|\bar\cN|\}$ and $\emptyset\neq A\subset
\cN_N$, recall the notation $I^{N,\pm}_i$ from \eqref{INi},
and let
\[\hat H^{N}_{i,A}(\xi_0^N,x,u_N)=\widehat
E(I_i^{N,+}(x,u_N,A,\xi_0^N)-I_i^{N,-}(x,u_N,A,\xi_0^N)).\]
Then from \eqref{hatHNdef} and \eqref{INrep} we have
\begin{equation}\label{^HNdecompa} \hat
H^{N,3}(\xi_0^N,x,u_N)=\sum_{i=2}^{|\bar\cN|}\
\sum_{\emptyset\neq A\subset \cN_N}r^{N,s}(\sqrt N A)\hat
H^{N}_{i,A}(\xi_0^N,x,u_N).
\end{equation}
Let $i\in\{3,\dots,|\bar\cN|\}$ and $\emptyset\neq
A\subset\cN_N$.  On $\{B_{u_N}^{N,x+A}\subset \xi_0^N,\
B_{u_N}^{N,x+\bar\cN_N\setminus A}\subset \hxi_0^N,\
|B_{u_N}^{N,x+\bar\cN_N}|=i\}$, there are distinct points
$b_1,b_2,b_3 \in \bar\cN_N$ so that
$\xi_0^N(B_{u_N}^{N,x+b_1})=1$ and
$\sigma^N_x(b_1,b_2,b_3)>u_N$.  The same conclusion holds on
$\{B_{u_N}^{N,x+\bar\cN_N\setminus A}\subset \xi_0^N,\
B_{u_N}^{N,x+A}\subset \hxi_0^N,\
|B_{u_N}^{N,x+\bar\cN_N}|=i\}$.  It follows from
\eqref{normr} that
\begin{align}\label{ige3bnd}\nonumber\sum_{i=3}^{|\bar\cN|}\
\sum_{\emptyset\neq A\subset \cN_N}&|r^{N,s}(\sqrt NA)\hat
H^{N}_{i,A}(\xi_0^N,x,u_N)|\\ &\le
C\sum_{b_1,b_2,b_3\in\bar\cN_N\text{ distinct}}\widehat
E(\xi_0^N(B^{N,x+b_1}_{u_N})1(\sigma_x^N(b_1,b_2,b_3)>u_N)).
\end{align}
In the right-hand side of the above, sum over the possible
values of $B^{N,x+b_1}_{u_N}$, as in the proof of
Lemma~\ref{l:oldl5.2}(a), and then use \eqref{CRWold} with
$n=3$ and \eqref{pNtbound} to conclude that
\begin{align}\label{ell3ormorewithp}
\nonumber\frac{1}{N'}&\sum_{x_2\in\SN}p^N_{2(t+\delta_N)-s_1-s_2}(x_2-x_1)\Bigl[\sum_{i=3}^{|\bar\cN|}\
\sum_{\emptyset\neq A\subset\cN_N}|r^{N,s}(\sqrt NA)\hat
H^{N}_{i,A}(\xi^N_{s_2-u_N},x_2,u_N)|\Bigr]\\ &\le
C_{\ref{ell3ormorewithp}}(\log
N)^{-3}(2(t+\delta_N)-s_1-s_2)^{-1}X^N_{s_2-u_N}(\1).
\end{align} The same reasoning as above %(the analagous
result in CDM2010 is (88)) gives for all $s\ge u_N$,
\begin{equation}\label{ell3ormore}
\frac{1}{N'}\sum_{x\in\SN}\Bigl[\sum_{i=3}^{{|\bar\cN|}}\
\sum_{\emptyset\neq A\subset\cN_N}|r^{N,s}(\sqrt N A)\hat
H^{N}_{i,A}(\xi^N_{s-u_N},x,u_N)|\Bigr]\le
C_{\ref{ell3ormore}}(\log N)^{-3}X^N_{s-u_N}(\1).
\end{equation}

Next consider the contribution to $\hat\cT^{(3)}_{3,2}$ from
the $i=2$ term in \eqref{^HNdecompa}.  For
$u_N\le s_i\le t$, $x_i\in\SN$ ($i=1,2$), and $k=1,2$,
define
\[\Phi^{(s_{3-k},x_{3-k})}(s_k,x_k)=p^N_{2(t+\delta_N)-s_k-s_{3-k}}(x_k-x_{3-k})=p^N_{2(t+\delta_N)-s_1-s_2}(x_2-x_1).\]
By Lemma 2.1 of \cite{CMP}, we have 
\begin{equation}\label{LippN}
\Vert \Phi^{s_{3-k}x_{3-k}}(s_k,\cdot)\Vert_{\text{Lip}}\le C_{\ref{LippN}}(2(t+\delta_N)-s_1-s_2)^{-3/2}.
\end{equation}

For $k=1,2$, by the definitions of $\hat H^{N,3}_2$ in
\eqref{^HNidef} and $\hat H^N_{2,A}$, and \eqref{hNiAdef} we
have,
\begin{align}\label{hatH2psum} \nonumber
\frac{1}{N'}&\Bigl|\sum_{x_k\in
S_N}p^N_{2(t+\delta_N)-s_1-s_2}(x_2-x_1)\sum_{\emptyset\neq
A\subset\cN}r^{N,s}(\sqrt N A)\hat
H^{N}_{2,A}(\xi^N_{s_k-u_N},x_k,u_N)\Bigr|\\
\nonumber&=(\log N)^{-3}|\hat
H^{N,3}_2(\xi^N_{s_k-u_N},u_N,\Phi^{(s_{3-k},x_{3-k})}_{s_k})|\\
&\le
C_{\ref{hatH2psum}}(2(t+\delta_N)-s_1-s_2)^{-3/2}X^N_{s_k-u_N}(\1)(\log
N)^{-p/2},
\end{align}
where in the last line we used \eqref{dN32} and \eqref{LippN}. 
For each $\emptyset\neq A\subset\cN_N$ we choose  $a\in A$.  
A much cruder calculation than that above shows that,
\begin{align}\label{hatH2sum}
\nonumber\frac{1}{N'}&\sum_{x_1\in\SN}\sum_{\emptyset\neq
A\subset\cN}|r^{N,s}(\sqrt NA)\hat
H^N_{2,A}(\xi^N_{s_1-u_N},x_1,u_N)|\\ \nonumber&\le
\frac{\|r\|}{N'}\sum_{x_1\in\SN}\sum_{\emptyset\neq
A\subset\cN}\widehat
E\Bigl(\Bigl(\xi^N_{s_1-u_N}(B^{N,x_1+a}_{u_N})+\xi^N_{s_1-u_N}(B^{N,x_1}_{u_N})\Bigr)\\
\nonumber&\phantom{\le
{\frac{\|r\|}{N'}}\sum_{x_1\in\SN}\sum_{\emptyset\neq
A\subset\cN}\widehat
E\Bigl(}\times1(|B^{N,x_1+A}_{u_N}|=|B_{u_N}^{N,x_1+\bar\cN_N\setminus
A}|=1, B_{u_N}^{N,x_1+a}\neq B_{u_N}^{N,x_1})\Bigr)\\
\nonumber&\le
\frac{\|r\|}{N'}\sum_{w\in\SN}\xi^N_{s_1-u_N}(w)\sum_{x_1\in\SN}\Bigl[\widehat
P(B^{N,a}_{u_N}=w-x_1,\sigma^N(0,a)>u_N)\\
\nonumber&\phantom{\le
\frac{\|r\|}{N'}\sum_{w\in\SN}\xi^N_{s_1-u_N}(w)\sum_{x_1\in\SN}\Bigl[}+\widehat
P(B^{N,0}_{u_N}=w-x_1,\sigma^N(0,a)>u_N)\Bigr]\\ &=
2\|r\|X^N_{s_1-u_N}(\1)\widehat P(\sigma^N(0,a)>u_N)\le
C_{\ref{hatH2sum}}X^N_{s_1-u_N}(\1)(\log N)^{-1},
\end{align}
the last by \eqref{CRWold} with $n=2$ and $u_N\ge cN^{-1/2}$
(by\eqref{deltacondn}).  Now argue exactly as in the
derivation of (89) in \cite{CMP} following the derivation
there from just below (88) to (89).  Here the hypotheses
(85)-(88) of that argument are provided by
\eqref{hatH2psum}, \eqref{hatH2sum}, \eqref{ell3ormorewithp}
and \eqref{ell3ormore}, respectively.  From this derivation
we may conclude that
\begin{align}\label{hatT32bound} \nonumber|\hat
\cT^{(3)}_{3,2}|\le &6\delta_N(\log N)^6\int_{u_N}^{t\vee
u_N}\int_{u_N}^{s_1}\Bigl|E\Bigl(\frac{1}{N'}\sum_{x_1\in\SN}\frac{1}{N'}\sum_{x_2\in\SN}p^N_{2(t+\delta_N)-s_1-s_2}(x_2-x_1)\\
\nonumber&\phantom{6\delta_N(\log
N)^6\int_{u_N}}\times\prod_{k=1}^2\Bigr[\sum_{i=2}^{|\bar\cN|}\sum_{\emptyset\neq
A\subset\cN_N}r^{\N,s}(\sqrt NA)\hat
H^{N}_{i,A}(\xi^N_{s_k-u_N},x_k,u_N)\Bigr]\Bigr)\Bigr|\,ds_2ds_1\\
\nonumber\le &C\delta_N\int_{u_N}^{t\vee
u_N}\int_{u_N}^{s_1}\frac{E(X^N_{s_1-u_N}(\1)X^N_{s_2-u_N}(\1))}{(2(t+\delta_N)-s_1-s_2)^{3/2}}ds_2ds_1(\log
N)^{6-p/2}((\log N)^{-1}+(\log N)^{-3})\\
\nonumber&\quad+C\delta_N\int_{u_N}^{t\vee
u_N}\int_{u_N}^{s_1}E(X^N_{s_1-u_N}(\1)X^N_{s_2-u_N}(\1))(2(t+\delta_N)-s_1-s_2)^{-1}ds_2ds_1\\
\le &C_T\delta_N(X_0^N(\1)+X^N_0(\1)^2).
\end{align}
In the last line we have used \eqref{stbnd}.

The corresponding bound on $\hat \cT_{3,2}^{(2)}$ is much
simpler.  The analogue of \eqref{ige3bnd} is
\[\sum_{i=2}^{\bar\cN}\sum_{\emptyset\neq
A\subset\cN}|r^{N,a}(\sqrt N A)|\hat
E(I^{N,+}_i(x,u_N,A,\xi^N_0)) \le CX_0^N(1)(\log N)^{-1}.\]
Now proceed as in \eqref{ell3ormorewithp} using the supnorm
bound on $p^N_{2(t+\delta_N-s_1-s_2}(x_1-x_2)$ to see that
\begin{align}\nonumber |\hat\cT^{(2)}_{3,2}|&\le
C\delta_N\int_{u_N}^{t\vee
u_N}\int_{u_N}^{s_1}(2(t+\delta_N)-s_1-s_2)^{-1}E(X^N_{s_1-u_N}(1)X^N_{s_2-u_N}(1))\,ds_1ds_2\\
&\le C_T\delta_N(X_0^N(1)+X_0^N(1)^2).\label{hatT232}
\end{align}

Finally use the decompositions \eqref{3.8bound1} and
\eqref{T3decomp} along with the bounds on $\cT_0$, $\cT_2$,
$\cT_1$, $\cT_{3.3}$, $\cT_{3.1}$, $|\cT^{j)}_{3.2}-\hat
\cT^{(j)}_{3.2}|$, and $|\hat T^{(j)}_{3.2}|$ from
\eqref{e:T0}, \eqref{e:T2}, \eqref{e:T1}, \eqref{T33bound},
\eqref{T31bound}, \eqref{TvshatT}, \eqref{hatT32bound} and
\eqref{hatT232}, respectively, to see that
\[E\Bigl(\int\int1_{\{|x-y|\le\sqrt{\delta_N}\}}dX^N_t(x)dX^N_t(y)\Bigr)\le
C_T(X^N_0(\1)+X_0^N(\1)^2)\delta_N\Bigl(1+\log\Bigl(1+\frac{t}{\delta_N}\Bigr)+(t+\delta_N)^{-1}\Bigr).\]
The result follows by noting that
$\log\Bigl(1+\frac{t}{\delta_N}\Bigr)+(t+\delta_N)^{-1}$ is
bounded away from $0$ uniformly in $N$, $t>0$ and so we can
drop the initial $1$ on the right-hand side.  \qed

\section{Proof of Theorem~\ref{t:ThetaOP}}\label{sec:percsetup}
We assume the hypotheses of Theorem~\ref{t:ThetaOP}, and
work in the setting of Sections~\ref{sec:proofCCT} and
\ref{sec:couplingsemimart}, so that the monotone,
asymptotically symmetric voter model perturbation,
$\{\xi^{[\vep]}:0<\vep\le \vep_0\}$ is constructed as in
Proposition~\eqref{p:infinitesde}(a), along with it's
associated measure-valued process $X^N$ in
\eqref{genlXNdef}.  For real numbers $K_0>2$ and $L'>3$, let
$I=[-L',L']^2$, $\tilde I=(-K_0L',K_0L')^2$, and recall the
notation $I_{\pm e_i}=\pm L'e_i+[-L'+1,L'-1]^2$, $i=1,2$,
from \eqref{Hyp2}. Recall also that $N$ is chosen as in
\eqref{Ndefn}. We construct our killed processes
$\uxi^{[\vep]}$ as in \eqref{killsconstruct}, where killing
is done when $|x|\ge M_0:=\lfloor \sqrt NK_0L'\rfloor$
($x\in\Z^2$).  We define
$\underline{\xi}^N_t(x)=\underline{\xi}^{[\vep_N]}_{Nt}(\sqrt
N x)$ for $x\in\SN$ as in \eqref{ulinexiNdef}, and let
$\xi^N_t(x)=\xi^{[\vep_N]}_{Nt}(\sqrt Nx)$ for $x\in\SN$ as
in Section~\ref{sec:couplingsemimart}.  The killed measure
valued process $\underline{X}^N$ is defined as in
\eqref{killedmvd} and so the killing here is done for
$|x|\ge K_0L'$ ($x\in\SN$). By
Proposition~\ref{p:infinitesde} (a),(b) our processes are
therefore coupled so that
\begin{equation}\label{barorder}\uxi^{N}\le \xi^{N}\text{
and hence }\underline{X}^N\le X^N.
\end{equation}

Recall from Section~\ref{s:qvdrifttm} that $(P^N_t,t\ge0)$
is the semigroup of a rate $N$ random walk on $\SN$ with
step kernel $p_N$.  For $x\in\SN$ let $\tilde B^{N,x}$
denote a random walk starting at $x\in\SN$ with semigroup
$(P^N_t)$.

A key step in verifying condition \eqref{Hyp2} is to show that the killed and unkilled processes are close
on certain time scales through the following version of Lemma~8.1 of \cite{CMP}. We stress that for now  $K_0$ and $L'$ are arbitrary real parameters.  

\begin{lemma}\label{l:CMPL81} There is a positive constant
$c_{\ref{l:CMPL81}}$ and a nondecreasing
function $C_{\ref{l:CMPL81}}:\R_+\to \R_+$ so that if $t>0$, 
$K_0>2$ and $L'>3$, and 
$\underline{X}^N_0 = X^N_0$ 
is supported on $I$, then 
\begin{align}\label{e:CMPL81}
\nonumber E[ X^N_t(\1) - \underline{X}^N_t(\1)] \le X_0^N(\1)\Bigl[ c_{\ref{l:CMPL81}}&e^{c_{\ref{l:CMPL81}}t}
P\Bigl( \sup_{s\le t}|\tB^{N,0}_s|>(K_0-1)L' -3 \Bigr)\\
& + C_{\ref{l:CMPL81}}(t)
(1\vee X^N_0(\1))(\log N)^{-1/6}\Bigr].
\end{align}
\end{lemma}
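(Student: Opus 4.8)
The plan is to compare the killed process $\underline\xi^N$ with the unkilled $\xi^N$ by tracking, for each site, the "source" of its type under the graphical (Poisson) construction, and to show that the discrepancy $X^N_t(\1)-\underline X^N_t(\1)$ is controlled by the probability that some ancestral path reaches near the killing boundary $\partial\tilde I$, plus a small error. Since $\underline\xi^N_0=\xi^N_0$ is supported on $I=[-L',L']^2$ and killing occurs for $|x|\ge K_0L'$, a particle of the killed process can only differ from the corresponding unkilled particle if information has propagated a distance at least $(K_0-1)L'$ (up to the $O(1)$ range of $\cN_N$, hence the $-3$). The natural way to make this precise is the coupling already set up in Proposition~\ref{p:infinitesde} and \eqref{barorder}: we have $\underline\xi^N\le\xi^N$, so $E[X^N_t(\1)-\underline X^N_t(\1)]=E[X^N_t(\1)]-E[\underline X^N_t(\1)]$, and both means can be estimated via the semimartingale decomposition \eqref{SMG1} applied to $\xi^N$ and to the killed rates.

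The key steps, in order, would be: (1) Write the mean-mass evolution equations for $X^N$ and $\underline X^N$ using \eqref{SMG1} with $\Phi=\1$ (for $X^N$) and with $\Phi=\1$ restricted appropriately for the killed process, so the difference $\delta^N_t:=E[X^N_t(\1)-\underline X^N_t(\1)]$ satisfies a Gronwall-type inequality whose inhomogeneous term involves only the dynamics happening near or past the killing boundary. (2) Use the comparison with the $1$-biased voter model $\bar\xi^N$ (rates \eqref{bvoterratesdef}) and the voter model $\xi^{N,\VM}$ together with the mass comparison estimates of Lemma~\ref{l:masscmps} and the duality \eqref{dual1}: the leading contribution to $\delta^N_t$ from the voter part is bounded by the probability that a dual coalescing random walk started in $\tilde I^c$ (boundary region) reaches the support of $\xi^N_0\subset I$, which by translation invariance is dominated by $P(\sup_{s\le t}|\tilde B^{N,0}_s|>(K_0-1)L'-3)$ up to the factor $X_0^N(\1)$; the prefactor $c_{\ref{l:CMPL81}}e^{c_{\ref{l:CMPL81}}t}$ comes from the Gronwall constant, which is uniform in $N$ since the relevant rates have bounded mean-mass growth by Proposition~\ref{p:mobnds}(a). (3) Collect the genuinely $N$-dependent error terms — those arising from replacing $\xi^N$ by $\xi^{N,\VM}$ over the conditioning window $t_N$, from the drift terms $d^{N,j}$, and from the $\scrI^N$ bounds — and show they are all $O((\log N)^{-1/6}(1\vee X^N_0(\1)))$ after multiplying by $X^N_0(\1)$, using Corollary~\ref{c:keyint}, Lemma~\ref{l:d3H}, Lemma~\ref{l:dN3bnd}, and the total-mass bounds of Proposition~\ref{p:mobnds}; the exponent $1/6$ (rather than $1/2$) is a deliberate weakening to absorb the various logarithmic losses cleanly. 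This mirrors, nearly line for line, the proof of Lemma~8.1 in \cite{CMP} for Lotka-Volterra models, with \eqref{crude1}, \eqref{dN3cnd1}, and \eqref{dsThtX} substituting for the corresponding Lotka-Volterra estimates there.

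The main obstacle I expect is step (2): cleanly isolating the "boundary-crossing" probability from the walk dynamics while keeping the prefactor of the form $c\,e^{ct}X_0^N(\1)$ uniformly in $N$. The subtlety is that the killed dynamics are not simply a voter model with killing — there is a drift of order $(\log N)^3$ over the unrescaled time scale — so one cannot directly invoke the voter-model duality for $\underline\xi^N$ itself. The resolution is to first dominate $\underline\xi^N$ and $\xi^N$ by the $1$-biased voter model $\bar\xi^N$ (via \eqref{cNborder} and \eqref{SDEcoupling}), run the duality argument for the voter pieces, and then pay the price of the difference $\bar X^N-X^N$ and $\bar X^N - X^{N,\VM}$, which Lemma~\ref{l:masscmps} shows is $O((\log N)^{3-p})$ — negligible for $p$ large, in particular for the choice $t_N=(\log N)^{-19}$. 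The geometric estimate that a dual walk in $\tilde I^c$ cannot reach $I$ without displacement exceeding $(K_0-1)L'-3$ is elementary, and translation invariance of the walk (so the starting point of $\tilde B^{N,0}$ can be taken to be $0$) reduces it to the single probability appearing on the right-hand side of \eqref{e:CMPL81}. Everything else is bookkeeping of error terms that have already been bounded in Sections~\ref{s:qvdrifttm} and \ref{sec:Ibound}.
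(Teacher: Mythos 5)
Your outline has the right skeleton and correctly identifies the template (Lemma~8.1 of \cite{CMP}, adapted exactly as the paper does: couple via \eqref{barorder}, test against $\Psi(s,\cdot)=\underline P^N_{t-s}h$ for a cutoff $h$ vanishing near $\partial\tilde I$, extract the exit probability $P(\sup_{s\le t}|\tB^{N,0}_s|>(K_0-1)L'-3)$ from the initial-mass term and from the drift applied to $\1-\Psi$, and close with Gronwall). The terms you call $\cU_0$, $\cU_1$, $\cU_2$ in effect are handled as you describe.

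The genuine gap is in your steps (1) and (3), concerning the difference of the perturbation drifts $d^{N,j}(s,\xi^N_s,\Psi)-d^{N,j}(s,\uxi^N_s,\Psi)$ (the terms $\cU_3,\cU_4$ in the paper). This difference is \emph{not} localized near the killing boundary, and its control is \emph{not} "bookkeeping of error terms that have already been bounded in Sections~\ref{s:qvdrifttm} and \ref{sec:Ibound}" — those sections treat only the unkilled process. What is actually needed, and what the paper spends most of the proof on, is a killed analogue of the whole dual-process machinery: the killed coalescing walks $\uB^{N,x}$ and killed duality \eqref{kdualeq}, the killed quantities $\hat\uH^{N,j}$, a killed version of the small-time voter comparison (Lemma~\ref{l:d3H_}, which itself requires re-deriving the biased-voter mass comparisons \eqref{Diff1}--\eqref{Diff2} for killed processes), and then a five-term decomposition in which the exact drift asymptotics (Propositions~\ref{p:^HTht2}, \ref{p:^HTht}) applied to $\uxi^N_s$ produce the Gronwall feedback $\tilde K\int_0^t E[X^N_s(\1)-\uX^N_s(\1)]\,ds$, while the residual comparison of $\hat H^{N,j}$ with $\hat\uH^{N,j}$ is done by splitting $\tilde I$ into a boundary layer $\cA(t_N^{1/3})$ and its complement, using an $L^1$-type mean-measure bound on the layer. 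That boundary-layer estimate \eqref{H-uH2} is where the exponent $-1/6$ actually comes from; it is not a discretionary weakening of $-1/2$. Without this killed-dual apparatus your Gronwall inequality has no usable inhomogeneous term for $\cU_3,\cU_4$, so the proof as proposed does not close.
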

\noindent The Lemma is proved below but we first turn to the
main result of this Section.  Given Lemma~\ref{l:CMPL81},
the proof of Theorem~\ref{t:ThetaOP} is done just as the
proof of Lemma~6.2 of \cite{CP14} (for Lotka-Volterra
models). We outline the argument below for completeness.

\medskip

\noindent{\it Proof of Theorem~\ref{t:ThetaOP} (sketch).} Recall that we must show, after  perhaps reducing $\vep_0$,  that
\begin{align}\label{perceventbnd} \nonumber&\text{There are
$T'>1$, $K,J'\in\N$ with $K>2$, and $L'>3$, so that if
}0<\vep\le\vep_0,\\ \nonumber&\text{then for }
L=\lfloor\sqrt{N}L'\rfloor,\,\underline{X}^N_0([-L',L']^2)\ge
J'\text{ implies }\\ &P(\underline{X}_{T'}^N(I_{e})\ge
J'\text{ for all } e\in\{\pm e_i,i=1,2\})\ge
1-6^{-5(2K+1)^3}.
\end{align}
Note first that our hypotheses imply that
Theorem~\ref{t:SBMgen} holds. The limiting super-Brownian
motion in that result has drift $\Theta_2+\Theta_3$, which
is positive by hypothesis, and therefore will continue to
grow exponentially up to time $T'$ with high probability if
it has a large enough initial mass. It is therefore not hard
to prove an analogue of \eqref{perceventbnd} for this
limiting process (see (6.7) of \cite{CP14}).  By the
convergence theorem (Theorem~\ref{t:SBMgen}), the same bound
will hold for $X^N_{T'}$ if $N$ is large enough, and so
$\vep$ is sufficiently small. This is where we may need to
reduce $\vep_0$. Here we also use monotonicity to reduce to
the case where $X^N_0$ is finite and apply a subsequence
argument to assume these initial measures converge and so
the convergence theorem holds. To derive the same bound for
$\underline{X}^N$, and hence gain the necessary spatial
independence required for our comparison to oriented
percolation, we need to show $\underline{X}_{T'}^N$ is close
to $X_{T'}^N$.  This is where Lemma~\ref{l:CMPL81} is
needed.  The inputs required to carry out the proof of
Lemma~6.2 of \cite{CP14} are Lemma~\ref{l:CMPL81} and the
weak convergence of the rescaled process to SBM with a
positive drift, given here by Theorem~\ref{t:SBMgen}. Our
definition of $I_e$ for $e=\pm e_i$ is slightly different
from that in \cite{CP14} but it results in only a trivial
change.  Also we have been a bit more careful in choosing
integer parameters here. So once $K\in\N$ and $L'>3$ are
chosen as in the proof of Lemma~6.2 of \cite{CP14}, one
takes $N$ large and sets $L=\lfloor\sqrt{N}L'\rfloor$ (as in
\eqref{perceventbnd}), and then chooses $K_0=K_0(N)(>2)$ so
that $K_0L'\sqrt N=KL$. (In this way $K_0$ is comparable to
$K$.) This equality ensures killing for the unscaled process
outside $(-KL,KL)^2$ corresponds to killing
$\underline{X}^N$ outside $\tilde I$ as in
Lemma~\ref{l:CMPL81}. (Note that $K_0$ may depend on $N$ in
Lemma~\ref{l:CMPL81}.) The rest of the argument is now
identical to that of Lemma~6.2 in \cite{CP14}. \qed

\medskip

Recall from Section~\ref{s:qvdrifttm} that
$\{B^{N,x}:\,x\in\SN\}$ is a system of rate $w_NN$
coalescing random walks in $\SN$ with step kernel $p_N$.
Let $T'_x=\inf\{t\ge0:B_t^{N,x}\notin \tilde I\}$, let
$\Delta$ denote a cemetery state, and define a ``killed''
coalescing random walk system,
$\{\underline{B}^{N,x}_t,x\in\SN\}$, by
\[
\underline{B}^{N,x}_t = \begin{cases}
B^{N,x}_t&\text{if }t<T'_x\\
\Delta & \text{if }t\ge T'_x.
\end{cases}
\]
We define the killed random walk $\tilde\uB^{N,x}$ (recall
the step rate here is $N$ and there is no coalescing) in the
same way and denote its associated killed semigroup by
$(\uP^N_t,t\ge 0)$. Of course,
$\uB^{N,x}_t=\tilde\uB^{N,x}\equiv \Delta$ for all
$x\notin \tilde I$.  We will use the convention that
$\xi(\Delta)=0$ for all
$\xi\in\{0,1\}^\SN$.  
\medskip

\noindent{\sl{Proof of Lemma~\ref{l:CMPL81}.}} We follow the proof of
Lemma~8.1 in \cite{CMP} for $2$-dimensional Lotka-Volterra
models, but some modifications are needed. Assume $X^N_0$
(and hence $\xi^N_0=\uxi^N_0$) is supported on
$I=[-L',L']^2$ and $T'>0$.  Let $f:\SN\cup\{\Delta\}\to\R$
with $f(\Delta)=0$ and set $\Phi(s,x) = \uP^N_{t-s}f(x)$,
$s\le t$.  We will assume $t\in[0,T']$ in what follows. The
killed analogue of \eqref{SMG1} is derived as in the proof
of Lemma~3.2 of \cite{CP07} where a general class of voter
model perturbations is considered. The argument there uses a
different representation for the perturbation but applies to
our representations without change and gives (see the last
display on p. 113 of \cite{CP07})
\begin{equation}\label{CMP121}
\underline{X}^N_t(f) =
\underline{X}^N_0(\uP^N_{t}f)
+ \int_{0}^t \sum_{j=2}^3d^{N,j}(s,\underline{\xi}^N_s,\Phi)ds
+ \underline{M}^N_t(\Phi),
\end{equation}
where $\underline{M}^N_t(\Phi)$ is a square-integrable, mean
zero martingale.  Next, choose $h:\R^2\cup\{\Delta\}\to
[0,1]$ such that $h(\Delta)=0$ and
\[
[-K_0L'+3,K_0L'-3]^2\subset\{h=1\} \subset
\text{Supp}(h)\subset [-K_0L'+2,K_0L'-2]^2,
\quad |h|_{\Lip}\le 1,
\]
and define, for $s\le t$ and $x\in\SN$, $\Psi(s,x) =
\underline{P}^N_{t-s}h(x).$ By Lemma~8.4 in \cite{CMP} there
is a constant $C_{\ref{Psi1/2}}>0$ such that
\begin{equation}\label{Psi1/2}
\|\Psi\|_{N}\le C_{\ref{Psi1/2}}\ \ \text{ for all }N.
\end{equation}
By \eqref{semimartI} (with $c=0$ and $\Phi=\1$),
\eqref{CMP121} (with $\Phi=\Psi$ and $f=h$), the inequality
$h\le 1$ and $X^N_0=\uX^N_0$, we have
\begin{align}\label{CMP122}
\nonumber E[X^N_t(\1)-\underline{X}^N_t(\1)]& \le
E[X^N_t(\1)-\underline{X}^N_t(h)] \\ 
&=
X^N_0(\1-\uP^N_t(h)) + E\Big[
\int_0^t\sum_{j=2}^3(d^{N,j}(s,\xi^N_s,\1) -
d^{N,j}(s,\underline{\xi}^N_s,\Psi))\, ds\Big]. 
\end{align}
It follows that
\begin{equation}
E[ X^N_t(\1)-\uX^N_t(\1)] \le
\cU_0+\cU_1+\cU_2+\cU_3+\cU_4,
\end{equation}
where
\begin{align*}
\cU_0 &= X^N_0(\1-\uP^N_t(h)),\\
\cU_1 &= E\Big[\int_0^{t_N\wedge t}\sum_{j=2}^3 (d^{N,j}(s,\xi^N_s,\1)
-d^{N,j}(s,\uxi^N_s,\Psi))\,ds\Big],\\
\cU_2 &= E\Big[\int_{t_N}^{t\vee t_N}\sum_{j=2}^3d^{N,j}(s,\xi^N_s,\1-\Psi)\, ds\Big] ,\\
\cU_3 &= E\Big[\int_{t_N}^{t\vee t_N}(d^{N,2}(s,\xi^N_s,\Psi)
-d^{N,2}(s,\uxi^N_s,\Psi))\,ds\Big],\\
\cU_4 &= E\Big[\int_{t_N}^{t\vee t_N}(d^{N,3}(s,\xi^N_s,\Psi)
-d^{N,3}(s,\uxi^N_s,\Psi))\,ds\Big].
\end{align*}
The labeling matches that of (127) in \cite{CMP}.

We claim that there is a positive function
$C_{\ref{U1}}:(0,\infty)\to(0,\infty)$ and positive
constants $\tilde K$, $c_{\ref{U2}}$ such that for any
$t\le T'$, if $|\cdot|_\infty$ denotes the $L^\infty$ norm
on $\R^d$, then
\begin{align}
|\cU_0| &\le X^N_0(\1) P\Big(\sup_{u\le t}
|\tB^{N,0}_u|_\infty>(K_0-1)L'-3\Big)\label{U0},\\
|\cU_1|&\le C_{\ref{U1}}(T')X^N_0(\1)(\log N)^3 t_N \label{U1},\\
\nonumber|\cU_2|&\le X^N_0(\1)\Big[c_{\ref{U2}}e^{c_{\ref{U2}} t}\hat P\Big(\sup_{u\le t}
|{\tB}^{N,0}_u|_\infty > (K_0-1)L'-3\Big)\\
&\qquad\qquad\qquad+ C_{\ref{U1}}(T')(1\vee X^N_0(\1))(\log N)^{-1/2}\Big] \label{U2},\\
|\cU_j|&\le C_{\ref{U1}}(T')
(\log N)^{-1/6}(X^N_0(\1)+X^N_0(\1)^2)+
\tilde K \int_0^tE[X^N_s(\1)-\underline{X}^N_s(\1)]ds,\ j=3,4. \label{U4}
\end{align}
Assuming 
\eqref{U0}--\eqref{U4} for now, and recalling that $t_N=(\log N)^{-19}$,
 we see that for some function $C:(0,\infty)\to(0,\infty)$, and all $t\le
T'$,  
\begin{multline*} E[X^N_t(\1)-\uX_t^N(\1)] \le
X^N_0(\1)\Big[ c_{\ref{U2}}e^{c_{\ref{U2}} t}P\Bigl(
\sup_{s\le t}|{\tB}^{N,0}_s|_\infty>(K_0-1)L' -3 \Bigr)\\ +
C(T')(1\vee X^N_0(\1))(\log N)^{-1/6} \Big]+ 2\tilde K
\int_0^t E[X^N_s(\1)-\uX^N_s(\1)]ds.
\end{multline*}
By replacing $C(T')$ with $\inf_{T\ge T'}C(T)$ we may assume
that $C(\cdot)$ is non-decreasing. Now take $T'=t$ in the
above and use Gronwall's inequality to complete the proof of
Lemma~\ref{l:CMPL81}.

Thus, our remaining task in this section is to verify
\eqref{U0}--\eqref{U4}.  Equation \eqref{U0} is (128) of
\cite{CMP} (which uses the fact that $X_0^N$ is supported on
$I$ and so applies here as well).  Equation \eqref{U1}
follows from \eqref{dn3roughbnd} and its counterpart for
$\underline{\xi}^N$ (the proof is the same), the mean mass
bound Proposition~\ref{p:mobnds}(a), and $\underline{X}^N\le
X^N$.

The derivation of \eqref{U2} follows that of (130) in
\cite{CMP}. Since it does not involve $\underline{\xi}^N_t$
we can apply the bounds from Section~\ref{s:qvdrifttm}. We
have
\begin{align} \nonumber|\cU_2|\le&
\Big|E\Bigl(\int_{t_N}^{t\vee t_N}
\sum_{j=2}^3\big(E(d^{N,j}(s,\xi^N_s,1-\Psi)|\cF_{s-t_N})-\Theta^N_jX^N_{s-t_N}(1-\Psi_{s-t_N})\big)\,ds\Bigr)\Big|\\
\nonumber&\qquad+\Big|E\Bigl(\int_{t_N}^{t\vee t_N}
(\Theta^N_2+\Theta^N_3)X^N_{s-
t_N}(1-\Psi_{s-t_N})\,ds\Bigr)\Bigr|\\
\label{U2a}:=&|\Delta_1|+|\Delta_2|.
\end{align}
For $\Delta_1$, first use Proposition~\ref{r:conddriftbnd}
and \eqref{Psi1/2}, and then Corollary~\ref{c:keyint} and
Proposition~\ref{p:mobnds}(a) to see that for some
$C_{\ref{U2b}}(T')$ (and all $t\le T'$)
\begin {equation}\label{U2b}
|\Delta_1|\le C_{\ref{U2b}}(T')(\log N)^{-1/2}(X^N_0(\1)+X^N_0(\1)^2).
\end{equation}
For $\Delta_2$, use \eqref{Tht2lim}, \eqref{Tht3lim}, and
Lemma~\ref{l:CMPL35} (recall
$\Vert\Psi_{s-t_N}\Vert_{\Lip}\le C_{\ref{Psi1/2}}$ by
\eqref{Psi1/2} and take $T'\ge C_{\ref{Psi1/2}}$ to apply
Lemma~\ref{l:CMPL35}), and then use
$P^N_{s-t_N}\Psi_{s-t_N}\ge\underline{P}^N_{s-t_N}\Psi_{s-t_N}=\underline{P}^N_th$
to see that for some constants $c_i>0$,
\begin{align} \nonumber|\Delta_2|&\le c_1\int_{t_N}^{t\vee
t_N}e^{c_{\ref{firstmomeasbnd}}(t-s)}X_0^N(P^N_{s-t_N}(1-\Psi_{s-
t_N}))\,ds+C_{\ref{firstmomeasbnd}}(T')(\log
N)^{-1/2}(X^N_0(\1)+X_0^N(\1)^2)\\ \nonumber&\le
c_1\int_{t_N}^{t\vee
t_N}e^{c_{\ref{firstmomeasbnd}}(t-s)}X_0^N(1-\underline{P}^N_t
h)\,ds+C_{\ref{firstmomeasbnd}}(T')(\log
N)^{-1/2}(X^N_0(\1)+X_0^N(\1)^2)\\
\label{U2c}&\le
c_2e^{c_{\ref{firstmomeasbnd}}t}X_0^N(\1)P(\sup_{u\le
t}|\tB^{N,0}_u|_\infty>(K_0-1)L'-3)+C_{\ref{firstmomeasbnd}}(T')(\log
N)^{-1/2}(X^N_0(\1)+X_0^N(\1)^2),
\end{align}
where \eqref{U0} is used in the last line.  So by
\eqref{U2a}-\eqref{U2c}, we have \eqref{U2}.

We turn now to the more involved proof of \eqref{U4}.
Recall from Section~\ref{ss:compproc}, that $\xi_t^{N,\VM}$
denotes a rescaled voter model on $\SN$ with rate function
$Nw_Nc^{N,\VM}(x,\xi)$. Let $\uxi^{N,\VM}$ denote the
corresponding killed voter model, which has rate function
$Nw_Nc^{N,\VM}(x,\xi)1(x\in\tilde I)$ and initial condition
$\uxi_0^N$ supported on $\tilde I$. We will assume
$\xi^{N,\VM}$ has the same initial condition and so by the
monotonicity of the voter model, just as in
\eqref{barorder}, we may assume
\begin{equation}\label{votercouplin}\uxi^{N,\VM}\le
\xi^{N,\VM}.
\end{equation}
We will also use the following killed duality equation which
is a special case of (9.36) in \cite{CP08}:
\begin{align}\nonumber &\text{For $\uxi^N_0$ supported in $\tilde I$ and 
finite disjoint $A,B\subset \SN$,} \\
\label{kdualeq}
&E_{\uxi^N_0}\Big[\prod_{a\in A}\uxi^{N,\VM}_t(a)
\prod_{b\in B}(1- \uxi^{N,\VM}_t(b))\Big]
= \widehat E\Big[\prod_{a\in A}\uxi^N_0(\uB^{N,a}_t)
\prod_{b \in B}(1- \uxi^N_0(\uB^{N,b}_t))
\Big].
\end{align}

In view of the above we assume for now that $\uxi^N_0$ is
supported on the larger set $\tilde I$. Recalling
\eqref{hatHNdef}, for $u>0$ and $j=2,3$ we define
\begin{equation}
\hat
\uH^{N,j}(\uxi^N_0,x,u)=E_{\uxi^N_0}(d^{N,j}(x,\uxi_u^{N,\VM})).
\end{equation}
By \eqref{kdualeq} and just as in the derivation of
  \eqref{dNjvm},
  \begin{align*}
\hat\uH^{N,2}(\uxi^N_0,x,u)&= \sum_{\emptyset\neq
    A\subset   \cN_N}r^{N,a}(\sqrt NA)\widehat
  E\Big(\prod_{y\in 
  A}\uxi^N_0(\uB^{N,x+y}_{u})\prod_{z \in \bar\cN_N\setminus
  A}(1- \uxi^N_0(\uB^{N,x+z}_{u}))\Big)\\  
\hat\uH^{N,3}(\uxi^N_0,x,u)&= \sum_{\emptyset\neq A\subset
  \cN_N}r^{N,s}(\sqrt NA)\widehat E\Big(\prod_{y\in
  A}\uxi^N_0(\uB^{N,x+y}_{u})\prod_{z \in \bar\cN_N\setminus
  A}(1- \uxi^N_0(\uB^{N,x+z}_{u}))\\  
&\phantom{= \sum_{\emptyset\ne A\subsetneq \cN}
r^{\vep_N}_{|A|}}- \prod_{y\in A}(1-\uxi^N_0(\uB^{N,x+y}_{u}))
\prod_{z \in \bar\cN_N\setminus A}\uxi^N_0(\uB^{N,x+z}_{u})
\Big).
\end{align*}
With the definitions \eqref{^HN2} and \eqref{^HN3} in mind
we also introduce
\begin{align*}
\hat\uH^{N,j}(\uxi^N_0,u,\Psi_s) &=
\frac{\ell^{(j)}_N}{N'}\sum_{x\in \SN}\Psi(s,x)
\hat \uH^{N,j}(\uxi^N_0,x,u),\ j=2,3.
\end{align*}
As for \eqref{dN3rep1}, \eqref{kdualeq} implies for $j=2,3$,
\[
E_{\uxi^N_0}[d^{N,j}(s,\uxi^{N,\VM}_{u},\Psi)] =
\hat\uH^{N,j}(\uxi^N_0,u,\Psi_s) .
\]

The next result is a killed version of Lemma~\ref{l:d3H}. We
give the proof at the end of this section.
\begin{lemma}\label{l:d3H_} There is a constant
$C_{\ref{d3H_}}$ such that for $j=2,3$, all $T'>0$, $\Phi\in
C_b([0,T']\times\R^2)$ and all $s\in[t_N,T']$,
\begin{equation}\label{d3H_}
\Big|E_{\uxi^N_0}\big[d^{N,j}(s,\uxi^N_{s},\Phi)|\cF^N_{s-t_N}\big]
- \hat \uH^{N,j}(\uxi^N_{s-t_N},t_N,\Phi_{s-t_N}) \Big| \le
C_{\ref{d3H_}}\|\Phi\|_{1/2,N}(\log
N)^{-13/2}\uX^N_{s-t_N}(\1).
\end{equation}
\end{lemma}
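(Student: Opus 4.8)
\textbf{Proof proposal for Lemma~\ref{l:d3H_}.} The plan is to mimic the proof of Lemma~\ref{l:d3H}, tracking the extra killing. First I would invoke the Markov property at time $s-t_N$, so that it suffices to prove the bound for $s=t_N$ with a generic (finitely supported in $\tilde I$) initial condition $\uxi^N_0$; that is, I would estimate $|E_{\uxi^N_0}[d^{N,j}(t_N,\uxi^N_{t_N},\Phi)] - \hat\uH^{N,j}(\uxi^N_0,t_N,\Phi_{t_N})|$, noting $\Phi_{s-t_N}$ and $\Phi_{s}$ are interchanged via $|\Phi|_{1/2,N}\sqrt{t_N}$ at the cost of an $O(\|\Phi\|_{1/2,N}\sqrt{t_N}(\log N)^3\uX^N_0(\1))$ term (using \eqref{dNjsbnd}), which is absorbed since $\sqrt{t_N}(\log N)^3=(\log N)^{-13/2}$. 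The key comparison is then between $\uxi^N$ run for the short time $t_N$ and the killed voter model $\uxi^{N,\VM}$, both started from $\uxi^N_0$ and both coupled to the same $1$-biased (unkilled) voter model $\bxi^N$ via \eqref{votercouplin}, \eqref{barorder} and the coupling \eqref{couple1}. Using \eqref{dNjdiff} and the coupling $\uxi^{N,\VM}_{t_N}\vee\uxi^N_{t_N}\le \bxi^N_{t_N}$, the difference $|d^{N,j}(x,\uxi^N_{t_N})-d^{N,j}(x,\uxi^{N,\VM}_{t_N})|$ is bounded by $2\|r\|\sum_{y\in x+\bar\cN_N}(2\bxi^N_{t_N}(y)-\uxi^N_{t_N}(y)-\uxi^{N,\VM}_{t_N}(y))$, so after summing against $\Phi$ in \eqref{dNjsdef} the error is at most $C\|\Phi\|_\infty(\log N)^3\,E[2\bar X^N_{t_N}(\1)-\uX^N_{t_N}(\1)-\uX^{N,\VM}_{t_N}(\1)]$.

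The one genuinely new input is a killed analogue of Lemma~\ref{l:masscmps}: I would show $E[2\bar X^N_{t_N}(\1)-\uX^N_{t_N}(\1)-\uX^{N,\VM}_{t_N}(\1)]\le C(\log N)^{3-p}\,\uX^N_0(\1)$ with $u_N=t_N=(\log N)^{-19}$, i.e. $p=19$, giving $(\log N)^{3}\cdot(\log N)^{3-p}=(\log N)^{6-p}=(\log N)^{-13}$, comfortably within the claimed bound. This follows exactly as in Lemma~\ref{l:masscmps}: $E\bar X^N_s(\1)\le e^{(2+\underline p^{-1})\|r\|(\log N)^3 s}\uX^N_0(\1)$ holds by Lemma~4.1 of \cite{CP05} applied to the biased rates (killing only lowers the mass, so the same upper bound holds for the unkilled $\bxi^N$), while $E[\uX^N_s(\1)]\ge e^{-2\|r\||\cN|(\log N)^3 s}\uX^N_0(\1)$ by setting $\Phi=\1$ in the killed semimartingale decomposition \eqref{CMP121} and using \eqref{dNjsbnd}; the killed voter model $\uX^{N,\VM}(\1)$ is a supermartingale, so $E[\uX^{N,\VM}_{t_N}(\1)]\le \uX^N_0(\1)\le E[\bar X^N_{t_N}(\1)]$. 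Combining via the triangle inequality and a Taylor expansion of the exponentials at $s=t_N$ (since $(\log N)^3 t_N=(\log N)^{-16}\to 0$) yields the $(\log N)^{3-p}$ bound.

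Finally I would assemble the pieces: the left side of \eqref{d3H_} is at most the $\Phi_{s}\to\Phi_{s-t_N}$ swap error, plus $C\|\Phi\|_\infty(\log N)^{6-p}\uX^N_{s-t_N}(\1)$ from the voter-model comparison above (both after reapplying the Markov property), and $\hat\uH^{N,j}$ itself is literally $E_{\uxi^N_{s-t_N}}[d^{N,j}(s,\uxi^{N,\VM}_{t_N},\Phi_{s-t_N})]$ by the killed duality \eqref{kdualeq}, so no further approximation of $\hat\uH^{N,j}$ is needed at this stage (in contrast with Lemma~\ref{l:d3H} I do not re-use Lemma~\ref{l:dvmcomp} but re-derive the analogous comparison directly in the killed setting). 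With $p=19$ every error term is $O(\|\Phi\|_{1/2,N}(\log N)^{-13/2}\uX^N_{s-t_N}(\1))$, which is \eqref{d3H_}. I expect the only real subtlety to be the bookkeeping ensuring the killed processes are all driven by the common Poisson noise so that the pointwise orderings $\uxi^N\le\bxi^N$ and $\uxi^{N,\VM}\le\bxi^N$ genuinely hold simultaneously; this is handled by the SDE construction of Section~\ref{sec:couplingsemimart} together with \eqref{SDEcoupling} applied to the killed rate functions, exactly as in the unkilled case, and the verification of \eqref{compcond} for the killed rates is immediate since multiplying a rate by $1(x\in\tilde I)$ preserves the inequalities in \eqref{cNborder}.
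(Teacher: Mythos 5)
There is a genuine gap, and it lies in your choice of dominating process. You compare the killed processes $\uxi^N$ and $\uxi^{N,\VM}$ to the \emph{unkilled} biased voter model $\bxi^N$, and then claim a killed analogue of Lemma~\ref{l:masscmps} of the form $E[2\bar X^N_{t_N}(\1)-\uX^N_{t_N}(\1)-\uX^{N,\VM}_{t_N}(\1)]\le C(\log N)^{3-p}\uX^N_0(\1)$. This bound is false. Setting $f=\1$ in the killed decomposition \eqref{CMP121} gives
\begin{equation*}
E[\uX^N_{t}(\1)] = \uX^N_0(\uP^N_{t}\1)+\int_0^{t}E\Big[\sum_{j=2}^3 d^{N,j}(s,\uxi^N_s,\uP^N_{t-s}\1)\Big]ds,
\end{equation*}
and the leading term is $\uX^N_0(\uP^N_{t}\1)$, \emph{not} $\uX^N_0(\1)$; the deficit $\uX^N_0(\1-\uP^N_{t_N}\1)$ is the mass lost to killing at $\partial\tilde I$. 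Lemma~\ref{l:d3H_} must hold uniformly over $\uxi^N_{s-t_N}$ supported anywhere in $\tilde I$ (that is how it is applied after the Markov property), and for a configuration concentrated within $O(1/\sqrt N)$ of $\partial \tilde I$ a rate-$N$ walk exits $\tilde I$ by time $t_N$ with probability bounded below, so $\uX^N_0(\1-\uP^N_{t_N}\1)\ge c\,\uX^N_0(\1)$. Your lower bound $E[\uX^N_{t_N}(\1)]\ge e^{-c(\log N)^3t_N}\uX^N_0(\1)$ therefore fails, and after multiplying by $(\log N)^3$ your error term is of order $(\log N)^3\uX^N_0(\1)$ rather than $(\log N)^{-13}\uX^N_0(\1)$. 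The same defect appears in your treatment of the voter model: the supermartingale property of $\uX^{N,\VM}(\1)$ gives an upper bound where you need a lower one, and in fact $E[\uX^{N,\VM}_{t_N}(\1)]=\uX^N_0(\uP^N_{t_N}\1)$ exactly, with the same boundary loss.

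The repair is the one the paper makes: introduce the \emph{killed} biased voter model $\bar{\uxi}^N$ with rates $\bar c^{N,b}(x,\xi)1(x\in\tilde I)$, for which \eqref{compcond} follows by multiplying the inequalities in \eqref{cNborder} by $1(x\in\tilde I)$, so that $\uxi^N\le\bar{\uxi}^N$ and $\uxi^{N,\VM}\le\bar{\uxi}^N$. Then in the difference of the mean-mass formulas (the analogue of \eqref{meankbvm} minus \eqref{CMP121}) the terms $\uX^N_0(\uP^N_{t_N}\1)$ cancel exactly, and what remains is an integral over $[0,t_N]$ of drift-type terms bounded by $C(\log N)^3 E[\bar X^N_s(\1)+X^N_s(\1)]$, yielding $E[\bar{\uX}^N_{t_N}(\1)-\uX^N_{t_N}(\1)]\le C(\log N)^{-16}X^N_0(\1)$ and likewise for $\uxi^{N,\VM}$. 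With that substitution the rest of your argument (the $\Phi_s\to\Phi_{s-t_N}$ swap costing $\|\Phi\|_{1/2,N}\sqrt{t_N}(\log N)^3=(\log N)^{-13/2}\|\Phi\|_{1/2,N}$, the use of \eqref{dNjdiff} against the common dominating process, the identification of $\hat\uH^{N,j}$ via \eqref{kdualeq}, and the final application of the Markov property) is correct and matches the paper.
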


Let us assume again that $\xi^N_0=\uxi^N_0$ is supported on
the smaller set $I$ (as opposed to $\tilde I$), and for
$j=2,3$ write $\cU_{j+1}=\sum_{i=1}^5\cV_{j,i}$, where
\begin{align}
\cV_{j,1} &= E\Big[\int_{t_N}^{t_N\vee t}[d^{N,j}(s,\xi^N_s,\Psi)
- \Theta^N_jX^N_{s-t_N}(\Psi)]ds\Big],\\
\cV_{j,2}&=
\Theta^N_jE\Big[\int_{t_N}^{t_N\vee t}[X_{s-t_N}(\Psi) -
\uX_{s-t_N}(\Psi)] ds\Big],\\
\cV_{j,3}&=
E\Big[\int_{t_N}^{t_N\vee t}[
\Theta^N_j\uX_{s-t_N}(\Psi) -
\hat H^{N,j}(\uxi^N_{s-t_N},t_N,\Psi_{s-t_N})]ds\Big],\\
\cV_{j,4}&=
E\Big[\int_{t_N}^{t_N\vee t}[
\hat H^{N,j}(\uxi^N_{s-t_N},t_N,\Psi_{s-t_N})-
\hat\uH^{N,j}(\uxi^N_{s-t_N},t_N,\Psi_{s-t_N})]ds\Big],\\
\cV_{j,5}&=
E\Big[\int_{t_N}^{t_N\vee t}[
\hat\uH^{N,j}(\uxi^{N}_{s-t_N},t_N,\Psi_{s-t_N})
- d^{N,j}(s,\uxi^N_s,\Psi)]ds.
\end{align}
We bound these one at a time.

By \eqref{dsThtX} and  \eqref{keyint},
\begin{align}
|\cV_{j,1}| &\le  C_{\ref{dsThtX}}\|\Psi\|_{N}
\int_{0}^{(t-t_N)^+}
 \Bigl(\frac{1}{t_N\log 
  N}E[\scrI^N(\xi^N_{s})]+(\log
N)^{-1/2}E[X_{s}^N(\1)]\Bigr)ds\label{U41bnd0}\\
&\le C_{\ref{dsThtX}} \|\Psi\|_{N}\Big[C_{\ref{keyint}}(T')  (\log
N)^{-1/2} \Big(X^N_0(\1)+X^N_0(\1)^2\Big) + (\log
N)^{-1/2}\int_0^{(t-t_N)^+}E[X^N_s(\1)]ds \Big]\nonumber\\
&\le C_{\ref{U41bnd}}(T')(\log N)^{-1/2}
\Big(X^N_0(\1)+X^N_0(\1)^2\Big), \label{U41bnd}
\end{align}
where we have used
Proposition~\ref{p:mobnds}(a) and \eqref{Psi1/2} in the last
step.  
Recalling that $\uxi^N_t\le \xi^N_t$, letting
$\tilde K=\sup_N (|\Theta^N_2|+|\Theta^N_3|)<\infty$ 
  and using 
$\|\Psi\|_\infty\le 1$,  we get
\begin{equation}\label{U42bnd}
|\cV_{j,2}| \le \tilde K 
\int_{0}^{(t-t_N)^+}E[X_{s}(\1) -
\uX_{s}(\1)] ds. 
\end{equation}
Change variables in $\cV_{j,3}$ to rewrite it as an integral over
$[0,(t-t_N)^+]$, set $u=s$ and replace $\xi^N_0$ with $\uxi^N_{s}$
in  Proposition~\ref{p:^HTht2} and Proposition~\ref{p:^HTht}
to obtain 
\[
|\cV_{j,3}| \le
C(T')\|\Psi\|_{N}
\int_{0}^{(t-t_N)^+}
 \Bigl(\frac{1}{t_N\log 
  N}E[\scrI^N(\uxi^N_{s})]+(\log
N)^{-1/2}E[\uX_{s}^N(\1)]\Bigr)ds,
\]
which is the same as the right-hand side of \eqref{U41bnd0},
but with $\uxi^N_s$ instead of $\xi^N_s$. Since $\uxi^N_s\le
\xi^N_s$, \eqref{U41bnd} gives
\begin{equation}\label{U43bnd}
|\cV_{j,3}| \le
C_{\ref{U41bnd}}(\log N)^{-1/2}
\Big(X^N_0(\1)+X^N_0(\1)^2\Big).
\end{equation}
By \eqref{Psi1/2}, Lemma~\ref{l:d3H_}, $\uX^N(\1)\le X^N(\1)$ and
Proposition~\ref{p:mobnds}(a),
\begin{equation}\label{U45bnd}
|\cV_{j,5}| \le C_{\ref{d3H_}}\|\Psi\|_{1/2,N}
(\log N)^{-13/2}\int_0^{(t-t_N)^+} E[\uX^N_s(\1)]ds
\le C_{\ref{U45bnd}}(T')(\log N)^{-13/2}X^N_0(\1). 
\end{equation}

Turning to $\cV_{j,4}$, we use $\Psi_s\equiv 0$ on $(\tilde I)^c$ and 
$\|\Psi\|_\infty \le 1 $
to obtain
\begin{equation}
\Big|\hat H^{N,j}(\uxi^N_{s},t_N,\Psi_{s})-
\hat\uH^{N,j}(\uxi^N_{s},t_N,\Psi_{s})\Big|
\le \frac{(\log N)^3}{N'}\sum_{x\in \tilde I}
|\hat H^{N,j}(\uxi^N_s,x,t_N)
-\hat\uH^{N,j}(\uxi^N_s,x,t_N)|.
\label{H-uH}
\end{equation}
For $x\in \SN$ we may use \eqref{dNjdiff}, the coupling
\eqref{votercouplin}, and then duality (recall
\eqref{kdualeq}) to see that
\begin{align}
\nonumber\Big|
\hat H^{N,j}(\uxi^N_s,x,t_N)- \hat\uH^{N,j}(\uxi^N_s,x,t_N) \Big|
\nonumber&=\Big|E_{\uxi^N_s}[d^{N,j}(x,\xi_{t_N}^{N,\VM})-d^{N,j}(x,\uxi_{t_N}^{N,\VM})]\Big|\\
\nonumber&\le 2\|r\|E_{\uxi^N_s}\Bigl[\sum_{y\in\bar\cN_N}(\xi^{N,\VM}_{t_N}(x+y)-\uxi^{N,\VM}_{t_N}(x+y))\Bigr]\\
\label{hatHinc}&=2\|r\|\sum_{y\in\bar\cN_N}\hat E[\uxi^N_s( B^{N,x+y}_{t_N})-\uxi^N_s( {\underline{B}}^{N,x+y}_{t_N})].
\end{align}
Now define
\[
\cA(\delta) = \{x\in\SN\cap \tilde I: \inf_{y\in(\tilde I)^c}|x-y|\le
\delta\} \ \ \text{and}\ \ 
\cA(\delta)' = \{x\in\SN\cap \tilde I: \inf_{y\in(\tilde I)^c}|x-y|>
\delta\} .
\]
We will decompose the sum in \eqref{H-uH} into sums over 
$x\in\cA(t_N^{1/3})$ and $x\in \cA(t_N^{1/3})'$.
By \eqref{hatHinc}, 
\begin{align}
\frac{(\log N)^3}{N'}&\sum_{x\in\cA(t_N^{1/3})}
|\hat H^{N,j}(\uxi^N_s,x,t_N)- \hat \uH^{N,j}(\uxi^N_s,x,t_N)|
\nonumber\\
&\le 2\|r\|\frac{(\log N)^3}{N'}\sum_{x\in\cA(t_N^{1/3})}
\hat E\Bigl[\sum_{y\in\bar\cN_N}\uxi^N_s(B^{N,x+y}_{t_N})\Bigr]\nonumber\\
&=2\|r\|\frac{(\log N)^3}{N'}\sum_{x\in\cA(t_N^{1/3})}
\sum_{w\in\SN}\uxi^N_s(w)
\sum_{y\in\bar\cN_N}\hat P(B^{N,y}_{t_N}=w-x)\nonumber\\
&\le 2\|r\| \frac{(\log N)^3}{N'}
\Big[ \sum_{w\notin\cA(2t_N^{1/3})}\uxi^N_s(w)
\sum_{y\in\bar\cN_N}\hat
P(|B^{N,y}_{t_N}|_\infty > t_N^{1/3})
+ |\bar\cN|\sum_{w\in\cA(2t_N^{1/3})}\uxi^N_s(w)\Big]
\nonumber\\
&\le C_{\ref{H-uH1}} |\bar\cN|(\log N)^3 \Big[X^N_s(\1) (N^{-1}+t_N)t_N^{-2/3} +
X^N_{s}(\cA(2t_N^{1/3}))\Big]\label{H-uH1},
\end{align}
for some constant $C_{\ref{H-uH1}}$. In the last line we
used $\uX^N\le X^N$ and Chebychev's inequality, and in the
next to last inequality we noted that for
$w\notin\cA(2t_N^{1/3})$ and $x\in\cA(t_N)^{1/3}$, we must
have $|w-x|>t_N^{1/3}$.  To bound
$E[X^N_{s}(\cA(2t_N^{1/3}))]$ we will need a bound on the
mean measure similar to Lemma~\ref{l:CMPL35}, but now in
terms of the $L^1$ norm of $\Psi:\SN\to R_+$,
$\|\Psi\|_1=\frac{1}{N}\sum_{x\in\SN}\Psi(x)$.  The result
we need is
\[E(X^N_s(\Psi))\le X_0^N(P^N_s(\Psi))+C_TX^N_0(\1)(\log
N)^3\|\Psi\|_1\Bigl(1+\log\Bigl(1\vee\frac{1}{\|\Psi\|_1}\Bigr)\Bigr)\quad\forall
s\le T'.\] This follows easily from \eqref{semimartI} (with
$c=0$) and \eqref{dNjbnd1} as in the proof of Lemma~8.6 of
\cite{CMP}.  Use this with
$\Psi=1\bigl\{\cA(2t_N^{1/3})\bigr\}$ to take means in
\eqref{H-uH1}, recalling that $X^N_0$ is supported on $I$ to
bound $X^N_0(P^N_s(\Psi))$, and obtain
\begin{equation}
\frac{(\log N)^3}{N}\int_0^{(t-t_N)^+} \sum_{x\in\cA(t_N^{1/3})}
E\Big[|\hat H^{N,j}(\uxi^N_s,x,t_N)- \hat\uH^{N,j}(\uxi^N_s,x,t_N)|\Big]
ds
 \le C_{\ref{H-uH2}}(T') X^N_0(\1) (\log N)^{-1/6}.\label{H-uH2}
\end{equation}
See the derivation of (141) in \cite{CMP} for the details.

If $x\in \cA(t_N^{1/3})'$ and $y\in\bar\cN_N$, then
$\uxi^{N}_{s}(B_{t_N}^{N,x+y})-\uxi^{N}_{s}(\underline{B}_{t_N}^{N,x+y})\neq
0$ implies that $B^{N,x+y}$ exits $\tilde I$ before $t_N$,
and hence moves a distance exceeding $t_N^{1/3}$ from $x$ by
time $t_N$.  Therefore \eqref{hatHinc} implies
\begin{align*}
\Big|
\hat H^{N,j}(\uxi^N_s,x,t_N,)- \hat\uH^{N,j}(\uxi^N_s,x,t_N) \Big|
\le2\|r\|\sum_{y\in \bar\cN_N}\widehat E\Big[\uxi^N_s(B^{N,x+y}_{t_N})
1\Big\{ \sup_{u\le t_N}|B^{N,x+y}_u-x|>t_N^{1/3}
\Big\}\Big].
\end{align*}
With this bound, using Proposition~\ref{p:mobnds}(a), we have
\begin{align}
\frac{(\log N)^3}{N'}&\int_0^{t-t_N^+}\sum_{x\in\cA(t_N^{1/3})'}
E\Big[\big|\hat H^{N,j}(\xi^N_s,x,t_N)- \hat
\uH^{N,j}(\xi^N_s,x,t_N)\big|\Big]ds
\nonumber\\
&\le \frac{2\|r\|(\log N)^3}{N'}\int_0^t
E\Big[\sum_{w\in\SN}\sum_{x\in\cA(t_N^{1/3})'}
\sum_{y\in\bar\cN_N} 
\uxi^N_s(w)\widehat P\Big(B^{N,y}_{t_N}=w-x,
\sup_{u\le t_N}|B^{N,y}_u|>t_N^{1/3}\Big)\Big]ds
\nonumber\\
&\le 2\|r\|(\log N)^3\sum_{y\in\bar\cN_N}\widehat P(
\sup_{u\le t_N}|B^{N,y}_u|>t_N^{1/3})
\int_0^tE[\uX^N_s(\1)]ds\nonumber \\
&\le2\|r\| C_{\ref{p:mobnds}}tX^N_0(\1)(\log N)^3 \sum_{y\in\bar\cN_N}\widehat P(
\sup_{u\le t_N}|B^{N,y}_u|^2>t_N^{2/3})\nonumber \\
&\le C_{\ref{H-uH3}}(T')X^N_0(\1)(\log N)^{-10/3},\label{H-uH3}
\end{align}
where the weak $L^1$ inequality for nonnegative submartingales is used in the last. 
Use the above and \eqref{H-uH2} in \eqref{H-uH} to
%Combining \eqref{H-uH}, \eqref{H-uH1} and \eqref{H-uH2} we
obtain
\begin{equation}\label{U44bnd}
|\cV_{j,4}| \le C_{\ref{U44bnd}}(T')(\log N)^{-1/6}X^N_0(1).
\end{equation}
%for a constant $C_{\ref{U44bnd}}>0$ depending on $T'$. 
The inequalities \eqref{U41bnd}, \eqref{U42bnd},
\eqref{U43bnd}, \eqref{U45bnd} and \eqref{U44bnd} imply 
\eqref{U4}, and we are done.
\qed

\medskip

\noindent{\sl{Proof of Lemma~\ref{l:d3H_}.}} From
Section~\ref{ss:compproc} we may recall the bounding biased
voter model $\bar{\xi}^N$ with rate function ${\bar
c}^{N,b}$ as in \eqref{bvoterratesdef}, constructed as in
\eqref{construct}.  We then construct the killed version of
$\bar{\xi}$, denoted $\bar{\uxi}$, using the same equation
with rate function
\[{\bar{\underline{c}}}^{N,b}(x,\xi)={\bar
c}^{N,b}(x,\xi)1(x\in \tilde I).\] We assume all these
processes have the same initial state $\uxi^N_0=\xi^N_0$,
supported on $\tilde I$, as our previous processes. In
Section~\ref{ss:compproc} we verified condition
\eqref{compcond} for the rates of $(\xi^N,\bxi^N)$ and for
the rates of $(\xi^{N,\VM},\bxi^N)$.  The same condition is
then immediate for the corresponding killed processes as one
simply multiplies the required inequalities by $1(x\in\tilde
I)$.  So we may now apply \eqref{SDEcoupling} for the killed
processes to conclude that
\begin{equation}\label{orderings} \uxi^N\le
{\bar{\uxi}}^N\quad\text{and}\quad \uxi^{N,\VM}\le
{\bar{\uxi}}^N.
\end{equation}
If
$\bar{\underline{X}}_t=\frac{1}{N'}\sum_{x\in\SN}{\bar{\uxi}}^N_t(x)\delta_x$  and $c_0=\Vert r\Vert(2+\underline{p}^{-1})$, 
then we may use the last display on p. 113 of \cite{CP07} to
conclude that
\begin{equation}\label{meankbvm}
E[\bar{\underline{X}}_t^N(\1)]={\underline
X}^N_0({\underline P}^N_t(\1))+c_0\int_0^t\frac{(\log
N)^3}{N'}\sum_{x\in\SN}{\underline P}^N_{t-s}(\1)(x)(1-{\bar
\uxi}^N_s(x)\sum_{y\in\cN_N}{\bar\uxi}^N_s(x+y)\,ds.
\end{equation}
To see this, first note that the setting in \cite{CP07} is
for a general class of voter model perturbations. This
includes our biased voter model with (the notation is from
\cite{CP07}) $\delta_N\equiv 0$, and
$\beta_N(\{y\})=c_0(\log N)^3$ for $y\in \cN_N$, while
$\beta_N$ is zero otherwise.  The above formula then follows
from p. 113 of \cite{CP07}. Now take the difference of
\eqref{meankbvm} with \eqref{CMP121} and use \eqref{crude1}
to conclude (recall \eqref{orderings})
\begin{align} \nonumber E[{\bar{\underline
X}}_{t_N}(\1)-{\underline X}^N_{t_N}
(\1)]&=E\Bigl[\int_0^{t_N}\Bigl(c_0\Bigl\{\frac{(\log
N)^3}{N'} \sum_{x\in\SN}{\underline
P}^N_{t_N-s}(\1)(x)(1-{\bar
\uxi}^N_s(x))\sum_{y\in\cN_N}{\bar\uxi}^N_s(x+y)\Bigr\}\\
&\phantom{=E\Bigl[\int_0^{t_N}\Bigl\{}\
-\sum_{j=2}^3d^{N,j}(x,\uxi^N_s,\1)\Bigr)\,ds\Bigr]\\
\nonumber&\le c_0\int_0^{t_N} |\bar\cN|(\log N)^3
E[{\bar{X}}^N_s(\1)+X^N_s(\1)]\,ds\\ \nonumber&\le
c_0|\bar\cN|(\log N)^3\int_0^{t_N}[e^{c_0(\log
N)^3s}X_0^N(\1)+CX^N_0(\1)]\,ds\\
\label{Diff1}&\le C(\log N)^{-16}X_0^N(\1),
\end{align}
where Lemma~4.1 of \cite{CP05}), and
Proposition~\ref{p:mobnds}\,(a) are used in the next to last
line. The same reasoning (in fact it is simpler as there are
no drift terms) gives
\begin{equation}\label{Diff2} E[{\bar{\underline
X}}_{t_N}(\1)-{\underline X}^{N,\VM}_{t_N}(\1)]\le C(\log
N)^{-16}X_0^N(\1).
\end{equation}

Now argue exactly as in the proof of Lemma~\ref{l:dvmcomp},
using \eqref{Diff1} and \eqref{Diff2} in place of
Lemma~\ref{l:masscmps}, to obtain the following killed
version of Lemma~\ref{l:dvmcomp} (recall now $p=19$ from the
definition of $t_N$):
\begin{align}\nonumber\label{lemma6.2old} &\text{There is a
$C_{\ref{lemma6.2old}}$ so that for $j=2,3$, all $T'>0$,
$s\in[0,T']$ and all $\Phi\in C_b([0,T']\times\R^2)$,}\\
&\qquad
E_{\uxi^N_0}[|d^{N,j}(s,\uxi^N_{t_N},\Phi)-d^{N,j}(s,\uxi^{N,\VM}_{t_N},\Phi)|]\le
C_{\ref{lemma6.2old}}\|\Phi\|_\infty(\log
N)^{-13}\underline{X}^N_0(\1).
\end{align} The proof of Lemma~\ref{l:d3H_} is now completed
using the Markov property just as in the derivation of
Lemma~\ref{l:d3H}. \qed

\medskip
\noindent{\it Proof of Corollary~\ref{cor:surv}}. By
Theorem~\ref{t:ThetaOP} and Theorem~\ref{t:SBMgenintro} we
have the percolation condition \eqref{Hyp2}. The required
result now follows by a comparison to supercritical oriented
percolation and Theorem~\ref{t:SBMgenintro} itself, as in
the proof of survival in Proposition~5.3 of \cite{CP07}.  In
fact, the derivation is now a bit easier as the uniform
bounds proved there are not required. \qed
	
\section{Appendix: The $|\cN|=8$ case of
  Lemma~\ref{l:qcanc}}

Recall that for $|\cN|=8$, 
\begin{align}\label{A3:M}
M(k,j) &= \sum_{\text{odd }i \le j\wedge k} \binom ji
\binom{|8-j}{k-i} , \qquad 1\le k,j \le 8, 
\\\label{A3:a}
a_\ell &= \left(\frac{\ell}{8}\right)^q,
\qquad 1\le \ell\le 8.
\end{align}
Our goal is to verify that  if $\balpha=\ba\bM^{-1}$ then
there exists a $q_0<1$ such that 
\begin{equation}\label{app3goal}
\alpha_\ell(q) >0  \ \forall\ q_0<q<1,\ 1\le \ell\le 8.
\end{equation}
The conclusion of Lemma~\ref{l:qcanc} then follows as
described in Section~\ref{sec:canc}.

It is straightforward to check that $\bM$ given by
\eqref{A3:M} is
\[
\bM =
\begin{bmatrix}
1 & 2 & 3 & 4 & 5 & 6 & 7 & 8 
\\
7 & 12 & 15 & 16 & 15 & 12 & 7 & 0 
\\
21 & 30 & 31 & 28 & 25 & 26 & 35 & 56 
\\
35 & 40 & 35 & 32 & 35 & 40 & 35 & 0 
\\
35 & 30 & 25 & 28 & 31 & 26 & 21 & 56 
\\
21 & 12 & 13 & 16 & 13 & 12 & 21 & 0 
\\
7 & 2 & 5 & 4 & 3 & 6 & 1 & 8 
\\
1 & 0 & 1 & 0 & 1 & 0 & 1 & 0 .
\end{bmatrix}
\]
Using \texttt{maple}, we obtain
\[
\bM^{-1} =
\begin{bmatrix}
-\frac{3}{64} & -\frac{1}{32} & -\frac{1}{64} & 0 & \frac{1}{64} & \frac{1}{32} & \frac{3}{64} & \frac{1}{16} 
\\ \noalign{\smallskip}
 -\frac{7}{64} & -\frac{1}{32} & \frac{1}{64} & \frac{1}{32} & \frac{1}{64} & -\frac{1}{32} & -\frac{7}{64} & -\frac{7}{32} 
\\\noalign{\smallskip}
 -\frac{7}{64} & \frac{1}{32} & \frac{3}{64} & 0 & -\frac{3}{64} & -\frac{1}{32} & \frac{7}{64} & \frac{7}{16} 
\\\noalign{\smallskip}
 0 & \frac{5}{64} & 0 & -\frac{3}{64} & 0 & \frac{5}{64} & 0 & -\frac{35}{64} 
\\\noalign{\smallskip}
 \frac{7}{64} & \frac{1}{32} & -\frac{3}{64} & 0 & \frac{3}{64} & -\frac{1}{32} & -\frac{7}{64} & \frac{7}{16} 
\\\noalign{\smallskip}
 \frac{7}{64} & -\frac{1}{32} & -\frac{1}{64} & \frac{1}{32} & -\frac{1}{64} & -\frac{1}{32} & \frac{7}{64} & -\frac{7}{32} 
\\\noalign{\smallskip}
 \frac{3}{64} & -\frac{1}{32} & \frac{1}{64} & 0 & -\frac{1}{64} & \frac{1}{32} & -\frac{3}{64} & \frac{1}{16} 
\\\noalign{\smallskip}
 \frac{1}{128} & -\frac{1}{128} & \frac{1}{128} & -\frac{1}{128} & \frac{1}{128} & -\frac{1}{128} & \frac{1}{128} & -\frac{1}{128} 
 \end{bmatrix},
\]
and it is straightforward but tedious to check that this is
indeed correct by verifying that the product of the above matrices is the
identity matrix.

Given $\bM^{-1}$ and $\ba$ as above, 
$\balpha=\ba\bM^{-1}$ is given by 
\begin{align*}
\alpha_1(q)&=-\frac{3
  \left(\frac{1}{8}\right)^{q}}{64}-\frac{7
  \left(\frac{1}{4}\right)^{q}}{64}-\frac{7
  \left(\frac{3}{8}\right)^{q}}{64}+\frac{7
  \left(\frac{5}{8}\right)^{q}}{64}+\frac{7
  \left(\frac{3}{4}\right)^{q}}{64}+\frac{3
  \left(\frac{7}{8}\right)^{q}}{64}+\frac{1}{128}\\
\alpha_2(q)&=-\frac{\left(\frac{1}{8}\right)^{q}}{32}-\frac{\left(\frac{1}{4}\right)^{q}}{32}+\frac{\left(\frac{3}{8}\right)^{q}}{32}+\frac{5 
  \left(\frac{1}{2}\right)^{q}}{64}+\frac{\left(\frac{5}{8}\right)^{q}}{32}-\frac{\left(\frac{3}{4}\right)^{q}}{32}-\frac{\left(\frac{7}{8}\right)^{q}}{32}-\frac{1}{128} 
\\
\alpha_3(q)&=-\frac{\left(\frac{1}{8}\right)^{q}}{64}+\frac{\left(\frac{1}{4}\right)^{q}}{64}+\frac{3
  \left(\frac{3}{8}\right)^{q}}{64}-\frac{3
  \left(\frac{5}{8}\right)^{q}}{64}-\frac{\left(\frac{3}{4}\right)^{q}}{64}+\frac{\left(\frac{7}{8}\right)^{q}}{64}+\frac{1}{128}\\ 
\alpha_4(q)&=\frac{\left(\frac{1}{4}\right)^{q}}{32}-\frac{3 \left(\frac{1}{2}\right)^{q}}{64}+\frac{\left(\frac{3}{4}\right)^{q}}{32}-\frac{1}{128}\\
\alpha_5(q)&=\frac{\left(\frac{1}{8}\right)^{q}}{64}+\frac{\left(\frac{1}{4}\right)^{q}}{64}-\frac{3 \left(\frac{3}{8}\right)^{q}}{64}+\frac{3 \left(\frac{5}{8}\right)^{q}}{64}-\frac{\left(\frac{3}{4}\right)^{q}}{64}-\frac{\left(\frac{7}{8}\right)^{q}}{64}+\frac{1}{128}\\
\alpha_6(q)&=\frac{\left(\frac{1}{8}\right)^{q}}{32}-\frac{\left(\frac{1}{4}\right)^{q}}{32}-\frac{\left(\frac{3}{8}\right)^{q}}{32}+\frac{5 \left(\frac{1}{2}\right)^{q}}{64}-\frac{\left(\frac{5}{8}\right)^{q}}{32}-\frac{\left(\frac{3}{4}\right)^{q}}{32}+\frac{\left(\frac{7}{8}\right)^{q}}{32}-\frac{1}{128}\\
\alpha_7(q) &=\frac{3
  \left(\frac{1}{8}\right)^{q}}{64}-\frac{7
  \left(\frac{1}{4}\right)^{q}}{64}+\frac{7
  \left(\frac{3}{8}\right)^{q}}{64}-\frac{7
  \left(\frac{5}{8}\right)^{q}}{64}+\frac{7
  \left(\frac{3}{4}\right)^{q}}{64}-\frac{3
  \left(\frac{7}{8}\right)^{q}}{64}+\frac{1}{128} \\
\alpha_8(q)&=\frac{\left(\frac{1}{8}\right)^{q}}{16}-\frac{7 \left(\frac{1}{4}\right)^{q}}{32}+\frac{7 \left(\frac{3}{8}\right)^{q}}{16}-\frac{35 \left(\frac{1}{2}\right)^{q}}{64}+\frac{7 \left(\frac{5}{8}\right)^{q}}{16}-\frac{7 \left(\frac{3}{4}\right)^{q}}{32}+\frac{\left(\frac{7}{8}\right)^{q}}{16}-\frac{1}{128}
\end{align*}

By pairing off terms it is easy to see that $\alpha_1(q)\ge
1/128>0$ for all $0\le q\le 1$. As described in
Section~\ref{sec:canc}, to prove \eqref{app3goal} it remains
only to verify \eqref{cangoal}, so we consider
$\alpha'_\ell(q), 2\le \ell\le 8$. By differentiating,
plugging in $q=1$ and simplifying, we obtain
\begin{align*}
\alpha_2'(1) &= \frac{3 }{128}\log 2-\frac{3 }{256}\log 3+\frac{5 }{256}\log 5-\frac{7 }{256}\log  7\\
\alpha_3'(1) &= \frac1{64}\log 2+\frac{3}{512}\log  3-\frac{15}{512}\log 5+\frac{7}{512} \log 7\\
\alpha_4'(1) &=-\frac{5}{128} \log 2+\frac{3}{128}\log 3\\
\alpha_5'(1) &=\frac1{64}\log 2-\frac{15 }{512}\log 3+\frac{15}{512} \log 5-\frac{7}{512}\log 7\\
\alpha_6'(1) &=\frac{3}{128} \log 2-\frac{9}{256} \log 3-\frac{5}{256} \log 5+\frac{7}{256} \log 7\\
\alpha_7'(1) &=\frac{5}{64} \log 2+\frac{63}{512} \log 3-\frac{35}{512} \log 5-\frac{21}{512} \log 7\\
\alpha_8'(1) &= -\frac{101}{128} \log 2+\frac{35}{128} \log 5+\frac{7}{128} \log 7
\end{align*}
We need only verify that each of the
above are strictly negative. 
Resorting again to \texttt{maple}, the above derivatives can
be written in the form 
\begin{align*}
\alpha_2'(1) &= -\frac1{256}\log\left(\frac{22235661}{200000}\right)\\
\alpha_3'(1) &= -\frac1{512}\log\left(\frac{30517578125}{5692329216}\right)\\
\alpha_4'(1) &= -\frac1{128}\log\left(\frac{32}{27}\right)\\
\alpha_5'(1) &= -\frac1{512}\log\left(\frac{11816941917501}{7812500000000}\right)\\
\alpha_6'(1) &= -\frac1{256}\log\left(\frac{61509375}{52706752}\right)\\
\alpha_7'(1) &= -\frac1{512}\log\left(
\frac{1625582413058972472208552062511444091796875}{1258458428839311554156984626190103821156352}\right)\\
\alpha_8'(1) &= -\frac1{128}\log\left(\frac{2535301200456458802993406410752}{2396825584582984447479248046875}\right).
\end{align*}
This completes the proof of \eqref{cangoal} for the case
$|\cN|=8$, and so we are done.

\newpage
\bibliographystyle{amsplain}

\end{document}